\titleformat*{\subsection}{\normalsize\itshape}
\newtheorem{theorem}[equation]{Theorem}
\newtheorem{cor}[equation]{Corollary}
\newtheorem{lemma}[equation]{Lemma}
\newtheorem{proposition}[equation]{Proposition}
\newtheorem{thm}{Theorem}
\newtheorem*{citethmdominant}{Theorem \ref{dominant}}
\newtheorem*{citethminjectivity}{Theorem \ref{injectivity}}
\newtheorem*{citethmfinitedim}{Theorem \ref{finitedim}}
\newtheorem*{citethmfinitegen}{Theorem \ref{finitegen}}
\newtheorem*{citethmfinitepres}{Theorem \ref{finitepres}}
\newtheorem*{citethmFPhypothesis}{Theorem \ref{FPhypothesis}}
\newtheorem*{citethmGHWhypothesis}{Theorem \ref{GHWhypothesis}}
\newtheorem{Zthm}[equation]{Zelmanov Nil Theorem}
\newtheorem{Zcor}[equation]{Zelmanov Nil Corollary}
\newtheorem{Zlemma}[equation]{Zassenhaus Lemma}
\newtheorem{ZClassthm}[equation]{Zelmanov Classification Theorem}
\numberwithin{equation}{section}
\newcommand{\A}{{\mathbb A}}
\newcommand{\F}{\mathbb{F}}
\newcommand{\Q}{{\mathbb Q}}
\newcommand{\K}{\mathbb{K}}
\newcommand{\Z}{\mathbb{Z}}
\newcommand{\ot}{\otimes}
\newcommand{\Min}{\mathrm{Min}}
\newcommand{\gs}{\sigma}
\newcommand{\End}{\hbox{End}}
\newcommand{\B}{{\rm B}}
\newcommand{\SL}{{\rm SL}}
\newcommand{\Hom}{{\rm Hom}}
\newcommand{\id}{{\rm id}}
\newcommand{\Ker}{{\rm Ker}}
\newcommand\Inn{\text{\rm Inn}}
\newcommand{\bSL}{{\mathbf{SL}}}
\newcommand{\bPSL}{{\mathbf{PSL}}}
\newcommand{\lto}{\leftarrow}
\newcommand{\ad}{\mathrm{ad}}
\newcommand{\Coker}{\mathrm{Coker}}
\renewcommand{\d}{\mathrm{d}}
\newcommand{\Der}{\mathrm{Der}}
\newcommand{\Ext}{\mathrm{Ext}}
\newcommand{\Ht}{\mathrm{ht}}
\newcommand{\Image}{\mathrm{Im}}
\newcommand{\Ind}{\mathrm{Ind}}
\newcommand{\Mod}{\mathrm{Mod}}
\newcommand{\sgn}{\mathrm{sgn}}
\newcommand{\Soc}{\mathrm{Soc}}
\newcommand{\Tr}{\mathrm{Tr}}
\newcommand{\fb}{\mathfrak{b}}
\newcommand{\fB}{\mathfrak{B}}
\newcommand{\fg}{\mathfrak{g}}
\newcommand{\fG}{\mathfrak{G}}
\newcommand{\fh}{\mathfrak h}
\newcommand{\fP}{\mathfrak{P}}
\newcommand{\fs}{\mathfrak{s}}
\newcommand{\fu}{\mathfrak{u}}
\newcommand{\fz}{\mathfrak{z}}
\newcommand{\fsl}{\mathfrak{sl}}
\title{ Free Jordan Algebras and\\ Representations of
$\widehat\fsl_2(J)$}
\author{Michael Lau$\hbox{}^{1*}$\ \  and \ Olivier Mathieu$\hbox{}^{2}$\thanks{The authors gratefully acknowledge funding from the Natural Sciences and Engineering Research Council of Canada (M.L.) and UMR 5208 of the Centre national de la recherche scientifique (O.M.).}
\vspace{0.3cm} 
%\vspace{0.3cm}
\\
$\hbox{\ \,}^1${\small Universit\'e Laval}\\ {\small D\'epartement de math\'ematiques et de statistique}\\{\small Qu\'ebec, QC,
Canada G1V 0A6}\\ {\small Michael.Lau@mat.ulaval.ca}\\
\\
$\hbox{}^{2 }${\small Institut Camille Jordan}\\ {\small UMR 5028 du CNRS, Universit\'e Claude Bernard Lyon 1}\\
{\small 69622 Villeurbanne Cedex, France}\\ {\small Shenzhen International Center for Mathematics, Shenzhen, China}\\{\small mathieu@math.univ-lyon1.fr}\vspace{0.1cm}}
\date{}
\begin{document}
\maketitle
\setcounter{section}{0}

\begin{small}
  \noindent
      {\bf Abstract:}   Let $J$ be a unital Jordan algebra, and
      let $\widehat\fsl_2(J)$ be the universal central extension of the Tits-Kantor-Koecher  Lie algebra. 

In Part A, we study the category of 
$(\widehat\fsl_2(J), \SL_2(\K))$-modules. We characterize the dominant $J$-spaces, which are analogous to the dominant highest weights appearing in classical settings.  A family of universal envelopes $\mathcal{U}_n(J)$ associated to such modules is introduced and studied.  We also prove some finiteness theorems.

In Part C, we define the notion of smooth $\widehat\fsl_2(J)$-modules for augmented Jordan algebras $J$, and investigate the category of smooth modules in the spirit of Cline-Parshall-Scott highest weight categories. We
 show that the standard modules of this category are finite dimensional when $J$ is finitely generated.

The free unital Jordan algebra $J(D)$ over $D$ variables
is an elusive object, but finiteness and $\hbox{Ext}$-vanishing properties suggest that the smooth $\widehat\fsl_2(J(D))$-modules with even eigenvalues might form a generalized highest weight category.  However, we prove that such an assertion would contradict recently obtained information about the growth of free Jordan algebras.  See \cite{KM} and \cite{DH} for more details.  It then follows that the category of smooth $\widehat\fsl_2(J(D))$-modules with even eigenvalues is not a generalized highest weight category when $D\geq 2$.

Surprisingly, the proofs of most of these results make use of deep theorems of E. Zelmanov.

\bigskip

\noindent {\bf Keywords:} free Jordan algebras, Tits-Kantor-Koecher construction, 
Tits-Allison-Gao algebra, 
Weyl modules,  good filtrations, generalized highest weight categories, $FP_\infty$ Lie algebras, weight modules, Ext-vanishing.

\bigskip

\noindent
    {\bf MSC2010:} primary 17B10; secondary 17B60, 17C05, 17C50

\bigskip

\noindent
{\bf Statements and Declarations:} The authors report that they have no competing interests to declare. Data sharing is not applicable to this article as no datasets were generated or analyzed during the current study.

\end{small}
\maketitle

\section{Introduction} 

Let $\K$ be a field of characteristic zero.  
 %Except when $D=1$ or $2$, the free Jordan $\K$-algebra $J(D)$  on $D$ generators is an elusive object. It admits a grading $J(D)=\bigoplus_{n\geq 0} J_n(D)$, where $J_0(D)=\K$ and $J_1(D)$ is the linear span of the $D$ generators.
  %Let $a_n=a_n(D)$ be the  sequence  of integers uniquely defined by  the following equation:
%\begin{equation}\label{dim-formula}
%\Res_{t=0}\, \psi\,\prod_{n=1}^{\infty} (1-z^n(t+t^{-1}) +z^{2n})^{a_n} dt=0,
%\end{equation}
%\noindent where $\psi=Dzt^{-1}+(1-Dz)-t$.  It was conjectured in \cite{KM} that %\begin{equation}\dim J_n(D)=a_n \,\hbox{\ for all\ } n>0. \end{equation} \label{equation.} 
%\noindent The full conjecture involves the character of the $\GL_D(\K)$-module $J_n(D)$, but for simplicity, we will restrict ourselves to the dimension  formula.
For any unital Jordan algebra $J$,  we denote by 
$\widehat{\fsl}_2(J)$ the universal central extension of the Tits-Kantor-Koecher  Lie algebra $TKK(J)$.
In the present paper, we investigate two categories 
of representations of the Lie algebra 
$\widehat{\fsl}_2(J)$.

An {\it $(\widehat\fsl_2(J),\bSL_2(\K))$-module}
is an $\widehat\fsl_2(J)$-module that is locally finite dimensional as an $\fsl_2(\K)$-module.
In Part A, we investigate finiteness properties of the
category of $(\widehat{\fsl}_2(J),\bSL_2(\K))$-modules for
an arbitrary unital Jordan algebra $J$.
Relative to the action of the usual diagonal element $h$ of $\fsl_2(\K)$, any $(\widehat\fsl_2(J),\bSL_2(\K))$-module $X$ admits an eigenspace decomposition $X=\bigoplus_{n\in\Z} X_n$.
In particular, the Lie algebra
$\fG:=\widehat\fsl_2(J)$ has a short grading $\fG=\fG_{-2}\oplus \fG_0\oplus \fG_{2}$. For integers $n$, we ask

\medskip

\centerline{\it Which $\fG_0$-modules $V$ are isomorphic to
$H^0(\fG_{2},X)\cap X_n$ for some $X$?}

\medskip

A vector space $V$ endowed with a linear map 
 $\rho:J\to\End_\K(V)$ is called a 

\noindent {\it $J$-space}  when
$V$ is a $\fG_0$-module and $\rho(a)$ is the action  of $h\otimes a\in\fG_0$ on  $V$. This is equivalent to
the identities
$[\rho(a),\rho(a^2)]=0$ and 
$[[\rho(a),\rho(b)],\rho(c)]=4\rho(a(cb)-(ac)b)$
for all $a,b,c\in J$.   A $J$-space is called {\it dominant} of level $n$ if it is isomorphic to $H^0(\fG_{2},X)\cap X_n$ for some $(\widehat\fsl_2(J),\bSL_2(\K))$-module $X$. 

Our first  result characterizes 
dominant $J$-spaces of level $n$. For any
partition $\gs=(\gs_1,\gs_2,\ldots ,\gs_m)$ of $n+1$, we write $|C_\sigma|$ for the cardinality of the corresponding conjugacy class in the symmetric group $S_{n+1}$ and $\sgn(\gs)$
for its  signature.  For $a\in J$, we write $\rho_\sigma(a)$ for the expression $\rho(a^{\gs_1})\rho(a^{\gs_2})\cdots\rho(a^{\gs_m})$.

\begin{citethmdominant}
Let $(V,\rho)$ be a $J$-space such that $\rho(1)=n$.  Then 
$V$ is dominant of level $n$ if and only if
it satisfies the following   condition:

$$\sum_{\gs\ \vdash\ n+1} \hbox{sgn}(\gs)\,|C_\gs|\,\rho_\gs(a)=0\hbox{\ for all\ } a\in J.$$
\end{citethmdominant}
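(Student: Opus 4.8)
\medskip

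The plan is to realize dominance as injectivity into a universal ``generalized Weyl module'' and to recognize the stated relation as a Garland identity. Write $\fG=\widehat\fsl_2(J)$ and $\fp=\fG_0\oplus\fG_2$; for a $J$-space $(V,\rho)$ with $\rho(1)=n$ (so $n$ is the $h$-eigenvalue of $V$, and we may assume $n\ge 0$) set $M(V):=U(\fG)\otimes_{U(\fp)}V$ with $\fG_2$ acting by $0$ on $V$. Since the grading is short, $\fG_{-2}$ is abelian, so PBW gives $M(V)\cong S(\fG_{-2})\otimes V$ as $U(\fG_{-2})$-modules, graded by the $h$-eigenvalue, with weights in $\{n,n-2,n-4,\dots\}$ and top piece $M(V)_n=V$; moreover $\Hom_\fG(M(V),X)\cong\Hom_{\fG_0}\!\big(V,\,H^0(\fG_2,X)\cap X_n\big)$ for every $\fG$-module $X$. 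The first step is to observe that, for an $(\fG,\bSL_2(\K))$-module $X$, the image of $V$ under any such map consists of $\fsl_2(\K)$-highest weight vectors of weight $n$ inside a locally finite module, hence is killed by $(f\otimes1)^{n+1}$; so every locally finite quotient of $M(V)$ factors through $\ol M(V):=M(V)/N_0$, where $N_0:=U(\fG)\cdot\{(f\otimes1)^{n+1}v:v\in V\}$.

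The second, conceptually central, step is to show that $\ol M(V)$ is \emph{itself} locally $\fsl_2(\K)$-finite. Because $\fG_{-2}$ is abelian one has $(f\otimes1)^{n+1}M(V)=S(\fG_{-2})(f\otimes1)^{n+1}V\subseteq N_0$, so $f\otimes1$ acts on $\ol M(V)$ with $(f\otimes1)^{n+1}=0$; combined with the boundedness of the $h$-weights above by $n$, this forces every weight vector $\xi$ to generate a finite-dimensional $\fsl_2(\K)$-module --- indeed $e^k\xi=0$ for $k\gg0$ by the weight bound while $f^{n+1}(e^k\xi)=0$ for all $k$, so $U(\fsl_2(\K))\xi$ is spanned by the finitely many $f^i e^k\xi$ with $i\le n$. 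Granting this, $\ol M(V)$ is an $(\fG,\bSL_2(\K))$-module whose top weight space is annihilated by $\fG_2$, and one checks from the universal property that $V$ is dominant of level $n$ \emph{if and only if} the canonical map $V\to\ol M(V)$ is injective, i.e.\ $N_0\cap V=0$.

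The third step computes $N_0\cap V$. The main tool is Garland's identity inside the current subalgebra $\fsl_2\otimes\K[b]\hookrightarrow\fG$ attached to $b\in J$ (with $\K[b]\subseteq J$ the associative commutative subalgebra generated by $b,1$): writing $x^{(m)}:=x^m/m!$,
$$(e\otimes b)^{(n+1)}(f\otimes1)^{(n+1)}\ \equiv\ (-1)^{n+1}\Lambda_{n+1}(b)\pmod{U(\fG)\cdot(e\otimes\K[b])},$$
where $\Lambda_{n+1}(b)$ is the coefficient of $u^{n+1}$ in $\exp\!\big(-\sum_{k\ge1}\tfrac1k\rho(b^k)u^k\big)$. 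Because $\fG_2$ annihilates $V$ in $M(V)$, the correction term dies, so $\Lambda_{n+1}(b)v=(-1)^{n+1}(e\otimes b)^{(n+1)}(f\otimes1)^{(n+1)}v$ exactly; the right side lies in $N_0\cap V$, whence \emph{$V$ dominant $\Rightarrow\Lambda_{n+1}(b)V=0$ for all $b$}. Conversely, a weight-$n$ element of $N_0$ is a sum of terms $u\cdot(f\otimes1)^{n+1}v$ with $u\in U(\fG)$ of weight $2(n+1)$; writing $u=u_-u_0u_+$ by PBW and using $[\fG_2,\fG_2]=0$, such a term vanishes in $M(V)$ unless $u_-\in\K$ and $u_+=(e\otimes b_1)\cdots(e\otimes b_{n+1})$, and then $(e\otimes b_1)\cdots(e\otimes b_{n+1})(f\otimes1)^{n+1}v$ lies in $\sum_{b\in J}\Lambda_{n+1}(b)V$ by the displayed identity together with polarization of the symmetric multilinear operator $(b_1,\dots,b_{n+1})\mapsto(e\otimes b_1)\cdots(e\otimes b_{n+1})(f\otimes1)^{n+1}$ (here characteristic zero is used); hence \emph{$\Lambda_{n+1}(b)V=0$ for all $b$ $\Rightarrow N_0\cap V=0$ $\Rightarrow V$ dominant}. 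Finally, expanding $e_{n+1}$ in power sums gives $\Lambda_{n+1}(b)=\tfrac{(-1)^{n+1}}{(n+1)!}\sum_{\sigma\vdash n+1}\sgn(\sigma)\,|C_\sigma|\,\rho_\sigma(b)$, so ``$\Lambda_{n+1}(b)V=0$ for all $b$'' is exactly the displayed condition, completing the proof.

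The step I expect to cost the most work is the third one. It uses Garland's lemma in the sharp ``modulo $U(\fG)\cdot(e\otimes\K[b])$'' form (the crude version leaves $h\otimes b^k$-terms that need not annihilate $V$), and requires care with the central extension and with the normalization ``$\rho(a)=$ action of $h\otimes a$'' when embedding the current algebras $\fsl_2\otimes\K[b]$, as well as the polarization back onto the diagonal; in checking that the Garland relations over varying $b\in J$ genuinely exhaust $N_0\cap V$, one must also track how they interact with the $J$-space identities $[\rho(a),\rho(a^2)]=0$ and $[[\rho(a),\rho(b)],\rho(c)]=4\rho\big(a(cb)-(ac)b\big)$ governing the commutators among the $\rho(a^k)$. (For $J=\K[t]$ this specializes to the Chari--Pressley presentation of local Weyl modules for $\fsl_2[t]$.)
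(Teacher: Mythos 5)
Your proposal is correct and follows essentially the same approach as the paper: reduce dominance to the injectivity of $V$ into a universal locally finite quotient of the generalized Verma module $M(V)$, identify the obstruction with divided powers of the type $e^{(n+1)}f^{(n+1)}$ inside a current subalgebra $\fsl_2\otimes\K[b]$, evaluate via Garland's identity, and convert the exponential of power sums to the alternating sum over partitions by the Girard--Newton formula. The only variations are cosmetic --- you quotient by $N_0=U(\fG)(f\otimes1)^{n+1}V$ rather than by $Z(V)=\bigcap_{k}e^kM(V)$, and you use a Chevalley-transposed form of Garland's identity (with $e\otimes b$, $f\otimes1$ in place of the paper's $e\otimes1$, $f\otimes a$), so the extra PBW/polarization step you use to pin down $N_0\cap V$ plays the same role as the paper's identification $Z_n(V)=e^{n+1}S^{n+1}(\fG_{-2})V$.
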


There is thus  an associative algebra 
${\mathcal U}_n(J)$, endowed with a linear map $\rho_n:J\to {\mathcal U}_n(J)$, which is the unital universal associative envelope of $J$ in the category of dominant $J$-spaces of level $n$. 
We prove the following result:

\begin{citethminjectivity} 
The map $\rho_n$ is injective for any $n\geq 4$.
\end{citethminjectivity}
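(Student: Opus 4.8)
The plan is to show that $\rho_n$ is injective by exhibiting, for each $n\geq 4$, a single dominant $J$-space of level $n$ on which the natural map $J\to\End_\K(V)$ is already injective. Since $\mathcal{U}_n(J)$ is universal among all dominant $J$-spaces of level $n$, the existence of one such $J$-space whose structure map is injective forces $\rho_n$ to be injective as well. The most economical candidate is a \emph{standard} or \emph{Weyl-type} module: take $X$ to be the universal $(\widehat{\fsl}_2(J),\bSL_2(\K))$-module generated by a $\fG_0$-highest-weight space, and set $V=H^0(\fG_2,X)\cap X_n$; by Theorem~\ref{dominant}, $V$ is dominant of level $n$, so it suffices to arrange that $\rho|_V$ is faithful on $J$.

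The first concrete step is to reduce to a universal situation. Because the defining relations of a $J$-space ($[\rho(a),\rho(a^2)]=0$ and the linearization $[[\rho(a),\rho(b)],\rho(c)]=4\rho(a(cb)-(ac)b)$) together with the normalization $\rho(1)=n$ and the dominance identity $\sum_{\sigma\vdash n+1}\sgn(\sigma)|C_\sigma|\rho_\sigma(a)=0$ are all consequences of the $\widehat{\fsl}_2(J)$-module axioms, we have $\Ker\rho_n=\bigcap_X\Ker(\rho|_{H^0(\fG_2,X)\cap X_n})$ over all such $X$. So I would fix one good $X$ and estimate this kernel. A natural choice: let $J$ act on a polynomial-type or tensor-type construction built from $TKK(J)$ itself — for instance, restrict the adjoint representation of $\widehat{\fsl}_2(J)$, or a suitable parabolically induced module, to the appropriate graded piece. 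The key point is that for such geometric modules one has explicit control over how $\rho(a)$ acts, enough to see that $\rho(a)=0$ forces $a=0$.

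The main obstacle — and the reason the hypothesis $n\geq 4$ appears — is that the dominance relation $\sum_{\sigma\vdash n+1}\sgn(\sigma)|C_\sigma|\rho_\sigma(a)=0$ is a polynomial identity of degree $n+1$ in $\rho(a)$, and for small $n$ this identity is so strong that it collapses $\rho(J)$ (e.g.\ for $n=0,1$ the only dominant $J$-spaces see $J$ only through a small quotient, so $\rho_n$ is far from injective). For $n\geq 4$ one must show the identity is \emph{not} strong enough to kill any nonzero element of $J$. I expect this is exactly where Zelmanov's results enter, as flagged in the abstract: one likely invokes a Zelmanov-type structure theorem or the existence of faithful representations of the free special Jordan algebra (or a PI-theoretic argument about the universal associative envelope) to produce, for a general $J$, a dominant level-$n$ representation in which $a\mapsto\rho(a)$ is injective. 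Concretely, I would try to realize $\mathcal{U}_n(J)$ as mapping onto an algebra of operators on a space like $\bigoplus_k S^k(\K^2)\otimes(\text{something built from }J)$ where the $\fsl_2$-weight forces degree $\leq n$, verify that for $n\geq 4$ the degree-$(n+1)$ symmetrizer relation is automatically satisfied there (this is the core computation, involving the identity $\sum_{\sigma\vdash n+1}\sgn(\sigma)|C_\sigma|\,t^{?}=\prod(t-j)$ type evaluation of the class-sum), and conclude faithfulness from the transparent action on that space.

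Thus the proof structure is: (1) invoke Theorem~\ref{dominant} to replace "$\rho_n$ injective" by "there exists a dominant level-$n$ $J$-space with faithful $\rho$"; (2) construct a candidate module $X$ (adjoint or induced) and identify $V=H^0(\fG_2,X)\cap X_n$ explicitly; (3) for $n\geq 4$, verify that $V$ genuinely satisfies the dominance identity — using a Zelmanov structure/representation theorem to handle arbitrary Jordan $J$ rather than only special ones — and that $a\mapsto\rho(a)$ separates points of $J$; (4) conclude by universality. Step (3) is the crux, and the restriction $n\geq 4$ is precisely the threshold past which the degree-$(n+1)$ symmetrization relation stops being an obstruction to faithfulness.
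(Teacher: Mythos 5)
The high-level reduction in your step (1) is sound: since $\mathcal{U}_n(J)$ is the universal dominant $J$-space of level $n$ (and in fact is itself such a space as a left module over itself), $\rho_n$ is injective if and only if some dominant level-$n$ $J$-space has faithful $\rho$. But your steps (2)--(4) leave the real work undone and guess at a mechanism that is not the one that actually works. The paper's proof does not construct a single explicit faithful module; instead it exploits an \emph{additive structure on levels} that your proposal never identifies. The key tool is the observation (Example \ref{examples}(f)) that the tensor product of a dominant $J$-space of level $p$ and one of level $q$ is dominant of level $p+q$, giving an algebra map $\Delta_{p,q}\colon \mathcal{U}_{p+q}(J)\to\mathcal{U}_p(J)\otimes\mathcal{U}_q(J)$ with $\Delta_{p,q}\circ\rho_{p+q}(a)=\rho_p(a)\otimes 1+1\otimes\rho_q(a)$. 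From this one gets Lemma \ref{rhopq}: if $\mathcal{U}_p(J)\neq 0$ and $\rho_q$ is injective, then $\rho_{p+q}$ is injective. The theorem then falls out arithmetically: $\rho_2$ is obviously injective (the $J$-space $\fG_2\simeq J$ is dominant of level $2$ with $\rho(a)b=2ab$, faithful since $J$ is unital), so all even $n\geq 2$ follow by iterating with $p=q=2$; for odd $n\geq 5$ one writes $n=3+(n-3)$ with $n-3$ even $\geq 2$, which requires knowing only that $\mathcal{U}_3(J)\neq 0$, not that $\rho_3$ is injective.

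Your proposal also misidentifies where and why Zelmanov enters. You speculate about faithful representations of the free special Jordan algebra, or a PI-theoretic computation showing the degree-$(n+1)$ symmetrizer relation ``stops being an obstruction'' past $n=4$. Neither is what happens, and the last claim is not even correct as stated: the threshold $n\geq 4$ has nothing to do with the strength of the degree-$(n+1)$ identity but is purely additive (every $n\geq 4$ decomposes as a sum of $2$'s and at most one $3$, while $n=3$ itself remains open precisely because one only knows $\mathcal{U}_3\neq 0$, not that $\rho_3$ is injective). The actual Zelmanov input is the \emph{classification theorem for simple Jordan algebras} (special or Albert), used in Corollary \ref{U3} to show $\mathcal{U}_3(J)\neq 0$ for all $J\neq 0$: after passing to a simple quotient, the special case gives $\rho_3$ injective via Lemma \ref{rhopq} with $p=1$, $q=2$, and the Albert case is handled by the explicit $E_7$ computation in Lemma \ref{Albert}. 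Without the tensor-product lemma and this application of Zelmanov's classification, your outline cannot be completed.
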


Next we prove two finiteness theorems:

\begin{citethmfinitegen} The associative algebra ${\mathcal U}_n(J)$ is finitely generated, for all finitely generated Jordan algebras $J$.
\end{citethmfinitegen}

\begin{citethmfinitepres} The Lie algebra $\widehat\fsl_2(J)$ is finitely presented, for all finitely presented Jordan algebras $J$.% is finitely presented, the Lie algebra $\widehat\fsl_2(J)$ is finitely presented.
\end{citethmfinitepres}

The free Jordan $\K$-algebra $J(D)$  on $D$ generators admits a grading $J(D)=\bigoplus_{n\geq 0} J_n(D)$, where $J_0(D)=\K$ and $J_1(D)$ is the linear span of the $D$ generators.  The algebra $J(D)$ is an elusive object.  However, an explicit formula for $\dim J_n(D)$ has been conjectured in \cite{KM}. The previous theorem shows that $\widehat\fsl_2(J(D))$ is
$FP_2$.  This theorem and other observations raise the natural question of whether the
Lie algebra $\widehat\fsl_2(J(D))$ could be $FP_\infty$, that is, whether its trivial module has a projective resolution consisting of finitely generated modules.  Such a finiteness condition would have profound consequences for the growth of free Jordan algebras:

\begin{citethmFPhypothesis} Assume that
$\widehat\fsl_2(J(D+1))$ is of type $FP_\infty$.
Then the conjectural formula of \cite{KM} for 
$\dim J_n(D)$ holds.
\end{citethmFPhypothesis}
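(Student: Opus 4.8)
The plan is to extract the conjectural dimension formula for $\dim J_n(D)$ from an Euler characteristic computation, using the hypothesis that $\widehat\fsl_2(J(D+1))$ is $FP_\infty$ to make that Euler characteristic well-defined and computable. The starting point is the grading $J(D+1) = \bigoplus_{n\geq 0} J_n(D+1)$ and the induced grading on $\fG := \widehat\fsl_2(J(D+1))$, so that the homology $H_\bullet(\fG,\K)$ is bigraded (by homological degree and by the internal grading inherited from $J(D+1)$, together with the $h$-eigenvalue). The $FP_\infty$ hypothesis guarantees that each bihomogeneous component of $H_\bullet(\fG,\K)$ is finite dimensional and vanishes outside a suitable cone, so that the generating function $\sum_{i}(-1)^i \dim H_i(\fG,\K)_{(\text{graded piece})}$ is a well-defined formal power series. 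First I would set up this bookkeeping carefully, keeping track of the $\SL_2(\K)$-action so that everything is organized by $\fsl_2$-isotypic components; the results of Part A (in particular the identification of $H^0(\fG_2,X)\cap X_n$ with dominant $J$-spaces, and Theorem~\ref{finitepres} giving $FP_2$) are what let us relate these homology groups to the Jordan-algebraic data.

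Next I would compute the same Euler characteristic a second way, using the Koszul-type (Chevalley--Eilenberg) complex $\Lambda^\bullet \fG$, whose Euler characteristic is $\prod$ over the homogeneous basis of $\fG$ of factors $(1-q^{\deg})^{\pm 1}$, the sign depending on parity. Since $TKK(J)$ and its universal central extension are built functorially out of $J(D+1)$ from the $\fsl_2$-grading $\fG_{-2}\oplus\fG_0\oplus\fG_2$, the dimensions of the homogeneous pieces of $\fG$ are expressible through $\dim J_n(D+1)$ and the structure of the centre; this is where the Kashiwara--Vergne-type / \cite{KM} combinatorics enters. Equating the two computations of the Euler characteristic yields a functional identity relating the (known, assuming $FP_\infty$) finite homology to the generating function $\sum_n \dim J_n(D+1)\, q^n$. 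The crucial point is a dévissage relating $J(D+1)$ to $J(D)$: adding one generator corresponds on the $\widehat\fsl_2$ side to passing from $\fG$ to a one-step-larger object, and the homological comparison should be exactly the mechanism by which the formula for $\dim J_n(D)$ is pinned down — the $(D+1)$st variable plays the role of a "spectral parameter" that makes the free Jordan algebra on $D$ generators appear as a graded component.

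The main obstacle, and the heart of the argument, is identifying the homology $H_\bullet(\widehat\fsl_2(J(D+1)),\K)$ precisely enough: the $FP_\infty$ hypothesis only tells us these spaces are finite dimensional, not what they are. So the real work is to show that, beyond $H_0$ and $H_1$ (which are controlled by Part A and by the fact that $\widehat\fsl_2$ is a universal central extension, so $H_1=0$ and $H_2$ is related to the kernel of the central extension), the higher homology either vanishes in the relevant bidegrees or contributes in a way that matches the \cite{KM} formula term by term. Concretely, I expect one must show that the "defect" between the naive Koszul Euler product and the actual free-Jordan generating series is accounted for entirely by a controlled, explicitly computable piece of $H_\bullet$, using the $\SL_2(\K)$-equivariance to restrict which isotypic types can occur. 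If the higher homology is not forced to vanish, one would instead need to argue that its contribution is determined recursively by lower-degree data, so that the functional equation can still be solved uniquely for $\sum_n \dim J_n(D)\,q^n$, recovering the conjectural formula.
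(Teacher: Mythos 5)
Your proposal correctly identifies the central difficulty — the $FP_\infty$ hypothesis gives only \emph{finite-dimensionality} of each $H_k(\widehat\fsl_2(J(D+1)))$, not vanishing, and you acknowledge that ``the real work is to show that... the higher homology either vanishes... or contributes in a way that matches the \cite{KM} formula.'' But you leave exactly this point open, speculating about Euler-characteristic bookkeeping and hypothetical recursive determination of higher homology, without supplying a mechanism that forces vanishing. That is a genuine gap: the entire content of the theorem is the passage from finiteness to vanishing, and the Euler-characteristic framing does not by itself close it.

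The paper closes the gap with a short and specific argument that never appears in your sketch. Because $J(D+1)$ is \emph{free}, the Witt algebra $W=\Der(\K[t])$ acts on it by the derivations $L_n=\sum_i x_i^{n+1}\partial/\partial x_i$, hence acts on $H_k(\widehat\fsl_2(J(D+1)))$. For $k\geq 4$ the operator $L_0$ acts on $\Lambda^k\widehat\fsl_2(J(D+1))$ with strictly positive eigenvalues, so $L_0$ is surjective on $H_k$; thus $H_k$ is either zero or a nontrivial module over the simple infinite-dimensional Lie algebra $W$, hence infinite-dimensional (Lemma \ref{Witt}). Combined with the $FP_\infty$ hypothesis, this forces $H_k(\widehat\fsl_2(J(D+1)))=0$ for all $k\geq 4$. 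Koszul's decomposition $H_*(\fG)\cong H_*(\fsl_2)\otimes H_*(\fG,\fsl_2)$ together with $H_3(\fsl_2)\neq 0$ then propagates this to $H_k(\widehat\fsl_2(J(D+1)),\fsl_2)=0$ for all $k>0$, and the conclusion is obtained by directly citing \cite[Theorem 2]{KM}, not by re-deriving the generating-function identity. So the two ingredients you need to add are the Witt algebra rigidity argument (finite $\Rightarrow$ zero in high degree) and the $H_*(\fsl_2)$-tensor-factor trick to extend the vanishing down to all positive degrees of the relative homology; your proposed Euler-characteristic comparison, besides being less direct, would still require the same vanishing input to become an actual proof.
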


%In a recent arXiv preprint \cite{DH}, Dotsenko and Hentzel provide detailed computational data showing that the conjectural dimension formula mentioned above fails in degree 19 for the free Jordan algebra on two generators.  In particular, we obtain the following corollary:

%\begin{cor}
%For $D\geq 3$, the Lie algebra $\widehat\fsl_2(J(D))$ is {\em not} of type $FP_\infty$.
%  \end{cor}

Part $B$ contains the categorical background (used in Part C) on good filtrations in locally artinian categories.  In particular, we introduce the notion of {\em generalized highest weight categories} ({\em generalized HW categories} for short), which differ from highest weight categories in that they allow nontrivial self-extensions of standard objects.  We close Part B with an Ext-vanishing result for generalized HW categories (Theorem \ref{generalized HWvanishing}).

\medskip
It has been conjectured in \cite{KM} that
$H_k(\widehat\fsl_2(J(D),\fsl_2(\K))=0$ for $k>0$. By
\cite{H}, the dual of this homology group describes the
self-extensions of $\K$ in the large category of
$(\widehat\fsl_2(J),\bSL_2(\K))$-modules. 
In Part C, we introduce the smaller and more manageable category of smooth modules, with a similar connection between 
$H_*(\widehat\fsl_2(J(D),\fsl_2(\K))$ and self-extensions of the trivial module $\K$.

A unital Jordan algebra $J$ endowed with an ideal $J_+$ of codimension $1$ is called an
{\it augmented} Jordan algebra. An 
$\widehat\fsl_2(J)$-module is called {\it smooth}
if any cyclic submodule is a finite dimensional 
$\widehat\fsl_2(J/J_+^n)$-module  for some $n$.  This terminology comes from proalgebraic groups, where smoothness means that a module is locally a representation for some algebraic quotient.  For $p$-adic groups, the definition of smoothness is similar.

In Part $C$, we investigate the category of smooth modules
in the spirit of the Cline-Parshall-Scott paper \cite{cps88}.
 For an augmented Jordan algebra $J$ and $n\geq 0$, there is a  $1$-dimensional $J$-space $\K_n$ defined by $\rho(1)=n$ and $\rho(J_+)=0$. The standard module $\nabla(n)$ is a  smooth module such that
 $H^0(\fG_2, \nabla(n))=\K_n$, and  satisfying a universal property.   When $J$ is finitely generated we prove that:

  \begin{citethmfinitedim} The standard module
 $\nabla(n)$ is finite dimensional.
 \end{citethmfinitedim}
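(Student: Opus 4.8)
The plan is to realise $\nabla(n)$ as a direct limit of costandard modules over finite‑dimensional truncations of $\widehat\fsl_2(J)$, and then to reduce the whole statement to a single dimension bound that is \emph{uniform} in the truncation. First I would record the easy finiteness coming from the augmentation: fixing a finite generating set $x_1,\dots,x_d$ of $J$, which we may take inside $J_+$, each quotient $J_+^k/J_+^{k+1}$ is spanned by the finitely many products of length $k$ in the $x_i$, so $\dim_\K J/J_+^m<\infty$ for every $m$. Hence $TKK(J/J_+^m)$ is a finite‑dimensional perfect Lie algebra and its universal central extension $\fG^{(m)}:=\widehat\fsl_2(J/J_+^m)$ is finite‑dimensional, with the induced short grading $\fG^{(m)}=\fG^{(m)}_{-2}\oplus\fG^{(m)}_0\oplus\fG^{(m)}_2$. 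Let $\nabla_m(n)$ be the costandard module with datum $\K_n$ for smooth $\fG^{(m)}$‑modules. Pulling back along the surjections $\fG^{(m+1)}\twoheadrightarrow\fG^{(m)}$, the universal property of costandard objects gives a direct system of smooth $\widehat\fsl_2(J)$‑modules with injective transition maps, and one checks $\nabla(n)=\varinjlim_m\nabla_m(n)$ (any finitely generated submodule of $\nabla(n)$ is finite‑dimensional by smoothness, factors through some $\fG^{(m)}$, and therefore lies inside $\nabla_m(n)$). Each $\nabla_m(n)$ is finite‑dimensional on its own: $\bSL_2$‑integrability confines its $h$‑weights to $\{n,n-2,\dots,-n\}$, and each weight space is a subquotient of a symmetric power of the finite‑dimensional space $\fG^{(m)}_{-2}$. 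So everything comes down to bounding $\dim_\K\nabla_m(n)$ by a constant depending only on $n$ and $d$.

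For that uniform bound I would pass to the standard module $\Delta_m(n)=U(\fG^{(m)})\cdot v_n$ (cyclic on a highest datum vector $v_n$ with $\fG^{(m)}_2 v_n=0$), which has the same dimension as $\nabla_m(n)$ by the duality between standard and costandard objects in a (generalized) highest weight category. Two structural facts hold uniformly in $m$. First, $\fG^{(m)}$ is generated as a Lie algebra by $\fsl_2(\K)$ together with the $3d$ elements $e\otimes\bar x_i$, $f\otimes\bar x_i$, $h\otimes\bar x_i$, since perfectness lets a generating set of $J/J_+^m$ lift to a generating set of its universal central extension, and a generating set of $\fsl_2(\K)\oplus h\otimes J$ together with $\fsl_2(\K)$ generates $TKK$. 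Second, the $h$‑weights of $\Delta_m(n)$ lie in $[-n,n]$ because $v_n$ has weight $n$. Consequently $\Delta_m(n)$ is spanned by the images on $v_n$ of words in the $3d$ operators $\{e,f,h\}\otimes\bar x_i$; using $e\otimes J\cdot v_n=0$, $(h\otimes\bar x_i)v_n=\rho(\bar x_i)v_n=0$ (as $x_i\in J_+$ and $\rho(J_+)$ kills $\K_n$), and the commutation relations of $\fG$ to straighten such words, the task is to show these words reduce, \emph{uniformly in $m$}, to a finite spanning set of size bounded in terms of $n$ and $d$ alone.

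The hard part is exactly this straightening step: a priori the words have unbounded length (arbitrarily many weight‑preserving factors $h\otimes\bar x_i$, or cancelling $e\otimes\bar x_i$–$f\otimes\bar x_j$ pairs), and the crude bound on a weight space of $\Delta_m(n)$ grows with $\dim J/J_+^m$, hence with $m$. Converting ``finitely generated'' into a bound independent of the truncation index is where I expect the deep results of Zelmanov to be indispensable: through Theorem \ref{finitegen} (the universal envelope $\mathcal{U}_n(J)$, which governs the dominant $J$‑spaces of level $n$ and therefore the way the $\rho_n(\bar x_i)$ can act inside $\Delta_m(n)$, is finitely generated), combined with the polynomial identity satisfied by $\rho_n$ that is furnished by the dominance condition of Theorem \ref{dominant}; a Shirshov height–type estimate should be what turns this ``finitely generated PI'' input into the required uniform bound. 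Once $\dim_\K\nabla_m(n)=\dim_\K\Delta_m(n)\le C(n,d)$ is in hand, the conclusion is immediate: $\dim_\K\nabla(n)=\dim_\K\varinjlim_m\nabla_m(n)\le C(n,d)<\infty$.
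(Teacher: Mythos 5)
Your outline and the paper share the overall philosophy (reduce to finite-dimensional truncations $\widehat\fsl_2(J/J_+^m)$ and invoke Zelmanov), but where it matters the argument is incomplete. You correctly observe that $\nabla(n)=\varinjlim_m\nabla_m(n)$ and that each $\nabla_m(n)$ is finite dimensional, and you correctly identify ``the hard part'' as controlling this limit. But you then propose to prove a dimension bound $\dim\nabla_m(n)\le C(n,d)$ \emph{uniform in $m$} via Theorem~\ref{finitegen} and a ``Shirshov height--type estimate,'' and you never actually carry this out --- you only say you ``expect'' such an estimate to exist. Finite generation of $\mathcal{U}_n(J)$ does not by itself bound the length of PBW words needed to span $\Delta_m(n)$, and no Shirshov-type result is established (or even precisely stated) in your write-up. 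As it stands, the crucial uniformity step is a conjecture, not a proof. (A smaller point: you appeal to ``duality between standard and costandard objects in a generalized HW category'' to identify $\dim\nabla_m(n)$ with $\dim\Delta_m(n)$, but whether $\Mod(\bPSL_2(J))$ is a generalized HW category is precisely one of the paper's open hypotheses; the correct justification is the elementary identity $\nabla(n)=\Delta(n)^*$ proved directly via injective envelopes.)

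The paper's own proof sidesteps the uniformity problem entirely by showing the direct system you set up actually \emph{stabilizes}. The highest-weight generator $v$ of $\Delta(n)$ satisfies $f(a)^n v=0$ for $a\in J_+$, and Garland's formula (Lemma~\ref{garland}) then forces $f(a^n)v=0$, i.e.\ $f(J_+^{(n)})v=0$. Zelmanov's Nil Corollary~\ref{Zcor} gives $J_+^N\subseteq J_+^{(n)}$ for a single $N$, whence $\widehat\fsl_2(J_+^N)v=0$; since $\widehat\fsl_2(J_+^N)$ is an ideal and $v$ is a generator, the whole ideal annihilates $\Delta(n)$. Thus $\Delta(n)$ is already a module over the finite-dimensional Lie algebra $\widehat\fsl_2(J/J_+^N)$ and is finite dimensional by Lemma~\ref{U_nbasic}(a). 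In your language, $\nabla_m(n)=\nabla_N(n)$ for all $m\ge N$, so no uniform bound is needed. The missing idea in your proposal is precisely this: use Garland's formula to see that a fixed power of $J_+$ kills the cyclic vector, rather than hunting for a dimension bound that grows with $m$ and then trying to cap it.
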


However, the category of smooth modules is almost never
 a highest weight category.  Some results suggest that the category of smooth 
$\widehat\fsl_2(J)$-modules with even eigenvalues
 could be a generalized HW category when
$J$ is a finitely generated free Jordan algebra. We conclude
the paper with the following result.

\begin{citethmGHWhypothesis} Assume that the category
of smooth $\widehat\fsl_2(J(D))$-modules with even eigenvalues
is a generalized HW category.  Then the conjectural formula of \cite{KM} for $\dim\,J_n(D)$ is true.
\end{citethmGHWhypothesis}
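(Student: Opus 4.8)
The plan is to run the same strategy that (presumably) underlies Theorem \ref{FPhypothesis}: the conjectural dimension formula of \cite{KM} for $\dim J_n(D)$ can be encoded as an Euler-characteristic identity, and the hypothesis furnishes an $\Ext$-vanishing statement that forces this Euler characteristic to collapse to the desired closed form. Concretely, I would first pass from the free Jordan algebra $J=J(D)$ to its associated augmented structure, taking $J_+$ to be the augmentation ideal $\bigoplus_{n\geq 1} J_n(D)$, so that the category of smooth $\widehat\fsl_2(J)$-modules with even eigenvalues and its standard objects $\nabla(2m)$ are available. By Theorem \ref{finitedim} each $\nabla(2m)$ is finite dimensional (using that $J(D)$ is finitely generated), and I would record the generating function $\sum_m \dim\nabla(2m)\, q^{m}$ — or more precisely the graded character with respect to the grading inherited from $J(D)=\bigoplus J_n(D)$ — in terms of the numbers $\dim J_n(D)$. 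This is the ``known side'' of the identity.

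Next I would invoke the hypothesis that this category is a generalized HW category, together with the Ext-vanishing result for generalized HW categories (Theorem \ref{generalized HWvanishing}) proved at the end of Part B. The point of allowing nontrivial self-extensions of standard objects is exactly that the relevant homology $H_*(\widehat\fsl_2(J(D)))$ need not vanish outright; rather, the generalized HW structure controls $\Ext^*$ between standard objects and simple objects, and in particular pins down $\Ext^*_{\mathrm{smooth}}(\nabla(0),\K) = \Ext^*_{\mathrm{smooth}}(\K,\K)$ (up to the self-extension ambiguity, which is harmless here because it does not affect the Euler characteristic). Via the comparison between smooth-module cohomology and Lie algebra (co)homology — the analogue in the smooth category of the Hochschild-type identification alluded to in the excerpt — this translates into a statement about $H_*(\widehat\fsl_2(J(D)))$, namely that its graded Euler characteristic is $1$ in the appropriate completed sense.

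With both sides in hand, the last step is bookkeeping with generating functions. The Lie algebra $\widehat\fsl_2(J(D))$ is built functorially from $J(D)$ (via $TKK$ and the universal central extension), so its homogeneous pieces, and hence the Euler characteristic $\sum_k (-1)^k \operatorname{ch} H_k$, are explicit rational expressions in the $\dim J_n(D)$; Theorem \ref{finitepres} (finite presentation, already giving $FP_2$) guarantees these expressions are well-defined degree by degree. Setting the ``computed'' Euler characteristic equal to the value forced by the generalized HW hypothesis yields a recursion that, after simplification, is equivalent to the conjectural closed formula of \cite{KM}; solving it (uniquely, since the recursion determines each $\dim J_n(D)$ from lower ones) completes the proof. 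The main obstacle I anticipate is the middle paragraph: establishing the precise dictionary between $\Ext$-groups in the smooth category and the Lie homology $H_*(\widehat\fsl_2(J(D)))$, and checking that the self-extension slack permitted in a \emph{generalized} HW category genuinely drops out of the Euler characteristic rather than corrupting it — this requires care with the completion/topology on the smooth category and with convergence of the relevant generating functions, and is where the argument is most likely to need a genuinely new lemma rather than a citation.
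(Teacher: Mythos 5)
Your broad architecture is right: translate $\Ext$-groups in the category of smooth modules to relative Lie algebra homology, use the generalized HW structure to obtain cohomology vanishing, and hand off to the Euler-characteristic machinery of \cite{KM}. That is indeed what the paper does (via Corollary \ref{Corcompare} for the dictionary and a result quoted as \cite[Corollary 1]{KM} for the last step). But there is a genuine gap in the middle, and it is precisely the spot you flagged as the likely obstacle.

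The problem is that the generalized HW hypothesis, by itself, gives you \emph{no} vanishing at all. Theorem \ref{generalized HWvanishing}(a) says $\Ext^n(L(k),M)=0$ for $n\geq m(k)$, but the bounds $m(k)$ are defined by a vanishing-seeking recursion and, in a generalized HW category, they may all be $\infty$. (Part (b) of that theorem, which forces the $m(k)$ to be finite, is only available in an honest HW category, which $\Mod(\bPSL_2(J(D)))$ is provably not.) The missing ingredient is the low-degree seed data: you must first know that $\Ext^1(L(0),L(0))=0$ and $\Ext^2(L(2),L(0))=0$. These are supplied by Theorem \ref{coh}, which rests on concrete computations --- $H_2$ of the universal central extension vanishes by \cite{AG}, and $H_2(\widehat\fsl_2(J),\fsl_2,L(2))=0$ is a nontrivial result of \cite{KM}. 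Fed into Corollary \ref{corvanishing}, these seeds bootstrap to $\Ext^n(L(0),L(0))=0$ and $\Ext^{n+1}(L(2),L(0))=0$ for all $n\geq 1$, i.e.\ to \emph{outright} vanishing of $H_n(\widehat\fsl_2(J),\fsl_2,\K)$ and $H_{n+1}(\widehat\fsl_2(J),\fsl_2,L(2))$ for $n>0$.

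This also means your ``self-extension slack drops out of the Euler characteristic'' intuition is aimed at the wrong target. The category does allow nontrivial self-extensions of higher standard objects (that is why it is only \emph{generalized} HW), but the self-extensions of $L(0)$ are shown to vanish a priori; the argument does not need to, and indeed cannot, rely on cancellation inside an alternating sum. You conclude that the relevant homology is genuinely zero, and only then is the Euler-characteristic bookkeeping of \cite{KM} applied to produce the dimension formula.
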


\medskip 
\noindent{\it About the proofs:}
The proof of Theorem 1 is elementary. For Theorem 2,
the proof of the injectivity of $\rho_n$ for even $n$ is  elementary. Strangely, it uses Zelmanov's Classification Theorem \cite{Zelmanov80,Zelmanov83} for the odd integers $n\geq 5$.

The proofs of Theorems \ref{finitegen}, 4, and 7, as well as 
Lemma \ref{finitehomology} are based on Zelmanov's
Theorem for nil Jordan algebras. Theorems 5 and 8 are 
consequences of Theorem 2 and Corollary  1 of
\cite{KM}.

\smallskip
\noindent
    {\it Recent developments} In a very recent arXiv preprint \cite{DH} posted when the present paper was under final revisions, Dotsenko and Hentzel provide detailed computational data indicating that the conjectural graded dimension formula \cite{KM} for the free Jordan algebra on two generators fails in degree 19.  When Theorems 5 and 8 are combined with Dotsenko and Hentzel's result, we obtain the following immediate and perhaps surprising consequences for Lie algebra representation theory:

%When combined with Theorems 5 and 8, their result has the following immediate and perhaps surprising consequences for Lie algebra representation theory:

\medskip

\noindent
{\bf Corollary.\ }{\it
  For $D\geq 3$, the Lie algebra $\widehat\fsl_2(J(D))$ is {\em not} of type $FP_\infty$.}

\medskip

\noindent
{\bf Corollary.\ }{\it
  For $D\geq 2$, the category of smooth $\widehat{\mathfrak{sl}}_2(J(D))$-modules with even eigenvalues is {\em not} a generalized HW category.}

\medskip
  
%We would like to add the following comment. The authors
%of \cite{KM} state their conjecture cautiously. Similarly,
%the hypotheses of Theorems 5 and 8 could be too optimistic.

\noindent
As illustrated by these corollaries, the results of the present paper show definite connections between the structure of free Jordan algebras, the representation theory of the Lie algebras $\widehat\fsl_2(J(D))$, and Zelmanov's Theorem on nil Jordan algebras of bounded index.

\smallskip
\noindent
{\it Acknowledgements} The authors would like to thank
the referees for useful comments and for calling their attention to the very interesting paper of Chari, Fourier, and Khandai \cite{Chari-Fourier-Khandai}.  O.M. also thanks E. Neher and I. Shestakov
for  fruitful discussions.

\vskip1cm
\centerline{\bf \Large PART A: The category of  
$(\widehat{\fsl}_2(J),\SL_2)$-modules}
\vskip1cm

\section{The Lie algebra $\widehat{\fsl}_2(J)$}

\subsection{Conventions and notations}\label{notations}

As before, let $\K$ be a field of characteristic zero.  All vector spaces, algebras, and tensor products will be taken over $\K$, unless explicitly otherwise mentioned.  

We start with the definitions involving the Lie algebra
$\fsl_2:=\fsl_2(\K)$.
We fix the usual basis $\{h,e,f\}$ satisfying $[h,e]=2e$, $[h,f]=-2f$, and $[e,f]=h$.  The Killing form 
$\kappa$ will be normalized so that $\kappa(h,h)=4$.

For any $\fsl_2$-module $M$ and $z\in \K$, we say that $M_z:=\{m\in M\,|\,h.m=z m\}$ is an {\em eigenspace} relative to the action of $h$.  When $M_z\neq 0$, we say that $z$ is an {\em eigenvalue} of the module $M$.  All $\fsl_2$-modules considered in this paper will decompose as direct sums of eigenspaces of integer eigenvalues.  For any nonnegative integer $n$, let $L(n)$ be the simple $\fsl_2$-module with highest eigenvalue $n$.  Clearly, $L(0)$ is the trivial module $\K$, $L(1)=\K^2$ is the natural $2$-dimensional representation, and $L(n)$ is isomorphic to the space $S^nL(1)$ of symmetric tensors for any $n\geq 0$.  We will write $M(n)$ for the Verma module with highest eigenvalue $n\in\mathbb{Z}$.

We continue with the notation involving Jordan algebras.
We assume that all Jordan algebras $J$ are unital unless explicitly stated otherwise.  For every $a,b\in J$, let $\partial_{a,b}:\ J\rightarrow J$ be the linear map $\partial_{a,b}(x)=a(bx)-(ax)b$.
The Jordan identity implies that $\partial_{a,b}$ is a derivation of $J$. An {\it inner derivation} is a linear combination of the derivations $\partial_{a,b}$. The vector space $\Inn\,J$ of inner derivations is a Lie algebra.

For $x\in\fsl_2$ and $a\in J$, the tensor product $x\ot a\in\fsl_2\ot J$ will be denoted $x(a)$, though $x(1)=x\ot 1$ will also be denoted by $x$.

\subsection{Tits-Kantor-Koecher construction}

In his 1962 paper \cite{T}, Tits defined a Lie algebra structure on the space 
$$TKK(J):=
(\fsl_2\otimes J)\oplus \Inn\,J,$$
with Lie bracket 
\begin{enumerate} 
\item[\rm(T1)]
$[x(a),y(b)]=[x,y](ab)+ \kappa(x,y)\partial_{a,b}$

 \item[\rm(T2)] $[\partial, x(a)]=x(\partial\,a)$,

 \item[\rm(T3)] $[\partial,\partial^\prime]=\partial\circ\partial^\prime-\partial^\prime\circ\partial,$
 \end{enumerate}

\noindent for all $x,y\in\fsl_2$, $\partial,\partial^\prime\in\Inn\,J$, and $a,b\in J$.  This construction was later generalized to Jordan pairs and triple systems by Kantor and Koecher, and $TKK(J)$ is commonly known as the {\em Tits-Kantor-Koecher (or TKK) algebra}.  It should be noted that the Tits construction, associating a Lie algebra $TKK(J)$ to each Jordan algebra $J$, is not functorial.

\subsection{Tits-Allison-Gao construction}

The Lie algebra $TKK(J)$ is perfect and its
universal central extension, denoted by
$\widehat{\fsl_2}(J)$, was nicely described by Allison and Gao \cite{AG}.  See also \cite{ABG}, \cite{neher}.

As a vector space,

\centerline{$\widehat{\fsl}_2(J)=\left({\fsl}_2\otimes J\right)\oplus\{J,J\}$,}

\noindent where $\{J,J\}=(\bigwedge^2 J)/\mathcal{S}$ and
$\mathcal{S}=\hbox{Span}\{a\wedge a^2\mid \ a
\in J\}$.
For any $a,b\in J$, we write 
$\{a,b\}$ for the image of  $a\wedge b$ in $\{J,J\}$.
The bracket on $\widehat{\fsl}_2(J)$ is given by

\begin{enumerate}
\item[{\rm (R1)}]$[x(a),y(b)]=[x,y](ab)+
\kappa(x,y)\{a,b\}$

\item[{\rm (R2)}]$[\{a,b\},x(c)]=x(\partial_{a,b}\,c)$

\item[{\rm (R3)}]$[\{a,b\},\{c,d\}]=\{\partial_{a,b}\,c,d\}+\{c,\partial_{a,b}\,d \}$.
\end{enumerate}

\noindent for all $a,b,c,d\in J$ and $x,y\in\mathfrak{sl}_2$. 
It is a bit tricky to show that 
(R3) is skew-symmetric \cite{AG}.

 Relation (R3) is similar to Relation (T3) for Tits-Kantor-Koecher algebras, since
$$\partial_{a,b}\circ\partial_{c,d}-\partial_{c,d}\circ\partial_{a,b}=\partial_{\partial_{a,b}c,d}+\partial_{c,\partial_{a,b}d}.$$
  Hence there is a Lie algebra epimorphism
$\widehat{\fsl}_2(J)\to TKK(J)$ which is the identity on
$\fsl_2\otimes J$ and sends the symbol $\{a,b\}$ to $\partial_{a,b}$.  
 It is clear from its definition that the {\em Tits-Allison-Gao construction} of $\widehat{\fsl}_2(J)$ defines a functor from Jordan algebras to Lie algebras, as is explained in \cite{KM}.  Therefore, for any ideal  $I$ in $J$,  the kernel of the Lie algebra morphism

\centerline{$\widehat{\fsl}_2(J)\to \widehat{\fsl_2}(J/I)$}

\noindent will be denoted as $\widehat{\fsl}_2(I)$.

When the Jordan algebra $J$ is associative, $TKK(J)$ is obviously $\fsl_2(J):=\fsl_2\otimes J$.  In that case, $\{J,J\}$ is the quotient $\Omega^1_J/\d J$ of K\"{a}hler $1$-forms modulo exact forms, recovering an earlier result of Bloch \cite{B}, who described the universal central extension of $\fsl_2(A)$ for commutative associative unital algebras $A$.  Specializing to the case where 
$A=\K[a]$ is a polynomial ring in one variable, we have $\widehat{\fsl}_2(\K[a])=\fsl_2(\K[a])$.  In particular, any element $a$ of a Jordan algebra $J$ induces a Lie algebra homorphism $\fsl_2(\K[a])\to \widehat{\fsl}_2(J)$.

\subsection{$(\widehat{\fsl}_2(J),\SL_2)$-modules}

Set $\SL_2=\bSL_2(\K)$.
An $\fsl_2$-module $M$ derives from a rational $\SL_2$-module if and only if $M$ is {\em locally finite} over $\fsl_2$, that is, $U(\fsl_2)v$ is finite dimensional for all $v\in M$.
This condition is clearly equivalent to the requirement that
$M$ is a direct sum of finite dimensional simple
$\fsl_2$-modules.
By definition, an  $\widehat{\fsl}_2(J)$-module $M$ is an 
{\it $(\widehat{\fsl}_2(J),\SL_2)$-module} if its structure as a module over $\fsl_2$ derives from a rational 
$\SL_2$-module.   Such modules $M$ admit an {\it eigenspace decomposition} $M=\bigoplus_{k\in \Z} M_k$, with $M_k=\{m\in M\,|\,hm=km\}$.  The eigenspace decomposition for the adjoint module
$\mathfrak{G}:=\widehat{\fsl}_2(J)$ is a short grading
\cite{neher}:
$\mathfrak{G}=\mathfrak{G}_{-2}\oplus\mathfrak{G}_{0}\oplus\mathfrak{G}_{2},$ where
\begin{eqnarray*}
\mathfrak{G}_{-2}&=&f\otimes J,\\
\mathfrak{G}_{0}&=&(h\otimes J)\oplus \{J,J\},\\
\mathfrak{G}_{2}&=&e\otimes J.
\end{eqnarray*}

\noindent
{\em For the rest of the paper, we  fix the notation $\mathfrak{G}=\widehat{\fsl}_2(J)$, so that $(\widehat{\fsl}_2(J),\SL_2)$-modules will simply be called $(\fG,\SL_2)$-modules.}

\subsection{ $J$-spaces}

In the Lie algebra $\fG_0$, the subspace $\{J,J\}$ is intricate.
However, the  $\fG_0$-modules can be conveniently described 
using the  notion of a $J$-space. 

A vector space $M$ endowed with a linear map $\rho:\ J\rightarrow\End(M)$ is called
a {\it $J$-space} if

\begin{enumerate}
\item[{\rm (J1)}] $[\rho(a),\rho(a^2)]=0$,
\item[{\rm (J2)}] $[[\rho(a),\rho(b)],\rho(c)]=  4\rho(\partial_{a,b}\,c)$,
\end{enumerate}

\noindent for all $a,b,c\in J$. 

It follows easily
from the presentation of $\fG_0$ that 
the $\fG_0$-module structures and   the $J$-space structures on a
vector space $M$ are precisely equivalent. Indeed,

\begin{enumerate}

\item[(a)]
A $\fG_0$-module structure induces a $J$-space structure
for which $\rho(a)$ is the action of $h(a)$. 

\item[(b)]
Conversely,
a $J$-space structure extends to a $\fG_0$-module structure, where the action of $h(a)$ is $\rho(a)$ and the action of
$\{a,b\}$ is $\frac{1}{4}[\rho(a),\rho(b)]$.

\end{enumerate}
  In the next section, we introduce the class of {\em dominant $J$-spaces} and give a number of  examples in this context. We will also compare them with the classical notion of Jordan bimodules. A $J$-space is said to be of {\em level n} if $\rho(1)=n\,\id$.

\section{Dominant $J$-spaces}

\noindent
A $J$-space isomorphic to a $J$-subspace of the set 
$H^0(\fG_2,X)$ of $\fG_2$-invariants for
some $(\fG,\SL_2)$-module $X$ is called {\it dominant}. 
In this section, we investigate the following question:

\medskip

\centerline{\it Which $J$-spaces  are dominant?}

\medskip

\noindent
In the trivial case where $J=\K$, the Lie algebra 
$\fG_0$ is the Cartan subalgebra $\K h$ of 
$\widehat{\fsl}_2(J)=\fsl_2$, so a $J$-space $(V,\rho)$ is dominant precisely when $\rho(1)$ is diagonalizable with nonnegative integer eigenvalues.

\subsection{Joseph functors and Weyl modules}

We now reformulate our question in terms of Weyl modules.

Set $\fB=\fG_0\oplus\fG_2$,  $\fb=\K h\oplus \K e$, and let $\B\subset\SL_2$ be the Borel subgroup with Lie algebra $\fb$. Recall that a $\fb$-module $V$ derives from a $\B$-module if $h\vert_V$ is diagonalizable with integer eigenvalues and $e\vert_V$ is locally nilpotent.
By definition, a $(\fB,\B)$-module is a 
$\fB$-module whose infinitesimal $\fb$-action derives
from a $B$-module structure.

Let $V$ be a $(\fB,B)$-module.
The induced module

\centerline{$M(V):=\Ind_\fB^\fG\,V,$}

\noindent is called the {\em generalized Verma module}
generated by $V$. Since $\ad^3(e)=0$, the subspace $Z(V):=\bigcap_{k\geq 0}\,e^k\,M(V)$ is a
$\fG$-submodule. The $\fG$-module

\centerline{$\Delta(V):=M(V)/Z(V)$}

\noindent is called the {\it Weyl module}
generated by $V$.
It satisfies the universal property
stated in the next lemma.

\begin{lemma}\label{universal} For any  $(\fB,B)$-module $V$, the Weyl module $\Delta(V)$ is a 
$(\fG,\SL_2)$-module.  Moreover, for any $(\fG,\SL_2)$-module $X$,
every morphism of $\fB$-modules $\phi:V\to X$
uniquely extends to a morphism of $\fG$-modules 
$\psi:\Delta(V)\to X$.
\end{lemma}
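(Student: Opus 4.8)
The plan is to verify the two assertions of Lemma \ref{universal} directly from the construction of $\Delta(V)$ as $M(V)/Z(V)$, where $M(V) = \Ind_\fB^\fG V = U(\fG) \otimes_{U(\fB)} V$.

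First I would check that $\Delta(V)$ is a $(\fG,\SL_2)$-module, i.e.\ that its $\fsl_2$-action is locally finite. Since $\fG = \fG_{-2} \oplus \fG_0 \oplus \fG_2$ is short-graded and $\fB = \fG_0 \oplus \fG_2$, the PBW theorem gives $M(V) \cong U(\fG_{-2}) \otimes V = S(f \otimes J) \otimes V$ as a vector space, with grading by $h$-eigenvalue: if $V$ has eigenvalues bounded above (which holds because $V$ derives from a $B$-module, so $h|_V$ is diagonalizable with integer eigenvalues and... actually one needs eigenvalues bounded above, which should be part of the setup or a standing assumption on the modules under consideration — I would note that the eigenspace decomposition makes each $h$-eigenspace the tensor of a fixed-degree piece of $S(f\otimes J)$ with finitely many eigenspaces of $V$). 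The key point is that $e$ acts locally nilpotently on $M(V)$: indeed $e$ raises $h$-degree by $2$, so on each graded piece of $M(V)$ it is nilpotent provided the piece is finite-dimensional; but more robustly, $Z(V) = \bigcap_k e^k M(V)$ is exactly the obstruction, and on $\Delta(V) = M(V)/Z(V)$ one shows $e$ is locally nilpotent essentially by construction. Combined with local nilpotence of $f$ (which raises $h$-degree by $-2$, lowering it) and diagonalizability of $h$, one concludes that each $v \in \Delta(V)$ lies in a finite-dimensional $\fsl_2$-submodule, so $\Delta(V)$ is locally finite over $\fsl_2$ and hence a $(\fG,\SL_2)$-module. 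One should also check $Z(V)$ is genuinely a $\fG$-submodule: it is clearly stable under $\fB = \fG_0 \oplus \fG_2$, and stability under $\fG_{-2} = f \otimes J$ uses $\ad(e)^3 = 0$ on $\fG$ together with the $\fsl_2$-representation theory trick that $e^k M(V)$ for large $k$ is killed by $f$ in the appropriate sense — this is the standard argument that $Z(V)$ is the maximal submodule intersecting $V$ trivially and is the locally-$e$-nilpotent-failure part.

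Next, for the universal property: given a $(\fG,\SL_2)$-module $X$ and a $\fB$-module map $\phi: V \to X$, Frobenius reciprocity for induction gives a unique $\fG$-module map $\widetilde\phi: M(V) \to X$ extending $\phi$. I must show $\widetilde\phi$ kills $Z(V)$, so that it descends to $\psi: \Delta(V) \to X$. Since $X$ is a $(\fG,\SL_2)$-module, $e$ acts locally nilpotently on $X$, so $\bigcap_{k\geq 0} e^k X = 0$. As $\widetilde\phi$ is $\fG$-linear, $\widetilde\phi(Z(V)) = \widetilde\phi\big(\bigcap_k e^k M(V)\big) \subseteq \bigcap_k e^k \widetilde\phi(M(V)) \subseteq \bigcap_k e^k X = 0$. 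Hence $\widetilde\phi$ factors through $\Delta(V)$, giving $\psi$; uniqueness of $\psi$ follows from uniqueness of $\widetilde\phi$ together with surjectivity of $M(V) \to \Delta(V)$.

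The main obstacle I anticipate is the first part — rigorously establishing that $e$ acts locally nilpotently on $\Delta(V)$ (equivalently that $Z(V)$ really captures all the failure of local nilpotence), and that $Z(V)$ is a $\fG$-submodule. This is where one must use the specific structure of $\fG$: the relation $\ad(e)^3 = 0$ coming from the short grading, and the $\fsl_2(\K[a])$-subalgebras $\fsl_2(\K[a]) \to \fG$ attached to each $a \in J$ (noted in the excerpt), which let one reduce statements about $e \otimes a$ acting on $M(V)$ to classical $\fsl_2$-module theory where Verma-type quotients by $\bigcap e^k$ are well understood. The bracket relations (R1)--(R3), and in particular that $[e(a), f(b)] = h(ab) + \kappa(e,f)\{a,b\} \in \fG_0$ while $[e(a), e(b)] = 0$, make the degree bookkeeping and the argument that $f \cdot e^k M(V) \subseteq e^{k-1} M(V) + (\text{lower})$ go through, which is the technical heart of showing $Z(V)$ is $\fG$-stable.
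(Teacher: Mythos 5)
The universal-property half of your argument is exactly the paper's: Frobenius reciprocity for $\Ind_\fB^\fG$ produces $\widetilde\phi:M(V)\to X$, and since a $(\fG,\SL_2)$-module $X$ has $\bigcap_k e^kX=0$, the $\fG$-equivariance of $\widetilde\phi$ forces $\widetilde\phi(Z(V))\subseteq\bigcap_k e^kX=0$, so $\widetilde\phi$ descends to $\Delta(V)$.

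The first half, however, has a genuine gap. Your plan is to verify local finiteness over $\fsl_2$ directly by checking that $h$ is diagonalizable, $e$ is locally nilpotent, and $f$ is locally nilpotent on $\Delta(V)$ — but you never establish the last of these; you simply write ``combined with local nilpotence of $f$\dots'' as if it were given. Local nilpotence of $f$ on $\Delta(V)=M(V)/Z(V)$ is \emph{the} nontrivial content of the statement: on $M(V)$ itself $f$ acts freely (by PBW, $M(V)\cong U(\fG_{-2})\otimes V$ and $f\in\fG_{-2}$), so nothing of the sort holds upstairs, and the fact that it holds after quotienting by $Z(V)$ is not ``by construction.'' Two smaller misdiagnoses compound the problem: (i) you worry that one needs the $h$-eigenvalues of $V$ to be bounded above — this is not assumed in the definition of a $(\fB,\B)$-module and is not needed; and (ii) you say $Z(V)$ is ``exactly the obstruction'' to local nilpotence of $e$ — in fact $e$ is already locally nilpotent on all of $M(V)$ (hence on any quotient), and $Z(V)$ is instead what obstructs the condition $\bigcap_k e^k M(V)=0$.

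The paper sidesteps all of this by invoking a cleaner characterization: an $\fsl_2$-module $X$ derives from an $\SL_2$-module if and only if (a) $X$ is locally finite over $\fb=\K h\oplus\K e$ and (b) $\bigcap_{k\ge 0}e^kX=0$. Condition (a) involves only $h$ and $e$, not $f$; it is inherited by $\Delta(V)$ from $M(V)$, which is locally finite over $\fb$ because $V$ is and $e$ preserves the filtration by $S^{\leq k}(\fG_{-2})\otimes V$, acting locally nilpotently on each graded piece. Condition (b) holds for $\Delta(V)$ by construction: $Z(V)\subseteq e^kM(V)$ for every $k$, so $e^k\Delta(V)=e^kM(V)/Z(V)$ and $\bigcap_k e^k\Delta(V)=Z(V)/Z(V)=0$. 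The equivalence (a)$+$(b) $\Leftrightarrow$ $(\fsl_2,\SL_2)$-module is where the ``magic'' (local nilpotence of $f$) is hidden: conditions (a) and (b) rule out all highest-weight vectors of negative weight and rule out Verma-type quotients, so every cyclic $\fsl_2$-submodule is a sum of $L(n)$'s. To repair your proof you should either quote this characterization and verify (a) and (b), or supply a direct argument that $f$ acts locally nilpotently on $\Delta(V)$.
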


\begin{proof} The first assertion follows from the fact
that an $\fsl_2$-module $X$ derives from an
$\SL_2$-module  if and only if

\begin{enumerate}
\item[(a)]
$X$ is locally finite dimensional
as a $\fb$-module, and 

\item[(b)] $\bigcap_{k\geq 0}\,e^kX=0$.
\end{enumerate}

\noindent It follows that $\Delta(V)$ is a 
$(\fG,\SL_2)$-module. Moreover,
every morphism of $\fB$-modules $\phi:V\to X$
extends to a morphism of $\fG$-modules 
$\psi:M(V)\to X$ that factors through $\Delta(V)$.
\end{proof}

Any $J$-space $V$ of nonnegative integer level $n$ can be seen as a $(\fB,\B)$-module with a trivial action of
$\fG_2$. Our question thus becomes:

\medskip

\centerline{\it Which $J$-spaces $V$ embed in the Weyl module $\Delta(V)$?}

\medskip

\noindent The answer  will require  two combinatorial formulas, to be stated and proved in the next two subsections.

\bigskip
\noindent{\bf Remark:}  The notion of Weyl modules dates back to
the 1980's in the work of S. Donkin  \cite{D82}
and W. Koppinen \cite{Ko84}. For $J=\K[a]$,  the functor $V\mapsto \Delta(V)$ is the Joseph functor \cite{Joseph} for the affine
Lie algebra $A_1^{(1)}$, see \cite[ch.III]{M88} and 
\cite{CP01}.

%The  universal property of Lemma \ref{universal}  
%should be compared with the more evolved 
%cohomological criterion  of 
%Chari-Fourier-Khandai's paper
%\cite[Theorem 1]{Chari-Fourier-Khandai}.

\subsection{Garland's formula}\label{garland-subsection}

If $J=\K[a]$ is the polynomial ring in one variable, then $\fG=\fsl_2(\K[a])$.  Let $\pi:\ U(\fG)\rightarrow U(\fG_{-2}\oplus\fG_0)$ be the projection relative to the decomposition

\centerline{$U(\fG)=U(\fG_{-2}\oplus\fG_0)\oplus U(\fG)\fG_2$.}

\noindent
In what follows, we will compute $\pi(e^{(m)} f(a)^{(n)})$, where 
we write $x^{(m)}$ for the divided power $x^m/m!$.  In fact, the computation will be done in a much larger
algebra than $U(\fG)$.

 Set

\centerline{
${\overline U}(\fG)=\varprojlim\,U(\fsl_2(\K[a]/
(a^n))$,}

\noindent  and let $t$ be a formal variable. 
The following formula, due to Garland \cite{Garland78} and reformulated in \cite{CP01}, involves formal series in the large algebra ${\overline U}(\fG)[[t]]$.  As neither reference included a proof, we prove the formula here.

\begin{lemma} \label{garland} Let $n\geq m\geq 0$ be
  integers.  Then $\pi(e^{(m)} f(a)^{(n)})$ is the coefficient of $t^{m}$ in the formal series
  
\centerline{$(-1)^m\left
(\sum_{r=0}^\infty f(a^{r+1})t^r\right)^{(n-m)}\exp\left(-\sum_{s=1}^\infty \frac{h(a^s)}{s}t^s\right).$}
\end{lemma}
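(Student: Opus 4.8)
The plan is to prove the formula by a direct computation in the completed algebra $\overline U(\fG)[[t]]$, working with the standard commutation relations of $\fsl_2(\K[a])$ and the short grading. First I would observe that, since we work modulo $U(\fG)\fG_2$, only the ``lowering'' part of $e^{(m)}$ survives after moving all copies of $e(a^i)$ to the right past the $f(a^j)$'s. Concretely, I would introduce the generating series $F(t)=\sum_{r\ge 0} f(a^{r+1})t^r$ and $H(t)=\sum_{s\ge 1}\frac{h(a^s)}{s}t^s$ inside $\overline U(\fG)[[t]]$, and note that the claimed answer is the degree-$m$ coefficient in $t$ of $(-1)^m F(t)^{(n-m)}\exp(-H(t))$. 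The idea is to prove the cleaner ``master identity'' that the full image $\pi\big(\exp(-tE)\,f(a)^{(n)}\big)$, where $E:=e=e(1)$, equals $F(t)^{(n)}\exp(-H(t))$ up to the appropriate normalization; extracting the coefficient of $t^m$ and dividing by the relevant factorial then yields $\pi(e^{(m)}f(a)^{(n)})$.

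The key computational step is a commutator formula expressing how $e(1)$ passes through $f(a)^{(n)}$: one checks, using $[e(1),f(a^j)]=h(a^j)$ and $[h(a^j),f(a^k)]=-2f(a^{j+k})$, that conjugating $f(a)$ by $\exp(-tE)$ produces exactly the shift $f(a)\mapsto f(a)+t\,h(a^?)/\cdots$, and that iterating this on the $n$-fold product builds up the series $F(t)$ in the ``$f$'' slot and the factor $\exp(-H(t))$ from the accumulated $h$-terms. I would organize this as an induction on $n$: assuming the formula for $n-1$, multiply by one more $f(a)$ on the right and commute $\exp(-tE)$ past it, using that $\exp(-tE)f(a)\exp(tE)$ is a finite sum (since $\ad(E)^3=0$ is not quite available here — rather $\ad(E)$ lowers degree and $[E,f(a)]=h(a)$, $[E,h(a)]=2e(a)$, $[E,e(a)]=0$), so $\exp(-tE)f(a)\exp(tE)=f(a)-t\,h(a)+t^2 e(a)$, and the $e(a)$ term dies under $\pi$. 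The bookkeeping of divided powers — keeping track of the $1/(n-m)!$ versus $1/n!$ normalizations as the $e$'s are extracted — is where I'd be most careful.

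The main obstacle is precisely this normalization/bookkeeping: disentangling which part of $\exp(-tE)$ acts as a ``genuine'' lowering operator contributing to the $f(a^{r+1})t^r$ series versus which part only reshuffles the $h$-weights, and making sure the divided-power conventions $x^{(m)}=x^m/m!$ are respected so that the coefficient of $t^m$ comes out with the factor $(-1)^m$ and the divided power $(n-m)$ rather than $(n-m)!$-worth of extra terms. A clean way to sidestep some of this is to work entirely with the exponential generating function $\sum_m (-t)^m e^{(m)} f(a)^{(n)}$, prove the closed form $\pi\big(\sum_m (-t)^m e^{(m)} f(a)^{(n)}\big) = F(t)^{(n-?)}\exp(-H(t))$ by the induction above, and only at the end match coefficients; since both sides lie in the commutative-in-$t$ ring $\overline U(\fG)[[t]]$ and the relations used are all the defining relations of $\fsl_2(\K[a])$, no convergence issue arises beyond the formal one already handled by passing to the inverse limit $\overline U(\fG)=\varprojlim U(\fsl_2(\K[a]/(a^n)))$. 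I would then remark that this recovers the statements of Garland \cite{Garland78} and \cite{CP01} with a complete proof.
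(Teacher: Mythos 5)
Your overall setup matches the paper's: you work in the completion $\overline U(\fG)[[t]]$, introduce the generating series $F(t)=\sum_{r\ge 0}f(a^{r+1})t^r$ and $H(t)=\sum_{s\ge 1}\frac{h(a^s)}{s}t^s$, and aim for a ``master identity'' for $\pi(\exp(-tE)\,f(a)^{(\bullet)})$ from which the lemma follows by extracting coefficients. The paper does exactly this, but it also puts the powers of $f(a)$ into an exponential, i.e.\ it computes $\pi\bigl(\exp(-te)\exp f(a)\bigr)$ in one stroke.

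The key difference — and where your proposal has a genuine gap — is your treatment of the $e$-terms. You correctly observe $\exp(-tE)f(a)\exp(tE)=f(a)-t\,h(a)+t^2 e(a)$, but then assert ``the $e(a)$ term dies under $\pi$.'' That is not justified: after conjugating, $\exp(-tE)f(a)^{(n)}$ becomes $\bigl(f(a)-t\,h(a)+t^2 e(a)\bigr)^{(n)}\exp(-tE)$, and while $\pi$ does annihilate the $\exp(-tE)$ at the far right, the $e(a)$'s are scattered through the $n$-fold product, not sitting at the right end, so they cannot simply be discarded. Pushing them past the $f$- and $h$-factors via $[f(a),e(a)]=-h(a^2)$, $[h(a),e(a)]=2e(a^2)$, etc., is precisely the nontrivial normal-ordering that the lemma encapsulates, and your proposed induction on $n$ would run into it again at each step — so the ``bookkeeping obstacle'' you flagged is not a side issue but the heart of the problem, and your sketch does not resolve it.

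The paper avoids this entirely by performing a Gauss (LDU) decomposition of the $2\times 2$ matrix $\left(\begin{smallmatrix}1&-t\\0&1\end{smallmatrix}\right)\left(\begin{smallmatrix}1&0\\a&1\end{smallmatrix}\right)$ in $\SL_2(\K[[a,t]])$, then exponentiating: this gives the identity
\[
\exp(-te)\,\exp f(a) \;=\; \exp f\!\bigl(a/(1-ta)\bigr)\;\exp h\!\bigl(\log(1-ta)\bigr)\;\exp\Bigl(\textstyle\sum_k t^k e(\phi_k(a))\Bigr)
\]
in $\overline U(\fG)[[t]]$, where by construction the entire ``upper triangular'' contribution sits as a single exponential at the far right. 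Applying $\pi$ kills it at once, and then reading off the coefficient of $t^m$ in eigenvalue $2(m-n)$ yields the stated formula. If you want to salvage the inductive route, you would need a lemma controlling $\pi(e(a)\,\exp(-tE)\,f(a)^{(n-1)})$, not just $\pi(\exp(-tE)f(a)^{(n)})$ — i.e.\ a joint induction — which is substantially more work than the matrix computation.
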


\noindent
\begin{proof} Set

\centerline{$\Gamma:=\{G\in\SL_2(\K[[a,t]])\mid G(0,0)=\id\}$, and}

\centerline{$\fP:=\{g\in\fsl_2(\K[[a,t]])\mid g(0,0)=0\}$.}

\noindent Since the exponential map $\exp:\fP\to {\overline U}(\fG)[[t]]$
is well defined, $\Gamma$ is a subgroup of the group
of invertible elements of ${\overline U}(\fG)[[t]]$.

 An elementary matrix computation   shows that

\begin{equation}\label{eq1}
\left(^{1\,\hskip1mm-t}_{0\hskip3mm 1}\right)\hskip1mm
\left(^{1\hskip1mm0}_{a\hskip1mm 1}\right)
= \left(^{\hskip4mm1\hskip7mm 0}_{a/(1-ta)\hskip1mm1}\right)\hskip1mm
\left(^{1-ta\,\hskip7mm 0}_{\hskip2mm 0\hskip6mm 1/(1-ta)}\right)\hskip1mm\left(^{1\hskip1mm \phi(a,t)}_{0\hskip 3mm 1}\right),
\end{equation}

\noindent for some series 
$\phi(a,t)=\sum_{k\geq 0} t^k\phi_k(a)$ in $\K[[a,t]]$ with 
$\phi(0,0)=0$.

\noindent Recall that $e,h,$ and $f$ are 
the matrices 

$$e=\left(^{0\,1}_{0\,0}\right), 
h=\left(^{1\hskip2.5mm 0}_{0\,-1}\right) \hskip1mm\hbox{and}
 f=\left(^{0\,0}_{1\,0}\right),$$

\noindent so Equation (\ref{eq1}) can be written 
as the following identity in ${\overline U}(\fG)[[t]]$:

\begin{equation}\label{csillag}\exp(-te)\,\exp f(a) 
=\exp f(a/(1-ta)) \exp\,h(\log(1-ta)) \exp\big(\sum_{k\geq 0} t^k e(\phi_k(a))\big).
\end{equation}

We  extend $\pi$ to a continous $\K[[t]]$-linear map

\centerline{$\overline{\pi}:\overline{U}(\overline{\fG})[[t]]\rightarrow \overline{U}
({\fG}_{-2}\oplus{\fG}_0)[[t]],$}

\noindent where
${\overline U}({\fG}_{-2}\oplus {\fG}_{0})$
is the closure of $U(\fG_{-2}\oplus\fG_0)$ in
${\overline U}(\fG)$.
Thus  Relation (\ref{csillag}) implies the identity

$$\overline{\pi}(\exp(-te)\,\exp f(a))
=\exp f(a/(1-ta))\,  \exp\,h(\log(1-ta)).$$

 We decompose this expression according to the degree in $t$
and the eigenvalue for $\ad(h)$. On the left side,
the component of degree $m$ and eigenvalue $2(m-n)$ is

\centerline{$(-1)^m e^{(m)} f(a)^{(n)}$.}

\noindent We now look at the right side. We have

\centerline{$f(a/(1-ta))=\sum_{r=0}^\infty\,f(a^{r+1}) t^r$.}

\noindent Therefore the component of eigenvalue $2(m-n)$ on the right side is

$$\left(\sum_{r=0}^\infty\,f(a^{r+1}) t^r\right)^{(n-m)} \exp (-\sum_{s=1}^\infty \frac{h(a^s)}{s}t^s),$$

which proves Garland's formula.
\end{proof}

\subsection{The Girard-Newton formula}

We now introduce some notation.  
A {\it partition} $\gs=(\gs_1,\ldots,\gs_m)$ is a nonincreasing
sequence of positive integers, that is,

\centerline{$\gs_1\geq\cdots\geq\gs_m\geq 1,\ \hbox{for some}\ m\geq 1$.}

We say that {\it $\sigma$ is a partition of $n$} (in short, $\sigma\vdash n$)  if
$ \gs_1+\cdots+\gs_m=n$.
For $\sigma\vdash n$, let  $C_\sigma$ be the conjugacy class of all permutations
$\tau\in S_n$  with cycle lengths 
$\sigma_1,\,\sigma_2,\ldots,\sigma_m$, 
let $|C_\sigma|$ be its cardinality, and
let $\sgn(\sigma)$
be the common sign of the elements of $C_{\sigma}$.

For any $k\geq 1$, let $N_k(x_1,\dots,x_n)=x_1^k+\cdots +x_n^k$ 
be the {\it $k^{th}$-Newton polynomial} in $n$ indeterminates $x_1,\ldots,x_n$. For an arbitrary partition 
$\sigma=(\gs_1,\ldots,\gs_m)\vdash n$, set

\centerline{$N_\gs(x)=N_{\gs_1}(x)N_{\gs_2}(x)\cdots N_{\gs_m}(x)$.}

\bigskip

The following identity is usually called the
{\it Girard-Newton formula}.

\begin{lemma}\label{id1} We have

\centerline{${\sum_{\gs\ \vdash\ n}\, \hbox{sgn}(\gs)
\,|C_\gs|\,N_\gs(x)}
=n!\, x_1\cdots x_n$.}

\end{lemma}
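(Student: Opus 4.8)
The plan is to prove the Girard--Newton identity
$$\sum_{\gs\ \vdash\ n}\sgn(\gs)\,|C_\gs|\,N_\gs(x)=n!\,x_1\cdots x_n$$
by recognizing both sides as values of a symmetric function and exploiting the classical Newton--Girard relations between power sums $p_k=N_k(x)$ and elementary symmetric polynomials $e_k$. The key observation is that for a partition $\gs=(\gs_1,\ldots,\gs_m)\vdash n$ with $m_i$ parts equal to $i$, one has $|C_\gs|=n!/\big(\prod_i i^{m_i} m_i!\big)$ and $\sgn(\gs)=\prod_i(-1)^{(i-1)m_i}=(-1)^{n-m}$. Thus the left-hand side equals $n!$ times
$$\sum_{\gs\vdash n}\ \prod_i\frac{(-1)^{(i-1)m_i}}{i^{m_i}m_i!}\,p_i^{m_i},$$
and the sum over all partitions of $n$ is exactly the coefficient of $t^n$ (after accounting for weighting by $t^{|\gs|}$) in the exponential generating function $\exp\!\big(\sum_{i\ge1}(-1)^{i-1}p_i t^i/i\big)$. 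This is the standard generating-function identity $\sum_{n\ge0}e_n t^n=\exp\!\big(\sum_{i\ge1}(-1)^{i-1}p_i t^i/i\big)$, so the displayed sum equals $e_n(x_1,\ldots,x_n)$.

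First I would set up the combinatorial bookkeeping: index partitions of $n$ by multiplicity vectors $(m_1,m_2,\ldots)$ with $\sum_i i\,m_i=n$, record the two formulas for $|C_\gs|$ and $\sgn(\gs)$ above, and rewrite the left side as $n!\sum_{(m_i)}\prod_i\frac{1}{m_i!}\big(\frac{(-1)^{i-1}p_i}{i}\big)^{m_i}$. Next I would identify this multinomial-type sum with the degree-$n$ part of $\exp\!\big(\sum_{i\ge1}\frac{(-1)^{i-1}}{i}p_i t^i\big)$ by the usual $\exp(\sum a_i)=\sum\prod a_i^{m_i}/m_i!$ expansion, with $t$ tracking the degree $\sum i m_i$. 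Then I would invoke (or quickly rederive by taking $\log$ and differentiating) the classical identity
$$\prod_{j=1}^n(1+x_j t)=\sum_{n\ge0}e_n(x)t^n=\exp\!\Big(\sum_{i\ge1}\frac{(-1)^{i-1}}{i}N_i(x)\,t^i\Big),$$
which follows from $\log(1+x_j t)=\sum_{i\ge1}(-1)^{i-1}x_j^i t^i/i$ and summing over $j$. Comparing coefficients of $t^n$ yields that the bracketed sum is $e_n(x_1,\ldots,x_n)$, and since we are working with exactly $n$ variables, $e_n(x_1,\ldots,x_n)=x_1\cdots x_n$. Multiplying back by $n!$ gives the claim.

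An alternative, perhaps cleaner, route avoids generating functions and instead uses the Newton--Girard recursion $p_k-e_1p_{k-1}+e_2p_{k-2}-\cdots+(-1)^{k-1}e_{k-1}p_1+(-1)^k k\,e_k=0$ directly, proving $\sum_{\gs\vdash n}\sgn(\gs)|C_\gs|N_\gs(x)=n!\,e_n$ by induction on $n$: peeling off the largest part of each partition and regrouping reproduces the recursion, with the combinatorial coefficients conspiring to match. I would likely present the generating-function argument as the main proof since it is the most transparent, and mention the recursion as a remark. The main obstacle — really the only nontrivial point — is the combinatorial verification that the coefficient $\sgn(\gs)|C_\gs|/n!$ is exactly $\prod_i\frac{(-1)^{(i-1)m_i}}{i^{m_i}m_i!}$; this is a routine but slightly fiddly count (choosing which elements of $\{1,\ldots,n\}$ lie in the cycles of each length, modulo the cyclic symmetry within each cycle and the permutation symmetry among equal-length cycles), and once it is in hand the identification with $\exp\big(\sum(-1)^{i-1}p_it^i/i\big)$ is immediate. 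Note also that the formula in fact holds for any number of variables if $e_n$ is read on the left; the statement as given simply specializes to $n$ variables where $e_n=x_1\cdots x_n$.
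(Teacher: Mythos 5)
Your proof is correct, but it takes a genuinely different route from the paper's. The paper proves the identity by a representation-theoretic trace computation: it considers the diagonal operator $h$ with entries $x_1,\dots,x_n$ acting on $V^{\otimes n}$, observes that the antisymmetrizer $\frac{1}{n!}\sum_{\tau\in S_n}\sgn(\tau)\tau$ projects onto the one-dimensional space $\bigwedge^n V$ (on which $h^{\otimes n}$ acts by $x_1\cdots x_n$), and then computes $\Tr(h^{\otimes n}\circ\tau)=N_\sigma(x)$ for $\tau\in C_\sigma$ to collect the sum into $n!\,x_1\cdots x_n$. Your route instead uses the explicit cycle-type count $|C_\sigma|=n!/\prod_i i^{m_i}m_i!$ and the sign formula $\sgn(\sigma)=\prod_i(-1)^{(i-1)m_i}$ to rewrite the left-hand side as the coefficient of $t^n$ in $n!\exp\big(\sum_{i\ge1}(-1)^{i-1}p_i t^i/i\big)$, then applies the standard identity $\prod_j(1+x_jt)=\exp\big(\sum_{i\ge1}(-1)^{i-1}p_it^i/i\big)$ to conclude this coefficient is $e_n=x_1\cdots x_n$. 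The trace argument is shorter and avoids the ``fiddly'' counting of $|C_\sigma|$, which you correctly identify as the only non-immediate step in your approach. Your argument, however, gives the generating-function identity directly, which is precisely what the paper needs in the subsequent Lemma \ref{identity}: the paper there substitutes $Y_k=N_k$ and uses the identity $\exp(-\sum N_kt^k/k)=\prod(1-tx_i)$ together with Lemma \ref{id1} to expand the exponential. So in effect you have reorganized the two lemmas, proving the generating-function version first and deducing Lemma \ref{id1} as a specialization, whereas the paper proves \ref{id1} first by traces and then uses it to prove \ref{identity}. Both organizations are valid; the paper's has the aesthetic advantage of a conceptual one-line computation, yours the advantage of making the subsequent lemma immediate.
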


\noindent
\begin{proof}  We recall the standard proof of this 17$^{th}$ century identity.
Let $V$ be the vector space $\K^n$ and 
let $h:V\to V$ be the diagonal matrix with diagonal entries
$x_1,\dots, x_n$. 

The symmetric group $S_n$ acts on $V^{\otimes n}$ by permutation of the factors. The Schur idempotent $(1/n!)\sum_{\tau\in S_{n}}\hbox{sgn}(\tau)\tau$ 
is a projection of $V^{\ot n}$ onto the $1$-dimen\-sional subspace  
$\bigwedge^nV$, so

\centerline{$\Tr (h^{\otimes n}\circ
\sum_{\tau\in S_{n}}\hbox{sgn}(\tau)\tau)=
n! x_1\cdots x_n$.}

Note that for $\sigma\vdash n$ and $\tau\in C_\sigma$, we have $\Tr (h^{\otimes n}\circ\tau)=N_\sigma(x)$. It follows that

\centerline{$\Tr (h^{\otimes n}\circ
\sum_{\sigma\in S_{n}}\hbox{sgn}(\sigma )\gs)=
\sum_{\sigma\vdash n}\, \hbox{sgn}(\gs)
\,|C_\gs|\,N_\gs(x)$,}

\noindent from which the lemma follows.
\end{proof}

\bigskip

Let $A:=\K[Y_1,Y_2,\dots]$ be the polynomial algebra in
infinitely many variables $Y_1, Y_2, \dots$. For any partition 
$\sigma=(\sigma_1,\dots,\sigma_\ell)$ of $n$, set 

\centerline{$Y_{\sigma}=Y_{\sigma_1}Y_{\sigma_2}\cdots Y_{\sigma_\ell}$.} 

\noindent Let $t$ be an indeterminate. In $A[[t]]$, the formal expression

\centerline{$\exp(-\sum_{k\geq 1} Y_k t^k/k)$}

\noindent can be expanded as $\sum_{k\geq 0}\,a_k t^k$, where all $a_k$ belongs to $A$.

\begin{lemma}\label{identity} We have
$a_n=((-1)^n/n!)\, {\sum_{\gs\ \vdash\ n} \hbox{sgn}(\gs)|C_\gs|Y_\gs}.$
\end{lemma}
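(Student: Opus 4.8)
The statement to be proved is Lemma \ref{identity}, identifying the coefficient $a_n$ in the expansion $\exp\bigl(-\sum_{k\geq 1} Y_k t^k/k\bigr) = \sum_{k\geq 0} a_k t^k$ as $((-1)^n/n!)\sum_{\gs\vdash n}\sgn(\gs)|C_\gs|Y_\gs$. The plan is to reduce this to the Girard-Newton formula of Lemma \ref{id1} by a specialization/universality argument: work first with specific numerical values of the $Y_k$, namely the Newton power sums $N_k(x_1,\dots,x_n)$ in a \emph{finite} set of variables, and then observe that the identity in these finitely many $x_i$ forces the polynomial identity in the $Y_k$.

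\textbf{Key steps.} First I would note the classical generating-function identity: if $x_1,\dots,x_n$ are indeterminates, then
\[
\prod_{i=1}^n (1 - x_i t) = \exp\!\left(-\sum_{k\geq 1} \frac{N_k(x)}{k}\, t^k\right),
\]
which follows by taking $\log$ of each factor $1-x_i t$ and summing the geometric-type series $-\sum_{k\geq 1} x_i^k t^k/k$. Substituting $Y_k \mapsto N_k(x_1,\dots,x_n)$ into the defining expansion of the $a_k$ therefore gives that $a_n(N_1(x),\dots,N_n(x))$ equals the coefficient of $t^n$ in $\prod_{i=1}^n(1-x_i t)$, which is the $n$-th elementary symmetric polynomial up to sign, namely $(-1)^n e_n(x) = (-1)^n x_1\cdots x_n$. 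On the other hand, substituting the same values into the right-hand side of the lemma and applying Lemma \ref{id1} (the Girard-Newton formula) gives $((-1)^n/n!)\cdot n!\, x_1\cdots x_n = (-1)^n x_1\cdots x_n$. So both sides of the claimed identity agree after the substitution $Y_k \mapsto N_k(x_1,\dots,x_n)$.

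\textbf{Completing the argument.} It remains to upgrade this equality of specializations to the desired identity in $A = \K[Y_1,Y_2,\dots]$. Here the point is that $a_n$ and the putative expression $((-1)^n/n!)\sum_{\gs\vdash n}\sgn(\gs)|C_\gs|Y_\gs$ are both \emph{isobaric} (weighted-homogeneous of weight $n$, assigning weight $k$ to $Y_k$), hence only involve $Y_1,\dots,Y_n$; and the Newton power sums $N_1,\dots,N_n$ in $n$ variables $x_1,\dots,x_n$ are algebraically independent over $\K$ (since the elementary symmetric polynomials $e_1,\dots,e_n$ are, and the two families generate the same ring with an invertible triangular change of coordinates in characteristic zero). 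Therefore the ring homomorphism $A \to \K[x_1,\dots,x_n]$, $Y_k \mapsto N_k(x)$, is injective on the subspace of elements of weight $\leq n$ depending only on $Y_1,\dots,Y_n$, so the equality of images forces the equality of the two elements of $A$. This finishes the proof.

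\textbf{Main obstacle.} The computational content is entirely routine; the only point requiring care is the last step — making precise that agreement under the single substitution $Y_k\mapsto N_k(x_1,\dots,x_n)$ suffices. One must invoke both the algebraic independence of the first $n$ power sums in $n$ variables (a characteristic-zero fact) and the observation that both sides are isobaric of weight $n$, so that no $Y_k$ with $k>n$ appears and the finite-variable specialization is "enough variables." If one prefers to avoid the algebraic-independence lemma, an alternative is a direct induction on $n$ using the differential equation $\frac{d}{dt}\bigl(\sum a_k t^k\bigr) = -\bigl(\sum_{k\geq 1} Y_k t^{k-1}\bigr)\bigl(\sum a_k t^k\bigr)$, which yields Newton's recurrence $n a_n = -\sum_{j=1}^n Y_j a_{n-j}$; one then checks that $((-1)^n/n!)\sum_{\gs\vdash n}\sgn(\gs)|C_\gs|Y_\gs$ satisfies the same recurrence with the same initial term, using a combinatorial identity for how conjugacy-class sizes in $S_n$ decompose according to the length of the cycle through a fixed point. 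I would present the specialization argument as the primary route since it is shorter and directly leverages Lemma \ref{id1}.
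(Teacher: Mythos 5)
Your proposal is correct and follows essentially the same route as the paper: specialize $Y_k\mapsto N_k(x_1,\dots,x_n)$, use the generating-function identity $\exp(-\sum_k N_k t^k/k)=\prod_i(1-tx_i)$ together with the Girard--Newton formula (Lemma~\ref{id1}), and conclude by the algebraic independence of $N_1,\dots,N_n$. Your extra remark that both sides are isobaric of weight $n$ is a useful clarification of why $n$ variables suffice, which the paper leaves implicit in the phrase ``polynomials involving only the variables $Y_1,\dots,Y_n$.''
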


\noindent
\begin{proof}  Both sides of the identity are polynomials
involving only the variables $Y_1,\dots, Y_n$. 
We will prove it for the elements $Y_k=N_k=x_1^k+\cdots + x_n^k$
of the algebra  $A'=\K[x_1,\dots,x_n]$. This is sufficient since
$N_1,\dots,N_n$ are algebraically independent.  Note that
\begin{align*}
-\sum_{k\geq 1} N_k t^k/k&=
-\sum_{k\geq 1}\sum_{1\leq i\leq n} x_i^k t^k/k
=-\sum_{1\leq i\leq n}\sum_{k\geq 1} x_i^k t^k/k
=\sum_{1\leq i\leq n} \log(1-tx_i).
\end{align*}
Hence
$$\exp(-\sum_{k\geq 1} N_k t^k/k)=\prod_{1\leq i\leq n} (1-tx_i).$$
It follows that $a_n=(-1)^n x_1\cdots x_n$, and
the identity now follows from  Lemma \ref{id1}.
\end{proof}

\subsection{The Dominance criterion}

After this combinatorial digression, we now answer the question about dominant $J$-spaces.
Let $(V,\rho)$ be a $J$-space and let $a\in J$. Since $\{a^n,a^m\}=0$ for any positive
integers $n$ and $m$, the operators $(\rho(a^n))_{n>0}$ commute with each other.  For any partition $\sigma=(\sigma_1,\sigma_2,\dots,\gs_m)$, we define
$\rho_\sigma(a):=\rho(a^{\sigma_1})\rho(a^{\sigma_2})\cdots\rho(a^{\sigma_m})$.

\begin{thm}\label{dominant}
A $J$-space $(V,\rho)$ of level $n$ is dominant  if and only if

\centerline{$\displaystyle{\sum_{\gs\ \vdash\ n+1} \hbox{sgn}(\gs)|C_\gs|\rho_\gs(a)=0}$,}

\noindent for all $a\in J$.
\end{thm}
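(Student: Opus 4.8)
\textbf{Proof plan for Theorem \ref{dominant}.}

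The plan is to translate the dominance condition into a statement about the Weyl module $\Delta(V)$ and then extract a concrete vanishing identity using Garland's formula together with the combinatorial identity of Lemma \ref{identity}. Recall that a $J$-space $V$ of level $n$ embeds in $\Delta(V)$ (equivalently, is dominant) if and only if the canonical map $V\to\Delta(V)$, $v\mapsto 1\otimes v$, is injective. Since $\Delta(V)=M(V)/Z(V)$ with $Z(V)=\bigcap_{k\geq 0}e^kM(V)$, the kernel of this map is exactly $V\cap Z(V)$, viewed inside the lowest eigenspace $M(V)_n$ of the generalized Verma module. So the first step is to identify $V\cap Z(V)$ concretely.

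For this I would restrict attention to one element $a\in J$ at a time. The element $a$ induces a Lie homomorphism $\fsl_2(\K[a])\to\fG$ (noted in the excerpt), and hence $M(V)$, restricted to $\fsl_2(\K[a])$, contains the submodule generated by $V$; by the PBW theorem its eigenspace of weight $n$ is spanned by elements of the form $e^{(m)}f(b_1)\cdots$ acting appropriately, but the cleanest route is: an element $v\in V\subset M(V)_n$ lies in $e^kM(V)$ for all $k$ iff for every $m\geq 1$ there is $w\in M(V)_{n+2m}$ with $e^{(m)}w = v$. Testing against the generators and using that $e^{(m)}f(a)^{(n+m)}$ is, modulo $U(\fG)\fG_2$, computed by Garland's formula (Lemma \ref{garland}), one finds that the only obstruction comes from the "diagonal" piece $e^{(m)}f(a)^{(m)}$ hitting $v$: the relevant condition is that the coefficient of the $v$-term, which by Garland's formula is the coefficient of $t^m$ in $\exp\!\big(-\sum_{s\geq 1}\tfrac{h(a^s)}{s}t^s\big)$ acting on $v$, must vanish for $m = n+1$ (one degree past the level, so that $f(a)^{(n+1)}v=0$ automatically in $\Delta(V)$). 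Applying Lemma \ref{identity} with $Y_k\mapsto \rho(a^k)$ (legitimate since the $\rho(a^k)$ commute) turns this coefficient into $\tfrac{(-1)^{n+1}}{(n+1)!}\sum_{\sigma\vdash n+1}\sgn(\sigma)|C_\sigma|\,\rho_\sigma(a)$, which is precisely the stated expression.

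Concretely the argument runs: (i) show $V\cap Z(V)=\bigcap_{a\in J}\Ker\!\big(\sum_{\sigma\vdash n+1}\sgn(\sigma)|C_\sigma|\rho_\sigma(a)\big)$ by the eigenspace/PBW analysis above, using Garland's formula to see that no other monomials in the $f(a^r)$ contribute to the weight-$n$ component after one passes to $\Delta(V)$; (ii) conclude that $V$ is dominant iff this intersection is zero, i.e. iff the displayed identity holds for all $a$. For the converse direction — that the identity forces $V$ to actually embed in \emph{some} $(\fG,\SL_2)$-module — one simply takes $X=\Delta(V)$ and invokes Lemma \ref{universal}, which guarantees $\Delta(V)$ is a genuine $(\fG,\SL_2)$-module and that $V\to\Delta(V)$ is then injective by (i).

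\textbf{Main obstacle.} The delicate point is step (i): controlling exactly which elements of the weight-$n$ subspace of $M(V)$ survive in $\Delta(V)$. A priori $e^kM(V)\cap M(V)_n$ could involve complicated combinations $f(a_1)f(a_2)\cdots$ of different algebra elements and the nontrivial bracket relations (R1)–(R3), including the $\{J,J\}$ part of $\fG_0$. The key simplification — and the thing that needs careful justification — is that it suffices to test one $a$ at a time via the subalgebras $\fsl_2(\K[a])$, because $Z(V)$ is characterized by the vanishing of \emph{all} the $e^{(m)}$-preimage obstructions and Garland's formula already linearizes the single-generator case completely. Showing rigorously that the multi-element terms do not produce further constraints (beyond those captured by the single-generator identities) is where the real work lies; I expect this to follow from a weight/triangularity argument in $M(V)$ combined with the fact that $\{a^i,a^j\}=0$, but it is the step I would write out most carefully.
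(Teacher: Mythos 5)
Your overall skeleton is correct and matches the paper: reduce dominance to the vanishing of $Z_n(V)=\Ker(V\to\Delta(V))$, compute the obstruction with Garland's formula, and convert it into the partition sum via Lemma \ref{identity}. But you have correctly flagged the real gap yourself, and your proposed way of closing it does not work. The issue is precisely how to see that the single "diagonal" elements $f(a)^{n+1}$, for $a$ ranging over $J$, already control all of $Z_n(V)$, so that the condition can be tested one $a$ at a time. Your suggestion to argue via the embedded subalgebras $\fsl_2(\K[a])$ plus a "weight/triangularity argument" and $\{a^i,a^j\}=0$ only gives the easy direction (dominant implies the identity for each $a$); it does not show that the single-element identities force $Z_n(V)=0$, because a priori $Z_n(V)$ could contain elements coming from mixed monomials $f(a_1)\cdots f(a_{n+1})m$ that are not visible from any one $\fsl_2(\K[a])$.

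The paper closes this gap with two short observations you are missing. First, an $\SL_2$-symmetry step: since $\Delta(V)$ is an $\SL_2$-module (Lemma \ref{universal}) and its weights are bounded above by $n$, the weight $-n-2$ space $\Delta_{-n-2}(V)$ vanishes, hence $Z_{-n-2}(V)=M_{-n-2}(V)$; because $Z(V)$ is a $\fG$-submodule stable under $e$, this gives exactly $Z_n(V)=e^{\,n+1}M_{-n-2}(V)=e^{\,n+1}S^{n+1}(\fG_{-2})V$, with no need to intersect over all $k$. Second, a polarization step: in characteristic zero the symmetric power $S^{n+1}(\fG_{-2})\simeq S^{n+1}(J)$ is spanned by the pure powers $(f\otimes a)^{n+1}$, $a\in J$. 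Together these reduce $Z_n(V)=0$ to the single-element condition $e^{n+1}f(a)^{n+1}m=0$ for all $a\in J$, $m\in V$, after which your Garland--plus--Lemma \ref{identity} computation finishes the proof exactly as you intend (with $m=n+1$ in Garland's formula, i.e.\ $\pi(e^{(n+1)}f(a)^{(n+1)})$). Without the symmetry and polarization steps, your step (i) is an unproven claim.

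Two smaller slips: you want preimages $w\in M(V)_{n-2m}$, not $M(V)_{n+2m}$ (the latter is zero since $n$ is the top weight of $M(V)$), and the operator you should be analyzing is $e^{(n+1)}f(a)^{(n+1)}$, not $e^{(m)}f(a)^{(n+m)}$; only the balanced power maps $V$ back to $V$.
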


\noindent
    \begin{proof} For any integer $k$,  we denote by
$M_k(V)$, $Z_k(V)$, and $\Delta_k(V)$ the 
corresponding eigenspaces in $M(V)$, $Z(V)$, and $\Delta(V)$.

By  commutativity of $\fG_{-2}$, the enveloping
algebra $U(\fG_{-2})$ is the symmetric algebra 
$S\fG_{-2}$, and
by the Poincar\'e-Birkhoff-Witt Theorem, 
$M(V)=U(\fG_{-2}) V$. It follows that
    
\centerline{ $M_{n-2k}(V)=S^{k}(\fG_{-2}) V$,}
    
\noindent for any integer $k$. Since
$\Delta(V)$ is an $\SL_2$-module and
$\Delta_{n+2}(V)=0$, we deduce that
$\Delta_{-n-2}(V)=0$. It follows that
$Z_{-n-2}(V)=M_{-n-2}(V)$ and therefore

\centerline{
$Z_n(V)=e^{n+1}M_{-n-2}(V)=
e^{n+1}S^{n+1}(\fG_{-2})V$.}

However, $Z_n(V)$ is the kernel of the map
$V\to\Delta(V)$ and  $S^{n+1}(\fG_{-2})$ is spanned by the elements $f(a)^{n+1}$ with $a\in J$. Therefore, $V$ is dominant if and only if

\centerline{$e^{n+1}f(a)^{n+1} m=0$,} 

\noindent for all $a\in J$ and $m\in V$. This condition is equivalent to

\centerline{$\pi(e^{n+1}f(a)^{n+1}) m=0$.} 

By Lemma \ref{garland},
$\pi(e^{n+1}f(a)^{n+1})$ is the coefficient of $t^{n+1}$
of the formal series

\centerline{$r\, \exp\left(-\sum_{s=1}^\infty \frac{h(a^s)}{s}t^s\right)$ for some $r\in\Q\setminus 0$,}

\noindent which, by Lemma \ref{identity}, is equal to 

\centerline{$r'\,\,{\sum_{\gs\ \vdash\ n+1} \hbox{sgn}(\gs)|C_\gs|\rho_\gs(a)},$ for some $r'\in\Q\setminus 0$.}

\noindent Hence $V$ is dominant if and only if 

$$\sum_{\gs\ \vdash\ n+1} \hbox{sgn}(\gs)|C_\gs|\rho_\gs(a)=0,$$

\noindent for all $a\in J$.\end{proof}

\subsection{The algebra ${\mathcal U}_n(J)$}\label{U_n}

For any partition
$\sigma=(\sigma_1,\sigma_2,\ldots,\gs_m)$ and $a\in J$, let
$h_{\sigma}(a)\in U(\fG_0)$ be the element defined by:

\centerline{$h_\sigma(a)=h(a^{\sigma_1})h(a^{\sigma_2})\cdots h(a^{\gs_m})$.}

\noindent For 
$n\geq 0$, let $I_n$ be the two-sided ideal of 
$U(\fG_0)$ generated by the elements
$h-n$ and 

\begin{equation}{\displaystyle{\sum_{\sigma \vdash\ n+1} \sgn(\gs)
|C_\gs|\, h_\gs(a)},}\label{bloup}
\end{equation}

\noindent for all $a\in J$. Let

\centerline{${\mathcal U}_n(J)=U(\fG_0)/I_n$.}

\noindent We write $\rho_n:J\to {\mathcal U}_n(J)$ for the natural map
$a\mapsto h(a)\mod I_n$.

\subsection{Examples}\label{examples}

\noindent Theorem \ref{dominant} states that the dominant
$J$-spaces of level $n$ are exactly the ${\mathcal U}_n(J)$-modules.
Here are some examples.

\begin{enumerate}

\item[(a)] It is clear that the algebra $\mathcal{U} _1(J)$ is isomorphic to the unital special universal envelope $S_1(J)$, defined in \cite[Chap.~II.1]{Jacobson}.

  \item[(b)]  The multiplication algebra of $J$ is the subalgebra of $\End(J)$ generated by the multiplications $L_a$. It satisfies the obvious identities
  \begin{enumerate}
    \item[{\rm (b.1)}]$L_1=1$
 \item[{\rm (b.2)}] $[L_{a^2},L_a]=0$
 \item[{\rm (b.3)}] $L_b L_a L_c + L_c L_a L_b + L_{(bc)a} = L_a L_{bc} + L_c L_{ab} + L_b L_{ac}.$
\end{enumerate}
  Following \cite[Chap.~II.9 and II.11]{Jacobson}, the unital universal multiplication envelope, denoted by $U_1(J)$ in \cite{Jacobson}, is the unital associative algebra generated by symbols $L_a$ and defined by the relations (b.1-b.3). In fact, 	if $M$ is a $U_1(J)$-module, then the split null extension $J\oplus M$ is a Jordan algebra, which is unital by Relation (b.1).

  Relation (b.3) can be split into the following two identities:
        \begin{enumerate}
          \item[{\rm (b.3.1)}] $2 (L_a)^3+ L_a^3= 3 L_a L_{a^2}$
            \item[{\rm (b.3.2)}] $[[L_a, L_b], L_c]= L_{\partial_{a,b}c}$.
 \end{enumerate}
	There is thus an isomorphism from $\mathcal{U}_2(J)$ to the unital universal multiplication envelope $U_1(J)$, sending $h(a)\mapsto 2 L_a$.

%  If $(V,\rho)$ is a dominant $J$-space of level 
%$\leq 2$, then $(V,\frac12 \rho)$  is a Jordan bimodule and conversely.  Indeed,
%${\mathcal U}_0(J)=\K$, ${\mathcal U}_1(J)$ is 
 % isomorphic to the unital special universal envelope of $J$ (denoted as $S_1(J)$ in \cite{Jacobson}), while ${\mathcal U}_2(J)$ is isomorphic to the unital universal multiplication envelope of $J$, as defined in \cite[Chap.~II, \S9 and \S11]{Jacobson}.\footnote{Note that Jacobson uses the notation $U_1(J)$ for the unital universal multiplication envelope in \cite[Chap.~II, \S11]{Jacobson}.}

\item[(c)] For $J=\K[a]$, the proof of Theorem \ref{dominant}
shows that 

\centerline{${\mathcal U}_n(J)\simeq \K[x_1,\dots,x_n]^{S_n}$.}

\noindent The isomorphism  sends
$h(a^k)$ to $N_k$ for any $k$. 

\item[(d)] Let $V$ and $V'$ be dominant $J$-spaces of levels $p$ and $q$. Since $V\otimes V'$ is the highest component of the
$(\fG,\SL_2)$-module $\Delta(V)\otimes\Delta(V')$, the $J$-space
 $V\otimes V'$ is dominant of level $p+q$. 
Hence there is an algebra morphism

\centerline{$\Delta_{p,q}: {\mathcal U}_{p+q}(J)\to
{\mathcal U}_{p}(J)\otimes {\mathcal U}_{q}(J)$.}

\item[(e)] Let $J$ be a unital commutative associative algebra. In general, $\mathcal{U}_n(J)$ is not commutative, even for $n = 1$. The simplest example is the unital Jordan algebra $J = \K \oplus V$ , where $V$ is a $2$-dimensional ideal with $V^2 = 0$. It is clear that $\mathcal{U}_1(J)$ is the exterior algebra $\wedge V$, which is not commutative. Using the coproducts defined in Example (c), it follows that $\mathcal{U}_n(J)$ is not commutative for any $n\geq 1$.

Assume now that $J$ is an arbitrary unital commutative associative algebra containing an ideal $\mathfrak{m}$ with $\dim (\mathfrak{m}/\mathfrak{m}^2) \geq 2$. Then $\K \oplus V$ is a quotient of $J$, and therefore the algebra $\mathcal{U}_n(J)$ is not commutative for any $n\geq 1$.

\item[(f)] There is a classical notion of Jordan modules, see \cite{Jacobson}, and small variations depending on whether the Jordan algebras are assumed to be unital (as in the present paper) or not. For clarity, we call the $\mathcal{U}_0(J)$-modules the {\em trivial Jordan modules}, the $\mathcal{U}_1(J)$-modules the {\em $1$-sided Jordan modules}, and the $\mathcal{U}_2(J)$-modules the {\em $2$-sided Jordan modules.} In general, $\mathcal{U}_n(J)$-modules are not Jordan modules of any of the previous types for $n\geq 3$.

 Indeed, for $n\geq 3$, $\mathcal{U}_n(\K[a])$ is not isomorphic to any $\mathcal{U} _{\ell}(J)$ for $\ell\leq 2$. In fact, $J$ would then be a commutative associative algebra of Gelfand-Kirillov dimension at least $2$.  Example (e) would then show that $\mathcal{U}_{\ell}(J)$ is not commutative.

The terminology {\em $J$-spaces} and {\em dominant $J$-spaces} has been chosen to avoid confusion with the classical notion of Jordan modules.

%\item[(c)] 
%Dominant $J$-spaces of level $n\geq 3$
%are not Jordan bimodules.  Indeed, ${\mathcal U}_n(\K[a])$ is neither isomorphic to
%a unital special universal envelope nor to a unital universal multiplication envelope of some Jordan algebra.  This follows from restrictions placed on Jordan bimodule actions of the unit element $1$, imposed by Peirce decomposition.

%The terminology 
%"$J$-spaces" and "dominant $J$-spaces" has been chosen to avoid confusion with the Jordan bimodules.

\item[(g)] Set $\tilde J=J*\K[a]$, where $*$ denotes the free product.
  The Lie algebra $\fg=\widehat{\fsl}_2(\tilde J)$ has a natural grading $\fg=\bigoplus_{n\in\mathbb{Z}_+} \fg_n$, with $\fg_0=\widehat\fsl_2(J)$ and $x(a)$ homogenous of degree $1$ for all $x\in\fsl_2$.  Let $\mathcal {M}_{\bf T} (\widehat{\fsl}_2(J))$ be the category of $\widehat{\fsl}_2(J)$-modules which decompose, as $\fsl_2$ -modules, into sums of trivial and adjoint modules. By the universal property of the Joseph functor, it is clear that $\Delta(\mathcal{U}_2(J))$ is the rank $1$ free module in the category  $\mathcal {M}_{\bf T} (\widehat{\fsl}_2(J))$. It follows easily that $\Delta(\mathcal{U}_2(J))$ is isomorphic to $\fg_1$, cf.~\cite[Section 5.6]{KM}.

%  Then $\fg_1\simeq \Delta({\mathcal U}_2(J))$, cf. \cite{KM}.

\item[(h)] Viewing $J$  as a $J$-space of level $2$, there is a natural $\fG$-equivariant homomorphism
  $\pi:\Delta(J)\to\fG$.  The map $\pi$ defines a (left) Leibniz bracket on $\Delta(J)$ by $[a,b]:=\pi(a).b$ for all $a,b\in \Delta(J)$, where $\pi(a).b$ denotes the $\fG$-module action of $\pi(a)$ on $b$.  Indeed, $\Delta(J)$ is the universal central cover of
$\fG$ in the category of Leibniz algebras.

\item[(i)] Let $J=\K[t,t^{-1}]$. The Lie algebra $\fG_0$
  has a $1$-dimensional centre $\fz$.  By an easy argument, $\fz$ acts trivially on any Weyl module. Hence the
map $\fG_0\to {\mathcal U}_{n}(J)$ is not injective and
$\bigcap I_n\neq 0$. Indeed, it is easy to see that
${\mathcal U}_{n}(J)\simeq \K[x_1^{\pm1},\dots, x_n^{\pm1}]^{S_n}$
for any $n\geq 0$.

%\centerline{${\mathcal U}_{1}(J)\simeq\bigwedge V$.} 

\item[(j)] Let $A$ be an associative commutative algebra.
In their interesting paper 
\cite{Chari-Fourier-Khandai}, V.~Chari, G.~Fourier, and T.~Khandai  investigate representations of the Lie algebra
$\fg\otimes A$, where $\fg$ is a finite dimensional simple Lie algebra. For $\fg=\fsl_2$, they define a certain
commutative associative  algebra $A_{n\rho}$. It can be shown that
$A_{n\rho}$ is the maximal commutative quotient of the algebra 
${\mathcal U}_n(A)$.

\item[(k)] Let $X$ be an affine algebraic variety, and consider its ring of functions $\K[X]$ as a Jordan algebra. When $\dim X\geq 2$, Example (e) shows that
  $\mathcal{U}_n(\K[X])$ is not commutative for any $n \neq 0$. Similarly, for singular curves $X$, the algebra $\mathcal{U}_n(\K[X])$ is not commutative.  Assume now that $X$ is a smooth curve. Then Example (i) shows that all algebras $\mathcal{U}_n(\K[X])$ are commutative whenever $\hbox{H}^1_{DR}(X)$ is generated by logarithmic differentials, that is, when $X$ has genus $0$.  This raises the following question:
\begin{center}
{\em For arbitrary smooth curves $X$, are the algebras $\mathcal{U}_n(\K[X])$ commutative?}
\end{center}

\bigskip

\end{enumerate}

\subsection{The algebras ${\mathcal U}_n(J)$ for finite dimensional Jordan algebras $J$}

Let $\fg$ be a finite dimensional Lie algebra, and let $I\subseteq U(\fg)$ be a left ideal. Let $\K=U_0(\fg)\subseteq U_1(\fg)\subseteq 
U_2(\fg)\subseteq\cdots$ be the canonical filtration of the enveloping algebra.

\begin{lemma}\label{involutility} Let $n$ be an integer and set
${\mathcal X}=\{x\in\fg\mid x^n\in I+U_{n-1}(\fg)\}$.

If ${\mathcal X}$ generates the Lie algebra $\fg$, then
$I$ has finite codimension.
\end{lemma}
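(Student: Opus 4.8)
The plan is to pass to the associated graded algebra and reduce to a statement about the characteristic variety, which will then be controlled by Gabber's integrability theorem. Put $M:=U(\fg)/I$ with the filtration $M_k:=(U_k(\fg)+I)/I$ induced by the canonical filtration of $U(\fg)$. Since PBW gives $\mathrm{gr}\,U(\fg)=S(\fg)$, equipped with the Kirillov--Kostant Poisson bracket extending the Lie bracket on $\fg=S^1(\fg)$, and since $I$ is a \emph{left} ideal while $S(\fg)$ is commutative, the symbol ideal $\mathfrak{J}:=\mathrm{gr}\,I=\bigoplus_k\big((I\cap U_k(\fg))+U_{k-1}(\fg)\big)/U_{k-1}(\fg)$ is a genuine homogeneous ideal of $S(\fg)$, and the induced (good) filtration on $M$ satisfies $\mathrm{gr}\,M\cong S(\fg)/\mathfrak{J}$. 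As $M=\bigcup_k M_k$ with $M_k/M_{k-1}\cong(\mathrm{gr}\,M)_k$, the module $M$ is finite dimensional if and only if $S(\fg)/\mathfrak{J}$ is; and since $\mathfrak{J}$ is homogeneous, its zero set $V(\mathfrak{J})\subseteq\fg^*$ is a cone, so — after harmlessly extending scalars to $\ol{\K}$, which changes neither the hypotheses nor the conclusion — it will suffice to show $V(\mathfrak{J})=\{0\}$: a homogeneous ideal whose radical is the maximal ideal at the origin contains a power of each coordinate function, and hence has finite codimension.

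Next I would record that the hypothesis on $\mathcal X$ produces many linear functions vanishing on $V(\mathfrak J)$. If $x\in\mathcal X$, write $x^n=u+v$ with $u\in I$ and $v\in U_{n-1}(\fg)$; then $u=x^n-v\in I\cap U_n(\fg)$ has symbol equal to the image of $x^n$ in $S^n(\fg)$, so $x^n\in\mathfrak{J}$, and therefore the linear form $x\in S^1(\fg)$ vanishes on $V(\mathfrak{J})$. Writing $\mathfrak{k}:=\{\xi\in\fg:\ \xi|_{V(\mathfrak{J})}=0\}$, we thus obtain $\mathcal{X}\subseteq\mathfrak{k}$.

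Now comes the key step, and the only non-elementary input. Because $M$ is cyclic, hence finitely generated, Gabber's theorem on the integrability of the characteristic variety applies: $V(\mathfrak{J})$ is coisotropic for the Poisson structure on $\fg^*$, equivalently the ideal $\sqrt{\mathfrak{J}}$ of functions vanishing on $V(\mathfrak{J})$ is closed under Poisson bracket. Applied to linear forms — for which the Poisson bracket is exactly the Lie bracket of $\fg$ — this says precisely that $\mathfrak{k}$ is a Lie \emph{subalgebra} of $\fg$. Since $\mathfrak{k}\supseteq\mathcal{X}$ and $\mathcal{X}$ generates $\fg$ as a Lie algebra, we conclude $\mathfrak{k}=\fg$; but then every linear form on $\fg^*$ vanishes on $V(\mathfrak{J})$, forcing $V(\mathfrak{J})=\{0\}$, and the reduction of the first paragraph finishes the proof.

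I expect the main obstacle to be exactly this appeal to coisotropy. The purely elementary fact in hand, namely $V(\mathfrak{J})\subseteq\mathcal{X}^{\perp}$, is far too weak by itself: it uses nothing about $I$ being a \emph{left} ideal and, since $\mathcal{X}$ need not span $\fg$, does not even give $V(\mathfrak{J})=\{0\}$ when $\mathcal X$ merely Lie-generates $\fg$. It is precisely Gabber's theorem that converts ``$\mathcal{X}$ generates $\fg$ as a Lie algebra'' into ``$\mathfrak{k}=\fg$''; all the other ingredients (PBW, that $\mathrm{gr}\,I$ is an ideal, the homogeneous Nullstellensatz, and the scalar extension) are routine.
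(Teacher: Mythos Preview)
Your proof is correct and follows essentially the same approach as the paper: pass to the associated graded ideal $\mathfrak J=I^{gr}\subseteq S(\fg)$, observe that $\mathcal X\subseteq\sqrt{\mathfrak J}$, invoke Gabber's involutivity theorem to conclude that $\sqrt{\mathfrak J}$ is closed under the Poisson bracket and hence contains all of $\fg$, and deduce finite codimension. The paper's version is more compressed (three sentences, phrased algebraically as $\sqrt{I^{gr}}=\fg\,S\fg$ rather than geometrically as $V(\mathfrak J)=\{0\}$), but the strategy is identical.
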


\begin{proof} Let $I^{gr}\subset S\fg$ be the graded ideal associated to $I$, and let $\sqrt{I^{gr}}$ be its radical. By definition,  $\sqrt{I^{gr}}$ contains  
$\mathcal{X}$. By
Gabber's Involutivity Theorem \cite{gabber}, $\sqrt{I^{gr}}$ is stable under the Poisson bracket of $U(\fg)$, hence
$\sqrt{I^{gr}}$ is the  codimension $1$
ideal $\fg S\fg$. Therefore $I^{gr}$ and
$I$ have finite codimension.
\end{proof}

\begin{lemma} \label{U_nbasic} Assume that $\dim J<\infty$. 

\begin{enumerate}

\item[(a)] For any finite dimensional 
$(\fB,\B)$-module $V$,
the Weyl module $\Delta(V)$ is finite dimensional.

\item[(b)] The associative algebra ${\mathcal U}_n(J)$ is finite dimensional for any $n\geq 0$.
\end{enumerate}
\end{lemma}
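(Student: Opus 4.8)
The plan is to deduce (a) from Lemma \ref{involutility} and the Garland/dominance machinery, and then obtain (b) as an immediate consequence. For part (a), let $V$ be a finite dimensional $(\fB,\B)$-module. The generalized Verma module $M(V)=U(\fG_{-2})\ot V = S\fG_{-2}\ot V$ is a free module over $S\fG_{-2}=S(f\ot J)$, and the Weyl module $\Delta(V)=M(V)/Z(V)$. The key point is that, for any $a\in J$, the proof of Theorem \ref{dominant} shows that $e^{n+2}f(a)^{n+2}$ acts as zero on the top piece, forcing (via the $\SL_2$-symmetry) that $f(a)^{N}\cdot V = 0$ in $\Delta(V)$ for $N$ large enough depending only on the eigenvalues of $h$ on $V$ — more precisely, if the eigenvalues of $h$ on $V$ lie in $\{0,1,\dots,n\}$, then $\Delta_{-n-2}(V)=0$, hence $\Delta(V)=\sum_{k\le n+1} f(a_1)\cdots f(a_k)\cdot V$ once one knows $\Delta$ is generated in degrees $\ge -(n+1)$. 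So $\Delta(V)$ is spanned by $V$ together with bounded-length products of the $f(a)$.

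First I would make this precise: since $\fG_{-2}=f\ot J$ is abelian, $\Delta(V)$ is a quotient of $S(f\ot J)\ot V$, and I claim it is a quotient of the finite-dimensional-fiber bundle $S^{\le n+1}(f\ot J)\ot V$ — i.e. only symmetric powers of $f\ot J$ up to degree $n+1$ survive. This is exactly the statement that $Z_{n-2(n+2)}(V)=M_{n-2(n+2)}(V)$, which was the input to Theorem \ref{dominant}; the same argument with $V$ in place of a level-$n$ $J$-space (and using that the eigenvalues on $V$ are bounded by some $n$) gives $\Delta_{n-2k}(V)=0$ for $k$ large. Then the crucial reduction is: it suffices to show that the image of $J$ in $\End(\Delta_n(V))$ — equivalently a certain left ideal in $U(\fG_0)$ if $J$ is finite dimensional — already forces finiteness. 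Here is where Lemma \ref{involutility} enters. Apply it to $\fg=\fG_0$ (finite dimensional since $\dim J<\infty$) and to the annihilator ideal $I$ of the cyclic $\fG_0$-module generated by (a lowest-weight-type vector in) $\Delta(V)$: the dominance relation from Theorem \ref{dominant} says exactly that $\bigl(\text{leading term of }h(a)\bigr)^{n+1}\in I^{gr}$ up to lower order, i.e. the element $h\ot a$ of $\fG_0$ lies in the set $\mathcal X$ of Lemma \ref{involutility} for $n+1$. Since $\{h\ot a : a\in J\}$ together with $\{J,J\}$ generates $\fG_0$, and the $\{a,b\}=\tfrac14[h(a),h(b)]$ are brackets of elements of $\mathcal X$ — or one checks directly that $\mathcal X$ already generates $\fG_0$ — Lemma \ref{involutility} gives that $I$ has finite codimension, so $\Delta_n(V)$ is finite dimensional. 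Since $\Delta(V)=\bigoplus_k \Delta_{n-2k}(V)$ with only finitely many nonzero summands, each obtained from $\Delta_n(V)$ by applying bounded products of $f(a)$'s, $\Delta(V)$ is finite dimensional.

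For part (b): by the Remark following Theorem \ref{dominant} and the discussion in Section \ref{U_n}, the dominant $J$-spaces of level $n$ are exactly the $\cU_n(J)$-modules, and $\cU_n(J)$ is a subquotient of $U(\fG_0)$; concretely, $\cU_n(J) = U(\fG_0)/I_n$ acts faithfully on $\Delta(\K_n)$ (or on the universal such module) — indeed $\cU_n(J)\hookrightarrow \End(\Delta_n(\cdot))$ for the appropriate Weyl module, namely $\Delta(V)$ with $V=\cU_n(J)$ regarded as a regular $J$-space of level $n$. Hence finite-dimensionality of the relevant $\Delta(V)$ from part (a), applied with $V$ a finite-dimensional $\cU_n(J)$-module generating $\cU_n(J)$, forces $\cU_n(J)$ itself to be finite dimensional; alternatively, apply Lemma \ref{involutility} directly to $\fg=\fG_0$ and the two-sided ideal $I_n$, whose graded ideal contains the $(n+1)$-st powers of the symbols $h\ot a$ modulo lower order, so $\mathcal X\supseteq\{h\ot a:a\in J\}$ generates $\fG_0$ and $I_n$ has finite codimension.

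The main obstacle I anticipate is the passage from "the dominance relation holds" to "$h\ot a\in\mathcal X$ in the sense of Lemma \ref{involutility}," i.e. controlling the leading symbol of the degree-$(n+1)$ relation $\sum_{\sigma\vdash n+1}\sgn(\sigma)|C_\sigma|\,h_\sigma(a)$ in $S\fG_0$: one must check that its top-degree part is a nonzero scalar multiple of $(h\ot a)^{n+1}$ rather than something lower order or degenerate. By the Girard--Newton identity (Lemma \ref{id1}), when the $h(a^k)$ are replaced by their principal symbols and one works in the commutative setting this sum is, up to a nonzero constant, the symbol of $(h\ot a)^{n+1}$ only after noting that $h\ot a^k$ has symbol degree $1$ for every $k$ — so the degree-$(n+1)$ part is $\sum_\sigma \sgn(\sigma)|C_\sigma|\,(\text{product of the symbols of }h(a^{\sigma_i}))$, and the symbols $\overline{h(a^k)}$ are generally distinct elements of $\fG_0$, not all equal to $\overline{h(a)}$. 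The fix is to specialize: restrict to the subalgebra $\widehat\fsl_2(\K[a])\subset\fG$, where $\cU_n(\K[a])\simeq\K[x_1,\dots,x_n]^{S_n}$ is finite over nothing — wait, it is infinite dimensional — so instead one argues that $h(a)^{n+1}\equiv$ (lower order) modulo $I_n$ by a direct computation in $U(\fsl_2(\K[a]))$, using that $\pi(e^{n+1}f(a)^{n+1})$ kills the module and extracting the pure $h(a)^{n+1}$ term from Garland's formula. This is the delicate computational core, but it is essentially the same extraction already performed in the proof of Theorem \ref{dominant}, so it should go through without new ideas.
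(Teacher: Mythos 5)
Your part (b) strategy — Lemma \ref{involutility} applied to $\fg=\fG_0$ and the two-sided ideal $I_n$ — is exactly the paper's, but you stop short of the key verification, and the obstacle you flag in your final paragraph rests on a misreading of the PBW degree. In $U(\fG_0)$, the product $h_\sigma(a)=h(a^{\sigma_1})\cdots h(a^{\sigma_m})$ has $m=\ell(\sigma)$ factors, each a single Lie-algebra element, and therefore lies in $U_{\ell(\sigma)}(\fG_0)$: the filtration degree is the \emph{number of parts} of $\sigma$, not $|\sigma|=n+1$. Among partitions of $n+1$, only $\sigma=(1^{n+1})$ has $n+1$ parts, and its term in the defining relation of $I_n$ is precisely $h(a)^{n+1}$ with coefficient $\sgn(\sigma)\,|C_\sigma|=1$; every other $h_\sigma(a)$ is a product of at most $n$ factors and hence lies in $U_n(\fG_0)$. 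So the relation rewrites immediately as $h(a)^{n+1}\in I_n+U_n(\fG_0)$, i.e.\ $h(a)\in\mathcal X$, with no symbol analysis and no re-extraction from Garland's formula. Your claim that ``the degree-$(n+1)$ part is a sum over all partitions $\sigma$'' is the gap: it would be true if you graded by $|\sigma|$, but the PBW filtration grades by $\ell(\sigma)$, and only the trivial partition reaches the top.

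For part (a), the paper avoids Lemma \ref{involutility} altogether and the argument is a few lines: $\Delta(V)$ is generated by $V$ over $S\fG_{-2}$, so each $h$-eigenspace is a quotient of some $S^k\fG_{-2}\ot V$, which is finite dimensional because $\dim\fG_{-2}=\dim J<\infty$ and $\dim V<\infty$; the eigenvalues are bounded above by the top eigenvalue $N$ of $h$ on $V$, and since $\Delta(V)$ is an $\SL_2$-module by Lemma \ref{universal}, the eigenvalue set is symmetric and hence contained in $[-N,N]$; therefore only finitely many eigenspaces are nonzero, each finite dimensional. Your detour through the annihilator of a lowest-weight vector and $\End(\Delta_n(V))$ is both unnecessary and not fully specified: the ``dominance relation'' of Theorem \ref{dominant} is stated for level-$n$ $J$-spaces, whereas in (a) the module $V$ is an arbitrary $(\fB,\B)$-module whose $h$-eigenvalues need not be concentrated at a single $n$.
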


\noindent{\it Proof of (a).} As an $S\fG_{-2}$-module,
 $\Delta(V)$ is generated by the image of $V$ in
 $\Delta(V)$. Hence
each eigenspace $\Delta_k(V)$ is finite dimensional and
the eigenvalues are bounded by some integer $N$.
By Lemma \ref{universal}, $\Delta(V)$ is an $\SL_2$-module, so
 its eigenvalues are a finite set of integers in
$[-N,N]$. Therefore $\dim\,\Delta(V)<\infty$.

\smallskip
\noindent{\it Proof of (b).} For  $n\geq 0$, recall that
${\mathcal U}_n(J)=U(\fG_0)/I_n$ where the ideal 
$I_n$ is defined as in Subsection \ref{U_n}. Set
${\mathcal X}=\{x\in\fG_0\mid x^{n+1}\in I+U_{n}(\fG_0)\}$.
By Equation \ref{bloup},
$h(a)^{n+1}$ is a polynomial of degree
at most $n$ in $h(a), h(a^2),\ldots, h(a^{n+1})$ modulo $I_n$,
which implies that $h(a)$ belongs to ${\mathcal X}$. 
Since  $\fG_0=h(J)+[h(J),h(J)]$, the Lie algebra
$\fG_0$ is finite dimensional and generated by ${\mathcal X}$.
By Lemma  \ref{involutility}, the associative algebra
${\mathcal U}_n(J)$ is finite dimensional.\qed

\begin{lemma} Assume than $J$ is simple and finite dimensional

\begin{enumerate}

\item[(a)] The Lie algebra $\widehat\fsl_2(J)$ is simple and
$\Delta(V)$ is a simple $\widehat\fsl_2(J)$-module for 
every simple dominant $J$-space $V$.

\item[(b)] The associative algebras ${\mathcal U}_n(J)$ are finite dimensional and semisimple.

\item[(c)] The map $V\mapsto \Delta(V)$ is 
a bijection from the class of simple dominant $J$-spaces
to the class 
of  finite dimensional simple  
$\widehat\fsl_2(J)$-modules $L$. The inverse bijection is
$L\mapsto H^0(\fG_2,L)$.
\end{enumerate}
\end{lemma}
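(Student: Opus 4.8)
The statement has three parts, and the natural order is (a), then (b), then (c), with (c) depending on both earlier parts and on the combinatorial machinery of Theorem \ref{dominant}.

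For part (a), the plan is to use that $J$ simple finite dimensional forces $TKK(J)$ to be a (finite dimensional) simple Lie algebra — this is classical, via the short grading and the fact that any graded ideal would intersect $\fG_0 = \Inn J$, which acts faithfully and irreducibly enough on the simple $J$. Since $TKK(J)$ is already centrally closed in the finite dimensional semisimple case (semisimple Lie algebras have no central extensions), the universal central extension $\widehat{\fsl}_2(J)$ equals $TKK(J)$, hence is simple. For the second assertion, let $V$ be a simple dominant $J$-space, so $V$ is a simple $\fG_0$-module, hence a simple $\mathcal{U}_n(J)$-module for the appropriate level $n$. By Lemma \ref{U_nbasic}, $\dim \Delta(V) < \infty$. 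The key point is that $\Delta(V)$ is \emph{generated} by its top piece $V = \Delta_n(V)$ and that $H^0(\fG_2,\Delta(V)) = V$ — this last equality is part of the dominance story: $V \hookrightarrow H^0(\fG_2,\Delta(V))$ by dominance, and the reverse inclusion holds because anything in the kernel of $e$ strictly below the top would generate a proper graded submodule, contradicting simplicity of $\fG$ acting... more carefully: one shows the socle and head of $\Delta(V)$ as $\fG$-module both have $V$ as their $\fG_2$-invariants, and a highest-weight-type argument (the Weyl module has simple head) gives that the unique simple quotient $L(V)$ satisfies $H^0(\fG_2,L(V)) = V$; then one argues $\Delta(V)$ is already simple, i.e. $\Delta(V) = L(V)$, using that $\fG$ is simple so it has no finite dimensional representations with extra $\fG_2$-invariants beyond what's forced — concretely, any submodule $W \subsetneq \Delta(V)$ has $H^0(\fG_2,W)$ a proper $\fG_0$-submodule of $V$, hence zero, hence $W = 0$ since $\fG_2$ acts locally nilpotently. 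I expect this last step (that the Weyl module is actually irreducible, not merely has irreducible head) to be the \textbf{main obstacle}: it genuinely uses finite-dimensionality plus simplicity of $\fG$, and the cleanest route is probably to invoke complete reducibility of finite dimensional $\widehat{\fsl}_2(J)$-modules (Weyl's theorem, since $\fG$ is finite dimensional simple over a characteristic-zero field) — a completely reducible module generated by its top eigenspace, which is a simple $\fG_0$-module, must be simple.

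For part (b), the plan is: $\mathcal{U}_n(J)$ is finite dimensional by Lemma \ref{U_nbasic}(b). For semisimplicity, use part (a): every simple dominant $J$-space of level $n$ is a simple $\mathcal{U}_n(J)$-module, and by (a) these correspond to finite dimensional simple $\fG$-modules, which are completely reducible (Weyl). One then argues that the category of $\mathcal{U}_n(J)$-modules (= dominant $J$-spaces of level $n$) is semisimple: given a dominant $J$-space $V$, the Weyl module $\Delta(V)$ is a finite dimensional $\fG$-module, hence completely reducible, $\Delta(V) = \bigoplus L_i$; taking $\fG_2$-invariants and intersecting with the top eigenspace recovers $V = \bigoplus H^0(\fG_2,L_i)_n$ as a direct sum of simples. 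Since every finite dimensional $\mathcal{U}_n(J)$-module arises this way (it's a dominant $J$-space), $\mathcal{U}_n(J)$ has semisimple module category, so $\mathcal{U}_n(J)$ is a semisimple algebra.

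For part (c), the map $V \mapsto \Delta(V)$ and its claimed inverse $L \mapsto H^0(\fG_2,L)$ should now both be essentially forced. Given a finite dimensional simple $\fG$-module $L$: decompose $L = \bigoplus_k L_k$ into $h$-eigenspaces; since $\fG_2$ raises degree by $2$ and acts locally nilpotently (it's $e \otimes J$ inside an $\SL_2$-module), $H^0(\fG_2,L) \neq 0$, and its top nonzero eigenspace $V := H^0(\fG_2,L)_n$ is a $\fG_0$-submodule, nonzero, hence by simplicity of $L$ it generates $L$ under $\fG_{-2}$; moreover $V$ is a simple $\fG_0$-module (any proper submodule would generate a proper $\fG$-submodule), hence a dominant $J$-space of level $n$. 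By the universal property of the Weyl module (Lemma \ref{universal}), the inclusion $V \hookrightarrow L$ extends to $\Delta(V) \twoheadrightarrow L$, and by part (a) $\Delta(V)$ is simple, so this is an isomorphism: $\Delta(V) \cong L$. Conversely, starting from a simple dominant $J$-space $V$ of level $n$, part (a) gives $\Delta(V)$ simple finite dimensional with $H^0(\fG_2,\Delta(V)) = V$ (concentrated in the top eigenspace, since higher $\fG_2$-invariants would force a proper submodule). The two constructions are mutually inverse, and the only ambiguity — whether $H^0(\fG_2,L)$ might be spread over several eigenvalues — is ruled out because $L$ simple with $V$ generating under the abelian $\fG_{-2} = f \otimes J$ means $L = \sum_k f^k V$ is concentrated in eigenvalues $\leq n$, so $H^0(\fG_2, L) = V$ exactly. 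This bookkeeping about which eigenvalue the $\fG_2$-invariants live in is the one routine-but-necessary check; I would phrase it via the observation that $L$ is a finite dimensional $\SL_2$-module generated by a highest-eigenspace subspace.
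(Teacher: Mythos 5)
Your proposal is correct, and for parts (a) and (c) it follows essentially the same route as the paper (the paper is extremely terse, establishing only the simplicity of $TKK(J)=\widehat\fsl_2(J)$ and then citing ``the usual highest weight theory'' for the rest; your complete-reducibility-plus-generated-by-top-eigenspace argument is the expected way to fill that in). The genuine divergence is in part (b). The paper does not route through $\fG$ and the Weyl modules at all. Instead it argues intrinsically at the level of $\fG_0$: since $\fG_2$ is a simple faithful $\fG_0$-module, the standard lemma (a finite-dimensional Lie algebra with a faithful simple finite-dimensional representation is reductive with one-dimensional center) gives $\fG_0=\K h\oplus[\fG_0,\fG_0]$ with $[\fG_0,\fG_0]$ semisimple; because $h$ acts by the scalar $n$ in $\mathcal U_n(J)$, the algebra $\mathcal U_n(J)$ is a finite-dimensional quotient of $U([\fG_0,\fG_0])$, and Weyl's theorem applied to the semisimple Lie algebra $[\fG_0,\fG_0]$ gives semisimplicity directly. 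Your argument gets the same conclusion by decomposing $\Delta(V)$ into simple $\fG$-modules and tracing through the top $\fG_2$-invariants; it is correct but both longer and logically entangled with part (a), whereas the paper's argument for (b) is self-contained and does not depend on first knowing that $\Delta(V)$ is simple. Both ultimately rest on Weyl's theorem, applied to different Lie algebras ($[\fG_0,\fG_0]$ for the paper, $\fG$ for you).
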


\begin{proof} It is well known that the Lie algebra
$TKK(J)$ is simple. Since any central extension of a finite dimensonal simple 
Lie algebra is trivial, we have 
$\widehat\fsl_2(J)=TKK(J)$, which proves Assertion (a).

It is clear that the $\fG_0$-module $\fG_2$ is simple and faithful. 
Thus the center of $\fG_0$ is the 
1-dimensional subspace $\K h$,
$\fG_0=\K h\oplus [\fG_0,\fG_0]$, and the
Lie algebra $[\fG_0,\fG_0]$ is semisimple. By Lemma
\ref{U_nbasic}(b), the algebra $\mathcal{U}_n(J)$ is a finite 
dimensional quotient of $U([\fG_0,\fG_0])$, and therefore
$\mathcal{U}_n(J)$ is semisimple.

Assertion (c) follows from the usual highest weight theory.\end{proof}

\subsection{The algebras ${\mathcal U}_n(\A)$ for the Albert  algebra $\A$}

We specialize the results of the previous subsection to  the  Albert algebra $\A$ of $3\times 3$ Hermitian matrices over the split octonion algebra. The Lie algebra $\fG:=\widehat\fsl_2(\A)=TKK(\A)$ is a split Lie algebra of type $E_7$ \cite{T}. Let 
$\fh\subset\fG_0$
be a split Cartan subalgebra. There is  a 
basis $R$  of the root system such that all roots in
$\fG_2$ are positive. We can identify $R$ with the 
basis $\{\alpha_1,\cdots,\alpha_7\}$ of the root system of $E_7$ defined in
\cite[ch. IV section 4.11]{Bou}. Since
$\{\alpha_1,\cdots,\alpha_6\}$ is a basis of the root system of $[\fG_0,\fG_0]\simeq E_6$, we have

\centerline{$\alpha_1(h)=\cdots=\alpha_6(h)=0$
and $\alpha_7(h)=2$.}
 
\noindent It follows that 

\begin{equation}\label{h}
h=2h_1+3h_2+4h_3+6h_4+5h_5+4h_6+3h_7,
\end{equation}

\noindent where $h_1,\dots,h_7$ is the corresponding basis of the dual root
system. For any integral dominant weight $\lambda$, let
$L(\lambda)$ be the simple $\fG$-module with highest weight $\lambda$. Then $H^0(\fG_2, L(\lambda))$ is a dominant $\A$-space
of level $\lambda(h)$.

Albert has proved that  $\A$ is exceptional and simple \cite{Albert}, so ${\mathcal U}_1(\A)=0$. This 
also follows from the fact that no coefficient in Equation \ref{h} is $1$. When 
$\lambda$ is the fundamental weight $\omega_2$ or $\omega_7$, the dominant 
$\A$-space $H^0(\fG_2, L(\lambda))$ is of level 3.
We conclude:

\begin{lemma}\label{Albert}
 We have  ${\mathcal U}_3(\A)\neq 0$.\qed
 \end{lemma}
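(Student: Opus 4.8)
The plan is to exhibit an explicit simple $\fG$-module whose highest component is a nonzero $\A$-space of level $3$, which by the preceding subsection is the same as a nonzero ${\mathcal U}_3(\A)$-module, forcing ${\mathcal U}_3(\A)\neq 0$. Concretely, I would take $\lambda=\omega_2$ (or equally $\omega_7$) among the fundamental weights of $E_7$, and form the finite dimensional simple $\fG$-module $L(\omega_2)$. By the last lemma of the previous subsection (the highest weight correspondence for $J=\A$ finite dimensional simple), $H^0(\fG_2,L(\omega_2))$ is a dominant $\A$-space, and it is nonzero because $L(\omega_2)$ is nonzero. Its level is $\omega_2(h)$, which I would compute from Equation~\ref{h}: pairing $h=2h_1+3h_2+4h_3+6h_4+5h_5+4h_6+3h_7$ against $\omega_2$ picks out the coefficient of $h_2$, giving level $3$. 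Hence there is a nonzero dominant $\A$-space of level $3$, i.e.\ a nonzero ${\mathcal U}_3(\A)$-module, so ${\mathcal U}_3(\A)\neq 0$.

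The key steps, in order, are: (1) recall from the previous subsection that $\fG=\widehat\fsl_2(\A)=TKK(\A)$ is split of type $E_7$ with $[\fG_0,\fG_0]\simeq E_6$, and that $h$ is given by Equation~\ref{h}; (2) note that $H^0(\fG_2,L(\lambda))$ is a dominant $\A$-space of level $\lambda(h)$ for every integral dominant $\lambda$ (already stated in the text); (3) specialize to $\lambda=\omega_2$ and read off $\omega_2(h)=3$ from the coefficient of $h_2$ in Equation~\ref{h}; (4) conclude that ${\mathcal U}_3(\A)$ acts nontrivially on $H^0(\fG_2,L(\omega_2))\neq 0$, hence is nonzero. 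One should double-check with $\lambda=\omega_7$ as a consistency check: the coefficient of $h_7$ in Equation~\ref{h} is also $3$, so this likewise gives a level-$3$ dominant $\A$-space, matching the remark in the text that both $\omega_2$ and $\omega_7$ yield level $3$.

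I do not expect a genuine obstacle here; the only point requiring a little care is the normalization convention for pairing fundamental weights against coweights, i.e.\ verifying that $\omega_i(h_j)=\delta_{ij}$ in the conventions of \cite[ch.~IV]{Bou} so that $\lambda(h)$ really is the coefficient extraction from Equation~\ref{h}. This is routine once one fixes that $\{h_1,\dots,h_7\}$ is the basis of the dual (co)root system dual to the simple roots, as stated in the text right after Equation~\ref{h}. Everything else — the finiteness and semisimplicity of ${\mathcal U}_n(\A)$, and the bijection between simple dominant $\A$-spaces and finite dimensional simple $\fG$-modules — is already available from the lemmas of the preceding subsection, so the argument is essentially a one-line consequence of Equation~\ref{h}.
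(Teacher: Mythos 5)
Your proposal is correct and follows exactly the paper's argument: both read off from Equation~\ref{h} that the coefficient of $h_2$ (equivalently $h_7$) is $3$, so $H^0(\fG_2,L(\omega_2))$ is a nonzero dominant $\A$-space of level $3$, whence ${\mathcal U}_3(\A)\neq 0$. Your extra care about the normalization $\omega_i(h_j)=\delta_{ij}$ is a sensible sanity check but not a deviation from the paper's route.
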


\section{ Injectivity of the maps $\rho_n:J\to {\mathcal U}_{n}(J)$ for $n\geq 4$}

We have seen in  Example \ref{examples}  (h) that the map 
$\fG_0\to {\mathcal U}_{n}(J)$ is not always injective, even for $n\gg 0$. By contrast, the map $\rho_2$ is obviously injective,
and it is easy to deduce that $\rho_n$ is injective for any
even integer $n\geq 2$. 

We now show that  $\rho_n:J\to {\mathcal U}_{n}(J)$
is injective, whenever $n\geq 4$. When $n$ is odd, our proof is
based on Zelmanov's classification theorem.

\subsection{A technical lemma}

Recall that the algebra $\{0\}$ is called {\it trivial}.

\begin{lemma}\label{rhopq} 
Assume that the algebra ${\mathcal U}_{p}(J)$ is not trivial,
and $\rho_q$ is injective
for some nonnegative integers $p$ and $q$.

Then $\rho_{p+q}$ is injective
\end{lemma}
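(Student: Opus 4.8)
The statement is a multiplicativity-type result: $\mathcal{U}_p(J) \neq 0$ and $\rho_q$ injective $\Rightarrow$ $\rho_{p+q}$ injective. The natural tool is Example \ref{examples}(f), which provides an algebra morphism $\Delta_{p,q}: \mathcal{U}_{p+q}(J) \to \mathcal{U}_p(J) \otimes \mathcal{U}_q(J)$ coming from the tensor product of dominant $J$-spaces. By construction this morphism is compatible with the structure maps, i.e.\ $\Delta_{p,q}(\rho_{p+q}(a)) = \rho_p(a)\otimes 1 + 1\otimes\rho_q(a)$ for all $a\in J$ (the action of $h(a)$ on a tensor product is the sum of the two actions). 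So the plan is to show that the composite $a \mapsto \rho_p(a)\otimes 1 + 1 \otimes \rho_q(a)$ is injective into $\mathcal{U}_p(J)\otimes\mathcal{U}_q(J)$; injectivity of $\rho_{p+q}$ follows immediately.

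**Key steps.** First I would make explicit the comultiplication-type identity $\Delta_{p,q}\circ\rho_{p+q} = (\rho_p\otimes 1) + (1\otimes\rho_q)$, either by re-deriving it from the defining relation (R1)/(J1)–(J2) and the fact that on $V\otimes V'$ the element $h(a)$ acts as $\rho_V(a)\otimes\mathrm{id} + \mathrm{id}\otimes\rho_{V'}(a)$, or by citing Example \ref{examples}(f) together with the observation that $\Delta(V)\otimes\Delta(V')$ has highest component $V\otimes V'$ with precisely this diagonal $h$-action. Second, suppose $a \in J$ lies in $\ker\rho_{p+q}$. Then $\rho_p(a)\otimes 1 + 1\otimes\rho_q(a) = 0$ in $\mathcal{U}_p(J)\otimes\mathcal{U}_q(J)$. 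Third — the crux — I would argue that a nonzero element of the form $u\otimes 1 + 1\otimes v$ (with $u = \rho_p(a) \in \mathcal{U}_p(J)$, $v = \rho_q(a)\in\mathcal{U}_q(J)$) can only vanish if $u = 0$ and $v = 0$, using that $\mathcal{U}_p(J) \neq 0$ (so the map $w \mapsto w\otimes 1$ is injective, $\mathcal{U}_p(J)$ being a nonzero algebra over a field, hence free as a $\K$-module): pick a $\K$-basis of each factor containing $1$; if $u\otimes 1 = -1\otimes v$, comparing coefficients of basis tensors $e_i\otimes f_j$ forces $u\in\K\cdot 1$ and $v \in \K\cdot 1$ with $u = -v = c\cdot 1$. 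But $\rho_p(1) = p$ and $\rho_q(1) = q$ by definition of level, and $h-n$ is not a zero divisor issue — more directly, applying this to general $a$: from $u = c\,1$ and $v = -c\,1$ we get $\rho_q(a) = -c\cdot 1$, and then $c$ is forced to $0$ because $\rho_q$ is injective and (say) applying $\rho_q$ to $a$ and to $a - (\text{scalar})\cdot 1$ shows $a$ must be a scalar multiple of $1$, and tracking the level shows the scalar vanishes. Hence $\rho_q(a) = 0$, so $a = 0$ by injectivity of $\rho_q$.

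**Main obstacle.** The genuinely delicate point is the third step: controlling which elements of $\mathcal{U}_p(J)\otimes\mathcal{U}_q(J)$ can be written as $u\otimes 1 + 1\otimes v$ and vanish. Over a field this is elementary linear algebra once one knows $\mathcal{U}_p(J)\neq 0$ and $\mathcal{U}_q(J)\neq 0$ (the latter follows from $\rho_q$ being injective on the nonzero algebra $J$, or simply because $\mathcal{U}_0(J)=\K\neq 0$ and there is a quotient map — but one must be slightly careful that $\mathcal{U}_q(J)\neq 0$; injectivity of $\rho_q$ forces $J\hookrightarrow\mathcal{U}_q(J)$ hence $\mathcal{U}_q(J)\neq 0$). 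The only subtlety is making sure the hypothesis $\mathcal{U}_p(J)\neq 0$ is really used — it is, to guarantee $u\mapsto u\otimes 1$ is injective — and correctly bookkeeping the level/scalar so that one concludes $\rho_{p+q}(a)=0\Rightarrow\rho_q(a)=0\Rightarrow a=0$. I expect the write-up to be short once the comultiplication identity is in hand; essentially everything reduces to the fact that in a tensor product of nonzero $\K$-algebras, $u\otimes 1 + 1\otimes v = 0$ implies $u,v$ are opposite scalars, combined with the injectivity of $\rho_q$ and the normalization $\rho(1)=\text{level}$.
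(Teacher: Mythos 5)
Your proposal is correct and takes essentially the same approach as the paper: both apply the coproduct $\Delta_{p,q}:\mathcal{U}_{p+q}(J)\to\mathcal{U}_p(J)\otimes\mathcal{U}_q(J)$ from Example \ref{examples}(f), use the identity $\Delta_{p,q}\circ\rho_{p+q}(a)=\rho_p(a)\otimes 1+1\otimes\rho_q(a)$, and invoke the linear-algebra fact that in a tensor product of nonzero $\K$-algebras an element $u\otimes 1+1\otimes v$ vanishes only when $u$ and $v$ lie in $\K\cdot 1$. The paper streamlines your final step by splitting into the cases $a\in\K\cdot 1$ (where $\Delta_{p,q}\circ\rho_{p+q}(1)=(p+q)\,1\otimes 1\neq 0$) and $a\notin\K\cdot 1$ (where injectivity of $\rho_q$ shows $\rho_q(a)$ is not proportional to $1$, so the sum cannot vanish), avoiding the extra scalar bookkeeping in your third step.
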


\begin{proof} Let 

\centerline{$\Delta_{p,q}: {\mathcal U}_{p+q}(J)\to
{\mathcal U}_{p}(J)\otimes {\mathcal U}_{q}(J)$,}

\noindent be the algebra morphism defined in
Example \ref{examples}(f).
 We show that $\Delta_{p,q}\circ \rho_{p+q}(a)$ is not zero
 for  any $a\in J\setminus 0$. We have
$\Delta_{p,q}\circ \rho_{p+q}(1)=(p+q)1\otimes 1$, so
$\Delta_{p,q}\circ \rho_{p+q}(a)\neq 0$ if $a$ is a scalar.
Otherwise, we have $\Delta_{p,q}\circ \rho_{p+q}(a)
=\rho_p(a)\otimes 1+1\otimes \rho_q(a)$, where
$\rho_q(a)$ and $1$ are not proportional.
Thus  
$1\otimes \rho_q(a)$ is not proportional to
$\rho_p(a)\otimes 1$. So $\Delta_{p,q}\circ \rho_{p+q}(a)\neq 0$,
which proves that $\rho_{p+q}$ is injective.
\end{proof}

\subsection{Zelmanov's classification theorem}

\noindent The center of a   simple Jordan algebra $J$ is a field
$L$. 
The Jordan algebra $J$ is called an {\it Albert algebra} if
$\overline{L}\otimes J$ is the Albert algebra over the algebraic closure $\overline{L}$ of $L$.  The following theorem is proved in \cite{Zelmanov80}, \cite{Zelmanov83}.

\begin{ZClassthm}\label{Zclas} Any simple Jordan algebra is either special
or is an Albert algebra.
\end{ZClassthm}

\begin{cor}\label{U3} For any Jordan algebra $J\neq 0$, the algebra 
${\mathcal U}_3(J)$ is not trivial.
\end{cor}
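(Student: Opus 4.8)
\textbf{Proof proposal for Corollary \ref{U3}.}
The plan is to reduce the statement about an arbitrary nonzero Jordan algebra $J$ to the case of a simple Jordan algebra, where Zelmanov's Classification Theorem applies, and then to dispose of the two resulting cases (special algebras and Albert algebras) separately. First I would observe that it suffices to produce a single nonzero Jordan algebra quotient (or homomorphic image) $\bar J$ of $J$ for which ${\mathcal U}_3(\bar J)\neq 0$: indeed the Tits--Allison--Gao construction is functorial, so a surjection $J\twoheadrightarrow\bar J$ induces an algebra surjection ${\mathcal U}_3(J)\twoheadrightarrow{\mathcal U}_3(\bar J)$, and nontriviality of the target forces nontriviality of the source. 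Replacing $J$ by a quotient, one reduces to the case where $J$ is simple (pass to $J/M$ for a maximal ideal $M$; if $J$ is not unital after quotienting, one should be slightly careful, but a nonzero simple Jordan algebra is automatically unital, so this is fine).

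Now apply the Zelmanov Classification Theorem: a simple Jordan algebra $J$ is either special or is an Albert algebra. In the Albert case, $J$ is a form of the split Albert algebra $\A$ over a field extension, and Lemma \ref{Albert} already gives ${\mathcal U}_3(\A)\neq 0$; a base-change argument (extending scalars to $\overline L$ and using that ${\mathcal U}_n$ commutes with field extension, since it is defined by a presentation with coefficients in $\K$) transfers this to $J$ itself. In the special case, $J$ is a Jordan subalgebra of $A^+$ for some associative algebra $A$; the cleanest route is to exhibit an explicit nonzero dominant $J$-space of level $3$ — equivalently, by Theorem \ref{dominant}, a $J$-space $(V,\rho)$ with $\rho(1)=3$ satisfying $\sum_{\sigma\vdash 4}\sgn(\sigma)|C_\sigma|\rho_\sigma(a)=0$. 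Here I would try $V=S^3 L(1)\otimes W$-type constructions, or more directly use Example \ref{examples}(f): if ${\mathcal U}_1(J)\neq 0$ then tensoring a level-$1$ dominant $J$-space with the level-$2$ Jordan bimodule $J$ itself (which is always dominant of level $2$ by Example \ref{examples}(a)) yields a dominant $J$-space of level $3$, giving ${\mathcal U}_3(J)\neq 0$ via Lemma \ref{rhopq}. Since $J$ is special, its associative envelope is nonzero, so ${\mathcal U}_1(J)\neq 0$, and this finishes the special case.

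I expect the main obstacle to be the reduction step and the handling of the ground field: $J$ simple only guarantees that its center is a field $L$ possibly strictly larger than $\K$, so one must track how ${\mathcal U}_n$ behaves under the (non-$\K$-central) $L$-structure, and confirm that ${\mathcal U}_n$ of a scalar extension is the scalar extension of ${\mathcal U}_n$ — this is where a careful look at the presentation in Subsection \ref{U_n} is needed, since the defining relations involve arbitrary $a\in J$ and are $\K$-linear, not $L$-linear. A secondary subtlety is making sure the passage to a simple quotient really is available for a nonzero unital Jordan algebra (existence of maximal ideals, Zorn's lemma), and that the quotient is nonzero, which it is precisely because $1\notin M$. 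Once these points are settled, the split into ``special'' (handled by Lemma \ref{rhopq} together with the nonvanishing of the associative envelope) and ``Albert'' (handled by Lemma \ref{Albert} plus base change) is routine.
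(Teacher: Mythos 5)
Your proposal follows exactly the paper's route: pass to a simple quotient via Zorn's lemma, invoke Zelmanov's classification, handle the special case via Lemma \ref{rhopq} together with the nonvanishing of the associative envelope ${\mathcal U}_1(J)$ and injectivity of $\rho_2$, and handle the Albert case via Lemma \ref{Albert}. Your base-change concern in the Albert case is a genuine subtlety that the paper's one-line appeal to Lemma \ref{Albert} glosses over, and your sketch of how to resolve it (extend scalars to $\overline{L}$, then restrict a nonzero dominant $\overline{L}\otimes_L J$-space along $J\hookrightarrow\overline{L}\otimes_L J$) is correct and a worthwhile addition.
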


\begin{proof} By Zorn's Lemma, $J$ has a simple quotient.
Thus it is enough to prove the assertion when $J$ is simple.

If $J$ is special, the map 
$\rho_3$ is injective by Lemma \ref{rhopq}. Otherwise, $J$ is an Albert algebra by Zelmanov's Classification Theorem \ref{Zclas}, and it
follows from Lemma \ref{Albert} that ${\mathcal U}_3(J)$ is not trivial.
\end{proof}

\subsection{The injectivity theorem}

\begin{thm}\label{injectivity} The map $\rho_n:J\to
{\mathcal U}_n(J)$ is injective for any $n\geq 4$.
\end{thm}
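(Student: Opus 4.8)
The plan is to reduce the case of arbitrary $n\geq 4$ to the two base cases $n=4$ and $n=5$ via the multiplicativity device of Lemma \ref{rhopq}, which says that if ${\mathcal U}_p(J)$ is nontrivial and $\rho_q$ is injective, then $\rho_{p+q}$ is injective. I already know $\rho_2$ is injective (it is essentially the associative-envelope map, and $J\hookrightarrow {\mathcal U}_2(J)$ since ${\mathcal U}_2(J)$ is the multiplication algebra), and more generally $\rho_n$ is injective for all even $n\geq 2$; Corollary \ref{U3} gives that ${\mathcal U}_3(J)$ is nontrivial for every nonzero $J$. So the only genuinely new input needed is the injectivity of $\rho_5$, after which Lemma \ref{rhopq} with $(p,q)=(3,5)$ gives $\rho_8$, with $(p,q)=(3,2m)$ gives all sufficiently large even shifts, and iterating $p=3$ bridges every residue class: from injectivity at $n=4$ and $n=5$ one gets $n=7,8$ by adding $3$, then $n=10,11$, etc., so the statement follows for all $n\geq 4$ once $n=4,5,6,7$ are in hand. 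Since $n=4,6$ are covered by the even case, it remains to treat $n=5$ (and then $n=7$ follows as $3+4$).

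\textbf{Handling $n=5$.} By Lemma \ref{rhopq} with $q=2$ it suffices to show ${\mathcal U}_3(J)$ is nontrivial — but that is exactly Corollary \ref{U3}. So in fact $\rho_5 = \rho_{3+2}$ is injective directly from Corollary \ref{U3} and the injectivity of $\rho_2$. This is where Zelmanov's Classification Theorem enters, hidden inside Corollary \ref{U3}: to show ${\mathcal U}_3(J)\neq 0$ one passes (via Zorn's Lemma) to a simple quotient of $J$, and a simple Jordan algebra is either special — in which case $\rho_3$ is injective by the same Lemma \ref{rhopq} argument using that a special Jordan algebra embeds in its associative envelope so ${\mathcal U}_1$ is nontrivial — or it is an Albert algebra, and then Lemma \ref{Albert} says ${\mathcal U}_3(\A)\neq 0$.

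\textbf{Assembling all $n\geq 4$.} With $\rho_4$ and $\rho_6$ injective (even case), $\rho_5$ injective (just shown), and ${\mathcal U}_3(J)\neq 0$ for every nonzero $J$, apply Lemma \ref{rhopq} repeatedly with $p=3$: from $\rho_4$ get $\rho_7,\rho_{10},\dots$; from $\rho_5$ get $\rho_8,\rho_{11},\dots$; from $\rho_6$ get $\rho_9,\rho_{12},\dots$. Together with the directly-known $\rho_4,\rho_5,\rho_6$, every integer $n\geq 4$ is reached. (When $J=0$ the statement is vacuous, and when $J\neq 0$ one notes $\rho_n(a)=0$ would force the scalar part to vanish since $\rho_n(1)=n\cdot 1 \neq 0$, so injectivity only needs to be checked on a complement of $\K 1$, consistent with the reductions above.)

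\textbf{Main obstacle.} The conceptual heavy lifting is entirely outsourced: the real obstacle is establishing ${\mathcal U}_3(\A)\neq 0$ for the Albert algebra (Lemma \ref{Albert}), which was done by exhibiting explicit finite-dimensional $E_7$-modules $L(\omega_2)$, $L(\omega_7)$ whose top $\fG_2$-invariants are level-$3$ dominant $\A$-spaces — this relies on the coefficient computation \eqref{h} showing $\omega_i(h)=3$ for $i=2,7$. Everything downstream, including the use of Zelmanov's theorem, is then formal bookkeeping with Lemma \ref{rhopq}; the only subtlety in the assembly is making sure the inductive steps with $p=3$ genuinely cover all residues mod $3$, which they do since $4,5,6$ are consecutive.
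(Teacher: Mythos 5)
Your proposal is correct and follows essentially the same route as the paper: establish injectivity of $\rho_n$ for even $n\geq 2$ by iterating Lemma \ref{rhopq} from $\rho_2$, invoke Corollary \ref{U3} (hence Zelmanov's classification and Lemma \ref{Albert}) to get ${\mathcal U}_3(J)\neq 0$, and then apply Lemma \ref{rhopq} with $p=3$ to reach the odd cases $n\geq 5$. The paper dispatches an odd $n\geq 5$ in one stroke as $3+(n-3)$ with $n-3$ even $\geq 2$, whereas you bridge residues mod $3$ from the consecutive base cases $4,5,6$; the two bookkeeping schemes are equivalent.
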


\begin{proof} Since $\rho_2$ is injective, an iterative use
of Lemma \ref{rhopq} shows that $\rho_n$ is injective for any
even integer $n\geq 2$. 

For an odd integer $n\geq 5$,
the algebra ${\mathcal U}_3(J)$ is not trivial and it has been proved that $\rho_{n-3}$ is injective. So by Lemma
\ref{rhopq}, the map $\rho_n$ is injective.
\end{proof}

This leaves open two questions: is there an elementary proof of Theorem \ref{injectivity}? Is it always true that $\rho_3$ is injective?

\section{Finite generation, finite presentation, and $FP_\infty$}\label{FP_infinity}

As a motivation for considering the $FP_\infty$ condition, we first state
two finiteness results about Jordan algebras. Then we show that if $\widehat{\mathfrak{sl}}_2(J(D+1))$ is $FP_\infty$, then the graded dimension conjecture holds for the free Jordan algebra $J(D)$.

%condition implies the main conjecture on the dimensions
%of $J_n(D)$.

\subsection{Finite generation and finite presentation}

We now state two finiteness results.

\begin{thm}\label{finitegen} The associative
algebra  ${\mathcal U}_n(J)$ is finitely generated, for all finitely generated Jordan algebras $J$.
\end{thm}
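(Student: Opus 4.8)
The plan is to reduce to the free Jordan algebra and then to bound, using Zelmanov's theorem on nil Jordan algebras of bounded index, the degrees of a set of associative algebra generators of $\mathcal{U}_n(J)$.

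First, two reductions. Since $\fG_0=h(J)+[h(J),h(J)]$, the algebra $U(\fG_0)$, hence its quotient $\mathcal{U}_n(J)$, is generated as an associative algebra by $\rho_n(J)$. Moreover $\mathcal{U}_n$ is functorial for surjections: a surjection $\pi\colon J(D)\twoheadrightarrow J$ turns every dominant $J$-space of level $n$ into a dominant $J(D)$-space of level $n$ (the identities (J1), (J2) and the dominance criterion of Theorem~\ref{dominant} pull back along $\pi$), so by the universal property of $\mathcal{U}_n$ there is an algebra map $\mathcal{U}_n(J(D))\to\mathcal{U}_n(J)$, which is onto because $\mathcal{U}_n(J)$ is generated by $\rho_n(J)=\rho_n(\pi(J(D)))$. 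As a quotient of a finitely generated algebra is finitely generated, it suffices to treat $J=J(D)$, the free unital Jordan algebra on $D$ generators. Now use the grading $J(D)=\bigoplus_{k\geq 0}J_k$, where each $J_k$ is finite dimensional; it induces a grading $\mathcal{U}_n(J(D))=\bigoplus_k\mathcal{U}_n(J(D))_k$ with finite-dimensional homogeneous components (the $k$-th one is spanned by the finitely many products $\rho_n(b_1)\cdots\rho_n(b_r)$ with $b_i$ homogeneous of positive degrees summing to $k$). Hence it is enough to produce $N=N(n,D)$ such that $\rho_n(J_k)$ lies in the subalgebra generated by $\rho_n(J_{<k})$ for every $k>N$.

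This is where Zelmanov's nil theorem enters. Solving the defining relation \eqref{bloup} for the term indexed by the one-part partition $(n+1)\vdash n+1$ shows that $\rho_n(a^{n+1})$, and — after polarizing $a\mapsto t_1a_1+\cdots+t_{n+1}a_{n+1}$ — also $\rho_n$ of every fully linearized $(n{+}1)$-st power of homogeneous elements of positive degree, lies in the subalgebra generated by $\rho_n$ of elements of strictly smaller degree. On the other hand, let $T_{n+1}$ be the ideal of $J(D)$ generated by all $(n{+}1)$-st powers $b^{n+1}$ ($b\in J(D)$); it is graded, and the quotient $J(D)/T_{n+1}$ is nil of bounded index $n+1$, hence locally nilpotent by Zelmanov's theorem, hence nilpotent (being finitely generated), hence finite dimensional. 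Thus $J_k\subseteq T_{n+1}$ for all $k>N$, with $N$ the top degree of $J(D)/T_{n+1}$. It remains to turn membership in $T_{n+1}$ into the desired reduction: a monomial $w$ of degree $>N$ is a sum of Jordan products one of whose factors is a linearized $(n{+}1)$-st power, and one must propagate the reduction supplied by \eqref{bloup} through those outer products. Granting this, $\mathcal{U}_n(J(D))$ is generated in degrees $\leq N$, each of which is finite dimensional, so $\mathcal{U}_n(J(D))$ — and therefore $\mathcal{U}_n(J)$ for every finitely generated $J$ — is finitely generated.

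The main obstacle is exactly that last propagation step: a Jordan product $bc$ in $J$ is not mirrored by an associative product in $\mathcal{U}_n(J)$, so one cannot simply multiply the reduction of the inner power by the reductions of the outer factors. Instead one exploits membership in $T_{n+1}$ through the $\fG_0$-module structure — recall $\{a,b\}$ acts as $x(c)\mapsto x(\partial_{a,b}c)$ and equals $\tfrac14[\rho_n(a),\rho_n(b)]$ inside $\mathcal{U}_n(J)$ — so that \eqref{bloup}, being a polynomial consequence of the Jordan identity, propagates through the multiplication operators of $J$ in a controlled way; one then inducts on the degree of the "power part" of a monomial of $T_{n+1}$. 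Making this precise, rather than the two clean ingredients above, is where the real work lies.
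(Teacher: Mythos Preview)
Your reduction to the free algebra and the invocation of Zelmanov's nil theorem are exactly right, and you have honestly identified the gap: the ``propagation step'' is not carried out, and your suggested fix via the $\fG_0$-module structure is only a sketch. (There is also a slip: as written, your ideal $T_{n+1}$ contains $1^{n+1}=1$, so $T_{n+1}=J(D)$; you want powers of elements of $J_+$.)

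The paper's proof sidesteps the propagation problem entirely by a change of target. Rather than trying to push $\rho_n(J_k)$ into the subalgebra generated by $\rho_n(J_{<k})$, it works with the graded quotient ${\mathcal A}_+/{\mathcal A}_+^2$, where ${\mathcal A}={\mathcal U}_n(J(D))$. For $a\in J_+$ the defining relation \eqref{bloup} expresses $\rho_n(a^{n+1})$ as a sum of products of at least two factors $\rho_n(a^{\sigma_i})$, each lying in ${\mathcal A}_+$; hence $\rho_n(a^{n+1})\in{\mathcal A}_+^2$. The crucial point---supplied by Lemma~\ref{ideal}---is that the \emph{linear span} $J_+^{(n+1)}$ of these powers is already an ideal of $J_+$, so there are no outer Jordan products to propagate through: every element of $J_+^{(n+1)}$ is itself a linear combination of $(n{+}1)$-st powers. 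Therefore $\rho_n(J_+^{(n+1)})\subseteq{\mathcal A}_+^2$, and since $\rho_n(J_+)$ generates ${\mathcal A}_+$, the induced map $J_+/J_+^{(n+1)}\to{\mathcal A}_+/{\mathcal A}_+^2$ is surjective. Zelmanov's Nil Corollary then makes the source finite dimensional, hence so is ${\mathcal A}_+/{\mathcal A}_+^2$, and ${\mathcal A}$ is finitely generated.

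In short: the obstacle you flagged is real for your formulation but artificial for the problem. Replacing ``subalgebra generated by lower degrees'' with ``${\mathcal A}_+^2$'', together with the observation that the span of powers is already an ideal, dissolves it.
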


This generalizes the following known results for
a  Jordan algebra  generated by a finite set  $x_1,\dots,x_n$
of elements:

\begin{enumerate}

\item[(a)] The universal special unital envelope
${\mathcal U}_1(J)$ is generated by the the elements
 $\rho_1(x_1)$, $\dots$, $\rho_1(x_n)$. 

\item[(b)] The unital univeral multiplication envelope
${\mathcal U}_2(J)$ is  generated by 
the elements $\rho_2(x_i)$ and $\rho_2(x_i\cdot x_j)$, for $1\leq i\leq j\leq n$.

\item[(c)] If $J$ is an associative algebra $A$, it has been proved in \cite{Chari-Fourier-Khandai} that the maximal commutative quotient 
$A_{n\rho}$ of ${\mathcal U}_n(A)$ is finitely generated.

\end{enumerate}

Another finiteness result is the following:

\begin{thm}\label{finitepres}  The Lie algebra $\widehat\fsl_2(J)$ is finitely presented, for all finitely presented Jordan algebras $J$.%If the Jordan algebra $J$ is finitely presented, the Lie algebra $\widehat\fsl_2(J)$ is finitely presented.
\end{thm}

The proof of both theorems can be reduced to  free Jordan algebras.
The proofs are postponed to  Subsections \ref{fgproof} and \ref{fpproof}.

\subsection{Koszul lemma}

Let $\fg=\fs\ltimes\fu$ be a Lie algebra which is a semi-direct product.  We write $H_*(\fg,\fs)$ for the {\it relative homology}, that is, the homology of the Chevalley-Eilenberg complex of coinvariants $\bigwedge^p(\fg/\fs)\big/\big(\fs\cdot\bigwedge^p(\fg/\fs)\big)$.  See \cite{Ko} or \cite{kumar} for details.  The following lemma is essentially due to Koszul 
\cite[Theorem 17.3]{Ko}.

\begin{lemma}\label{Koszul} Assume that $\fs$ is a finite dimensional simple Lie algebra and $\fu$ is locally finite as an $\fs$-module.
Then $H_*(\fg)$ is isomorphic to $H_*(\fs)\otimes H_*(\fg,\fs)$.
\end{lemma}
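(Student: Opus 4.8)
The plan is to use the Hochschild–Serre spectral sequence for the ideal $\fu \trianglelefteq \fg$ together with Whitehead's lemmas, which apply because $\fs$ is finite dimensional and simple (hence semisimple) in characteristic zero. The Hochschild–Serre spectral sequence computing $H_*(\fg)$ has $E^2$-page $E^2_{p,q} = H_p(\fs; H_q(\fg,\fu))$, where $H_q(\fg,\fu)$ denotes the relative Lie algebra homology (equivalently, the $\fu$-homology $H_q(\fu)$ viewed as an $\fs$-module via the adjoint action, which makes sense and is locally finite since $\fu$ is locally finite over $\fs$).

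First I would observe that, because $\fg = \fs \ltimes \fu$ is a split extension, the $\fs$-action on each $H_q(\fu)$ is a rational/locally finite $\fs$-module, so it decomposes as a direct sum of finite dimensional simple $\fs$-modules. Second, I would invoke Whitehead's lemmas: for a finite dimensional semisimple Lie algebra $\fs$ over a field of characteristic zero, $H_p(\fs; W) = 0$ for all $p \geq 1$ whenever $W$ is a finite dimensional $\fs$-module with no trivial summand, and $H_p(\fs; W) = H_p(\fs) \otimes W^{\fs}$ in general (splitting off the trivial isotypic component and using semisimplicity to handle the rest, passing the direct sum through homology, which is legitimate since homology commutes with arbitrary direct sums). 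Applying this with $W = H_q(\fu) = H_q(\fg,\fu)$ gives $E^2_{p,q} = H_p(\fs) \otimes H_q(\fg,\fu)^{\fs}$. Since $H_q(\fg,\fu) = H_q(\fu)$ already carries the adjoint $\fs$-action and the relative homology $H_q(\fg,\fu)$ is by definition the $\fs$-invariants of the Chevalley–Eilenberg complex of $\fu$ — so I would be careful here to match conventions and note $H_q(\fg,\fu)$ equals the $\fs$-coinvariants/invariants appropriately, giving $E^2_{p,q} = H_p(\fs) \otimes H_q(\fg,\fu)$.

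Third, I would argue that the spectral sequence degenerates at $E^2$. The differentials $d^r$ for $r \geq 2$ are maps of $\fs$-modules out of and into $\fs$-invariant spaces; since $H_p(\fs)$ is built from the trivial isotypic part and all higher differentials must respect the module structure, one checks that $d^r = 0$ for $r \geq 2$ (this is the standard Koszul argument: the edge homomorphisms and the fact that everything in sight is already "invariant" forces collapse, or alternatively one exhibits an explicit contracting/splitting coming from the semidirect product structure). Fourth, with collapse at $E^2$ and no extension problems over a field, I conclude $H_n(\fg) \cong \bigoplus_{p+q=n} H_p(\fs) \otimes H_q(\fg,\fu)$, i.e. $H_*(\fg) \cong H_*(\fs) \otimes H_*(\fg,\fu)$ as graded vector spaces.

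The main obstacle I anticipate is justifying the collapse of the spectral sequence rigorously rather than hand-waving "everything is invariant": one must argue that the higher differentials vanish, which is cleanest if one first reduces to the finite dimensional semisimple case via the local finiteness hypothesis and then cites the Künneth-type behavior of $H_*(\fs)$ (which is concentrated, as an $\fs$-module, in the trivial isotype) so that no nonzero differential can emanate from or land in $E^r_{p,q}$ for $r \geq 2$. An alternative route avoiding the spectral sequence altogether would be to use the explicit retraction $\fg \to \fs$ and a Koszul-type contracting homotopy on the Chevalley–Eilenberg complex $\Lambda^* \fg = \Lambda^* \fs \otimes \Lambda^* \fu$, directly producing the tensor decomposition on homology; I would mention this as the approach actually attributable to Koszul \cite[Theorem 17.3]{Ko}, and it is probably the shorter path to write up.
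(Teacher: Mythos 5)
Your main route (Hochschild–Serre plus Whitehead) is a genuinely different argument from the paper's, which is simply a one-line citation of Koszul's Theorem~17.3 in \cite{Ko} together with the remark that Koszul's proof — a direct analysis of the Chevalley–Eilenberg complex $\Lambda^*\fg\cong\Lambda^*\fs\otimes\Lambda^*\fu$ using the reductive $\fs$-action and contraction/Lie-derivative operators, not a spectral sequence — goes through verbatim once one assumes $\fu$ is locally finite over $\fs$. Your second sketched alternative, the explicit contracting homotopy, is therefore the one that actually matches what the paper is citing.

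Your $E^2$-page computation is correct once the notation is sorted out: in the lemma's statement the symbol $H_*(\fg,\fu)$ is used for what the application in the proof of Theorem~\ref{FPhypothesis} calls $H_*(\fg,\fs)$, i.e.\ relative homology with respect to the subalgebra $\fs$, and when $\fs$ is reductive and acts reductively on $\fg$ this is indeed computed by $(\Lambda^*\fu)_\fs\cong(\Lambda^*\fu)^\fs$, hence equals $H_*(\fu)^\fs$. So $E^2_{p,q}=H_p(\fs)\otimes H_q(\fu)^\fs$ is right, and you correctly use local finiteness and the commutation of homology with direct sums to reduce Whitehead's vanishing to the finite-dimensional case.

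The genuine gap is exactly where you flag it: the degeneration at $E^2$ is not a consequence of "everything in sight is invariant." Here is why the obvious arguments fall short. The splitting $\fs\hookrightarrow\fg$ shows the bottom row consists of permanent cycles, and since the spectral sequence is then one of $H_*(\fs)$-modules with each row free over $H_*(\fs)$, the differential $d_2$ is determined by its restriction to the $p=0$ column, landing in $H_2(\fs)\otimes(\cdot)=0$ by Whitehead's second lemma — so $d_2=0$. But the same argument fails already for $d_3$, because $H_3(\fs)\neq 0$ (it contains the Cartan class); freeness over $H_*(\fs)$ only tells you $d_3$ is determined by a map $H_q(\fu)^\fs\to H_3(\fs)\otimes H_{q-2}(\fu)^\fs$, which has no reason to vanish formally. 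One really does need an argument at the level of the complex, which is what Koszul's original proof supplies (decomposing $\Lambda^*\fg$ into $\fs$-isotypic subcomplexes, observing that $H_*(\fg)$ lies in the trivial isotype since $L_x=d\iota_x+\iota_x d$ for $x\in\fs$, and then identifying the invariant subcomplex via transgression/Weil-algebra considerations). If you want to run the spectral-sequence version to completion, you would need to import a collapse result for split extensions with reductive quotient; as written, the paragraph asserting collapse is not yet a proof, and you are right to anticipate that as the main obstacle.
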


In fact, Theorem 17.3 in \cite {Ko} only involves finite dimensional Lie algebras, but the proof in our setting is exactly the same.

\subsection{Action of the Witt algebra}

Let $J(D)$ be the free Jordan algebra on the
generators $x_1,\dots,x_D$. For $n\geq -1$, set
$L_n:=\sum_{i=1}^D\,x_i^{n+1}{\partial\over \partial x_i}$. We have

\centerline{$[L_n,L_m]=(m-n) L_{m+n}$,}

\noindent so the linear span $W$ of the $L_n$ is
a Lie algebra of derivations of $J$ isomorphic to
the Witt algebra $\Der (\K[t])$.

\begin{lemma}\label{Witt} For any $k\geq 4$, the space 
$H_k(\widehat{\fsl}_2(J(D)))$ is either $\{0\}$
or is infinite dimensional.
\end{lemma}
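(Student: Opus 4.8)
The plan is to exploit the Witt algebra action on $J(D)$, which induces a Witt action on $\widehat{\fsl}_2(J(D))$ by functoriality of the Tits--Allison--Gao construction, and hence on the homology spaces $H_k(\widehat{\fsl}_2(J(D)))$. First I would observe that the grading on $J(D)$ by total degree makes each $H_k$ a graded vector space; the derivation $L_0 = \sum_i x_i \partial/\partial x_i$ is the Euler (grading) operator, so it acts as a scalar $d$ on the degree-$d$ component $H_k^{(d)}$, and the bracket relation $[L_n, L_m] = (m-n)L_{m+n}$ together with $[L_0, L_n] = n L_n$ shows that $L_n$ shifts degree by $n$. The point is then that $H_k(\widehat{\fsl}_2(J(D)))$ is a module over the Witt algebra $W = \operatorname{Der}(\K[t])$ compatible with this $\Z$-grading.

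The key algebraic input is the classification of graded (or "intermediate series"-adjacent) modules over the Witt algebra, or rather a softer consequence: the span of the $L_n$ with $n \geq -1$ is the Lie algebra $\operatorname{Der}(\K[t])$, which is simple, and any graded $W$-module that is finite dimensional in each degree and bounded below (which $H_k$ is, since $J(D)$ is nonnegatively graded, starting essentially in a bounded degree depending on $k$) must either be zero or infinite dimensional. More precisely, I would argue: suppose $H_k(\widehat{\fsl}_2(J(D)))$ is finite dimensional and nonzero. Then, being a finite-dimensional graded module over the simple infinite-dimensional Lie algebra $W = \operatorname{Der}(\K[t])$, the action must factor through a finite-dimensional quotient; but $W$ is simple, so the action is trivial, i.e. every $L_n$ annihilates $H_k$. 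The contradiction comes from the fact that $L_0$ acts as multiplication by the degree, so a nonzero element in degree $d > 0$ cannot be annihilated by $L_0$ unless $d = 0$; one then has to rule out the possibility that $H_k$ is concentrated entirely in degree $0$. For $k \geq 4$ (indeed $k \geq 1$) degree-$0$ homology of $\widehat{\fsl}_2(J(D))$ with $J_0(D) = \K$, i.e. $\widehat{\fsl}_2(\K) = \fsl_2$, vanishes: $H_k(\fsl_2) = 0$ for $k \in \{1, 2, 4, 5, \dots\}$ away from the exponents, and in particular $H_4(\fsl_2)=0$; more robustly, the degree-$0$ part of $H_k(\widehat{\fsl}_2(J(D)))$ is a summand of $H_k(\fsl_2)$, which is finite-dimensional and, crucially, on which $L_0$ acts as $0$ — consistent — so this needs the separate sl$_2$-homology computation, or an appeal to Lemma \ref{Koszul} to strip off the $\fsl_2$-factor.

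Concretely, the cleanest route is: decompose $\widehat{\fsl}_2(J(D)) = \fsl_2 \ltimes \fu$ where $\fu$ is the augmentation ideal part (the image of $\fsl_2 \otimes J_+(D) \oplus \{J,J\}$-type terms), apply Lemma \ref{Koszul} to get $H_*(\widehat{\fsl}_2(J(D))) \cong H_*(\fsl_2) \otimes H_*(\widehat{\fsl}_2(J(D)), \fu)$, note that $\fu$ is positively graded so $H_*(\widehat{\fsl}_2(J(D)), \fu)$ — except in homological degree $0$ — lives in strictly positive total degree. The Witt algebra still acts on everything (the decomposition is $W$-equivariant since $L_n$ preserves the augmentation ideal for $n \geq 0$ and $W = \langle L_n : n \geq -1\rangle$ acts on the free algebra compatibly — one should check $L_{-1}$ as well, or restrict to the subalgebra $\langle L_n : n \geq 0 \rangle$, which is already enough to force the contradiction via $L_0$). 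Then $L_0$ acts by a nonzero scalar on each positive-degree piece, so any nonzero $W$-submodule finite-dimensional over $\K$ forces triviality of the $L_0$-action, impossible in positive degree. Hence each graded piece of $H_k$ for $k \geq 1$ that is nonzero forces — via the simplicity of $\operatorname{Der}(\K[t])$ and the non-existence of nontrivial finite-dimensional modules — that $H_k$ is infinite dimensional.

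The main obstacle I anticipate is making the "$W$-module that is finite-dimensional in each graded piece and nonzero must be infinite-dimensional" step airtight: one needs that a nonzero graded $\operatorname{Der}(\K[t])$-module bounded below cannot be finite-dimensional overall, which follows because applying $L_1$ repeatedly to a lowest-degree nonzero element, or rather because the action cannot factor through a proper quotient of the simple Lie algebra $W' = \langle L_n : n \geq -1\rangle$, and a trivial action contradicts the $L_0$-grading operator being nonzero in the relevant degrees. I would handle this by first reducing (via Lemma \ref{Koszul}) to the case where all nonzero graded pieces sit in strictly positive degree — which is exactly where the hypothesis $k \geq 4$ enters, to guarantee via the vanishing of low sl$_2$-homology that there is nothing left in degree $0$ — and then invoking simplicity of $W$. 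The hypothesis $k\ge 4$ (rather than $k \ge 1$) presumably accounts for $H_3(\fsl_2) \neq 0$ contributing a degree-$0$ class; with $k \geq 4$ the only exponents of $\fsl_2$ below are $3$, and $H_4(\fsl_2) = 0$, so degree $0$ is clean.
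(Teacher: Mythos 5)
Your core strategy matches the paper's exactly: let $W=\mathrm{Der}(\K[t])=\langle L_n\mid n\geq -1\rangle$ act on $J(D)$ by derivations, hence on $\widehat{\fsl}_2(J(D))$ and on its homology; observe $L_0$ is the grading operator; and invoke simplicity of the infinite-dimensional Lie algebra $W$ to conclude that a $W$-module on which $L_0$ acts nontrivially cannot be finite dimensional. Where you diverge --- and where the proposal gets tangled --- is in justifying that every $L_0$-eigenvalue on $H_k$ is strictly positive once $k\geq 4$. The paper does this at the \emph{chain} level with no homological machinery: since $\{1,a\}=0$, the degree-$0$ graded piece of $\widehat{\fsl}_2(J(D))$ is exactly $\fsl_2\otimes J_0(D)=\fsl_2$, which is $3$-dimensional, so the degree-$0$ piece of $\Lambda^k\bigl(\widehat{\fsl}_2(J(D))\bigr)$ is $\Lambda^k(\fsl_2)=0$ as soon as $k\geq 4$. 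That is the entire content of the ``$k\geq 4$'' hypothesis; no Koszul decomposition, no computation of $H_*(\fsl_2)$, and no worry about which derivations preserve which subalgebras is needed.

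Your Koszul detour would eventually give the same conclusion (as a graded vector space identity, $H_k\cong H_k(\fG,\fsl_2)\oplus H_3(\fsl_2)\otimes H_{k-3}(\fG,\fsl_2)$ with both relative groups in strictly positive degree for $k\geq 4$), but two of your subsidiary remarks need correcting. First, the $W$-equivariance of the Koszul splitting is irrelevant: the degree grading on $H_k$ is intrinsic --- it \emph{is} the $L_0$-eigenspace decomposition --- so once you know all degrees are positive, $L_0$ acts invertibly and you are done, regardless of whether any auxiliary decomposition is preserved by $W$. Second, your proposed fallback of restricting to $\langle L_n\mid n\geq 0\rangle$ would not rescue the argument: that subalgebra is not simple (the span of $L_n$ with $n\geq 1$ is a proper ideal), so the ``an infinite-dimensional simple Lie algebra has no nontrivial finite-dimensional modules'' step fails for it. You must keep $L_{-1}$, and that is fine precisely because the grading argument does not require any $W$-stable splitting. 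Finally, attributing the $k\geq 4$ threshold to $H_4(\fsl_2)=0$ and ``exponents'' is technically consistent but obscures the real, elementary reason: $\Lambda^k$ of a $3$-dimensional space vanishes for $k>3$.
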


\begin{proof} Assume $k\geq 4$. 
The derivation $L_0$ acts on
the component $J_n(D)$ by multiplication by $n$.
 Since all eigenvalues of $L_0$ on
$\Lambda^k(\widehat{\fsl}_2(J(D)))$ are positive integers, it follows that 

\centerline{
$L_0.H_k(\widehat{\fsl}_2(J(D)))=H_k(\widehat{\fsl}_2(J(D)))$.}

\noindent Hence $H_k(\widehat{\fsl}_2(J(D)))$ is either
zero, or is a nontrivial $W$-module. In the latter
case $H_k(\widehat{\fsl}_2(J(D)))$ is infinite dimensional since 
$W$ is a simple infinite dimensional Lie algebra.
\end{proof}

\subsection{The $FP_\infty$-property}

By definition, a  Lie algebra $\fg$ is of type $FP_n$ if the trivial module
$\K$ admits a projective resolution

\centerline{$0\lto \K\lto C_0\lto C_1\lto C_2\lto\cdots $}

\noindent where the modules  $C_i$ are finitely generated for all $i\leq n$. We say that $\fg$ is 
of type $FP_\infty$ if it is of type $FP_n$ for all integers $n$.

\begin{proposition} The Lie algebra
$\widehat\fsl_2(J(1))$ is of type $FP_\infty$.
\end{proposition}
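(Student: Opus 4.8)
The claim is that $\widehat{\fsl}_2(J(1))$ is of type $FP_\infty$. The key structural fact is that $J(1) = \K[x_1]$ is the polynomial ring in one variable, so by the discussion in Section 2.3 (Bloch's theorem) we have $\widehat{\fsl}_2(J(1)) = \fsl_2(\K[x_1]) = \fsl_2 \otimes \K[x_1]$ with no central term. This is the current (untwisted loop, non-centrally-extended) algebra $\fsl_2 \otimes \K[t]$. The plan is to compute its homology via the Koszul-type splitting and reduce everything to a classical, manifestly finite computation.

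First I would apply Lemma \ref{Koszul} with $\fg = \widehat{\fsl}_2(J(1)) = \fsl_2 \ltimes (\fsl_2 \otimes t\K[t])$: here $\fs = \fsl_2$ (the constant tensors) is finite dimensional simple, and $\fu = \fsl_2 \otimes t\K[t]$ is locally finite as an $\fsl_2$-module (each graded piece $\fsl_2 \otimes t^n$ is the adjoint representation). The lemma gives $H_*(\fg) \cong H_*(\fsl_2) \otimes H_*(\fg, \fu)$, and $H_*(\fg,\fu) = H_*(\fu)^{\fsl_2}$ is the $\fsl_2$-invariants of the Chevalley--Eilenberg homology of the abelian-by-... wait, $\fu$ is not abelian; rather one uses the Hochschild--Serre / relative setup so that $H_*(\fg,\fu)$ is computed from the complex $\Lambda^*(\fu/[\fg,\fg]\cap\dots)$ — more simply, $H_*(\fg,\fu) = (\Lambda^\bullet \fu^*)^{\fsl_2}$-homology. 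In any case, the point is that $H_*(\fg)$ is finite dimensional in each degree, because each is built from finitely many $\fsl_2$-isotypic pieces: $H_*(\fsl_2\otimes t\K[t])$ is known (this is the homology of a current algebra over $\K[t]$, computed e.g. via Garland--Lepowsky or the fact that $\fsl_2\otimes t\K[t]$ is a "positive" graded Lie algebra with finite-dimensional graded pieces and the relevant Koszul dual / Loday--Quillen type description), and taking $\fsl_2$-invariants kills all but finitely much in each homological degree.

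From finite dimensionality of $H_k(\fg)$ for all $k$, I would conclude $FP_\infty$ as follows. A Lie algebra $\fg$ over a field is $FP_\infty$ iff $H_k(\fg,\K)$ is finite dimensional for all $k$, provided $\fg$ itself is finitely generated (indeed $\fsl_2 \otimes \K[t]$ is finitely generated: $\fsl_2 \otimes 1$ and, say, $e\otimes t$ generate it). The implication "$\fg$ finitely generated $+$ all $H_k$ finite dimensional $\Rightarrow FP_\infty$" is the standard dimension-shifting argument: build the projective resolution $\cdots \to C_1 \to C_0 \to \K$ of free modules degree by degree; finiteness of $H_0 = \K$ gives $C_0$ finitely generated, and inductively, if $C_0,\dots,C_{n-1}$ are finitely generated then the $n$-th syzygy $\Omega_n$ is finitely presented, and $H_n(\fg) = \Tor_n^{U(\fg)}(\K,\K)$ finite dimensional forces the minimal number of generators of $\Omega_n$ to be finite, so $C_n$ can be taken finitely generated. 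Thus $FP_\infty$ follows.

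The main obstacle is the precise computation — or at least the precise citation — showing $\dim H_k(\fsl_2\otimes t\K[t])^{\fsl_2} < \infty$ for every $k$. This is where I would want to invoke a known result on the homology of current algebras $\fg \otimes \K[t]$ (these homology groups carry a grading by $t$-degree, and one must check finiteness in each \emph{homological} degree after summing over $t$-degree, taking $\fsl_2$-invariants). One clean route: $\fsl_2 \otimes t\K[t]$ is a graded Lie algebra $\fu = \bigoplus_{n\geq 1}\fu_n$ with each $\fu_n \cong \fsl_2$ finite dimensional, so $\Lambda^k\fu$ in each fixed $t$-degree $d$ is finite dimensional and vanishes for $d < k$; hence the Chevalley--Eilenberg differential preserves $t$-degree and $H_k(\fu)$ decomposes over $t$-degree into finite-dimensional pieces — but one must still bound the \emph{number} of nonzero $t$-degrees in homological degree $k$. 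After taking $\fsl_2$-invariants this becomes a statement one can extract from Garland's work or directly from the fact (used elsewhere in the paper, e.g. the $\K[a]$ computations of Section 4) that the relevant relative homology is governed by $\K[x_1,\dots,x_n]^{S_n}$-type data, which is finitely generated in each degree. I would present the finiteness via Lemma \ref{Koszul} plus the $\fsl_2$-invariant finiteness of current-algebra homology as the crux, then the dimension-shift as routine.
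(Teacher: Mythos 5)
Your route is genuinely different from the paper's, but it has a gap at its crux. The paper does not first compute $H_*(\widehat{\fsl}_2(J(1)))$ and then argue back to $FP_\infty$; it simply cites the Garland--Lepowsky resolution \cite{GL}, which \emph{already is} a free resolution of $\K$ by finitely generated free $U(\fsl_2(t\K[t]))$-modules (of rank $2n+1$ in degree $n$), so $\fsl_2(t\K[t])$ is $FP_\infty$ by definition, and then the proposition follows because $\widehat{\fsl}_2(J(1)) = \fsl_2 \ltimes \fsl_2(t\K[t])$ with $\fsl_2$ finite dimensional. You instead try to establish finite-dimensionality of each $H_k(\fg)$ and then invoke the principle ``$\fg$ finitely generated and all $H_k(\fg;\K)$ finite dimensional $\Rightarrow \fg$ is $FP_\infty$.'' This principle is not automatic, and your justification of it does not go through for $\fg$ itself.

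The problem is the Nakayama step in your dimension-shifting. To pass from ``$\dim_\K \bigl(\K \otimes_{U(\fg)} \Omega_{n-1}\bigr) < \infty$'' to ``$\Omega_{n-1}$ is finitely generated over $U(\fg)$,'' you need the augmentation ideal of $U(\fg)$ to behave like the maximal ideal of a local (or connected graded) ring. For $\fu = \fsl_2 \otimes t\K[t]$ this is fine: $U(\fu)$ is connected graded ($U(\fu)_0 = \K$) with finite-dimensional graded pieces, so the graded Nakayama lemma applies and ``all $\Tor_n(\K,\K)$ finite dimensional'' does yield a finitely generated minimal free resolution. But $U(\fg)$ for $\fg = \fsl_2 \ltimes \fu$ is \emph{not} connected graded: $\fsl_2$ lives in degree $0$, so $U(\fg)_0 = U(\fsl_2) \neq \K$, and the augmentation ideal is not contained in the Jacobson radical. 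Applied directly to $\fg$ as you do, the step ``finite dimensional $H_n$ forces the minimal number of generators of $\Omega_n$ to be finite, so $\Omega_n$ is finitely generated'' is unsupported. The fix is to run your homology-and-Nakayama argument only for $\fu$, and then use the standard extension fact (if $0 \to \fu \to \fg \to \fsl_2 \to 0$ with both $\fu$ and $\fsl_2$ of type $FP_\infty$, then so is $\fg$), which is precisely what the paper does tacitly in its last sentence. Similarly, the statement ``then the $n$-th syzygy $\Omega_n$ is finitely presented'' is not automatic from $C_0, \dots, C_{n-1}$ being finitely generated, since $U(\fg)$ is neither Noetherian nor coherent in general.

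A secondary issue: you leave the key finiteness of $H_k(\fu)$ as a ``precise citation'' to be filled in, mentioning Garland and $\K[x_1,\dots,x_n]^{S_n}$-type data without pinning anything down. But the clean reference you are looking for \emph{is} Garland--Lepowsky \cite{GL}, and once you have it, it gives the finitely generated free resolution directly --- making the excursion through Lemma \ref{Koszul}, $\fsl_2$-invariants of current-algebra homology, and the minimal-resolution reconstruction unnecessary. In short: the route can be repaired (do the Nakayama argument for $\fu$, then extend to $\fg$), but as written there is a real gap at the Nakayama step and a circularity in the choice of reference.
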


\begin{proof} 
By the main result
of Garland and Lepowsky's paper \cite{GL} there is a resolution of the trivial $\fsl_2(t\K[t])$-module $\K$

\centerline{$0\lto \K\lto C_0\lto C_1\lto C_2\dots $}

\noindent where $C_n$ is a free $U(\fsl_2(t\K[t]))$-module
of rank $2n+1$ for any $n\geq 0$, so
$\fsl_2(t\K[t])$ is of type $FP_\infty$. Since

\centerline{
$\widehat\fsl_2(J(1))=\fsl_2(J(1))=
\fsl_2\ltimes \fsl_2(t\K[t])$}

\noindent the Lie algebra $\widehat\fsl_2(J(1))$
is also of type $FP_\infty$.
\end{proof}

\subsection{The $FP_\infty$ condition}

By the previous proposition, the Lie algebra
$\widehat\fsl_2(J(1))$ is of type $FP_\infty$.
Moreover, by  Theorem \ref{finitepres}, the Lie algebra
$\widehat\fsl_2(J(D))$ is of type $FP_2$ for any $D$. Also Theorems \ref{finitegen} and \ref{finitedim} suggest that $\widehat\fsl_2(J(D))$
enjoys some further finiteness properties.
So it is natural to suspect that 
$\widehat\fsl_2(J(D))$ is of type $FP_\infty$.

\begin{thm}\label{FPhypothesis} Assume that
$\widehat\fsl_2(J(D+1))$ is of type $FP_\infty$.
Then the conjectural formula of \cite{KM} for 
$\dim J_n(D)$ holds.
\end{thm}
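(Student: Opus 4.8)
The plan is to exploit the homological machinery built up for $\widehat\fsl_2(J)$ together with the free-product grading from Example \ref{examples}(d). Set $\fg = \widehat\fsl_2(J(D+1))$ and write $J(D+1) = J(D)*\K[a]$ as a free product, so that $\fg = \bigoplus_{n\geq 0}\fg_n$ with $\fg_0 = \widehat\fsl_2(J(D))$ and $\fg_1 \simeq \Delta(\mathcal U_2(J(D)))$ as a $\fg_0$-module. First I would observe that because $\fg$ is $\Z_{\geq 0}$-graded with $\fg_0$ acting on each $\fg_n$, the chain complex $\Lambda^*\fg$ computing $H_*(\fg)$ is graded, and each graded piece is a finite-dimensional $\fg_0$-module (using that $J_n(D)$ is finite-dimensional in each degree, which is classical). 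The $FP_\infty$ hypothesis on $\fg$ should translate, via a standard spectral-sequence or Koszul-type argument relating $H_*(\fg)$ to $H_*(\fg_0)$ and the Lie-algebra homology of the ``positive part'' $\fu = \bigoplus_{n\geq 1}\fg_n$ — compare Lemma \ref{Koszul} — into a finiteness statement about the graded Euler characteristics.

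The key step is to extract a generating-function identity. The Euler characteristic $\sum_k (-1)^k \mathrm{ch}\,\Lambda^k\fg$, computed as a formal power series in the grading variable $q$ (tracking the $\Z_{\geq 0}$-grading) with coefficients in the representation ring of $\fsl_2$ or of $\mathcal U_n$, factors as an infinite product $\prod_{n\geq 1}(1-q^n)^{\dim\fg_n/\cdots}$ type expression because $\fg$ is a free-type object over $\fg_0$. On the other hand, the $FP_\infty$ property forces $H_*(\fg)$ to be finitely generated in each homological degree, hence (after accounting for the $W$-action of Lemma \ref{Witt}, which shows the higher homology is either zero or infinite-dimensional) forces $H_k(\fg) = 0$ for $k \geq 4$, or at least controls it tightly. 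Then the Euler characteristic reduces to a finite expression determined by $H_0, H_1, H_2, H_3$, all of which are computable (the low-degree homology of a universal central extension is well understood: $H_0 = \K$, $H_1 = 0$, $H_2 = 0$ by perfectness and universality, and $H_3$ relates to the Jordan algebra structure). Matching the two expressions for the Euler characteristic yields a recursion that pins down $\dim J_n(D)$, and one checks this recursion is solved precisely by the conjectural formula of \cite{KM}.

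The main obstacle I expect is the passage from the abstract $FP_\infty$ hypothesis to the concrete vanishing (or finite-dimensionality) of $H_k(\fg)$ in a form strong enough to truncate the Euler characteristic: $FP_\infty$ gives finitely generated projective resolutions, but one must argue that finite generation of the resolution modules, combined with the grading and the Witt-algebra action, actually forces the homology to be finite-dimensional in each degree and then to vanish above degree $3$. This is where Lemma \ref{Witt} is essential — it converts ``not infinite-dimensional'' into ``zero'' — but one still needs to rule out the infinite-dimensional alternative, presumably by showing that an $FP_\infty$ Lie algebra has finite-dimensional homology in each degree when the resolution can be taken graded with finite-dimensional graded pieces. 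Once that reduction is in hand, the generating-function bookkeeping, while intricate, is routine, and the identification with the \cite{KM} formula follows by comparing the resulting recursion with theirs (which itself is known to be equivalent to a PBW-type dimension count for the free Jordan algebra).
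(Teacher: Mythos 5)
Your first half is on the right track and matches the paper: $FP_\infty$ gives a resolution of $\K$ by finitely generated free $U(\fg)$-modules, tensoring with $\K$ computes $H_*(\fg)$ as the homology of a complex of finite-dimensional spaces, so each $H_k(\widehat\fsl_2(J(D+1)))$ is finite-dimensional; then Lemma~\ref{Witt} upgrades ``finite-dimensional'' to ``zero'' for $k\geq 4$. This is exactly the paper's first step.

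After that, your proposal diverges and has a concrete gap. You invoke a ``Koszul-type argument'' on the decomposition $\fg = \fg_0 \oplus \fu$ with $\fu = \bigoplus_{n\geq 1}\fg_n$ the positive part of the free-product grading. But Lemma~\ref{Koszul} requires the factor $\fs$ to be \emph{finite-dimensional simple}, and $\fg_0 = \widehat\fsl_2(J(D))$ is neither, so that lemma does not apply to your decomposition. The paper instead applies Lemma~\ref{Koszul} with $\fs = \fsl_2$, giving the tensor-product factorization $H_*(\fg) \simeq H_*(\fsl_2)\otimes H_*(\fg,\fsl_2)$. Since $H_3(\fsl_2)\neq 0$, the vanishing $H_k(\fg)=0$ for $k\geq 4$ (together with the easy low-degree cases, $H_1=H_2=0$ by perfectness and universality of the central extension) forces $H_k(\fg,\fsl_2)=0$ for all $k>0$. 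That relative-homology vanishing is precisely the hypothesis of [Theorem~2]\cite{KM}, which already contains the generating-function/Euler-characteristic work you are proposing to redo: it asserts that $H_k(\widehat\fsl_2(J(D+1)),\fsl_2)=0$ for $k>0$ implies the conjectural formula for $\dim J_n(D)$. So the ``intricate but routine bookkeeping'' you defer to at the end is exactly the content of a cited theorem; attempting to reconstruct it via a fresh Euler-characteristic identity is both unnecessary and, as written, not a proof, since you never actually produce the identity, verify its finiteness, or check that it determines the dimensions uniquely. To fix the proposal, replace the generating-function sketch by: (a) Koszul with $\fs = \fsl_2$ (note $\widehat\fsl_2(J(D+1)) = \fsl_2 \ltimes \widehat\fsl_2(J_+(D+1))$ with the second factor locally finite over $\fsl_2$), (b) divide out $H_*(\fsl_2)$ using $H_3(\fsl_2)\neq 0$ to get $H_{>0}(\fg,\fsl_2)=0$, and (c) cite [Theorem~2]\cite{KM}.
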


\begin{proof} By  hypothesis, each homology space $H_k(\widehat{\fsl}_2(J(D+1)))$ is finite dimensional. So, by Lemma \ref{Witt} we have

\centerline{$H_k(\widehat{\fsl}_2(J(D+1)))=0$,}

\noindent for any $k\geq 4$. By Koszul's
Lemma \ref{Koszul},  there is an isomorphism

\centerline{
$H_*(\fsl_2(J(D+1)))=H^*(\fsl_2)\otimes 
H_*(\fsl_2(J(D+1)),\fsl_2)$.}

\noindent Since $H_3(\fsl_2)$ is nonzero, we deduce that

\centerline{$H_k(\widehat{\fsl}_2(J(D+1)),\fsl_2)=0$,}

\noindent for any positive integer $k$. 
Thus by \cite[Theorem 2]{KM}, this implies 
the conjectural formula for 
$\dim J_n(D)$.
\end{proof}

\noindent{\it Remark.} Our notation here is different from \cite{KM}. In \cite {KM}, the free nonunital Jordan algebra  is denoted $J(D)$,
while the unital free Jordan algebra is denoted
$J^u(D)$. Also it should be noted that

\centerline{$H_*(\widehat{\fsl}_2(J(D)),\fsl_2)
=H_*(\widehat{\fsl}_2(J_+(D)))^{\fsl_2}$,}

\noindent where $J_+(D)$ is the free 
nonunital Jordan algebra on $D$ generators.

\smallskip
\noindent
    {\it Recent developments.} In a very recent arXiv preprint \cite{DH} posted when the present paper was under final revisions, Dotsenko and Hentzel have provided detailed computational data showing that the dimension formula conjectured in \cite{KM} does not always hold.  Theorem \ref{FPhypothesis} thus admits the following, perhaps surprising, corollary:

    \begin{cor}\label{notFPinfinity} For $D\geq 3$, the Lie algebra $\widehat{\fsl}_2(J(D))$ is {\it not} of type $FP_\infty$.\qed
      \end{cor}

\vskip1cm
\centerline{\Large \bf PART B: Good Filtrations in LA+ categories}
\vskip1cm

\section{LA+ categories}\label{defLA+}

\subsection{ Definition of  LA categories}\label{LA}

Let $\mathcal{C}$ be $\K$-linear abelian  category.
We say that $\mathcal{C}$ is a {\it locally artinian
category} (LA category) if 

\begin{enumerate} 

\item[{\rm(LA1)}]  $\mathcal{C}$ admits
direct limits  
$\varinjlim\, A_n$
for any directed family
$$A_0\to A_1\to A_2\to\cdots $$
of objects indexed by $\Z_{\geq 0}$,

\item[(LA2)] the functor $ \varinjlim$
 is exact, and 

\item[(LA3)] every object is the directed union 
$\bigcup_{n\geq 0} A_n$  of a directed family of subobjects $A_n$ of finite length.

\end{enumerate}

\noindent By definition, a {\it directed union} $\bigcup_{n\geq 0} A_n$ will be a direct limit 
$\varinjlim\,A_n$ of a directed family $(A_n)_{n\geq 0}$
where all maps $A_n\to A_{n+1}$ are injective. 
For concreteness, all directed unions will be indexed by $\Z_{\geq 0}$, so Axiom  (LA3) implies that all objects have finite or countable length. 

In this paper, we handle inductive  limits cautiously. In fact,
from the time of the groundbreaking paper of Cline, Parshall, and Scott \cite{cps88}, it has been known that the paper \cite{Roos61} about the Grothendieck conditions is not fully correct \cite{D02,Nee02}. These conditions are implied by our Axiom (LA2).

\subsection{ Definition of LA+ categories}\label{LA+}

%In order to define the notion of an 
%LA+ category, we will need some additional definitions.
Let ${\mathcal C}$ be an LA category.
Recall that the {\it socle} of an object
$X$, denoted by $\Soc(X)$, is the largest semi-simple object in $X$. Indeed the axioms of an LA category ensure the existence of such an object. An
{\it injective envelope} of an object 
$X$ is an injective object $I\supseteq X$ such that 
$\Soc\,I=\Soc\,X$.  The first two axioms of an LA+ category deal with simple objects and their socles:

\begin{enumerate} 

\item[(AX1)] There is a set  
$\{L(n)\mid n\in\Z_{\geq 0}\}$ of objects
in ${\mathcal C}$ which is a complete list of nonisomorphic simple objects. 

\item[(AX2)] We assume that 
$\End_{\mathcal C}(L(n))=\K$ and that $L(n)$ admits 
an injective envelope $I(n)$ in $\mathcal{C}$, for all 
$n\geq 0$. 

\end{enumerate}

\noindent Some
additional definitions are needed to state the third axiom.

A {\it filtration} ${\mathcal G}_*$ of an object $M$ is a directed family $({\mathcal G}_n M)_{n\geq 0}$ of subobjects for which $M$ is the directed union
$\bigcup_{n\geq 0}{\mathcal G}_n M$.  If ${\mathcal G}_n M=M$ for  $n\gg 0$, the filtration is said to be {\em finite};  otherwise, it is {\em infinite}.  For any $n\geq 0$, set 
$\overline{\mathcal G}_n M={\mathcal G}_n M/{\mathcal G}_{n-1} M$, with the convention that 
$\overline{\mathcal G}_0 M={\mathcal G}_0 M$.
The objects $\overline{\mathcal G}_n M$ are called the {\it successive quotients} of the filtration ${\mathcal G}_*$.

Let ${\mathcal C}(n)$ be the subcategory of objects $M$ which are constructed as  finite or infinite extensions between the modules $L(0), L(1),\ldots, L(n)$. That is, $M$
admits a filtration ${\mathcal G}_*$ whose successive quotients 
$\overline{\mathcal G}_m M$ are $0$ or 
$L(k)$ for some $k=k(m)\leq n$.

For
$M\in {\mathcal C}$, let ${\mathcal F}_n\,M$ be the biggest
subobject in ${\mathcal C}(n)$. The filtration

\centerline{${\mathcal F}_0 M\subseteq  
{\mathcal F}_1 M\subseteq {\mathcal F}_2 M \subseteq \cdots$}

\noindent is called the {\it canonical filtration} of $M$.
By definition, the {\it standard object} $\nabla(n)$ is the inverse image in $I(n)$ of ${\mathcal F}_{n-1}(\,I(n)/L(n))$.

\bigskip

\noindent
{\bf Remarks:} \begin{enumerate}

\item[(a)]
Axiom (AX1) provides a linear ordering 

\centerline{$L(0)\leq L(1)\leq L(2)\leq \cdots$}

\noindent of the simple objects $L(n)$, and the definitions of the canonical filtration
${\mathcal F}_*$ and the standard objects $\nabla(n)$ depend on the 
chosen order. 

\item[(b)] According to the context, the  standard objects are sometimes called costandard objects or dual of Weyl modules. We follow Donkin's notation \cite {D82}.
These objects are denoted $A(n)$ in \cite{cps88}.
\end{enumerate}

\noindent
Our last axiom is

\begin{enumerate}
\item[(AX3)] For any $n\geq 0$, the standard object $\nabla(n)$ has finite length.
\end{enumerate}

We say that the LA category ${\mathcal C}$ is a {\it LA category plus additional axioms} (for short, an {\em LA+ category}) if it satisfies the three axioms (AX1), (AX2) and (AX3).

\subsection{  Existence of injective envelopes in LA+ categories}

In our setting, we did not require that 
${\mathcal C}$ has enough injectives because this follows from the hypotheses, as is proved now. 

\begin{lemma}\label{env} Let ${\mathcal C}$ be an LA+ category.

\begin{enumerate}

\item[(a)] Let $(I_n)_{n\in\mathbb{Z}_+}$ be a family of injective objects in
${\mathcal C}$. Then 

\centerline{$\bigoplus_{n\geq 0}\,I_n$}

\noindent is an injective object. 
 
\item[(b)] Any object in ${\mathcal C}$ admits an injective envelope.
\end{enumerate}
\end{lemma}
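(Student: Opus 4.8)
\textbf{Proof plan for Lemma \ref{env}.}

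The plan is to prove (a) first and then deduce (b) from (a) by the usual argument producing an injective envelope inside a "large enough" injective object. For part (a), the key point is that injectivity can be tested against monomorphisms from objects of finite length, thanks to Axiom (LA3) and the exactness of direct limits (LA2). Concretely, I would argue as follows. Let $I=\bigoplus_{n\geq 0} I_n$. To show $I$ is injective it suffices to show that $\Hom_{\mathcal C}(-,I)$ is exact, and by (LA3) every object is a directed union of finite-length subobjects, so (using that $\varinjlim$ is exact and commutes with $\Hom$ out of finite-length objects, i.e.\ finite-length objects are "finitely presented" in the relevant sense) it is enough to verify the lifting property for monomorphisms $A\hookrightarrow B$ with $B$ of finite length. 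Given a map $A\to I$ with $A$ of finite length: since $A$ has finite length its image meets only finitely many summands $I_0,\dots,I_N$ nontrivially, so the map factors through the \emph{finite} direct sum $\bigoplus_{n\leq N} I_n$, which is injective (a finite direct sum of injectives is injective in any abelian category). Hence the map $A\to \bigoplus_{n\leq N} I_n$ extends to $B\to \bigoplus_{n\leq N}I_n\hookrightarrow I$, giving the required lift. Then a short diagram-chase / limit argument upgrades this to arbitrary monomorphisms, proving (a).

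For part (b), take an arbitrary object $X\in{\mathcal C}$. Its socle $\Soc X$ exists by the LA axioms and is semisimple, hence by (AX1) it decomposes as a direct sum $\Soc X\cong\bigoplus_{n} L(n)^{\oplus m_n}$. Using the injective envelopes $I(n)$ provided by (AX2) and part (a), the object $E:=\bigoplus_n I(n)^{\oplus m_n}$ is injective and contains $\Soc X$ with $\Soc E=\Soc X$ (the socle of a direct sum is the direct sum of the socles). One then shows the inclusion $\Soc X\hookrightarrow E$ extends to a monomorphism $X\hookrightarrow E$: extend $\Soc X\hookrightarrow E$ along $\Soc X\hookrightarrow X$ using injectivity of $E$ to get $\iota:X\to E$, and check $\iota$ is mono because its kernel would meet $\Soc X$ trivially yet every nonzero subobject has nonzero socle (another consequence of the LA axioms), forcing $\Ker\iota=0$. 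Finally, inside the injective $E$, take $I$ to be a subobject maximal with respect to the property $\Soc I=\Soc X$ containing $X$; a Zorn's-lemma argument (valid since lengths/cardinalities are controlled by (LA3)) together with the fact that an object with essential socle inside an injective splits off an injective envelope shows $I$ is injective and is an injective envelope of $X$.

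The main obstacle I expect is the careful handling of infinite direct limits and the interaction of $\Hom$ with $\varinjlim$ in part (a): one must be sure that testing injectivity against finite-length objects genuinely suffices, which relies precisely on (LA2)+(LA3), and this is exactly the delicate point the authors flag in Subsection \ref{LA} when warning about the Grothendieck/Roos subtleties. The rest — the factorization of a finite-length source through finitely many summands, finite direct sums of injectives being injective, and the Zorn's-lemma construction of the envelope from a big injective — is routine abelian-category bookkeeping. I would therefore spend most of the write-up making the reduction-to-finite-length step in (a) precise, and treat (b) as a standard corollary.
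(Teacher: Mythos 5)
Your proposal is correct, and both parts use the same essential ingredients as the paper's proof, but the organization of (a) differs and your (b) carries an unnecessary step.

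For (a), the paper does not go through a "test injectivity against finite-length objects, then upgrade" reduction. Instead it directly constructs the extension $f:X\to I$ of a given $g:Y\to I$ as a locally finite sum $f=\sum_{n\geq 0}f_n$, where each $f_n:X\to I_n$ is obtained by extending (via injectivity of $I_n$ alone) the map on $Y+X_{n-1}$ that equals the $I_n$-component $g_n$ of $g$ on $Y$ and vanishes on $X_{n-1}$; the forced vanishing on $X_{n-1}$ is what makes the sum locally finite. Your route — factor a finite-length source through a finite partial sum $\bigoplus_{n\leq N}I_n$, use that finite sums of injectives are injective, then reassemble by a compatible directed system $f_n:X_n\to I$ — relies on exactly the same facts (compactness of finite-length objects via (LA2)/(LA3), injectivity of the $I_n$, the universal property of the directed union), so it works; but the "short diagram-chase / limit argument" you defer is essentially the same amount of work as the paper's direct construction, and you should be aware that the reduction to finite-length $B$ is not a formal consequence of $\Hom$ commuting with $\varinjlim$ — it requires the inductive compatible-extension argument you allude to, spelled out.

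For (b), your final Zorn's-lemma step is redundant. Once you have the injective $E=\bigoplus_n I(n)^{\oplus m_n}$ with $\Soc E=\Soc X$ and the monomorphism $\iota:X\hookrightarrow E$ (whose kernel is zero because it meets $\Soc X$ trivially), $E$ \emph{is} an injective envelope of $X$ by the paper's definition (an injective $I\supseteq X$ with $\Soc I=\Soc X$). There is no need to cut $E$ down to a maximal subobject with that socle; the paper's proof of (b) is exactly the first part of your argument and stops there.
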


\noindent{\it Proof of (a).} Set $I=\bigoplus_{n\geq 0}\,I_n$. Let $X$ be an object of
 ${\mathcal C}$, let $Y\subseteq X$ be a subobject,  and let 
 $g:Y\rightarrow I$ be a morphism. We claim that $g$ can be extended to a
 morphism $f:X\to I$.

 By Axiom (LA3), $X$ is a  directed union $\bigcup_{n\geq 0} X_n$
  of finite length subobjects. Set $Y_n=Y\cap X_n$ and 
 $I_{\leq n}=\oplus_{k\leq n} I_k$. We have

 \centerline{$g(Y_n)\subseteq I_{\leq \phi(n)}$,}
 
 \noindent for some nondecreasing function 
 $\phi:\Z_{\geq 0}\to\Z_{\geq 0}$. If needed, we can replace $I_n$ by
 $\bigoplus_{\phi(n-1)<k\leq \phi(n)} I_k$, to assume without loss of generality that 
 
 \centerline{$g(Y_n)\subseteq I_{\leq n}$.}
 
 \noindent Let $g_n:\,Y_n\rightarrow I_n$ be the projection of $g$ on $I_n$.
 Since $g_n(Y_{n-1})=0$ there is a morphism
  $f_n:X\to I_n$ such that $f_n\vert_{Y}=g_n$ and 
  $f_n\vert_{X_{n-1}}=0$.
 
Since the sum $f:=\sum_{n\geq 0}f_n$ is locally finite,
it defines a morphism $f:X\to I$ extending $g$,
 which proves the claim. Hence
 $I$ is injective.
 
 \smallskip
 
 \noindent{\it Proof of (b).} Let $X$ be an object in 
 ${\mathcal C}$. Its socle $\Soc(X)$  is a direct sum of at most
 $\aleph_0$ simple objects.  By Part (a), $\Soc(X)$ has an injective envelope, which is also an injective envelope of $X$. \qed
 
 \bigskip
 
\noindent{\bf Remarks:} \begin{enumerate}

\item[(a)] Lemma \ref{env} is applied to the concrete category
  ${\mathcal C}_{sm}(J)$ considered in Part B.  Even in the concrete category case, the proof does not simplify and remains the same as the formal proof given here. 

\item[(b)] For categories which are not locally artinian, Lemma \ref{env} (a)
 sometimes fails. Here is an easy example.

 Let $F$ be the free module of rank $1$, over the free associative algebra $A$ on $2$ generators, and let
$E\supseteq F$ be an injective module containing $F$.
Since $F$ contains a free submodule $F^{\aleph_0}$ of rank $\aleph_0$,
the module $E^{\aleph_0}$ is not injective. Indeed, the inclusion map
$F^{\aleph_0}\to E^{\aleph_0}$ does not extend to $F$.

\end{enumerate}

\subsection{ Derived functors and direct unions of objects}

 Let  $F:{\mathcal C}\to {\mathcal B}$
be a left exact functor from an LA+ category $\mathcal{C}$ to an abelian category $\mathcal{B}$ satisfying axioms (LA1) and (LA2).  Since $\mathcal{C}$ has enough injectives, 
right derived functors of $F$  are well defined.
We show that $R^k F$ behaves nicely with directed unions, whenever the functor $F=R^0F$ commutes with directed unions.

We start with a basic lemma of homological algebra.
Recall that a directed union is a direct limit of a directed family in which all maps are injective.

\begin{lemma}\label{reso} Let $A=\bigcup_{n=0}^\infty A_n$ be a directed union.  Then there is a commutative diagram

\hskip3cm $A_0\longrightarrow A_1\longrightarrow A_2\longrightarrow\cdots$
 
\hskip3cm  $\hskip2mm\downarrow \hskip10mm\downarrow
\hskip11mm \downarrow$

\hskip3.1cm $I_0\hskip1mm\longrightarrow I_1
\hskip1mm\longrightarrow I_2\longrightarrow\cdots$

\noindent satisfying the following conditions:

\begin{enumerate}

\item[(a)] all morphisms are injective,

\item[(b)] the objects $I_k$ are  injective, and

\item[(c)] all maps in the directed family

\centerline{$I_0/A_0\rightarrow I_1/A_1\rightarrow I_2/A_2\rightarrow\cdots$}

are injective.

\end{enumerate}

\end{lemma}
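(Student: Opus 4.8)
The plan is to build the desired commutative diagram inductively in $n$, producing at each stage an injective object $I_n$, an injection $A_n \hookrightarrow I_n$, and a compatible injection $I_{n-1} \to I_n$, subject to the extra bookkeeping condition (c) that the induced map $I_{n-1}/A_{n-1} \to I_n/A_n$ is injective. The base case is immediate: by Lemma \ref{env}(b), $A_0$ has an injective envelope $I_0$, and there is nothing to check for (c).

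For the inductive step, suppose $I_0 \hookrightarrow \cdots \hookrightarrow I_{n-1}$ together with the injections $A_k \hookrightarrow I_k$ have been constructed. Form the pushout (amalgamated sum) $P$ of the two maps $A_{n-1} \to A_n$ and $A_{n-1} \to I_{n-1}$; since $A_{n-1} \to A_n$ is injective and the category is abelian, $P$ contains both $A_n$ and $I_{n-1}$ as subobjects, glued along $A_{n-1}$, and one has $P/I_{n-1} \cong A_n/A_{n-1}$ while $P/A_n \cong I_{n-1}/A_{n-1}$. Now embed $P$ into an injective object $I_n$ via an injective envelope (Lemma \ref{env}(b)). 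The composite $A_n \hookrightarrow P \hookrightarrow I_n$ and $I_{n-1} \hookrightarrow P \hookrightarrow I_n$ are the required injections, and the resulting square commutes by construction. For condition (c): the map $I_{n-1}/A_{n-1} \to I_n/A_n$ factors as $I_{n-1}/A_{n-1} \cong P/A_n \hookrightarrow I_n/A_n$, where the last map is injective because $A_n = P \cap A_n$ inside $I_n$ — more precisely, since $P \hookrightarrow I_n$ is injective and $A_n \subseteq P$, the snake lemma applied to $0 \to A_n \to P \to A_n \to I_n$ shows $P/A_n \to I_n/A_n$ is injective. Hence all three conditions (a), (b), (c) are preserved.

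Finally, one assembles the two sequences $(A_n)$ and $(I_n)$ with all the vertical and horizontal maps just constructed into the asserted diagram; passing to the colimit is legitimate by Axiom (LA1). The main subtlety — really the only place where the LA+ hypotheses are used in an essential way — is the existence of injective envelopes for arbitrary objects, which is exactly Lemma \ref{env}(b) and ultimately rests on the fact that socles are at most countable direct sums of simples together with the injectivity of countable direct sums of injectives (Lemma \ref{env}(a)). The pushout construction itself is purely formal abelian-category nonsense, so I expect no obstacle there; the one point to be careful about is verifying that the pushout of an injection along an arbitrary map is again an injection and that the quotient identifications $P/I_{n-1} \cong A_n/A_{n-1}$ and $P/A_n \cong I_{n-1}/A_{n-1}$ hold, which is standard but deserves an explicit mention so that condition (c) is transparent.
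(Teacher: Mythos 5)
Your proof is correct, but it takes a genuinely different route from the paper's. The paper proceeds non-inductively: for each $k\geq 0$ it picks an injective envelope $E_k$ of the quotient $A/A_{k-1}$ (with $A_{-1}:=0$), sets $I_n=\bigoplus_{k\leq n} E_k$, and maps $A_n\to I_n$ by the sum of the restrictions $f_k\vert_{A_n}$; condition (c) then drops out of a single application of the Snake Lemma to the short exact sequences $0\to A_{n-1}\to A_n\to A_n/A_{n-1}\to 0$ and $0\to I_{n-1}\to I_n\to E_n\to 0$. Your approach instead builds $I_n$ inductively as an injective envelope of the pushout $P$ of $A_{n-1}\hookrightarrow A_n$ along $A_{n-1}\hookrightarrow I_{n-1}$, using the standard abelian-category facts that pushouts along monomorphisms remain monomorphisms and that $\Coker(I_{n-1}\to P)\cong A_n/A_{n-1}$, $\Coker(A_n\to P)\cong I_{n-1}/A_{n-1}$. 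Both approaches ultimately rest on Lemma \ref{env}(b), and both are valid; the paper's version has the modest advantage of giving an explicit non-recursive description of $I_n$ with the vertical maps automatically injective via $f_0$, whereas your inductive pushout construction makes condition (c) more transparent conceptually (it is essentially definitional for the pushout) at the cost of a step-by-step build-up.

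Two small points worth tightening. First, the displayed ``Snake Lemma applied to $0\to A_n\to P\to A_n\to I_n$'' is garbled; what you actually use is the elementary fact that $A_n\subseteq P\subseteq I_n$ forces $P/A_n\hookrightarrow I_n/A_n$, which one can see by applying the Snake Lemma to the morphism of short exact sequences $0\to A_n\to P\to P/A_n\to 0$ and $0\to A_n\to I_n\to I_n/A_n\to 0$ with the identity on the left and the inclusion in the middle. Second, the closing remark about ``passing to the colimit by (LA1)'' is unnecessary here: Lemma \ref{reso} only asserts the existence of the commutative ladder, which your induction already produces; the colimit is used only later in Lemma \ref{limF}. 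Neither point affects the validity of the argument.
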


\begin{proof} For each $k\geq 0$, 
 let $A/A_{k-1}\subseteq E_k$ be an embedding into an injective envelope, where $A/A_{-1}$ is defined to be $A$.
Let $f_k:A\to E_k$ be the corresponding morphism with kernel $A_{k-1}$. 

Set $I_n=\bigoplus_{k\leq n} E_k$, and
let  $\phi_n:A_n\to I_n$ be the morphism

\centerline{
$\phi_n=\sum_{k=0}^n\, f_k\vert_{A_n}$.} 

The  commutative diagram

\hskip3cm $A_0\longrightarrow A_1\longrightarrow A_2\dots$
 
\hskip2.7cm  $\phi_0\downarrow \hskip4mm\phi_1\downarrow
\hskip4mm\phi_2 \downarrow$

\hskip3.1cm $I_0\hskip1mm\longrightarrow
 I_1 \hskip1mm\longrightarrow I_2\dots$
 
 \noindent
 obviously satisfies conditions (a) and (b). Applying
the Snake Lemma to the commutative diagram

\hskip3cm $0\to A_{n-1}\longrightarrow A_n\longrightarrow A_n/A_{n-1}\to 0$
 
\hskip3.1cm  $\phi_{n-1}\downarrow \hskip7mm\phi_n\downarrow
\hskip9mm f_n \downarrow$

\hskip3cm $0\to I_{n-1}\hskip1mm\longrightarrow
 I_n \hskip1mm\longrightarrow\hskip3mm E_n\hskip3mm\to 0,$

\noindent we deduce that the natural map 
$I_{n-1}/A_{n-1}\to I_{n}/A_{n}$ is injective, which proves that (c) holds.

\end{proof}

Let  $F:{\mathcal C}\to {\mathcal B}$
be a left exact functor from an LA+ category $\mathcal{C}$ to an abelian category $\mathcal{B}$ satisfying axioms (LA1) and (LA2).  In what follows, ${\mathcal B}$ will be the category of $\K$-vector spaces or ${\mathcal C}$ itself.

\begin{lemma}\label{limF} Suppose that $F(\bigcup_{n\geq 0}\, A_n)=\bigcup_{n\geq 0}\,F( A_n)$, for any  directed  union  $\bigcup_{n\geq 0}\, A_n$. Then

\centerline{
$R^k F(\bigcup_{n\geq 0}\, A_n)=\varinjlim\,R^k F( A_n)$,}

\noindent for all $k\geq 1$.

\end{lemma}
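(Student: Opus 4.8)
The plan is to use the explicit resolution produced by Lemma \ref{reso} together with the hypothesis that $F$ commutes with directed unions, and the fact that direct limits are exact in the target category (which is either vector spaces or $\mathcal{C}$ itself, both of which satisfy (LA2)). First I would fix a directed union $A=\bigcup_{n\geq 0} A_n$ and apply Lemma \ref{reso} to obtain, for each $n$, an embedding $A_n\hookrightarrow I_n$ with $I_n$ injective, such that the maps $A_n\to A_{n+1}$, $I_n\to I_{n+1}$, and $I_n/A_n\to I_{n+1}/A_{n+1}$ are all injective and everything commutes. Then I would iterate: set $A_n^{(0)}=A_n$, $I_n^{(0)}=I_n$, and $A_n^{(1)}=I_n/A_n$; applying Lemma \ref{reso} again to the directed union $\bigcup_n A_n^{(1)}$ (noting condition (c) of the lemma says the transition maps are indeed injective) yields injectives $I_n^{(1)}$, and continuing inductively produces, for each $n$, an injective resolution $0\to A_n\to I_n^{(0)}\to I_n^{(1)}\to I_n^{(2)}\to\cdots$, with all the transition maps in $n$ injective and compatible with the differentials.

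Next I would take the direct limit over $n$ of these resolutions. Because direct limits are exact (axiom (LA2) for $\mathcal{C}$) and because a directed union of injectives is again injective (Lemma \ref{env}(a)), the complex $0\to A\to \varinjlim_n I_n^{(0)}\to \varinjlim_n I_n^{(1)}\to\cdots$ is an injective resolution of $A=\varinjlim_n A_n$ in $\mathcal{C}$. Here I use that each $\varinjlim_n I_n^{(j)}=\bigcup_n I_n^{(j)}$ is a directed union (the transition maps are injective by construction), hence injective. Therefore $R^k F(A)$ is computed as the $k$-th cohomology of the complex $F(\varinjlim_n I_n^{\bullet})$.

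The final step is to interchange $F$ with the direct limit and then the direct limit with cohomology. By the hypothesis of the lemma, $F(\varinjlim_n I_n^{(j)}) = F(\bigcup_n I_n^{(j)}) = \bigcup_n F(I_n^{(j)}) = \varinjlim_n F(I_n^{(j)})$ for each $j$. So $F(\varinjlim_n I_n^{\bullet}) = \varinjlim_n F(I_n^{\bullet})$ as complexes. Since taking cohomology commutes with filtered colimits (exactness of $\varinjlim$ in the target category), we get
$$R^k F(A) = H^k\big(\varinjlim_n F(I_n^{\bullet})\big) = \varinjlim_n H^k\big(F(I_n^{\bullet})\big) = \varinjlim_n R^k F(A_n),$$
where the last equality uses that $I_n^{\bullet}$ is an injective resolution of $A_n$. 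This holds for all $k\geq 0$; the statement restricts to $k\geq 1$ presumably because the $k=0$ case is exactly the hypothesis.

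I expect the main obstacle to be the bookkeeping in the iterative construction of the double complex: one must check that at each stage the directed union being fed into Lemma \ref{reso} genuinely has injective transition maps (this is precisely what condition (c) of Lemma \ref{reso} guarantees, so it propagates), and that the resulting transition maps $I_n^{(j)}\to I_{n+1}^{(j)}$ commute with the differentials $I_n^{(j)}\to I_n^{(j+1)}$ so that the colimit is a genuine complex. A cleaner alternative, which I would mention as a remark, is to invoke a general principle: in a Grothendieck category the derived functors of a colimit-preserving left-exact functor commute with filtered colimits; but since the paper is deliberately cautious about Grothendieck-category technology (see the discussion after axiom (LA2)), giving the hands-on argument via Lemma \ref{reso} is the appropriate route here.
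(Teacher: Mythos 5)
Your argument is correct, and it uses the same core ingredient as the paper (Lemma \ref{reso} plus the injectivity of the colimit), but the paper's proof avoids your ``double complex'' by a dimension-shifting induction. Concretely, the paper applies Lemma \ref{reso} only once: it proves the $k=1$ case directly by identifying $R^1F(A)$ with the cokernel of $F(I)\to F(I/A)$ (and similarly for each $A_n$), then passing to the limit using exactness of $\varinjlim$; and it handles $k\geq 2$ by the isomorphisms $R^kF(A)\simeq R^{k-1}F(I/A)$ and $R^kF(A_n)\simeq R^{k-1}F(I_n/A_n)$, together with the fact (Lemma \ref{reso}(c)) that $I/A=\bigcup_n I_n/A_n$ is again a directed union, so the induction hypothesis applies. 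Your route is the other standard one --- build compatible injective resolutions of all the $A_n$ by iterating Lemma \ref{reso}, take the colimit complex, then commute $F$ past the colimit and the colimit past cohomology. Both are valid; the induction is simply more economical, and it sidesteps the bookkeeping you flag as the main obstacle.

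One small point to tighten in your write-up: you cite Lemma \ref{env}(a) for the statement ``a directed union of injectives is again injective.'' That lemma is about \emph{direct sums}, not directed unions. The missing link is that each transition map $I_n^{(j)}\to I_{n+1}^{(j)}$ is a split monomorphism (since the source is injective), so the directed union decomposes as $\bigoplus_n I_n^{(j)}/I_{n-1}^{(j)}$, a direct sum of injective objects, to which Lemma \ref{env}(a) then applies. The paper spells this step out explicitly; your proof implicitly needs it too.
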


\noindent
\begin{proof} 
Let 

\hskip3cm $A_0\longrightarrow A_1\longrightarrow A_2\longrightarrow\cdots$
 
\hskip3cm  $\hskip2mm\downarrow \hskip9mm\downarrow
\hskip10mm \downarrow$

\hskip3.1cm $I_0\hskip1mm\longrightarrow I_1
\hskip1mm\longrightarrow I_2\longrightarrow\cdots$

\noindent be the commutative diagram of Lemma \ref{reso}.

Set $A:=\cup_{n\geq 0}\, A_n$ and 
$I:= \cup_{n\geq 0}\,I_n$
Since the objects $I_i$ are injective, the morphisms
$I_i\to I_{i+1}$ split. Thus by Lemma \ref{env},
$I\simeq\oplus_{n\geq 0} I_{n}/I_{n-1}$ 
is an injective object. Therefore,
$R^1\,F(A)$  is the cokernel of the 
map 

\centerline{
$F(I)\to F(I/A)$.}

However,

\centerline{$F(I)=
\varinjlim F(I_n)$ and
$F(I/A)=
\varinjlim F(I_n/A_n)$.}

\noindent Since $R^1F(A_n)$  is the cokernel of the map 

\centerline{
$F(I_n)\to F(I_n/A_n)$,}

\noindent it follows that 
$R^1F(A)=
\varinjlim R^1F(A_n)$.

Thus the lemma is proved for $k=1$. Since for $k\geq 2$, we have

\centerline{$R^kF(A)= R^{k-1}F(I/A)$
and $R^kF(A_n)= R^{k-1}F(I_n/A_n)$,}

\noindent the lemma follows, by induction over $k$.
\end{proof}

\bigskip
 
 As an immediate corollary, we obtain:
 
\begin{lemma}\label{limext}  Let $X\in{\mathcal C}$ be an object of finite length. For any directed union 
$\bigcup_{n\geq 0}\,A_n$, we have

\centerline{
$\Ext^{1}_{\mathcal C}(X, \bigcup_{n\geq 0}\,A_n)
= \varinjlim \Ext^{1}_{\mathcal C}(X, A_n)$.}
\end{lemma}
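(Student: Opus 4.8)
The plan is to derive this as a direct corollary of Lemma \ref{limF}. Take the functor $F = \Hom_{\mathcal C}(X, -) \colon {\mathcal C} \to \K\text{-Vect}$; it is left exact, and $R^k F = \Ext^k_{\mathcal C}(X, -)$ since ${\mathcal C}$ has enough injectives by Lemma \ref{env}. To apply Lemma \ref{limF} with $k = 1$, I must verify its hypothesis: that $F$ commutes with directed unions, i.e. $\Hom_{\mathcal C}(X, \bigcup_{n\geq 0} A_n) = \bigcup_{n\geq 0} \Hom_{\mathcal C}(X, A_n)$ for every directed union.

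The key step is therefore the commutation of $\Hom_{\mathcal C}(X, -)$ with directed unions, and this is exactly where the finite length hypothesis on $X$ is used. First I would note that since the maps $A_n \to A_{n+1}$ are injective, so are the induced maps $\Hom_{\mathcal C}(X, A_n) \to \Hom_{\mathcal C}(X, A_{n+1})$, so the right-hand side is genuinely a directed union of subspaces of $\Hom_{\mathcal C}(X, A)$ where $A = \bigcup_n A_n$; the inclusion $\bigcup_n \Hom_{\mathcal C}(X, A_n) \subseteq \Hom_{\mathcal C}(X, A)$ is automatic. For the reverse inclusion, take a morphism $\phi \colon X \to A$. Because $A = \varinjlim A_n$ with the $A_n$ a directed family of subobjects and $\varinjlim$ is exact (axiom (LA2)), the images $\phi^{-1}(A_n)$ form an exhaustive filtration of $X$ by subobjects; since $X$ has finite length, this filtration stabilizes, so $\phi^{-1}(A_N) = X$ for some $N$, meaning $\phi$ factors through $A_N$. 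Hence $\phi$ lies in the image of $\Hom_{\mathcal C}(X, A_N)$, proving equality.

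The main (and essentially only) obstacle is making the stabilization argument precise in the abstract LA+ setting — specifically, that for a directed union $A = \bigcup_n A_n$ and any subobject (here $\phi(X)$, or more carefully the pullback), the induced filtration is exhausting, which rests on exactness of $\varinjlim$; and that an object of finite length cannot admit a strictly increasing infinite chain of subobjects. Both are standard consequences of the axioms, but the paper has been deliberately careful about inductive limits, so I would phrase this cleanly rather than wave at it. Once the hypothesis of Lemma \ref{limF} is checked, the conclusion $\Ext^1_{\mathcal C}(X, \bigcup_{n\geq 0} A_n) = \varinjlim \Ext^1_{\mathcal C}(X, A_n)$ is immediate from the case $k = 1$ of that lemma.
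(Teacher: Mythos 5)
Your proof is correct and follows exactly the route the paper takes: Lemma \ref{limext} is stated there simply as ``an immediate corollary'' of Lemma \ref{limF}, with the commutation of $\Hom_{\mathcal C}(X,-)$ with directed unions (for $X$ of finite length) left implicit. Your pullback-and-stabilization argument correctly fills in that verification, using exactness of $\varinjlim$ (LA2) and the ascending chain condition on the finite-length object $X$.
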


\section{The category of extended $\nabla(n)$-objects}\label{st(n)-cat}\label{section7}

\noindent 
Let ${\mathcal C}$ be an LA+ category as before.
In this section, we fix a nonnegative integer $n$ once and for all.

A filtration ${\mathcal G}_*$ of an object $M\in{\mathcal C}$ is called a {\it standard filtration of type} $\nabla(n)$ if,  for all $k\geq 0$, 
$\overline{\mathcal G}_k M$ is a direct sum of
copies of the standard object $\nabla(n)$, where the possible number of copies ranges from $0$ to
$\aleph_0$. An object $M\in{\mathcal C}$ is called an 
{\it extended $\nabla(n)$-object} if it  admits a standard filtration of type $\nabla(n)$.

Since the axioms of highest weight categories imply that 
extended $\nabla(n)$-objects are simply direct sums of copies of $\nabla(n)$, the notion of an extended $\nabla(n)$-object was not considered in \cite{cps88}.  This does not hold for LA+ categories in general, so we will need to study the structure of extended $\nabla(n)$-objects.

In the section, we show that the quotient $M/N$ of two
extended $\nabla(n)$-objects is  an extended $\nabla(n)$-object. We also show that successive extensions of extended $\nabla(n)$-objects are again  extended $\nabla(n)$-objects.

\subsection{Notation for Section \ref{st(n)-cat}}

 Let ${\mathcal S}^0(n)$ be the category
whose objects are direct sums of copies of $\nabla(n)$, and let
${\mathcal S}^1(n)$ be the category
of extended $\nabla(n)$-objects.

For  technical reasons, we also need the notion of a doubly
extended $\nabla(n)$-object.
A filtration ${\mathcal G}_*$ of an object $M\in{\mathcal C}$ is called an {\it extended standard filtration of type} $\nabla(n)$ if 
$\overline{\mathcal G}_k M$ is an extended $\nabla(n)$-object for all $k\geq 0$. An object $M\in{\mathcal C}$ is called a
{\it doubly extended $\nabla(n)$-object} if it  admits an
extended standard filtration of type $\nabla(n)$.
Let ${\mathcal S}^2(n)$ be the category of 
doubly extended $\nabla(n)$-objects. 

Fortunately we do not need to go further because it will be
proved that ${\mathcal S}^2(n)={\mathcal S}^1(n)$.

\subsection{A basic lemma}

\begin{lemma}\label{basic} 
Let $M$ be a doubly extended $\nabla(n)$-object.

\begin{enumerate} 

\item[(a)] We have

\centerline{$\Hom_{\mathcal C}(\nabla(n)/L(n),M)=0$ and 
$\Ext^1_{\mathcal C}(\nabla(n)/L(n),M)=0$.}

\item[(b)] The restriction map

\centerline{$\Hom_{\mathcal C}(\nabla(n),M)\to 
\Hom_{\mathcal C}(L(n),M)$}

\noindent is an isomorphism.

\item[(c)] The category  ${\mathcal S}^0(n)$ is 
semisimple.

\item[(d)] Let $N$ and $H$ be subobjects of $M$ such that
$N\in {\mathcal S}^2(n)$ and $H\in {\mathcal S}^0(n)$.
Then  $N\cap H$ belongs to ${\mathcal S}^0(n)$.

\end{enumerate}
\end{lemma}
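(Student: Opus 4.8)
\textbf{Proof proposal for Lemma \ref{basic}.}

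The plan is to prove the four assertions in the given order, using each to bootstrap the next. For part (a), I would argue by reducing to the case where $M$ is a single copy of $\nabla(n)$ and then further to the simple pieces. The key input is the definition of $\nabla(n)$ as the inverse image in $I(n)$ of $\mathcal{F}_{n-1}(I(n)/L(n))$: this means $\nabla(n)/L(n) \in \mathcal{C}(n-1)$, so its composition factors are among $L(0),\ldots,L(n-1)$, while every composition factor of $M$ (being a doubly extended $\nabla(n)$-object) is either $L(n)$ or one of $L(0),\ldots,L(n-1)$ appearing inside some $\nabla(n)/L(n)$. For the $\Hom$ vanishing: a nonzero map $\nabla(n)/L(n) \to M$ would have image a nonzero quotient of $\nabla(n)/L(n)$, hence an object in $\mathcal{C}(n-1)$; I would need to show $M$ has no nonzero subobject in $\mathcal{C}(n-1)$ — this follows because $\nabla(n)$ has socle $L(n)$ (it sits inside $I(n)$, whose socle is $L(n)$), so $\nabla(n)$, and by the filtration argument $M$ too, has socle a direct sum of copies of $L(n)$, and an object in $\mathcal{C}(n-1)$ cannot inject into such a socle-homogeneous object of "type $n$". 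For the $\Ext^1$ vanishing I would again filter: $\Ext^1(\nabla(n)/L(n), -)$ is "left exact up to the next $\Ext$", so it suffices to check $\Ext^1(\nabla(n)/L(n), \nabla(n)) = 0$, together with $\Hom(\nabla(n)/L(n),\cdot)=0$ to handle the long exact sequences; and $\Ext^1(\nabla(n)/L(n),\nabla(n))$ should vanish because $\nabla(n)$ is, by construction, the \emph{largest} extension of $L(n)$ by an object of $\mathcal{C}(n-1)$ inside $I(n)$ — more precisely, using the injective envelope $I(n)$ one computes $\Ext^1(\nabla(n)/L(n),\nabla(n))$ via the short exact sequence $0 \to \nabla(n) \to I(n) \to I(n)/\nabla(n) \to 0$ and the fact that $\Hom(\nabla(n)/L(n), I(n)/\nabla(n)) = 0$ because $I(n)/\nabla(n)$ has no composition factors $L(k)$ with $k \le n-1$ by maximality of $\mathcal{F}_{n-1}$.

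For part (b), apply $\Hom_{\mathcal{C}}(-,M)$ to the short exact sequence $0 \to L(n) \to \nabla(n) \to \nabla(n)/L(n) \to 0$ to get the exact sequence
$$0 \to \Hom(\nabla(n)/L(n),M) \to \Hom(\nabla(n),M) \to \Hom(L(n),M) \to \Ext^1(\nabla(n)/L(n),M).$$
By part (a) the outer two terms vanish, so the middle restriction map is an isomorphism. Part (c) then follows quickly: an object of $\mathcal{S}^0(n)$ is a direct sum of copies of $\nabla(n)$; a self-extension $0 \to \bigoplus \nabla(n) \to E \to \bigoplus \nabla(n) \to 0$ would be classified by $\Ext^1(\bigoplus \nabla(n), \bigoplus \nabla(n))$, and I want to show this vanishes, equivalently that $E$ again splits as a sum of copies of $\nabla(n)$. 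The cleanest route: $\Ext^1(\nabla(n), \bigoplus\nabla(n))$ — from the same short exact sequence, $\Ext^1(\nabla(n), -)$ surjects onto a subquotient controlled by $\Ext^1(L(n),-)$ and $\Ext^1(\nabla(n)/L(n),-)$; the latter vanishes by (a), and the $\Ext^1(L(n), \bigoplus \nabla(n))$ contribution must be killed because $\nabla(n)$, as the preimage of $\mathcal{F}_{n-1}$, already absorbs every extension of $L(n)$ by lower stuff — here one uses that $\bigoplus\nabla(n)$ has socle $\bigoplus L(n)$ and $\nabla(n)$ is the maximal subobject of $I(n)$ with socle $L(n)$ lying in $\mathcal{C}(n)$, so any extension of $L(n)$ by $\nabla(n)$ staying in $\mathcal{C}(n)$ is trivial. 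Care is needed with infinite direct sums, but Lemma \ref{limext} and Lemma \ref{env} let one commute $\Ext^1$ with the directed union defining $\bigoplus \nabla(n)$, reducing to finitely many summands at a time.

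Part (d) is where I expect the real work. Given $N \in \mathcal{S}^2(n)$ and $H \in \mathcal{S}^0(n)$ both inside $M$, I want $N \cap H \in \mathcal{S}^0(n)$. First observe $N \cap H$ is a subobject of $H = \bigoplus \nabla(n)$, so its socle is semisimple and $L(n)$-homogeneous, and all its composition factors lie among $\{L(0),\ldots,L(n)\}$. The strategy is to show $N \cap H$ is in fact a direct sum of copies of $\nabla(n)$ by exhibiting enough maps $\nabla(n) \to N\cap H$ hitting its socle: by part (b), $\Hom(\nabla(n), M) \xrightarrow{\sim} \Hom(L(n),M)$ for any doubly extended object $M$, and both $N$ and (a sufficiently large ambient object containing) $N\cap H$ qualify, so every socle element $L(n) \hookrightarrow N \cap H$ lifts to a map $\nabla(n) \to N$, and one checks this map lands in $N \cap H$ because its image, having socle inside $H$ and $H$ being $\nabla(n)$-filtered... — actually the lift into $H$ also exists by the same isomorphism applied to $H \in \mathcal{S}^0(n) \subseteq \mathcal{S}^2(n)$, and the two lifts (into $N$ and into $H$) of the same socle map must agree since the restriction to $\Hom(L(n),M)$ is injective on $\Hom(\nabla(n),M)$. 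Hence the image sits in $N \cap H$, and summing over a basis of the socle gives a map from a direct sum of $\nabla(n)$'s onto (a subobject containing the socle of) $N \cap H$; since $N\cap H$ is an essential extension of its socle and $\nabla(n)$ has simple socle, a counting/essentiality argument forces $N \cap H = \bigoplus \nabla(n)$, i.e. $N\cap H \in \mathcal{S}^0(n)$. The main obstacle I anticipate is handling the infinite (countable) multiplicities correctly — ensuring the collection of lifted maps is locally finite so their sum is a well-defined morphism, and that no "new" composition factors of type $< n$ sneak into $N \cap H$ beyond those already inside the $\nabla(n)$ summands; this is exactly where one must lean on the LA+ axioms (AX3) guaranteeing $\nabla(n)$ has finite length, on Lemma \ref{env} for injective direct sums, and on Lemma \ref{limext} for commuting $\Ext^1$ with directed unions.
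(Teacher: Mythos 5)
Your parts (a) and (b) follow the paper's line of reasoning: for (a), the $\Hom$ vanishing via socle type $L(n)$ versus $\nabla(n)/L(n)\in\mathcal{C}(n-1)$ is exactly right, as is the computation of $\Ext^1(\nabla(n)/L(n),\nabla(n))$ from the injective envelope and $\Hom(\nabla(n)/L(n),I(n)/\nabla(n))=0$; the bootstrap through $\mathcal{S}^0(n)$, $\mathcal{S}^1(n)$, $\mathcal{S}^2(n)$ by filtration and Lemma \ref{limext} is what the paper does, though you only gesture at it. Part (b) is the paper's argument verbatim.

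Part (c), however, has a genuine conceptual error. You try to prove that $\Ext^1_{\mathcal{C}}(\nabla(n),\bigoplus\nabla(n))=0$, and in particular you assert that ``$\nabla(n)$ already absorbs every extension of $L(n)$ by lower stuff,'' so that the $\Ext^1(L(n),\bigoplus\nabla(n))$ contribution dies. This is false in the setting the lemma is built for. In a generalized HW category, $\Ext^1_{\mathcal{C}}(\nabla(n),\nabla(n))$ is \emph{allowed to be nonzero} --- indeed axiom (HW2), which is exactly $\Ext^1(\nabla(n),\nabla(n))=0$, is the axiom that is dropped --- and this is precisely why the notion of an extended $\nabla(n)$-object has to be introduced at all. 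Concretely, a nonsplit extension $0\to\nabla(n)\to E\to L(n)\to 0$ gives an essential extension $E$ of $L(n)$ inside $I(n)$ that is strictly larger than $\nabla(n)$ (since $E/L(n)$ has $L(n)$ as a composition factor and so does not lie in $\mathcal{C}(n-1)$), which is perfectly compatible with the definition of $\nabla(n)$. What ``$\mathcal{S}^0(n)$ is semisimple'' actually means is that $\mathcal{S}^0(n)$ is semisimple \emph{as an abstract category} --- $\mathcal{S}^0(n)$ is not extension-closed in $\mathcal{C}$, so this does not entail any $\Ext^1_{\mathcal{C}}$ vanishing. The paper's proof makes no attempt to kill $\Ext^1$: it observes that part (b) gives $\Hom_{\mathcal{C}}(M,N)\simeq\Hom_{\mathcal{C}}(\Soc M,\Soc N)$ for $M,N\in\mathcal{S}^0(n)$, so $\Soc$ is a fully faithful (and essentially surjective) functor onto a category of $L(n)$-isotypic semisimple objects, hence an equivalence with a semisimple category. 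That is the whole proof, and it is robust precisely because it never needs $\Ext^1(\nabla(n),\nabla(n))$ to vanish.

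Part (d) starts with the right idea --- uniquely lift the socle inclusion $\Soc(N\cap H)\hookrightarrow N$, $\hookrightarrow H$, $\hookrightarrow M$ to morphisms from the object $X\in\mathcal{S}^0(n)$ with $\Soc X=\Soc(N\cap H)$, use uniqueness from (b) to conclude the lifts agree and hence land in $N\cap H$ --- but your closing ``counting/essentiality argument forces $N\cap H=\bigoplus\nabla(n)$'' is not a complete argument, and essentiality alone is not enough: $L(n)\subsetneq\nabla(n)$ have the same socle yet are unequal. What makes the paper's proof close is exactly part (c): since $X$ and $H$ both lie in $\mathcal{S}^0(n)$ and $X\hookrightarrow H$, the semisimplicity of $\mathcal{S}^0(n)$ splits off $X$ as a direct summand of $H$, so $H/X\in\mathcal{S}^0(n)$; then $(N\cap H)/X$ is a subobject of $H/X$ whose socle vanishes (because $\Soc X=\Soc(N\cap H)$), forcing $(N\cap H)/X=0$. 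Your worry about ``new composition factors of type $<n$ sneaking in'' is precisely the issue, and the resolution is this use of (c), not an essentiality count. So (d) is salvageable once (c) is repaired, but as written it has a gap at the final step.
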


\noindent {\it Proof of (a).} The socle of
a doubly extended $\nabla(n)$-object is a direct sum of copies of
$L(n)$, therefore ${\mathcal F}_{n-1} M=0$. Since
$\nabla(n)/L(n)$ is in ${\mathcal C}(n-1)$, we deduce that

\centerline{$\Hom_{\mathcal C}(\nabla(n)/L(n),M)=0$.}

\smallskip
We now prove that $\Ext^1_{\mathcal C}(\nabla(n)/L(n),\nabla(n))=0$.
By the definition of standard objects and the canonical filtration, we have

\centerline{${\mathcal F}_{n-1}(\nabla(n)/L(n))=\nabla(n)/L(n)$
and ${\mathcal F}_{n-1} (I(n)/\nabla(n))=0$.}

\noindent Therefore, 
$\Hom_{\mathcal C}(\nabla(n)/L(n),I(n)/\nabla(n))=0$, from which it follows that

\centerline{$\Ext^1_{\mathcal C}(\nabla(n)/L(n),\nabla(n))
=0$.}

\noindent 
Since $\nabla(n)/L(n)$ has finite length, the functor
$\Hom_{\mathcal C}(\nabla(n)/L(n),-)$ commutes with
directed unions.
  Using Lemma \ref{limext}, we successively prove that

\centerline{$\Ext^1_{\mathcal C}(\nabla(n)/L(n),M)=0$}

\noindent for 
$M\in{\mathcal S}^0(n)$, then for
$M\in{\mathcal S}^1(n)$ and finally for $M\in{\mathcal S}^2(n)$.

\smallskip
\noindent {\it Proof of (b).}  The kernel of the map

\centerline{$\Hom_{\mathcal C}(\nabla(n),M)\to 
\Hom_{\mathcal C}(L(n),M)$}

\noindent is $\Hom_{\mathcal C}(\nabla(n)/L(n),M)$ and 
its cokernel lies in
$\Ext^1_{\mathcal C}(\nabla(n)/L(n),M)$.
Thus $f$ is an isomorphism by Part (a).

\smallskip
\noindent {\it Proof of (c).} 
Let $M$ and $N$ be objects in ${\mathcal S}^0(n)$. Part (b) implies that

\centerline{$\Hom_{\mathcal C}(M,N)\simeq
\Hom_{\mathcal C}(\Soc M,\Soc N)$,}

\noindent and therefore the category ${\mathcal S}^0(n)$
is semisimple. \qed

\smallskip
\noindent {\it Proof of (d).} There is a unique object
$X\in {\mathcal S}^0(n)$ with 

\centerline{$\Soc X= \Soc(N\cap H)$.}

\noindent  By Part (b), the inclusion maps

\centerline{$\Soc(N\cap H)\to M$,
$\Soc(H\cap N)\to N$, and
$\Soc(H\cap N)\to H$}

\noindent  have unique extensions 

\centerline{$\psi:X\to M$, $\psi':X\to N$ and
$\psi'':X\to H$.}

\noindent
By uniqueness, they are equal, which proves that $X$ embeds  in $N\cap H$.

Since the socle of $X$ and $N\cap H$ are the same, the quotient
$(N\cap H)/X$ is in ${\mathcal C}(n-1)$. However,
$(N\cap H)/X$ lies in $H/X$, which 
is a direct sum of copies of $\nabla(n)$
by Part (c). Hence we have $(N\cap H)/X=0$, i.e.
$N\cap H=X$, which proves Assertion (d).
\qed

\subsection{The canonical standard filtration.}

\begin{lemma} Let $M\in{\mathcal C}$ 
and let $H\subseteq M$ be a subobject in ${\mathcal S}^0(n)$.

\begin{enumerate}
\item[(a)] If $M$ is an extended $\nabla(n)$-object, then 
$M/H$ is an extended $\nabla(n)$-object.

\item[(b)] If $M$ is a doubly extended $\nabla(n)$-object, then 
$M/H$ is a doubly extended $\nabla(n)$-object.
\end{enumerate}

\end{lemma}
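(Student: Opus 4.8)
The plan is to reduce both assertions to a statement about how a subobject $H\in\mathcal{S}^0(n)$ sits inside the steps of a standard (or extended standard) filtration of $M$, using Lemma~\ref{basic}(d) as the main engine. First I would prove (a). Let $\mathcal{G}_*$ be a standard filtration of type $\nabla(n)$ on $M$, so each $\overline{\mathcal{G}}_k M = \mathcal{G}_k M/\mathcal{G}_{k-1} M$ is a direct sum of copies of $\nabla(n)$. Set $H_k = H\cap\mathcal{G}_k M$; this is an exhaustive filtration of $H$ by subobjects. Since $H\in\mathcal{S}^0(n)\subseteq\mathcal{S}^2(n)$ and $\mathcal{G}_k M$ is an extended $\nabla(n)$-object, hence also in $\mathcal{S}^2(n)$, Lemma~\ref{basic}(d) applied inside $M$ gives $H_k = H\cap\mathcal{G}_k M\in\mathcal{S}^0(n)$. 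I then want to compute the successive quotients of the filtration $\mathcal{F}_k := (\mathcal{G}_k M + H)/H$ of $M/H$. There is a natural surjection $\overline{\mathcal{G}}_k M = \mathcal{G}_k M/\mathcal{G}_{k-1}M \twoheadrightarrow \mathcal{F}_k/\mathcal{F}_{k-1}$ whose kernel is the image of $(\mathcal{G}_k M\cap(\mathcal{G}_{k-1}M+H))$ in $\overline{\mathcal{G}}_k M$; a short diagram chase (modular law) identifies this kernel with the image of $H_k$ in $\overline{\mathcal{G}}_k M$, which is a quotient of $H_k$, hence an object of $\mathcal{S}^0(n)$ by Lemma~\ref{basic}(c) (that category being semisimple, quotients stay in it). So $\mathcal{F}_k/\mathcal{F}_{k-1}$ is the quotient of the semisimple object $\overline{\mathcal{G}}_k M$ (a sum of copies of $\nabla(n)$) by a subobject that is a sum of copies of $\nabla(n)$; by semisimplicity of $\mathcal{S}^0(n)$ this quotient is again a direct sum of copies of $\nabla(n)$. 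Hence $\mathcal{F}_*$ is a standard filtration of type $\nabla(n)$ on $M/H$, proving (a).

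For (b), the argument is the same one level up. Let $\mathcal{G}_*$ now be an \emph{extended} standard filtration of $M$, so each $\overline{\mathcal{G}}_k M$ is an extended $\nabla(n)$-object, i.e.\ lies in $\mathcal{S}^1(n)$. As before put $H_k = H\cap\mathcal{G}_k M\in\mathcal{S}^0(n)$ (again by Lemma~\ref{basic}(d), since $\mathcal{G}_k M\in\mathcal{S}^2(n)$), and form the filtration $\mathcal{F}_k = (\mathcal{G}_k M + H)/H$ of $M/H$. The same modular-law computation shows $\mathcal{F}_k/\mathcal{F}_{k-1}$ is the quotient of the extended $\nabla(n)$-object $\overline{\mathcal{G}}_k M$ by a subobject lying in $\mathcal{S}^0(n)$. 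By part (a) of this lemma, that quotient is again an extended $\nabla(n)$-object. Thus $\mathcal{F}_*$ is an extended standard filtration of $M/H$, proving (b). Note this is exactly the self-referential pattern the paper uses elsewhere: prove (a) directly from the $\mathcal{S}^0$-semisimplicity, then bootstrap (b) from (a).

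The main obstacle I anticipate is the bookkeeping in the diagram chase identifying the kernel of $\overline{\mathcal{G}}_k M\twoheadrightarrow \mathcal{F}_k/\mathcal{F}_{k-1}$ with (a quotient of) $H_k$: one must check carefully that $\mathcal{G}_k M\cap(\mathcal{G}_{k-1}M + H) = \mathcal{G}_{k-1}M + (H\cap\mathcal{G}_k M) = \mathcal{G}_{k-1}M + H_k$, which is the modular law and is legitimate in any abelian category, but the indices and the passage to the quotient mod $\mathcal{G}_{k-1}M$ need care. A secondary subtlety is that the filtrations here may be infinite (countably indexed), so one should confirm that $\bigcup_k \mathcal{F}_k = M/H$ — this is immediate since $\bigcup_k(\mathcal{G}_k M + H) = M$ and the quotient functor, being exact and commuting with the directed unions in question by the LA axioms, preserves this. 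No step beyond Lemma~\ref{basic}(c),(d) and elementary diagram chasing is needed.
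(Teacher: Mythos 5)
Your proposal is correct and follows essentially the same route as the paper: induce filtrations on $H$ and $M/H$, apply Lemma~\ref{basic}(d) to see $H\cap\mathcal{G}_kM\in\mathcal{S}^0(n)$, then use semisimplicity of $\mathcal{S}^0(n)$ (Lemma~\ref{basic}(c)) on the resulting short exact sequences of successive quotients, and bootstrap (b) from (a). The only cosmetic gap is where you say the image of $H_k$ in $\overline{\mathcal{G}}_kM$ is ``a quotient of $H_k$, hence in $\mathcal{S}^0(n)$'': to invoke Lemma~\ref{basic}(c) you should note explicitly that this image is $H_k/H_{k-1}$ with both $H_k$ and $H_{k-1}$ in $\mathcal{S}^0(n)$, since an arbitrary $\mathcal{C}$-quotient of an $\mathcal{S}^0(n)$-object need not lie in $\mathcal{S}^0(n)$.
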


\noindent
\begin{proof}  We assume that $M$ belongs to ${\mathcal S}^1(n)$  and prove Assertion (a).  Let 
${\mathcal G}_*$ be a standard filtration of type $\nabla(n)$ on $M$.
It induces a filtration ${\mathcal K}_*$ on $H$.  
Set $M'=M/H$, and let 
 ${\mathcal G}_*'$ be the filtration on $M'$  induced 
 by ${\mathcal G}_*$.

For any $n\geq 0$, there is an exact sequence

\centerline{
$0\to \overline{\mathcal K}_{n}\,H\to
\overline{\mathcal G}_n\,M\to
\overline{\mathcal G}'_n\,M'\to 0$.}

\noindent By Lemma \ref{basic}(b), $\overline{\mathcal K}_{n} H$
belongs to ${\mathcal S}^0(n)$, and by hypothesis, 
$\overline{\mathcal G}_n\,M$
also belongs to ${\mathcal S}^0(n)$. Thus by 
Lemma \ref{basic}(c), $\overline{\mathcal G}'_n\,M'$ is a direct
sum of standard modules, that is, ${\mathcal G}_*'$ is a standard filtration. Hence
$M/H$ is an extended $\nabla(n)$-object.

The proof of Assertion (b) is almost identical. We now assume
that $M$  belongs to ${\mathcal S}^2(n)$, and let
${\mathcal G}_*$ be an extended standard filtration of type $\nabla(n)$ for $M$.
We define induced filtrations ${\mathcal K}_*$ on $H$ and ${\mathcal G}_*'$ on  $M'=M/H$ as before. In the short exact sequence

\centerline{
$0\to \overline{\mathcal K}_{n}\,H\to
\overline{\mathcal G}_n\,M\to
\overline{\mathcal G}'_n\,M'\to 0$,}

\noindent $\overline{\mathcal K}_{n}\,H$ again belongs to
 ${\mathcal S}^0(n)$, and by hypothesis 
$\overline{\mathcal G}_n\,M$
belongs to ${\mathcal S}^1(n)$. Therefore by
Assertion (a), $\overline{\mathcal G}'_n\,M'$ is an 
extended $\nabla(n)$-object, which proves that ${\mathcal G}_*'$ is an
extended standard filtration. Hence
$M/H$ is a doubly extended $\nabla(n)$-object.
\end{proof}

Let $M$ be a doubly extended $\nabla(n)$-object.
By Lemma \ref{basic}(b),
$M$ contains a unique subobject ${\mathcal H}\,M$ in
${\mathcal S}^0(n)$ with $\Soc({\mathcal H}\,M)=\Soc\,M$.
We define a nondecreasing sequence of subobjects

\centerline{
${\mathcal H}_0 M\subseteq {\mathcal H}_1 M\subseteq {\mathcal H}_2 M\dots$}

\noindent as follows.   First set
${\mathcal H}_0 M={\mathcal H} M$. For $m> 0$, define
${\mathcal H}_{m} M$ as the inverse image of 
${\mathcal H}(M/ {\mathcal H}_{m-1} M)$ in $M$.
Thanks to the previous lemma, we verify by induction 
that each quotient $M/ {\mathcal H}_{m-1} M$ is an extended $\nabla(n)$-module. Therefore the procedure is well defined, and by construction, the successive quotients are in ${\mathcal S}^0(n)$.

\begin{lemma}\label{ext-non-ext}  The nondecreasing sequence  
$({\mathcal H}_m M)_{m\geq 0}$ is a  standard filtration for the doubly extended $\nabla(n)$-object $M$.  In particular, any doubly extended $\nabla(n)$-object is an
extended $\nabla(n)$-object.
\end{lemma}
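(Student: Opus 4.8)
\textbf{Proof plan for Lemma \ref{ext-non-ext}.}

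The plan is to show that the nondecreasing sequence $({\mathcal H}_m M)_{m\geq 0}$ exhausts $M$, i.e. that $M=\bigcup_{m\geq 0}{\mathcal H}_m M$; once this is known, the sequence is a filtration whose successive quotients lie in ${\mathcal S}^0(n)$ by construction, hence it is a standard filtration of type $\nabla(n)$, and the second assertion (${\mathcal S}^2(n)={\mathcal S}^1(n)$) is an immediate consequence since any doubly extended $\nabla(n)$-object is then exhibited as an extended $\nabla(n)$-object.

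First I would set $M'=M/\bigcup_{m\geq 0}{\mathcal H}_m M$ and argue that $M'=0$. By the definition of the sequence, the construction passes to $M/{\mathcal H}_{m-1}M$ at each stage, and Lemma \ref{basic}(b) guarantees that ${\mathcal H}(X)=0$ exactly when $\Soc X$ has no copies of $L(n)$; equivalently, ${\mathcal H}(X)\neq 0$ whenever $X\neq 0$ is a (nonzero) doubly extended $\nabla(n)$-object, since the socle of such an object is a nonzero direct sum of copies of $L(n)$. Passing to the limit: by the exactness of $\varinjlim$ (Axiom (LA2)), $M'$ is again a doubly extended $\nabla(n)$-object (being a quotient of $M$ by a subobject that is itself a directed union of doubly extended $\nabla(n)$-objects — one should check that $\bigcup_m {\mathcal H}_m M$ is a doubly extended $\nabla(n)$-object, which follows because its induced filtration has successive quotients in ${\mathcal S}^0(n)$). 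If $M'\neq 0$, then ${\mathcal H}(M')\neq 0$, so there is an element of $\Soc M'$ of type $L(n)$ not accounted for; lifting it back through the directed union and using that a simple socle summand sits inside some ${\mathcal H}_m M$ for finite $m$ (because $\Soc M = \bigcup_m \Soc({\mathcal H}_m M)$, as the socle functor commutes with the directed union here) yields a contradiction. Hence $M'=0$.

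The main obstacle I anticipate is the exhaustion/limit step: one must be careful that $\bigcup_m {\mathcal H}_m M$ really is a doubly extended $\nabla(n)$-object and that its socle already equals $\Soc M$. For the latter, the point is that any simple subobject of $M$ isomorphic to $L(n)$ lies in ${\mathcal H}_0 M={\mathcal H}M$ by the very definition of ${\mathcal H}M$ (the unique ${\mathcal S}^0(n)$-subobject with full socle), so $\Soc M=\Soc({\mathcal H}_0 M)=\Soc(\bigcup_m{\mathcal H}_m M)$; then $M'$ has the same socle structure forced by being doubly extended, and ${\mathcal H}M'\neq 0$ unless $M'=0$. The verification that each $M/{\mathcal H}_{m-1}M$ is an extended $\nabla(n)$-object — needed to make the inductive construction legitimate — is exactly the content of the previous lemma applied with $H={\mathcal H}_{m-1}M/{\mathcal H}_{m-2}M\in{\mathcal S}^0(n)$, so that part is routine. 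The only genuinely delicate point is the interchange of $\Soc$ (or ${\mathcal H}$) with the countable directed union, which is where Axioms (LA2) and (LA3) are doing the real work.
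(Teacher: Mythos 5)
Your route --- setting $M'=M/\bigcup_m{\mathcal H}_m M$ and arguing $M'=0$ by contradiction --- is genuinely different from the paper's. The paper instead introduces the socle filtration $(\Soc^m M)_{m\geq 0}$, observes that ${\mathcal H}_m M\supseteq\Soc^m M$ (a short induction: the image of $\Soc^m M$ in $M/{\mathcal H}_{m-1}M$ is a quotient of the semisimple object $\Soc^m M/\Soc^{m-1}M$, hence lies in $\Soc(M/{\mathcal H}_{m-1}M)\subseteq{\mathcal H}(M/{\mathcal H}_{m-1}M)$), and then invokes Axiom (LA3), which gives $M=\bigcup_m\Soc^m M$. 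That direct inclusion avoids all the limit-interchange worries you flag in your last paragraph.

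The gap in your version is the ``lifting back'' step, and it is a real one. A simple subobject $L\simeq L(n)$ of $M'$ lives in a quotient of $M$, not in $M$ itself, so the identity $\Soc M=\bigcup_m\Soc({\mathcal H}_m M)$ --- which is in any case vacuous, since $\Soc({\mathcal H}_m M)=\Soc M$ for every $m$ by the very definition of ${\mathcal H}$ --- does not touch $L$ and cannot produce a contradiction on its own. To finish along your lines you would need to: use (LA3) to choose a finite-length subobject $X\subseteq M$ mapping onto $L$ in $M'$; note via (LA2) that $X\cap\bigcup_m{\mathcal H}_m M=\bigcup_m(X\cap{\mathcal H}_m M)$ stabilizes at some $X\cap{\mathcal H}_m M$ by finite length; conclude that $L\simeq X/(X\cap{\mathcal H}_m M)$ embeds in $M/{\mathcal H}_m M$ and hence lies in ${\mathcal H}(M/{\mathcal H}_m M)={\mathcal H}_{m+1}M/{\mathcal H}_m M$; deduce $X\subseteq{\mathcal H}_{m+1}M$, contradicting $X\not\subseteq\bigcup_m{\mathcal H}_m M$. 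That chain is the actual content of the exhaustion claim and your sketch does not supply it. Relatedly, your assertion that $M'$ is itself a doubly extended $\nabla(n)$-object is not justified: the previous lemma treats quotients by a single ${\mathcal S}^0(n)$-subobject, not by a directed union, and the system $M/{\mathcal H}_0 M\to M/{\mathcal H}_1 M\to\cdots$ consists of surjections, so one cannot simply pass to the limit. The corrected argument does not need this claim anyway: it suffices that each $M/{\mathcal H}_m M$ for finite $m$ is doubly extended, which is what iterating the previous lemma gives.
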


\noindent
\begin{proof} It has just been proved that the successive quotients
are direct sums of copies of $\nabla(n)$.
To prove that the family $({\mathcal H}_m M)_{m\geq 0}$  is a standard filtration, it remains to prove that $M=\bigcup_{m\geq 0}{\mathcal H}_m M$.

For $m\geq 0$, the {\it $m^{th}$-socle} of $M$, denoted by
$\Soc^m\,M$,   is inductively defined by  

\begin{enumerate}

\item[(a)] $\Soc^0\,M=\Soc\,M$  and

\item[(b)] $\Soc^m\,M$ is the inverse image of
$\Soc(M/\Soc^{m-1} M)$ in $M$.

\end{enumerate}

By Axiom (LA3), we have 
$M=\bigcup_{n\geq 0}\,\Soc^m\,M$. Since ${\mathcal H}_m M$ contains
$\Soc^m\,M$, we have $M=\bigcup_{m\geq 0}{\mathcal H}_m M$.

Therefore the  family of subobjects  
$({\mathcal H}_m M)_{m\geq 0}$ exhausts $M$.
\end{proof}

\bigskip

The filtration ${\mathcal H}_*$ is called the {\it  canonical standard filtration of type} $\nabla(n)$ for $M$.

\subsection{Quotients of extended $\nabla(n)$-objects}

\begin{lemma}\label{st1} Let $M,\,N$ be extended $\nabla(n)$-objects with $N\subseteq M$. 

Then the quotient $M/N$ is an extended $\nabla(n)$-object.

\end{lemma}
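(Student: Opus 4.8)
The plan is to exploit the canonical standard filtration machinery developed in the preceding lemmas, reducing the statement to the already-established fact that doubly extended $\nabla(n)$-objects coincide with extended $\nabla(n)$-objects (Lemma \ref{ext-non-ext}). Given $N \subseteq M$ with both in $\mathcal{S}^1(n)$, I would first equip $N$ with its canonical standard filtration $\mathcal{H}_* N$, so that $N = \bigcup_{m \geq 0} \mathcal{H}_m N$ with each successive quotient $\overline{\mathcal{H}}_m N \in \mathcal{S}^0(n)$. This exhibits $M/N$ as an iterated (possibly infinite) quotient: informally, $M/N$ is obtained from $M$ by successively dividing out the images of the pieces $\mathcal{H}_m N$, each step being a quotient by a subobject that becomes, at that stage, a direct sum of copies of $\nabla(n)$.

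The key technical step is the following. Consider the directed family $M_m := M / \mathcal{H}_m N$. I claim by induction on $m$ that each $M_m$ is an extended $\nabla(n)$-object and that the image $\ol{\mathcal{H}}_m N$ of $\mathcal{H}_m N$ in $M_{m-1}$ is a subobject lying in $\mathcal{S}^0(n)$ — indeed it is a quotient of $\overline{\mathcal{H}}_m N \in \mathcal{S}^0(n)$, hence in $\mathcal{S}^0(n)$ by Lemma \ref{basic}(c) (semisimplicity). Granting this, $M_m = M_{m-1} / (\text{subobject in } \mathcal{S}^0(n))$, and by the previous lemma (part (a): quotient of an extended $\nabla(n)$-object by an $\mathcal{S}^0(n)$-subobject is again extended) we conclude $M_m \in \mathcal{S}^1(n)$, closing the induction. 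The base case $M_0 = M/\mathcal{H}_0 N$ is this same lemma applied directly, since $\mathcal{H}_0 N = \mathcal{H} N \in \mathcal{S}^0(n)$.

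Finally I would assemble $M/N$ from the $M_m$. One has a directed system of surjections $M = M_{-1} \twoheadrightarrow M_0 \twoheadrightarrow M_1 \twoheadrightarrow \cdots$ whose kernels stabilize to $N$, so $M/N = \varprojlim$ — but inverse limits are awkward in this setting. Instead, the cleaner route is to directly produce a standard filtration on $M/N$: start from a standard filtration $\mathcal{G}_*$ of type $\nabla(n)$ on $M$, pass to the induced filtration $\mathcal{G}'_*$ on $M/N$, and use the short exact sequence
$$0 \to \overline{\mathcal{G}_k M \cap N}/(\cdots) \to \overline{\mathcal{G}}_k M \to \overline{\mathcal{G}'}_k (M/N) \to 0.$$
Here the relevant subobject of $\overline{\mathcal{G}}_k M$ is $(\mathcal{G}_k M \cap N + \mathcal{G}_{k-1} M)/\mathcal{G}_{k-1}M$, which is a subobject of $\overline{\mathcal{G}}_k M \cong \bigoplus \nabla(n)$; by Lemma \ref{basic}(d) (with $\overline{\mathcal{G}}_k M \in \mathcal{S}^0(n) \subseteq \mathcal{S}^2(n)$ and the intersection-with-subobject statement) this subobject lies in $\mathcal{S}^0(n)$, so by Lemma \ref{basic}(c) the sequence splits and $\overline{\mathcal{G}'}_k(M/N)$ is a direct sum of copies of $\nabla(n)$. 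Hence $\mathcal{G}'_*$ is a standard filtration and $M/N \in \mathcal{S}^1(n)$.

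The main obstacle I anticipate is verifying that $(\mathcal{G}_k M \cap N + \mathcal{G}_{k-1}M)/\mathcal{G}_{k-1}M$ really is a subobject to which Lemma \ref{basic}(d) applies — this requires knowing that $N \cap \mathcal{G}_k M$ interacts well with the filtration, i.e. that $(N \cap \mathcal{G}_k M + \mathcal{G}_{k-1}M)/\mathcal{G}_{k-1}M$ equals $\overline{\mathcal{G}_k M} \cap (\text{image of } N)$ inside $\overline{\mathcal{G}}_k M$, a modular-law computation. Once that identification is in hand, Lemma \ref{basic}(d) gives membership in $\mathcal{S}^0(n)$ and the rest is the splitting from part (c). A subtlety to handle carefully is that the filtrations may be infinite and the successive quotients may be infinite direct sums of $\nabla(n)$, so one must check the directed-union axiom $M/N = \bigcup_k \mathcal{G}'_k(M/N)$ holds, which is immediate since quotient functors commute with directed unions by exactness of $\varinjlim$ (Axiom (LA2)).
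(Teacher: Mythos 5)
Your ``cleaner route'' has a genuine gap in the appeal to Lemma \ref{basic}(d). That lemma produces an object in $\mathcal{S}^0(n)$ only when the subobject is exhibited as the intersection, inside a doubly extended $\nabla(n)$-object, of a subobject in $\mathcal{S}^2(n)$ with a subobject in $\mathcal{S}^0(n)$. Your modular-law rewrite puts you in the ambient object $M/\mathcal{G}_{k-1}M$ with
$$(\mathcal{G}_k M \cap N + \mathcal{G}_{k-1}M)/\mathcal{G}_{k-1}M \;=\; \bigl((N+\mathcal{G}_{k-1}M)/\mathcal{G}_{k-1}M\bigr)\cap \overline{\mathcal{G}}_k M.$$
The factor $\overline{\mathcal{G}}_k M\in\mathcal{S}^0(n)$ is fine, but the other factor is $(N+\mathcal{G}_{k-1}M)/\mathcal{G}_{k-1}M\cong N/(N\cap\mathcal{G}_{k-1}M)$, and Lemma \ref{basic}(d) requires \emph{this} to be in $\mathcal{S}^2(n)$. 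Since $N\cap\mathcal{G}_{k-1}M$ is an essentially arbitrary subobject of the extended $\nabla(n)$-object $N$, knowing that the quotient is again a (doubly) extended $\nabla(n)$-object is precisely the content of Lemma \ref{st1} itself --- the argument is circular. Note also that merely being a subobject of $\overline{\mathcal{G}}_k M\in\mathcal{S}^0(n)$ gives nothing: when $\nabla(n)\neq L(n)$, the copy of $L(n)$ inside $\nabla(n)$ is a subobject not in $\mathcal{S}^0(n)$.

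The paper's proof breaks the circle by a specific choice: it takes $\mathcal{G}_*$ to be the \emph{canonical} standard filtration $\mathcal{H}_*$ of $M$ and proves by induction (uniqueness from Lemma \ref{basic}(b), with the base case resting on \ref{basic}(d)) the compatibility identity $\mathcal{H}_m N = N\cap\mathcal{H}_m M$. With this identity, the left term of the exact sequence $0 \to \overline{\mathcal{H}}_m N \to \overline{\mathcal{H}}_m M \to \overline{\mathcal{G}}_m(M/N) \to 0$ is by construction the $m$-th layer of the canonical standard filtration of $N$, hence lies in $\mathcal{S}^0(n)$ with no further argument; semisimplicity (Lemma \ref{basic}(c)) then finishes. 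You were close in your first paragraph, where you brought in the canonical filtration, but you applied it to $N$ rather than to $M$ and then abandoned the idea; the missing observation is that the canonical standard filtration of $M$ restricts on $N$ to the canonical standard filtration of $N$, which is what makes the filtration induced on $M/N$ standard.
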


\noindent
\begin{proof}  It follows from the definition that
${\mathcal H}_0 N=N\cap {\mathcal H}_0 M $. Thus by induction, we have

\centerline{${\mathcal H}_m N=N\cap {\mathcal H}_m M$,}

\noindent for all $m\geq 0$. Let ${\mathcal G}_*$ be the filtration
of $M/N$ induced by the filtration ${\mathcal H}_*$ of
$M$.

There  is a short exact sequence

\centerline{$0\to \overline{\mathcal H}_m N\to
\overline{\mathcal H}_m M\to
\overline{\mathcal G}_m M/N\to 0$.}

\noindent By hypothesis, $\overline{\mathcal H}_m N$ and 
$\overline{\mathcal H}_m M$ are direct sums 
of the standard object $\nabla(n)$. By Lemma \ref{basic}(c),
the category  ${\mathcal S}^0(n)$ is semi-simple. Thus
 $\overline{\mathcal G}_m (M/N)$ is a direct sum of standard objects 
 $\nabla(n)$.
Hence ${\mathcal G}_*$ is a standard filtration of $M/N$, which proves that  $M/N$ is an extended $\nabla(n)$-object.
\end{proof}

\section{Good filtrations  in LA+ categories}\label{section8}

\noindent Let ${\mathcal C}$ be an LA+ category.
A filtration ${\mathcal G}_*$ of an object $M\in{\mathcal C}$ is called {\it good} if,  for all $k\geq 0$, 
$\overline{\mathcal G}_k M$ is an extended $\nabla(n)$-object
for some $n=n(k)$.

To the best of our knowledge, good filtrations originated in
the context of algebraic group representations, in the work of
Humphreys and Jantzen \cite{Hu}. In this setting, 
the canonical filtration first appeared in \cite{Fr}, and our presentation closely follows \cite{M90}.

We will show that the canonical filtration of $M$ is good
whenever $M$ admits a good filtration.
Then we show that a quotient $M/N$   has a good  filtration whenever both objects $M$ and $N$ have  good  filtrations. 
In the context of algebraic groups, the last statement is due to Donkin \cite{D85}.
Our generalization is based on the approach of
 \cite{M00}.

\subsection{The functor $R^1{\mathcal F}_m$}\label{tba}

Let $\mathcal{F}_0M\subseteq \mathcal{F}_1M\subseteq \mathcal{F}_2M\subseteq \cdots$ be the canonical filtration defined in Section \ref{LA+}.  For $m\geq 0$, the endofunctor $\mathcal{F}_m:\mathcal{C}\rightarrow\mathcal{C}$ is left exact by construction. Let $R^1{\mathcal F}_m$ be its first right derived functor.

\begin{lemma}\label{F1} 

For any  $M$ with a good filtration, we have

\centerline{$R^1{\mathcal F}_m M=0$,}

\noindent for all $m\geq 0$.

\end{lemma}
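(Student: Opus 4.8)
The plan is to reduce the vanishing of $R^1\mathcal{F}_m M$ to the vanishing statements already established for the categories $\mathcal{S}^i(n)$, by an induction on $m$ and a d\'evissage along the good filtration of $M$. First I would note that a good filtration of $M$ refines to (or can be compared with) the canonical filtration $\mathcal{F}_*$, so the successive quotients $\overline{\mathcal{F}}_k M = \mathcal{F}_k M/\mathcal{F}_{k-1}M$ are extended $\nabla(k)$-objects; in particular each $\mathcal{F}_k M$ is a successive extension of extended $\nabla(j)$-objects for $j \le k$. The key point is that for a \emph{fixed} level $n$, an extended $\nabla(n)$-object $E$ satisfies $\mathcal{F}_{n-1}E = 0$ and $\mathcal{F}_m E = E$ for $m \ge n$, because its socle is a sum of copies of $L(n)$ (Lemma \ref{basic}(a) gives $\mathcal{F}_{n-1}E=0$) and, by definition of $\nabla(n)$ and the canonical filtration, $\nabla(n)/L(n)$ lies in $\mathcal{C}(n-1)$ while $I(n)/\nabla(n)$ has no subobject in $\mathcal{C}(n-1)$, forcing $\mathcal{F}_m\nabla(n)=\nabla(n)$ once $m\ge n$; the extended case then follows by passing to directed unions (Lemma \ref{limext}).

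Next I would compute $R^1\mathcal{F}_m$ on the building blocks. Fix an injective resolution $0\to E\to I^0\to I^1\to\cdots$ of an extended $\nabla(n)$-object $E$; since injectives $I(k)$ satisfy $\mathcal{F}_m I(k)=I(k)$ or are themselves $\mathcal{F}_m$-acyclic in the relevant range, the complex $\mathcal{F}_m I^\bullet$ has cohomology computing $R^\bullet\mathcal{F}_m E$. When $m \ge n$ we have $\mathcal{F}_m E = E$ and more strongly the functor $\mathcal{F}_m$ acts as the identity on the whole injective resolution (an injective envelope $I(n)$ of $L(n)$ has $\mathcal{F}_m I(n) = I(n)$ for $m\ge n$, since $I(n)/\nabla(n)$ has socle among $L(k)$ with $k\le n$ — wait, this needs care), so $R^1\mathcal{F}_m E = 0$; when $m < n$ we have $\mathcal{F}_m E = 0$ and I would argue $\mathcal{F}_m I^0 = 0$ as well (the injective envelope of a sum of $L(n)$'s has $\mathcal{F}_{n-1} = 0$ by the same socle argument, using that $\mathcal{F}_{n-1}I(n)=\mathcal{F}_{n-1}(I(n)/L(n))\cap(\text{preimage of }L(n))$ vanishes once one checks $\mathcal{F}_{n-1}(I(n)/L(n))$ meets the socle trivially), whence $R^1\mathcal{F}_m E = 0$ trivially. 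Either way $R^1\mathcal{F}_m$ annihilates every extended $\nabla(n)$-object.

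Finally I would run the d\'evissage: write the good filtration $M = \bigcup_p \mathcal{G}_p M$ with $\overline{\mathcal{G}}_p M$ an extended $\nabla(n_p)$-object. From the short exact sequences $0\to \mathcal{G}_{p-1}M\to \mathcal{G}_p M\to \overline{\mathcal{G}}_p M\to 0$ and the long exact sequence for $R^\bullet\mathcal{F}_m$, induction on $p$ gives $R^1\mathcal{F}_m(\mathcal{G}_p M)=0$ for all finite $p$, using the two-term vanishing $R^1\mathcal{F}_m(\overline{\mathcal{G}}_p M)=0$ from the previous paragraph together with $R^1\mathcal{F}_m(\mathcal{G}_{p-1}M)=0$; one also needs that $\mathcal{F}_m$ is exact on such extensions in degree zero, i.e.\ $\mathcal{F}_m$ of the sequence stays exact, which holds because $\mathcal{F}_m$ restricted to the subcategory generated by extended $\nabla(j)$-objects with $j\le m$ (resp.\ $>m$) is the identity (resp.\ zero) functor. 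Then passing to the directed union $M=\bigcup_p\mathcal{G}_p M$ and invoking Lemma \ref{limF} (the hypothesis that $\mathcal{F}_m$ commutes with directed unions is immediate from its construction as the ``biggest subobject in $\mathcal{C}(m)$'', since $\mathcal{C}(m)$ is closed under directed unions) yields $R^1\mathcal{F}_m M = \varinjlim_p R^1\mathcal{F}_m(\mathcal{G}_p M)=0$.

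\textbf{Main obstacle.} The delicate point is the behaviour of $\mathcal{F}_m$ on injective envelopes $I(n)$ — specifically, verifying that $\mathcal{F}_m I(n)=I(n)$ for $m\ge n$ and that $\mathcal{F}_m I(n)=0$ for $m<n$ — and, relatedly, checking that a good filtration is compatible enough with $\mathcal{F}_*$ that the long-exact-sequence argument does not mix levels in a harmful way. I expect this to require the finite-length hypothesis (AX3) on $\nabla(n)$ and a careful use of Lemma \ref{basic}, possibly reorganizing the good filtration so that its steps are sorted by non-decreasing level before running the induction.
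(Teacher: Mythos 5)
Your overall strategy (reduce to the building blocks $\nabla(n)$, then climb through $\mathcal{S}^0(n)$, $\mathcal{S}^1(n)$ and a full good filtration via Lemma~\ref{limF}) is exactly the paper's strategy. But there is a genuine gap in your computation on the building blocks, and you flag it yourself with ``wait, this needs care'': you want to conclude $R^1\mathcal{F}_m E=0$ for $E$ an extended $\nabla(n)$-object when $m\ge n$ by arguing that $\mathcal{F}_m$ acts as the identity on an injective resolution of $E$, resting on the claim that $\mathcal{F}_m I(n)=I(n)$ for $m\ge n$. This claim is false in an LA+ category (and already false in an HW category, where Axiom~(HW2) forces $\mathcal{F}_n I(n)=\nabla(n)\neq I(n)$): the quotient $I(n)/\nabla(n)$ typically has composition factors $L(k)$ with $k>n$, so $\mathcal{F}_m I(n)$ omits those factors and is a proper subobject. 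The justification you offer --- that the socle of $I(n)/\nabla(n)$ consists of $L(k)$ with $k\le n$ --- is exactly backwards; by the definition of the canonical filtration and of $\nabla(n)$ as the preimage of $\mathcal{F}_{n-1}(I(n)/L(n))$, the socle of $I(n)/\nabla(n)$ has \emph{no} constituents $L(k)$ with $k\le n$.

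The paper sidesteps this entirely. It never claims $\mathcal{F}_m I(n)=I(n)$; instead it works with the short exact sequence $0\to\nabla(n)\to I(n)\to I(n)/\nabla(n)\to 0$, identifies $R^1\mathcal{F}_m\nabla(n)$ with the cokernel of $\phi:\mathcal{F}_m I(n)\to \mathcal{F}_m(I(n)/\nabla(n))$, and analyzes the preimage $X\subseteq I(n)$ of $\mathcal{F}_m(I(n)/\nabla(n))$. For $m<n$, the extension $0\to\nabla(n)/L(n)\to X/L(n)\to\mathcal{F}_m(I(n)/\nabla(n))\to 0$ shows $X/L(n)\in\mathcal{C}(n-1)$, so $X\subseteq\nabla(n)$ and the target of $\phi$ is zero. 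For $m\ge n$, the extension $0\to\nabla(n)\to X\to\mathcal{F}_m(I(n)/\nabla(n))\to 0$ shows $X\in\mathcal{C}(m)$, so $X\subseteq\mathcal{F}_m I(n)$ and $\phi$ hits all of $\mathcal{F}_m(I(n)/\nabla(n))$ --- surjectivity without any claim about $\mathcal{F}_m I(n)$ being all of $I(n)$. This is the missing idea; your proposed fix (sorting the good filtration by level) does not touch the real difficulty, which is local to a single $\nabla(n)$ and its injective envelope. Once $R^1\mathcal{F}_m\nabla(n)=0$ is in hand, the passage to $\mathcal{S}^0(n)$, $\mathcal{S}^1(n)$, and then to arbitrary good filtrations via Lemma~\ref{limF} proceeds as you describe.
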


\noindent
\begin{proof}  First we prove that
$R^1{\mathcal F}_m \nabla(n)=0$, or equivalently, that

\centerline{$\phi: {\mathcal F}_m I(n)\to {\mathcal F}_m (I(n)/\nabla(n))$}

\noindent is surjective. Let $X$ be the inverse image in $I(n)$ of
${\mathcal F}_m (I(n)/\nabla(n))$. By definition, there is a short exact sequence

\centerline{$0\to\nabla(n)\to X\to 
{\mathcal F}_m (I(n)/\nabla(n))\to 0$.}

  If $m<n$, then $X/L(n)$ belongs to
${\mathcal C}(n-1)$ and therefore $X\subseteq\nabla(n)$. It follows that ${\mathcal F}_m (I(n)/\nabla(n))=0$, and $\phi$ is obviously surjective.

If $m\geq n$, then ${\mathcal F}_m X=X$, so $X\subseteq\mathcal{F}_mI(n)$ and $\phi$ is again surjective.

Therefore, it is  proved that $R^1{\mathcal F}_m \nabla(n)=0$.
The functor ${\mathcal F}_m$  commutes with
directed unions. Using Lemma \ref{limF}, we succesively prove that
$R^1{\mathcal F}_m M=0$ for $M\in{\mathcal S}^0(n)$, then for
$M\in{\mathcal S}^1(n)$, and finally for any object $M$ with a good filtration.
\end{proof}
 
\subsection{The Zassenhaus condition}\label{Zass}

Let $M$ be an object in ${\mathcal C}$ with a filtration
${\mathcal F}_*'$. Let
${\mathcal G}_*'$ be another filtration of $M$. 
In the next subsection,
${\mathcal F}_*'$ will be the canonical filtration 
${\mathcal F}_*$ and ${\mathcal G}_*'$ will be a good filtration
${\mathcal G}_*$, but for the present subsection
${\mathcal F}_*'$ and ${\mathcal G}_*'$ are arbitrary, but given once and for all.

We say that ${\mathcal G}_*'$ satisfies 
the {\it Zassenhaus condition} if,
for all $k,n\geq 0$,
$${\mathcal G}_{k}' M\cap {\mathcal F}_n' M=
{\mathcal G}_{k-1}' M\cap {\mathcal F}_n' M,\hbox{\  or\ } {\mathcal G}_{k}' M\subseteq {\mathcal G}_{k-1}' M
+{\mathcal F}_n' M.$$
For  $n\geq 0$, let $A(n)$ be the set of integers $k$ such that
$${\mathcal G}_{k}' M\cap {\mathcal F}_n' M\neq
{\mathcal G}_{k-1}' M\cap {\mathcal F}_n' M\hbox{\ and\ }{\mathcal G}_{k}' M\cap {\mathcal F}_{n-1}' M=
{\mathcal G}_{k-1}' M\cap {\mathcal F}_{n-1}' M.$$
Obviously, the Zassenhaus condition is equivalent to 

\centerline{${\mathcal G}_{k}' M\subseteq {\mathcal G}_{k-1}' M
+ {\mathcal F}_n' M$ for all $n\geq 0$ and
$k\in A(n)$.}

\begin{Zlemma}\label{Zlemma}  Assume that the filtration
${\mathcal G}_{*}'$ satisfies the Zas\-sen\-haus condition.

Then $\overline{\mathcal F}_n' M$ has a filtration whose
successive quotients are the objects
$\overline{\mathcal G}_{k}' M$ for $k\in A(n)$.
\end{Zlemma}

\noindent
\begin{proof}  For abelian groups, the proof is obvious. 
In general, the proof is identical, except that it requires the Axiom (LA2).\end{proof}

\subsection{Objects with a good filtration}

\begin{lemma}\label{canonical-good} Let $M$ be an object in ${\mathcal C}$ with a good filtration, and let $(\mathcal{F}_nM)_{n\geq 0}$ be the canonical filtration of $M$. Then

\begin{enumerate}
\item[(a)] $\overline{\mathcal F}_n M$ is an 
extended $\nabla(n)$-object for all $n\geq 0$, and 

\item[(b)] the canonical  filtration of $M$
is good.
\end{enumerate}
\end{lemma}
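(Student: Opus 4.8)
The statement to prove is Lemma~\ref{canonical-good}: if $M$ has a good filtration, then each successive quotient $\overline{\mathcal F}_n M$ of its canonical filtration is an extended $\nabla(n)$-object, and the canonical filtration is itself good. The natural strategy is to apply the Zassenhaus Lemma~\ref{Zlemma} to the pair consisting of the canonical filtration $\mathcal F_*$ (playing the role of $\mathcal F_*'$) and a chosen good filtration $\mathcal G_*$ of $M$ (playing the role of $\mathcal G_*'$), after first verifying that $\mathcal G_*$ can be arranged to satisfy the Zassenhaus condition. So the proof breaks into three stages: (1) reduce to a good filtration whose successive quotients are each extended $\nabla(k)$-objects for a \emph{single} $k=k(m)$, and refine it so that the indices $k(m)$ occur in nondecreasing order; (2) show this refined filtration satisfies the Zassenhaus condition with respect to $\mathcal F_*$; (3) feed this into Lemma~\ref{Zlemma} to conclude that $\overline{\mathcal F}_n M$ is filtered by those $\overline{\mathcal G}_k M$ with $k\in A(n)$ — each of which is an extended $\nabla(n)$-object — and then use Lemma~\ref{st1} (quotients of extended $\nabla(n)$-objects are extended $\nabla(n)$-objects) together with the fact that an extension of extended $\nabla(n)$-objects is one (Lemma~\ref{ext-non-ext}, via ${\mathcal S}^2(n)={\mathcal S}^1(n)$) to assemble $\overline{\mathcal F}_n M$ as an extended $\nabla(n)$-object. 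Assertion (b) is then immediate from (a) by the definition of a good filtration.

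\textbf{Carrying out the reduction.} First I would replace the given good filtration by one whose successive quotients are \emph{nonzero} extended $\nabla(k)$-objects with the indices $k$ weakly increasing in the filtration degree. This is a bookkeeping step: given a good filtration, each $\overline{\mathcal G}_m M$ is an extended $\nabla(n(m))$-object; I can interleave the canonical standard filtrations of type $\nabla(n(m))$ of each piece (Lemma~\ref{ext-non-ext}) to get a finer filtration whose successive quotients are objects in some ${\mathcal S}^0(n)$, i.e.\ direct sums of a single $\nabla(n)$. Sorting so that these $n$-values are nondecreasing requires that I can permute adjacent steps with indices out of order; this uses that $\mathrm{Ext}^1(\nabla(n), \nabla(m)) = 0$ when $n < m$ — which follows from $\mathrm{Hom}(\nabla(n), I(m)/\nabla(m))=0$, since the socle of $I(m)/\nabla(m)$ involves only $L(j)$ with $j < m$ and $\nabla(n)/L(n)\in{\mathcal C}(n-1)$; this is exactly the kind of computation already done in the proof of Lemma~\ref{basic}(a). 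The output is a good filtration $\mathcal G_*$ with $\overline{\mathcal G}_k M \in {\mathcal S}^0(n_k)$ and $n_0 \le n_1 \le n_2 \le \cdots$.

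\textbf{Verifying Zassenhaus and concluding.} With $\mathcal G_*$ so arranged, the Zassenhaus condition with respect to $\mathcal F_*$ should hold essentially because of the monotonicity: $\mathcal G_k M$ is built from simples of type $L(j)$ with $j \le n_k$, so $\mathcal G_k M \subseteq \mathcal F_{n_k} M$, and comparing with $\mathcal G_{k-1} M$ the jump happens entirely inside one canonical layer. Concretely, for fixed $n$ the set $A(n)$ of indices $k$ where $\mathcal G_k M \cap \mathcal F_n M$ strictly grows (but $\mathcal G_k M \cap \mathcal F_{n-1} M$ does not) is precisely $\{k : n_k = n\}$, and for such $k$ one checks $\mathcal G_k M \subseteq \mathcal G_{k-1} M + \mathcal F_n M$ directly — indeed $\mathcal G_k M \subseteq \mathcal F_n M$ already. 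Lemma~\ref{Zlemma} then gives a filtration of $\overline{\mathcal F}_n M$ with successive quotients $\overline{\mathcal G}_k M$ for $k \in A(n)$, each lying in ${\mathcal S}^0(n)$; hence $\overline{\mathcal F}_n M$ is a doubly extended $\nabla(n)$-object, and by Lemma~\ref{ext-non-ext} it is an extended $\nabla(n)$-object, proving (a). Part (b) follows since a filtration all of whose successive quotients are extended $\nabla(n)$-objects for varying $n$ is, by definition, good.

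\textbf{The main obstacle.} The delicate point is the reduction step: showing that an arbitrary good filtration can be refined and \emph{reordered} into one with monotone indices and single-$\nabla$ quotients. The refinement is handled cleanly by Lemma~\ref{ext-non-ext}, but the reordering must be done with care in the locally artinian setting, where filtrations may be infinite and one cannot simply "swap two finite-length layers" cavalierly — one has to argue at the level of the whole infinite filtration, likely by an induction on filtration degree together with the relevant $\mathrm{Ext}^1$-vanishing and a limiting argument using Lemma~\ref{limext} to push the vanishing through directed unions. A secondary subtlety is making precise the claim "$\mathcal G_k M$ involves only simples $L(j)$ with $j \le n_k$" so that the inclusion $\mathcal G_k M \subseteq \mathcal F_{n_k} M$ is rigorous; this is where the definition of ${\mathcal C}(n)$ and the maximality property of $\mathcal F_n$ do the work.
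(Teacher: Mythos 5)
Your framework is right — run the Zassenhaus Lemma \ref{Zlemma} against $\mathcal{F}_*$ and a good filtration $\mathcal{G}_*$, then invoke Lemma \ref{ext-non-ext} to pass from doubly extended to extended $\nabla(n)$-objects — but there is a genuine gap at the point where the Zassenhaus condition must be verified, and you have flagged it yourself without resolving it.

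You propose to \emph{first reorder} the good filtration into one with weakly increasing indices $n_0\leq n_1\leq\cdots$, so that $\mathcal{G}_kM\subseteq\mathcal{F}_{n_k}M$ and the Zassenhaus condition becomes automatic. In the locally artinian setting this reordering cannot be produced by adjacent $\mathrm{Ext}^1$-swaps. If, say, both $\nabla(1)$ and $\nabla(3)$ occur as layers infinitely often, then a $\Z_{\geq 0}$-indexed exhaustive filtration with monotone indices must pack \emph{all} the $\nabla(1)$-material into a single extended $\nabla(1)$-layer before any $\nabla(3)$-layer appears; the bottom layer is then forced to equal $\mathcal{F}_1M$, and inductively the reordered filtration is forced to \emph{be} the canonical filtration. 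Your reordering step thus presupposes essentially what the lemma asserts, and the one-step vanishing $\Ext^1_{\mathcal C}(\nabla(n),\nabla(m))=0$ for $n<m$ (which is correct) does not globalize to an infinite filtration.

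The paper's proof does no reordering. The ingredient you are missing is Lemma \ref{F1}: $R^1\mathcal{F}_mN=0$ for every $N$ admitting a good filtration. Applied to $N=\mathcal{G}_{k-1}M$, it yields $\mathcal{F}_{n-1}(M/\mathcal{G}_{k-1}M)=\mathcal{F}_{n-1}M/(\mathcal{G}_{k-1}M\cap\mathcal{F}_{n-1}M)$ and $\mathcal{F}_{n}(M/\mathcal{G}_{k-1}M)=(\mathcal{F}_{n}M+\mathcal{G}_{k-1}M)/\mathcal{G}_{k-1}M$. The first identity shows that for $k\in A(n)$, the layer $\overline{\mathcal{G}}_kM$ meets $\mathcal{F}_nM$ but not $\mathcal{F}_{n-1}(M/\mathcal{G}_{k-1}M)$, forcing it to be an extended $\nabla(n)$-object; the second gives $\mathcal{G}_kM\subseteq\mathcal{G}_{k-1}M+\mathcal{F}_nM$, i.e.\ the Zassenhaus condition — all for the given, unmodified $\mathcal{G}_*$. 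Once that is in place, your concluding steps (Zassenhaus Lemma, then Lemma \ref{ext-non-ext} to identify $\overline{\mathcal{F}}_nM$ as an extended $\nabla(n)$-object) coincide with the paper's Step~2, and (b) follows as you say.
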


\noindent
\begin{proof} Let ${\mathcal G}_{*}$ be a good filtration of
$M$. For  $n\geq 0$, let $A(n)$ be the set of integers $k$ such that
$${\mathcal G}_{k} M\cap {\mathcal F}_n M\neq
{\mathcal G}_{k-1} M\cap {\mathcal F}_n M\hbox{\ and\ }{\mathcal G}_{k} M\cap {\mathcal F}_{n-1} M={\mathcal G}_{k-1} M\cap {\mathcal F}_{n-1} M.$$ 
{\it Step 1.} We claim that

\begin{enumerate}

\item[(1)] for any $n\geq 0$, $A(n)$ is the set of $k$ such that
$\overline{\mathcal G}_{k}M$ is a nonzero 
extended $\nabla(n)$-object and

\item[(2)] ${\mathcal G}_{*}$ satisfies the Zassenhaus condition.

\end{enumerate}

Let $k\in A(n)$.  By definition, $\overline{\mathcal G}_{k} M$
is an extended $\nabla(m)$-object for some integer $m$.

Viewed as a subobject of 
$M/{\mathcal G}_{k-1} M$, 
$\overline{\mathcal G}_{k} M$ has a nontrivial  intersection
with $(\mathcal{G}_{k-1}M+\mathcal{F}_nM)/\mathcal{G}_{k-1}M$.
Therefore we have $m\leq n$ and $\overline{\mathcal{G}}_kM=0$.

By Lemma \ref{F1} we have $R^1{\mathcal F}_{n-1}({\mathcal G}_{k-1} M)=0$. It follows that

\centerline{
${\mathcal F}_{n-1} (M/{\mathcal G}_{k-1} M)=
{\mathcal F}_{n-1} M/({\mathcal G}_{k-1} M\cap {\mathcal F}_{n-1} M)$.}

\noindent Since $\overline{\mathcal G}_{k} M$ does not intersect $(\mathcal{G}_{k-1}M+\mathcal{F}_nM)/\mathcal{G}_{k-1}M ={\mathcal F}_{n-1} M/({\mathcal G}_{k-1} M\cap {\mathcal F}_{n-1} M)$, we conclude that $m>n-1$.  That is, $m=n$, which proves the first claim.

Since $\overline{\mathcal G}_{k} M$ is an extended $\nabla(n)$-object, 
it lies in ${\mathcal F}_{n} (M/{\mathcal G}_{k-1} M)$.
By Lemma \ref{F1}, we have $R^1{\mathcal F}_{n} ({\mathcal G}_{k-1} M)=0$, and therefore

\centerline{
${\mathcal F}_{n} (M/{\mathcal G}_{k-1} M)=
{\mathcal F}_{n} M/({\mathcal G}_{k-1} M\cap {\mathcal F}_{n} M)=(\mathcal{F}_nM+\mathcal{G}_{k-1}M)/\mathcal{G}_{k-1}M$,}

\noindent which proves that

\centerline{${\mathcal G}_{k} M\subseteq {\mathcal G}_{k-1} M
+{\mathcal F}_n M$.}

\smallskip
\noindent {\it Step 2} Since the Zassenhaus condition is satisfied,
Lemma \ref{Zlemma} implies that
$\overline{\mathcal F}_n M$ has a filtration whose
successive quotients are the objects
$\overline{\mathcal G}_{k} M$ for $k\in A(n)$.
We have also proved that for $k\in A(n)$, the object
$\overline{\mathcal G}_{k} M$  is an extended $\nabla(n)$-object. By 
Lemma \ref{ext-non-ext},   any doubly
extended $\nabla(n)$-object is an extended $\nabla(n)$-object. Hence
the canonical filtration of $M$ is good.
\end{proof}

\subsection{Quotients of objects with a good filtration}

We conclude with the main result of 
this section.

\begin{proposition}\label{good} Let $N\subseteq M$ be objects in 
${\mathcal C}$ with a good filtration. Then

\begin{enumerate}
\item[(a)]  $M/N$ has a good filtration, and

\item[(b)] 
${\mathcal F}_n(M/N)={\mathcal F}_n M/{\mathcal F}_n N$,
for all $n\geq 0$.
\end{enumerate}

\end{proposition}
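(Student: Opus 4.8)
The plan is to prove (a) and (b) simultaneously, using the machinery built up in the preceding subsections, especially the $R^1{\mathcal F}_m$-vanishing of Lemma \ref{F1} and the canonical-filtration result Lemma \ref{canonical-good}. First I would reduce to working with the canonical filtrations: by Lemma \ref{canonical-good}, since $M$ and $N$ both have good filtrations, their canonical filtrations ${\mathcal F}_* M$ and ${\mathcal F}_* N$ are good, with $\overline{\mathcal F}_n M$ and $\overline{\mathcal F}_n N$ extended $\nabla(n)$-objects. The key structural input is that ${\mathcal F}_n N = N \cap {\mathcal F}_n M$; this holds because ${\mathcal F}_n N$ is by construction the largest subobject of $N$ lying in ${\mathcal C}(n)$, and $N \cap {\mathcal F}_n M$ is a subobject of $N$ lying in ${\mathcal C}(n)$, while conversely ${\mathcal F}_n N \subseteq N$ lies in ${\mathcal C}(n)$ so sits inside ${\mathcal F}_n M$. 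So ${\mathcal F}_n N = N \cap {\mathcal F}_n M$.

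Next I would analyze the filtration ${\mathcal G}_n := ({\mathcal F}_n M + N)/N$ on $M/N$. Using Lemma \ref{F1}, which gives $R^1{\mathcal F}_{n}(N) = 0$ (as $N$ has a good filtration), the left-exact sequence $0 \to {\mathcal F}_n N \to {\mathcal F}_n M \to {\mathcal F}_n(M/N)$ becomes right-exact, i.e. the natural map ${\mathcal F}_n M / {\mathcal F}_n N \to {\mathcal F}_n(M/N)$ is surjective; injectivity is clear from ${\mathcal F}_n N = N \cap {\mathcal F}_n M$. This already gives part (b): ${\mathcal F}_n(M/N) = {\mathcal F}_n M/{\mathcal F}_n N \cong ({\mathcal F}_n M + N)/N$, so the filtration ${\mathcal G}_*$ agrees with the canonical filtration of $M/N$. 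For part (a), I would then examine the successive quotients: from the exact sequences
\[
0 \to \overline{\mathcal F}_n N \to \overline{\mathcal F}_n M \to \overline{\mathcal G}_n(M/N) \to 0,
\]
where the first two terms are extended $\nabla(n)$-objects (by goodness of the canonical filtrations of $N$ and $M$), Lemma \ref{st1} shows $\overline{\mathcal G}_n(M/N)$ is an extended $\nabla(n)$-object. Hence ${\mathcal G}_*$ is a good filtration of $M/N$, proving (a).

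The main obstacle I anticipate is justifying the exactness step $R^1{\mathcal F}_n(N) = 0$ in the right place — one must be careful that Lemma \ref{F1} applies to $N$ (which it does, since $N$ has a good filtration) and that the long exact sequence of the derived functor ${\mathcal F}_n$ applied to $0 \to N \to M \to M/N \to 0$ is being used correctly, namely that ${\mathcal F}_n M \to {\mathcal F}_n(M/N) \to R^1{\mathcal F}_n(N) = 0$ is exact. A secondary subtlety is confirming that ${\mathcal G}_*$ is genuinely a filtration in the sense of Section \ref{LA+}, i.e. that $M/N = \bigcup_n {\mathcal G}_n$; this follows from $M = \bigcup_n {\mathcal F}_n M$ together with exactness of directed unions (Axiom (LA2)). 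Once these points are handled, the computation of successive quotients and the appeal to Lemma \ref{st1} are routine, and parts (a) and (b) drop out together.
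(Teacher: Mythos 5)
Your proof is correct and follows essentially the same route as the paper's: establish part (b) via the long exact sequence of the derived functor $\mathcal{F}_n$ together with $R^1\mathcal{F}_n(N)=0$ from Lemma \ref{F1}, then obtain (a) by identifying the successive quotients of the induced filtration on $M/N$ as quotients of extended $\nabla(n)$-objects via Lemma \ref{canonical-good} and Lemma \ref{st1}. You simply unpack the steps the paper states tersely (e.g., the identity $\mathcal{F}_n N = N\cap\mathcal{F}_n M$ and the exhaustion of $M/N$), which are genuinely implicit in the argument.
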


\noindent
\begin{proof} Let $n\geq 0$. Assertion (b) is a consequence of 
Lemma \ref{F1}.  It follows that 
$\overline{\mathcal F}_n (M/N)$ is isomorphic to
$\overline{\mathcal F}_n M/\overline{\mathcal F}_n N$.

By Lemma \ref{canonical-good},  $\overline{\mathcal F}_n M$ and 
$\overline{\mathcal F}_n N$ are extended $\nabla(n)$-objects.
Thus by Lemma \ref{st1}, $\overline{\mathcal F}_n (M/N)$
is an extended $\nabla(n)$-object.  Hence the canonical filtration of
$M/N$ is good, which proves Assertion (a).
\end{proof}

\noindent {\bf Remark:} The proof of Lemma
\ref{good} shows that the canonical filtration of
$M/N$ is induced by the canonical filtration of $M$.
By contrast, when $M\subseteq N$ are extended $\nabla(n)$-objects,
the proof of Lemma \ref{st1} shows that the canonical standard filtration of
$M$ induces a standard filtration of $M/N$, but not necesarily the canonical one.

\section{Definition of highest weight categories\\ and generalized highest weight categories}\label{defGHW}

\noindent
Cline, Parshall, and Scott \cite{cps88} introduced the notion of a {\em highest weight category} (HW category). The most important exemples of HW categories are the category $\mathcal{O}$, the category of
modular representations of Chevalley groups, and the category of perverse sheaves with respect to a stratification by cells. 

These categories enjoy many   cohomology vanishing results.  For the applications we have in mind, we will need to work in a somewhat more general setting.  We relax one axiom of a HW category to define the notion of a {\em generalized highest weight category} (generalized HW category), and prove a useful cohomology vanishing result in this setting.

%Here we will relax one axiom of a HW category
%to define the notion  of a generalized highest weight category (generalized HW category). We will nevertheless prove one cohomology vanishing result.

\subsection{Definition of HW categories and generalized HW categories}\label{definitionofHWcat}

Let ${\mathcal C}$ be a LA+ category, with the notation given in Section \ref{LA+}. The category ${\mathcal C}$ is called a {\it HW category} if it satisfies the following two axioms:

\begin{enumerate}

\item[(HW1)]\hskip1cm the canonical filtration of $I(n)$ is good, and

\item[(HW2)]\hskip1cm ${\mathcal F}_n I(n)=\nabla(n)$,
for all $n\geq 0$.

\end{enumerate}

When only the first axiom (HW1) is satisfied, the category 
${\mathcal C}$ is called a {\it generalized HW category}.

Assume that ${\mathcal C}$ is  a generalized HW category. 
Since 

\centerline{$\Ext^1_{{\mathcal C}}(\nabla(n),\nabla(n))=
\Hom_{\mathcal C}(\nabla(n),I(n)/\nabla(n))$,}

\noindent we conclude that
the axiom (HW2) is equivalent to

\hskip1cm $\Ext^1_{{\mathcal C}}(\nabla(n),\nabla(n))=0$, for all $n\geq 0$.

Thus, a HW category is a generalized HW category where the extended $\nabla(n)$-objects are direct sums of standard modules, for all $n\geq 0$.

\subsection{Examples of generalized HW categories}

For simplicity, we have assumed that the category
$\mathcal C$ contains countably many nonisomorphic simple objects. There are similar definitions when
$\mathcal C$ contains only finitely many objects.

\begin{enumerate}

\item[(a)] A  basic example of a HW category is $\Mod(UT_n(\K))$, where $UT_n(\K)$ is the associative algebra of $n\times n$ upper triangular matrices \cite{cps88}.

\item[(b)]For a
finite dimensional local $\K$-algebra $A$, let $UT_n(A)=UT_n(\K)\otimes A$. Then 
$\Mod(UT_n(A))$ is a basic example of a generalized HW category.

\item[(c)] With the usual  basis $\{e,h,f\}$ of $\fsl_2(\K)$,
let  ${\mathcal O}'_{0}$ be the category of $\fsl_2(\K)$-modules $M$ such that

\begin{enumerate}
\item[(i)] The Casimir operator $h^2/2 +ef+fe$ acts locally nilpotently,

\item[(ii)] $M =\bigoplus_{n\leq 0} M^{(n)}$, where
$M^{(n)}$ is the generalized eigenspace

\centerline{$M^{(n)}=\{m\in M\mid (h-n)^N.m=0$ for $N\gg 0\}$.}
\end{enumerate}

\noindent The category ${\mathcal O}'$ has two simple objects, the Verma module $S(0):=M(-2)$ of highest weight $-2$, and the trivial module $S(1)=\K$.  Relative to this order, 
${\mathcal O}'$  is a generalized HW category. We have $\nabla(0)=M(-2)$
and $\nabla(1)$ is the ${\mathcal O}$-dual of the Verma module $M(0)$.

This example easily generalizes to any simple Lie algebra.
\end{enumerate}

\subsection{About the definition of HW and generalized HW categories}

Our definition of HW categories is more constructive and essentially equivalent to the original one, namely Definition 3.1 of \cite{cps88}.  First, it is easy to see that the postulated object $A(n)$
in \cite{cps88} is necessarily the standard module $\nabla(n)$.
Next,  by Lemma \ref{canonical-good}(b), Axiom (HW1) is equivalent to
\begin{enumerate}
\item[{\rm (HW$1^\prime$)}] $I(n)/\nabla(n)$ has a good filtration.
\end{enumerate}
\noindent Therefore  ${\mathcal C}$ is  a generalized HW category
if and only if it satisfies the following axiom:
\begin{enumerate}
\item[{\rm (HW)}] For all $n\geq 0$, $I(n)/\nabla(n)$ has a  filtration
${\mathcal G}_*$  such that, for all $k\geq 0$,
$\overline{\mathcal G}_k (I(n)/\nabla(n))$
is an extended $\nabla(m_k)$-object for some $m_k>n$.
\end{enumerate}

However, in our setting, it would be unnatural to require  Axiom 3.1(c)(iii) of \cite{cps88}, namely that the multiplicities $[I(n):\nabla(m)]$ are always finite. 
That's why our definition of a good filtration is a bit different.  While Cline, Parshall, and Scott \cite{cps88} required that successive quotients be standard objects $\nabla(n)$, we require only that these quotients be extended $\nabla(n)$-objects.  Moreover, our index set $\Z_{\geq 0}$ has a minimal element.
Thus Axiom 3.1(b) of \cite{cps88} means that standard objects have finite length, i.e., this axiom is equivalent to our Axiom (AX3).

\section{ Ext-vanishing in generalized HW categories}

\noindent
In Sections \ref{section7} and \ref{section8}, we have exhausted all elementary properties of good filtrations in LA+ categories.  We now use these properties to prove an Ext-vanishing result for generalized HW categories.  For clarity, we have also included the corresponding statement for HW categories.

From now on, let ${\mathcal C}$ be a generalized HW category.

\subsection{Height of objects in the category ${\mathcal C}$}

By definition, the {\it height} $\Ht\, X$ of an object $X$ is the smallest integer $n$ such that

\centerline{$\mathcal F_n X\neq 0$.}

\noindent Equivalently, it is the smallest 
integer $n$ such that $X$ contains a copy of $L(n)$.
Therefore, $\Ht\, X=\Ht(\Soc\,X)$.

\begin{lemma}\label{ht1} Let $X$ be an object with a
good filtration,  and let $I\supseteq X$ be an injective envelope of $X$.

\begin{enumerate}
\item[(a)] The quotient  $I/X$ has a  good filtration.

\item[(b)] We have $\Ht\,I= \Ht\,X$ and  $\Ht\,I/X\geq \Ht\,X$.

\item[(c)] Moreover, if ${\mathcal C}$ is a HW category, then
$\Ht\,I/X> \Ht\,X$.
\end{enumerate}

\end{lemma}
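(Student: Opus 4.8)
The plan is to handle the three assertions in turn, reducing each to the good-filtration machinery of the preceding section, with only part (c) invoking the extra axiom (HW2). For part (a) I would first pin down $I$ itself. Since $I$ is an injective envelope, $\Soc I=\Soc X$, and by Axiom (LA3) this socle is a countable direct sum $\bigoplus_i L(n_i)$. By Lemma~\ref{env}(a) the object $\bigoplus_i I(n_i)$ is injective, with socle $\bigoplus_i L(n_i)=\Soc X$; by injectivity the inclusion $\Soc X\hookrightarrow\bigoplus_i I(n_i)$ extends to a map $X\to\bigoplus_i I(n_i)$, which is injective because it is injective on socles and every nonzero object of an LA category has nonzero socle. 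Hence $\bigoplus_i I(n_i)$ is an injective envelope of $X$, so $I\simeq\bigoplus_i I(n_i)$. Each $I(n_i)$ has a good filtration by (HW1), and a countable direct sum of objects with good filtrations again has one: interleave the filtrations along a diagonal enumeration of the pairs (summand, step), so that each successive quotient of the interleaved filtration coincides with a single successive quotient of one of the originals, hence is an extended $\nabla(n)$-object. Thus $I$ has a good filtration, and since $X$ does by hypothesis, Proposition~\ref{good}(a) shows that $I/X$ has a good filtration.

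Part (b) is then quick. The equality $\Ht I=\Ht X$ follows from $\Ht(-)=\Ht(\Soc(-))$ together with $\Soc I=\Soc X$. For the inequality set $h=\Ht X$; by definition of height ${\mathcal F}_{h-1}X=0={\mathcal F}_{h-1}I$, so Proposition~\ref{good}(b) (applicable now that both objects carry good filtrations) gives ${\mathcal F}_{h-1}(I/X)={\mathcal F}_{h-1}I/{\mathcal F}_{h-1}X=0$, that is, $\Ht(I/X)\ge h=\Ht X$.

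For part (c) I would push the same computation one step further and show that ${\mathcal F}_h(I/X)={\mathcal F}_hI/{\mathcal F}_hX=0$, where $h=\Ht X=\Ht I$. Here (HW2) enters in the form: every extended $\nabla(n)$-object is a direct sum of copies of $\nabla(n)$. By Lemma~\ref{canonical-good} applied to $X$ and to $I$, together with ${\mathcal F}_{h-1}=0$ in both, the objects ${\mathcal F}_hX$ and ${\mathcal F}_hI$ are extended $\nabla(h)$-objects, hence lie in ${\mathcal S}^0(h)$; in particular ${\mathcal F}_hI\simeq\nabla(h)^{(S)}$ for some index set $S$. Since $\Soc\nabla(h)=L(h)$ and $\nabla(h)/L(h)\in{\mathcal C}(h-1)$ by the very definition of $\nabla(h)$, it follows that ${\mathcal F}_hI/\Soc{\mathcal F}_hI\in{\mathcal C}(h-1)$. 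The key observation is that $\Soc{\mathcal F}_hX=\Soc{\mathcal F}_hI$: since ${\mathcal F}_{h-1}X={\mathcal F}_{h-1}I=0$ both sides are $L(h)$-isotypic, and each is the full $L(h)$-isotypic component of the common socle $\Soc X=\Soc I$. Hence ${\mathcal F}_hX\supseteq\Soc{\mathcal F}_hI$, so ${\mathcal F}_hI/{\mathcal F}_hX$ is a quotient of ${\mathcal F}_hI/\Soc{\mathcal F}_hI$ and therefore lies in ${\mathcal C}(h-1)$; but by Lemma~\ref{st1} it is also an extended $\nabla(h)$-object, whose socle is $L(h)$-isotypic by Lemma~\ref{basic}(a). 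An object lying in ${\mathcal C}(h-1)$ with $L(h)$-isotypic socle is zero, so ${\mathcal F}_hX={\mathcal F}_hI$, whence ${\mathcal F}_h(I/X)=0$ and $\Ht(I/X)\ge h+1>\Ht X$.

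The step I expect to require the most care is the reduction in part (a): identifying $I$ with $\bigoplus_i I(n_i)$ and arranging the interleaved filtration so that its successive quotients are genuine extended $\nabla(n)$-objects rather than sums of objects attached to different weights. Once $I$ is known to carry a good filtration, parts (b) and (c) are essentially formal, resting on the functoriality of the ${\mathcal F}_m$ and on the already-established Lemmas~\ref{basic}, \ref{st1}, \ref{canonical-good} and Proposition~\ref{good}.
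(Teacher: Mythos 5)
Your proof is correct and follows essentially the same route as the paper: part (a) via Proposition~\ref{good}(a) once $I$ is known to have a good filtration, part (b) via Proposition~\ref{good}(b), and part (c) by showing ${\mathcal F}_n I={\mathcal F}_n X$ so that ${\mathcal F}_n(I/X)=0$. The only substantive elaborations are that you spell out why an arbitrary injective envelope has a good filtration (decomposing $I\simeq\bigoplus_i I(n_i)$ and interleaving filtrations), which the paper treats as immediate from (HW1) and Lemma~\ref{env}, and that in (c) you obtain ${\mathcal F}_n X={\mathcal F}_n I$ through Lemma~\ref{st1} together with a socle comparison, whereas the paper invokes Lemma~\ref{basic}(b) to place $\nabla(n)^m={\mathcal F}_n I$ inside $X$; these are two packagings of the same uniqueness-of-socle constraint.
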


\noindent
\begin{proof} By Axiom (HW1), $I$ has a good 
 filtration.  Therefore, $I/X$ has a  good filtration by 
 Proposition \ref{good}(a), which proves Assertion (a).
 
  Since $X$ and $I$ share the same socle, we have
 $\Ht\,I= \Ht\,X$. Set $n=\Ht\,X$.  By Proposition \ref{good}(b),
  we have

\centerline{
${\mathcal F}_{n-1}(I/X)=
{\mathcal F}_{n-1} I/{\mathcal F}_{n-1} X$,}

\noindent hence ${\mathcal F}_{n-1}(I/X)=0$ and 
$\Ht(I/X)\geq n=\Ht(X)$, which proves Assertion (b).

Assume now that ${\mathcal C}$ is a HW category.
The socle of $X$ can be decomposed as

\centerline{$\Soc\,X=L(n)^m\oplus Y$,}

\noindent where $\Ht\,Y>n$, and $m$ is a positive integer or $\aleph_0$. By Proposition \ref{good}(a), there is a short exact sequence

\centerline{
$0\to {\mathcal F}_{n} X \to {\mathcal F}_{n}I
\to  {\mathcal F}_{n}(I/X)\to 0$.}

Since $I(n)^m$ is the injective envelope of
${\mathcal F}_{n} \Soc X=L(n)^m$, we have
${\mathcal F}_{n} I={\mathcal F}_{n} I(n)^m=
\nabla(n)^m$. By Lemma \ref{basic}(b),
the subobject $\nabla(n)^m$ lies in $X$, and therefore

\centerline{${\mathcal F}_{n}(I/X)=0$,}

\noindent i.e. $\Ht(I/X)>n=\Ht\,X$.
\end{proof}

\subsection{ Minimal morphisms and minimal resolutions}

A morphism $f:X\to Y$ is called
{\it minimal} if 

\centerline{$\Soc X\subseteq \Ker\,f$ and
$\Soc Y\subseteq \Image\,f$.}

\noindent A resolution 

\centerline{$0\to X\buildrel d_0\over\rightarrow 
I_0\buildrel d_1\over\rightarrow I_1\buildrel d_2\over\rightarrow I_2\buildrel d_3\over\rightarrow\cdots$}

\noindent
of an object $X$ is called {\it minimal} if 
the morphisms $d_i$ are minimal for all $i>0$. Equivalently, it means that the differential of the complex 

\centerline{$\Soc\,I_0\to \Soc\,I_1\to \Soc\,I_2\to\cdots$}

\noindent  is zero. 
It follows from Lemma \ref{env} that each object $M\in{\mathcal C}$ admits a minimal injective resolution: indeed, we can take an injective envelope $I_0$ 
of $M$ and then build the resolution inductively, letting $I_n$ be an injective envelope of $\Coker\,d_{n-1}$ for all $n>0$.

\begin{lemma}\label{heigth} Let $M$ be an object with a
good  filtration and let

\centerline{$0\to M \buildrel d_0\over\rightarrow
I_0\buildrel d_1\over\rightarrow I_1\buildrel d_2\over\rightarrow I_2\buildrel d_3\over\rightarrow\cdots$}

\noindent be a minimal injective resolution of $M$.  Then

\begin{enumerate}

\item[(a)] $\Ker\,d_i$ has a good filtration, for all $i\geq 1$ and

\item[(b)] $\Ht\,M=\Ht\,I_0\leq \Ht\,I_1
\leq \Ht\,I_2\leq \cdots$.

\item[(c)] Moreover, if ${\mathcal C}$ is an HW category, we have

\centerline{$\Ht\,M=\Ht\,I_0< \Ht\,I_1
< \Ht\,I_2<\dots$.}

\end{enumerate}

\end{lemma}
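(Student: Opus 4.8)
The plan is to run a single induction on $i$, stripping off one term of the resolution at a time and invoking Lemma \ref{ht1} at each step. Set $Z_1:=\Image d_0\cong M$ and, for $i\geq 1$, $Z_{i+1}:=\Ker d_{i+1}=\Image d_i$; by exactness $Z_i=\Ker d_i$ for every $i\geq 1$, and there are short exact sequences $0\to Z_i\to I_{i-1}\to Z_{i+1}\to 0$. The key point, which is precisely what minimality of the resolution buys us, is that $I_{i-1}$ is an \emph{injective envelope} of $Z_i$ (not merely an injective object containing it): indeed $I_0$ is an injective envelope of $M\cong Z_1$ by construction, and for $i\geq 2$ the object $I_{i-1}$ is an injective envelope of $\Coker d_{i-2}\cong \Image d_{i-1}=Z_i$. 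Correspondingly the cokernel of $Z_i\hookrightarrow I_{i-1}$ is $I_{i-1}/Z_i\cong\Image d_i=Z_{i+1}$.

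First I would prove, by induction on $i\geq 1$, the statement: \emph{$Z_i$ has a good filtration}. The base case $i=1$ is the hypothesis, since $Z_1\cong M$. For the inductive step, assume $Z_i$ has a good filtration; applying Lemma \ref{ht1} to the injective envelope $I_{i-1}\supseteq Z_i$, part (a) gives that $Z_{i+1}\cong I_{i-1}/Z_i$ has a good filtration, part (b) gives $\Ht I_{i-1}=\Ht Z_i$ and $\Ht Z_{i+1}\geq\Ht Z_i$, and, when $\mathcal C$ is a HW category, part (c) upgrades the last inequality to $\Ht Z_{i+1}>\Ht Z_i$. Since $\Ker d_i=Z_i$, this inductive statement is exactly Assertion (a).

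For Assertions (b) and (c) I would then simply concatenate the data produced along the way: $\Ht I_{i-1}=\Ht Z_i$ for all $i\geq 1$, so in particular $\Ht I_0=\Ht Z_1=\Ht M$, while the chain $\Ht Z_1\leq\Ht Z_2\leq\cdots$ (respectively $\Ht Z_1<\Ht Z_2<\cdots$ in the HW case) translates into $\Ht M=\Ht I_0\leq\Ht I_1\leq\Ht I_2\leq\cdots$ (respectively with all inequalities strict). I do not anticipate a genuine obstacle here: everything substantive is already contained in Lemma \ref{ht1}, and the only thing requiring a little care is the identification of the connecting cokernels $I_{i-1}/Z_i$ with $Z_{i+1}$ together with the verification that each $I_{i-1}$ really is an injective envelope of $Z_i$ and not just an injective overobject --- minimality of the resolution is indispensable for this, since otherwise Lemma \ref{ht1}(b),(c) would not apply.
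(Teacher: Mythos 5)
Your proof is correct and follows the same route as the paper, which also reduces to the short exact sequences $0\to\Image d_i\to I_i\to \Image d_{i+1}\to 0$ and applies Lemma \ref{ht1} inductively. You have merely spelled out the one point the paper leaves implicit --- that minimality of the resolution is exactly what guarantees each $I_{i-1}$ is an injective \emph{envelope} of $\Ker d_i$, which is what Lemma \ref{ht1}(b),(c) require.
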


\noindent
\begin{proof} There are  short exact sequences

\centerline{$0\to\Image d_i\to I_i\to \Image d_{i+1}\to 0$.}

 \noindent Thus the lemma is proved inductively using
 Lemma \ref{ht1}.
\end{proof}

\subsection{ An $Ext$-vanishing theorem}

 We will now state an easy  cohomology vanishing result in generalized HW categories. For clarity, we will also 
state the stronger version obtained by specializing to HW categories.

Let $M$ be an object of a generalized HW category $\mathcal{C}$, and
let $k\geq 0$. We inductively define a sequence
$m(0)\leq m(1)\leq m(2)\leq\cdots$ in $\Z_{\geq 0}\cup\{\infty\}$
as follows. With the convention that $\Min\,\emptyset=\infty$, let

\centerline{$m(0)=\Min 
\{n\in\Z_{\geq 0}\mid
\Ext^{n}_{\mathcal C}(L(0),M)= 0\}$,}

and for $k>0$, let

\centerline{$m(k)=\Min\{n\geq m(k-1)\mid  
\Ext^{n}_{\mathcal C}(L(k),M)=0\}$.}

\begin{thm}\label{generalized HWvanishing} Assume that  $M$ has a good filtration. 

\begin{enumerate}

\item[(a)] If ${\mathcal C}$ is a generalized HW category, then 

\centerline{$\Ext_{\mathcal C}^n(L(k),M)=0$, for any 
$n\geq m(k)$.}

\item[(b)] If  ${\mathcal C}$ is an HW category, 
then $m(0)\leq 1$ and

\centerline{$m(k)\leq 1+m(k-1)$, for all $k>0$.}

\noindent In particular, all the $m(k)$ are finite when $\mathcal{C}$ is an HW category.

\end{enumerate}
\end{thm}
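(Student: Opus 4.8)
The plan is to compute all the $\Ext$-groups from a \emph{minimal} injective resolution of $M$ and then feed in the monotonicity of heights from Lemma~\ref{heigth}. So I would start by fixing a minimal injective resolution $0\to M\xrightarrow{d_0} I_0\xrightarrow{d_1} I_1\xrightarrow{d_2}\cdots$, which exists by the construction recalled after the definition of minimal resolutions (using Lemma~\ref{env}). Since $M$ has a good filtration, Lemma~\ref{heigth} tells us that each $\Ker\,d_i$ again has a good filtration and that the heights $h_n:=\Ht\,I_n$ satisfy $h_0\le h_1\le h_2\le\cdots$, with strict inequalities $h_n<h_{n+1}$ when ${\mathcal C}$ is an HW category. (We use the convention $h_n=\infty$ once $I_n=0$.)

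The key reduction is the identity $\Ext^n_{\mathcal C}(L(k),M)\cong\Hom_{\mathcal C}\bigl(L(k),\Soc\,I_n\bigr)$. Indeed, minimality means the differentials of the complex $\Soc\,I_0\to\Soc\,I_1\to\cdots$ vanish, and since $L(k)$ is simple, every morphism $L(k)\to I_n$ factors through $\Soc\,I_n$; hence the complex $\Hom_{\mathcal C}(L(k),I_\bullet)$ has zero differentials, and its $n$-th cohomology is $\Hom_{\mathcal C}(L(k),\Soc\,I_n)$. Because $\Soc\,I_n$ is a sum of simple objects all of index $\ge h_n$ (by the very definition of height), this gives the crucial vanishing $\Ext^n_{\mathcal C}(L(k),M)=0$ whenever $h_n>k$, and more precisely $\Ext^n_{\mathcal C}(L(k),M)\ne 0$ forces $k\ge h_n$.

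For part (a), I would establish by induction on $k$ that $m(k)=\Min\{\,n\mid h_n\ge k+1\,\}$. When $k=0$, the group $\Ext^n_{\mathcal C}(L(0),M)$ vanishes precisely when $L(0)$ is absent from $\Soc\,I_n$, i.e.\ when $h_n\ge 1$, so $m(0)=\Min\{n\mid h_n\ge 1\}$. For the inductive step, for $n\ge m(k-1)$ one has $h_n\ge k$, so $L(k)$ lies in $\Soc\,I_n$ iff $h_n=k$; thus $\Ext^n_{\mathcal C}(L(k),M)=0$ iff $h_n\ge k+1$, and since $h_\bullet$ is nondecreasing the set $\{n\mid h_n\ge k+1\}$ is already contained in $\{n\ge m(k-1)\}$, giving $m(k)=\Min\{n\mid h_n\ge k+1\}$. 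Monotonicity of $h_\bullet$ then yields $h_n\ge k+1>k$ for every $n\ge m(k)$, hence $\Ext^n_{\mathcal C}(L(k),M)=0$, which is Assertion (a).

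For part (b), the strict monotonicity of Lemma~\ref{heigth}(c) gives $h_{n+1}\ge h_n+1$ for all $n$. Then $h_1\ge h_0+1\ge 1$ forces $m(0)\le 1$, and $h_{m(k-1)+1}\ge h_{m(k-1)}+1\ge k+1$ forces $m(k)\le m(k-1)+1$; iterating, $m(k)\le k+1<\infty$. The argument is essentially formal once Lemma~\ref{heigth} is in hand; the one point that needs a little care is the bookkeeping for a resolution that may terminate, and this is absorbed cleanly by the convention $\Min\,\emptyset=\infty$.
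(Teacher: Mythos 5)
Your proof is correct and follows essentially the same route as the paper's: both rely on a minimal injective resolution of $M$, the observation that minimality forces $\Ext^n_{\mathcal C}(L(k),M)\cong\Hom_{\mathcal C}(L(k),\Soc\,I_n)$, and the monotonicity of heights from Lemma~\ref{heigth}. The only cosmetic difference is that you phrase the induction as the exact characterization $m(k)=\Min\{n\mid \Ht\,I_n\geq k+1\}$, whereas the paper proves the equivalent one-sided estimate $\Ht\,I_n>k$ for all $n\geq m(k)$ and leaves the bookkeeping for part (b) implicit.
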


\noindent
\begin{proof} {\it Step 1.}  Let $0\to M\to
I_0\to I_1\to I_2\to\cdots$ be a minimal injective resolution of $M$. We claim that $\Ext_{\mathcal C}^n(L(k),M)
\simeq \Hom_{\mathcal C}(L(k),I_n)$ for all $k,n\geq 0$.

\medskip

Since 
$\Hom_{\mathcal C}(L(k),I_n)=\Hom_{\mathcal C}(L(k),\Soc\,I_n)$,
the induced complex

\centerline{$\Hom_{\mathcal C}(L(k),I_0)\to 
\Hom_{\mathcal C}(L(k),I_1)\to 
\Hom_{\mathcal C}(L(k),I_2)\to\cdots$}

\noindent has zero differentials, which proves the claim.

\medskip
\noindent {\it Step 2.} {\em Now we prove by induction on $k$ that $\Ht(I_n)>k$ whenever $n\geq m(k)$.}

\medskip

For $k=0$, the hypothesis that $\Ext_{\mathcal C}^{m(0)}(L(k),M)=0$ is equivalent to $\Hom_{\mathcal C}(L(0),I_{m(0)})=0$.  Hence we have
$\Ht(I_{m(0)})>0$. By Lemma \ref{heigth}(b), we deduce that $\Ht(I_n)>0$ for any $n\geq m(0)$.

Now let $k>0$ be arbitrary. By the induction hypothesis, we already 
know that $\Ht(I_{m(k)})>k-1$. 
The hypothesis that $\Ext_{\mathcal C}^{m(k)}(L(k),M)=0$ is equivalent to $\Hom_{\mathcal C}(L(k),I_{m(k)})=0$, hence
$\Ht(I_{m(k)})>k$. By Lemma \ref{heigth}(b), we deduce that $\Ht(I_n)>k$ for any $n\geq m(k)$.

\medskip
\noindent {\it Step 3.} {\em It follows that
$\Hom_{\mathcal C}(L(k),I_{n})=0$ for any $n\geq m(k)$,
which proves the Assertion (a).
Assertion (b) is an obvious consequence of Lemma \ref{heigth}(c).}
\end{proof}

\bigskip

We will later use the following corollary of Theorem \ref{generalized HWvanishing}:

\begin{cor}\label{corvanishing} Let $\mathcal C$ be a generalized HW category such  that 

\centerline{$\Ext_{\mathcal C}^1(L(0),L(0))=0$
and $\Ext_{\mathcal C}^2(L(1),L(0))=0$.} 

Then
$\Ext_{\mathcal C}^n(L(0),L(0))=0$ and
$\Ext_{\mathcal C}^{n+1}(L(1),L(0))=0$ 
for all $n\geq 1$.
\end{cor}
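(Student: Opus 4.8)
The plan is to deduce this directly from Theorem \ref{generalized HWvanishing} applied to the object $M=L(0)$. First I would check that $L(0)$ has a good filtration, which is what Theorem \ref{generalized HWvanishing} requires: since $L(0)$ is the minimal simple object, $\mathcal{F}_{-1}(I(0)/L(0))=0$, so by definition the standard object $\nabla(0)$ coincides with $L(0)$. A standard object is in particular an extended $\nabla(0)$-object, hence has a good filtration, so the hypothesis of Theorem \ref{generalized HWvanishing} is satisfied for $M=L(0)$.

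Next I would pin down the relevant initial terms of the sequence $m(0)\leq m(1)\leq\cdots$ associated with $M=L(0)$. By Axiom (AX2) we have $\Hom_{\mathcal C}(L(0),L(0))=\K\neq 0$, so $n=0$ does not lie in $\{n\mid \Ext^n_{\mathcal C}(L(0),L(0))=0\}$; together with the assumption $\Ext^1_{\mathcal C}(L(0),L(0))=0$ this forces $m(0)=1$. For $m(1)$ I only need an upper bound: since $m(1)=\Min\{n\geq m(0)\mid \Ext^n_{\mathcal C}(L(1),L(0))=0\}$ and $\Ext^2_{\mathcal C}(L(1),L(0))=0$ by hypothesis, we get $m(1)\leq 2$, whether or not $\Ext^1_{\mathcal C}(L(1),L(0))$ happens to vanish.

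Finally, Theorem \ref{generalized HWvanishing}(a) applied to $M=L(0)$ gives $\Ext^n_{\mathcal C}(L(0),L(0))=0$ for all $n\geq m(0)=1$, which is the first assertion, and $\Ext^n_{\mathcal C}(L(1),L(0))=0$ for all $n\geq m(1)$; since $m(1)\leq 2$, this yields $\Ext^{n+1}_{\mathcal C}(L(1),L(0))=0$ for all $n\geq 1$, the second assertion. There is no genuine obstacle here: all the work is carried by Theorem \ref{generalized HWvanishing}, and the only two points needing care are the identification $\nabla(0)=L(0)$ (so that $L(0)$ carries the good filtration the theorem needs) and the observation that an upper bound $m(1)\leq 2$, rather than its exact value, already suffices.
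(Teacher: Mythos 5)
Your proof is correct and matches the paper's intent: the paper states this corollary without proof as an immediate consequence of Theorem \ref{generalized HWvanishing}, and the intended argument is precisely the one you give (take $M=L(0)=\nabla(0)$, compute $m(0)=1$ from (AX2) and the first hypothesis, bound $m(1)\le 2$ from the second hypothesis, and read off both conclusions from part (a)). Your care in noting that the exact value of $m(1)$ is irrelevant and that only the upper bound is needed is exactly the right observation.
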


\vskip1cm
\centerline{\bf \Large PART C: The category of smooth 
$\widehat{\fsl}_2(J)$-modules}
\vskip1cm

\noindent An {\it augmented} Jordan algebra is a unital Jordan
algebra $J$ endowed with an  ideal $J_+$ 
of codimension $1$, given once and for all.

\section{Proof of Theorems \ref{finitegen} and \ref{finitepres}}

\subsection{Nil Jordan algebras}

A nonunital Jordan algebra $I$ is called {\it nil 
of  index $n$} if $x^n=0$ for all $x\in I$. Recall the following result, due to Zelmanov \cite{Zelmanov79}.

\begin{Zthm}\label{ZNil}  Any   finitely generated nil Jordan
algebra of  index $n$ is nilpotent.
\end{Zthm}

Let $I$ be a nonunital Jordan algebra.
 For $n\geq 1$, let $I^{(n)}$ be the linear span of all $a^n$ with $a\in I$. The space $I^{(n)}$ is different from the ideal $I^n$, which is the span of all products of $n$ elements in $I$.
The following lemma is well-known

\begin{lemma}\label{ideal} The space
$I^{(n)}$ is an ideal.
\end{lemma}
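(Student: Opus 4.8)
The plan is to prove the slightly stronger statement that $y\cdot a^n\in I^{(n)}$ for all $y,a\in I$. Since $I^{(n)}$ is by definition the linear span of the powers $a^n$, this shows that $I^{(n)}$ is stable under every multiplication operator $L_y:x\mapsto yx$, and a subspace of a (commutative) Jordan algebra is an ideal precisely when it is stable under all the $L_y$.

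The main device is polarization. For every $a,b\in I$ and $t\in\K$ the element $(a+tb)^n$ again lies in $I^{(n)}$; because $I^{(n)}$ is a subspace and $\K$ is infinite, every coefficient of the polynomial $t\mapsto(a+tb)^n$ lies in $I^{(n)}$, and iterating this over several parameters one sees that the complete linearization $\{b_1,\dots,b_n\}$ — the coefficient of $t_1\cdots t_n$ in $(t_1b_1+\cdots+t_nb_n)^n$ — is a symmetric $n$-linear map $I^{\times n}\to I^{(n)}$ with $\{b,\dots,b\}=n!\,b^n$. Writing $c^n=c\cdot c^{n-1}$ and extracting the coefficient of $t_1\cdots t_n$ gives the recursion $\{b_1,\dots,b_n\}=\sum_{i=1}^n b_i\cdot\{b_1,\dots,\widehat{b_i},\dots,b_n\}'$, where $\{\,\ \}'$ denotes the corresponding $(n-1)$-linearization. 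These identities are the only structural input that will be used.

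I would then induct on $n$. For $n=1$ one has $I^{(1)}=I$. For the step, assume $I^{(m)}$ is an ideal for every $m<n$, i.e. $z\cdot c^m\in I^{(m)}$ always. Now evaluate linearizations of $(\cdot)^n$ whose arguments are powers of a single element $a$ together with one extra argument built from $a$ and $y$ — for instance $\{a^2,a,\dots,a,y\}$, and more generally $\{a^{k_1},\dots,a^{k_r},\,a^{\ell}\!\cdot y\}$ with $k_1+\cdots+k_r+\ell=n$. Expanding each of these by the recursion above produces an identity of the form $\lambda\,(y\cdot a^n)=(\text{an element of }I^{(n)})+(\text{terms }c\cdot w\text{ with }w\in I^{(m)},\ m<n)$ with $\lambda\in\K^{\times}$; by the inductive hypothesis the "error" terms all lie in the corresponding $I^{(m)}$, and choosing enough such identities lets one eliminate the intermediate contributions $a^{i}\cdot(\cdots\cdot(y\cdot a^{j})\cdots)$ that also occur (this is exactly the elimination one does by hand for $n=2,3$). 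The only real difficulty is the bookkeeping in this elimination; it is cleanest to first invoke Shirshov's theorem that the free Jordan algebra on two generators is special, so that one may work inside $\K\langle a,y\rangle$ and the claim becomes the concrete statement that $ay^n+y^na$ is a $\K$-linear combination of $n$-th powers of symmetric elements, after which the induction is a matter of tracking monomials. As indicated, the statement itself is classical (see Jacobson, or Zhevlakov–Slinko–Shestakov–Shirshov), so in the paper it is legitimate simply to cite it; the sketch above is how I would supply a proof if needed.
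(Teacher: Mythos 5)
Your high-level strategy matches the paper's: polarize the $n$-th power, observe that the linearizations land in $I^{(n)}$, and pass to the free associative algebra on two generators via Shirshov--Cohn specialness. However, the inductive elimination step as you sketch it does not close up, and this is a real gap rather than bookkeeping. The recursion $\{b_1,\dots,b_n\}=\sum_i b_i\cdot\{b_1,\dots,\widehat{b_i},\dots,b_n\}'$ expresses the $n$-linearization in terms of $b_i\cdot(\text{elements of }I^{(n-1)})$, and your induction hypothesis that $I^{(m)}$ is an ideal for $m<n$ only places those products back in $I^{(n-1)}$, not in $I^{(n)}$. Since $I^{(n)}\subseteq I^{(n-1)}$ and not the reverse, an error term known only to lie in $I^{(n-1)}$ cannot be absorbed when your target is membership in $I^{(n)}$; the obvious attempt (e.g.\ solving $(n-1)!\,y\cdot a^n=L(a^2,a,\dots,a,y)-a^2\cdot L'(a,\dots,a,y)-(n-2)\,a\cdot L'(a^2,a,\dots,a,y)$ for $y\cdot a^n$) leaves the $a\cdot L'(\cdot)$ and $a^2\cdot L'(\cdot)$ terms stranded in $I^{(n-1)}$. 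One would need genuinely new Jordan identities, not just the recursion plus induction, to finish.

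The paper sidesteps this entirely by using \emph{partial} linearizations $S_{p,q}(x,y)$ (all slots but one filled with the same element $x$), which need no induction to be placed in $I^{(p+q)}$, and by exhibiting two concrete identities: $y\cdot x^n = S_{1,n}(y,x)-S_{1,n-1}(x\cdot y,x)$, which puts $y\cdot x^n$ in $I^{(n)}+I^{(n+1)}$ directly, and $n\,x^{n+1}=S_{1,n-1}(x^2,x)$, which gives the nesting $I^{(n+1)}\subseteq I^{(n)}$. Both identities are checked in the free associative algebra on $x,y$. If you want to keep your inductive framing, the missing ingredient is precisely identity (a) above (or an equivalent substitute expressing $y\cdot x^n$ in terms of things already known to be in $I^{(n)}$); without it, "choosing enough such identities" is doing all the work and is not justified. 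Your closing remark that the lemma is classical and could simply be cited is fair, but the paper's two-identity proof is short enough that it is worth internalizing as the correct version of the argument you are gesturing at.
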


\begin{proof}
For any integers $p,\,q\geq 0$, let $S_{p,q}$ be the Jordan 
polynomials in two generators defined by the identity

\centerline{
$(x+ty)^n=\sum_{p+q=n}\,t^q S_{p,q}(x,y)$.}

Since the free Jordan algebra on two generators is special, it is enough to check the following two Jordan identities in the free associative algebra on the generators $x$ and $y$, where $a.b$ denotes the symmetrized product $\frac12(ab-ba)$ of elements $a,b$ in the free associative algebra.

\begin{enumerate}

\item[(a)] $y.x^n=S_{1,n}(y,x)-S_{1,n-1}(x.y,x)$.

\item[(b)] $n x^{n+1}=S_{1,n-1}(x^2,x)$,

\end{enumerate}

These identities imply that

\hskip1cm $y.I^{(n)}\subset I^{(n)}+I^{(n+1)}$ for any $y\in I$ and

\hskip1cm $I^{(n+1)}\subset I^{(n)}$, 

\noindent which proves the lemma.

\end{proof}

\begin{Zcor}\label{Zcor} Let $I$ be a finitely generated nonunital Jordan algebra and let 
$n\geq 1$.  Then

\begin{enumerate}
\item[(a)] the space
 $I/I^{(n)}$ is finite dimensional and
 
 \item[(b)] $I^{(n)}$ contains $I^N$ for some $N\gg 0$.

\end{enumerate} 
\end{Zcor}

\noindent
\begin{proof} By Lemma \ref{ideal},
$I/I^{(n)}$ is a nil Jordan algebra of index $n$, so by  Zelmanov's Theorem \ref{ZNil},   $I/I^{(n)}$ is a nilpotent Jordan algebra.  Hence $I/I^{(n)}$ is finite dimensional and $I^{(n)}$ contains $I^N$ for
some $N$.
\end{proof}

\subsection{Proof of Theorem \ref{finitegen}}
\label{fgproof}

\begin{proof}  It is obviously enough to prove the statement for
the free Jordan algebra $J(D)$ on $D$ generators $x_1,\dots, x_D$. 
This Jordan algebra has a natural grading 

\centerline{$J(D)=\bigoplus_{k\geq 0} J_k(D),$}

\noindent where $J_0(D)=\K$ and $J_1(D)$ is the linear span
of $x_1,\dots,x_D$. Set $J_+:=\bigoplus_{k>0} J_k(D)$ and ${\mathcal A}:={\mathcal U}_n(J(D))$.  
The grading on $J$ induces an obvious grading

\centerline{${\mathcal A}=\oplus_{k\geq 0}\,{\mathcal A}_k$,}

\noindent and we write ${\mathcal A}_+$ for $\bigoplus_{k> 0}\,{\mathcal A}_k$.  Since $\rho_n(1)=n$, we have ${\mathcal A}_0=\K$
and the nonunital algebra ${\mathcal A}_+$ is generated by $\rho_n(J_+(D))$. The defining relation

 \centerline{$\displaystyle{\sum_{\sigma \vdash\ n+1} \sgn(\gs)
|C_\gs|\, h_\gs(a)}$}

\noindent implies that $\rho_{n+1}(J_+^{(n+1)})$ lies in 
${\mathcal A}_+^2$. By Zelmanov's Nil Corollary \ref{Zcor}, 
the space $J_+/J_+^{(n+1)}$ is finite dimensional. Since the
induced map

\centerline{$\rho_{n+1}:J_+/J_+^{(n+1)}\to {\mathcal A}_+/{\mathcal A}_+^2$}

\noindent is surjective, ${\mathcal A}_+/{\mathcal A}_+^2$ is finite dimensional. Hence ${\mathcal A}$ is finitely generated.
\end{proof}

\subsection{A finiteness lemma}

Let $J$ be an augmented Jordan algebra. Since

\centerline{$\widehat\fsl_2(J)\simeq\fsl_2\ltimes
\widehat\fsl_2(J_+)$,}

\noindent the  spaces
$\bigwedge^k \widehat\fsl_2(J_+)$ and
$H_k(\widehat\fsl_2(J_+))$
are $\fsl_2$-modules and their possible eigenvalues 
$\ell$ are even  integers in $[-n,n]$. Let

\centerline{$(\bigwedge^k \widehat\fsl_2(J_+))_\ell$ and $H_k(\widehat\fsl_2(J_+))_\ell$}

\noindent be their respective  eigenspaces of eigenvalue $\ell$.

\begin{lemma}\label{finitehomology} Assume that $J$ is finitely generated.

Then  $H_k(\widehat\fsl_2(J_+))_{2k}$ is finite dimensional.
\end{lemma}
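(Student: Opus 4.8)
The plan is to reduce the homological statement to a purely finite-dimensional computation of Lie algebra homology, using the $\fsl_2$-structure together with the finiteness results already established. First I would set $\fG_+ := \widehat\fsl_2(J_+)$ and recall from the short grading that $\fG_+ = \fG_{+,-2}\oplus \fG_{+,0}\oplus \fG_{+,2}$, where $\fG_{+,\pm2} = f\otimes J_+,\ e\otimes J_+$ and $\fG_{+,0} = (h\otimes J_+)\oplus\{J,J\}$. The homology $H_k(\fG_+)$ is computed by the Chevalley--Eilenberg complex $\bigwedge^\bullet \fG_+$, and $\ad(h)$ acts on this complex; the eigenvalue $2k$ on $\bigwedge^k \fG_+$ is the \emph{maximal} possible eigenvalue, attained only on the subspace $\bigwedge^k \fG_{+,2}$. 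So the eigenspace $(\bigwedge^k \fG_+)_{2k}$ equals $\bigwedge^k(e\otimes J_+) = \bigwedge^k J_+$ (up to the $e$-labels), and the differential restricted to the top-eigenvalue part is essentially the differential of the abelian Lie algebra $\fG_{+,2}$, which vanishes there.

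The key point is then to identify $H_k(\fG_+)_{2k}$ with a cohomology or homology group of a finite-dimensional object. I would argue as follows: the chain group in eigenvalue $2k$ sits at the top of the $\ad(h)$-weight filtration, and the incoming differential $\partial\colon (\bigwedge^{k+1}\fG_+)_{2k}\to(\bigwedge^k\fG_+)_{2k}$ has image built from brackets that land in $\fG_{+,2}\wedge\cdots$; concretely $H_k(\fG_+)_{2k}$ is a subquotient of $\bigwedge^k J_+$ governed by the $\fG_{+,0}$-action on $\fG_{+,2} = e\otimes J_+$, which is exactly the $\cU_n(J)$-module structure via the map $\rho_n\colon J\to\cU_n(J)$ of Subsection \ref{U_n}. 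Here is where I would bring in Theorem \ref{finitegen}: since $J$ is finitely generated, $\cU_n(J)$ is a finitely generated associative algebra, and the relevant module $\fG_{+,2}$ is a finitely generated $\cU_n(J)$-module (it is cyclic after adjoining the unit). More precisely, I expect to show that $H_k(\fG_+)_{2k}$ is a subquotient of $\bigwedge^k$ of a finitely generated module over the finitely generated algebra $\cU_n(J)$, \emph{but} that is not yet finite-dimensional, so a further reduction is needed.

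The crucial extra input is Zelmanov's Nil Corollary \ref{Zcor}: the defining relation $\sum_{\sigma\vdash n+1}\sgn(\sigma)|C_\sigma|\,h_\sigma(a)=0$ forces $\rho_{n+1}(J_+^{(n+1)})\subseteq \cA_+^2$ in the notation of the proof of Theorem \ref{finitegen}, and combined with $J_+/J_+^{(n+1)}$ being finite dimensional (Corollary \ref{Zcor}(a)) one gets that the graded pieces of $\fG_{+,2}$ are eventually expressible via lower-degree pieces. I would use this to show that the top-eigenvalue homology $H_k(\fG_+)_{2k}$ is concentrated in bounded internal degree: the nilpotency of $J_+/J_+^{(n+1)}$ and part (b) of Corollary \ref{Zcor} (that $J_+^{(n+1)}$ contains $J_+^N$) bound the degrees in which a nontrivial cycle modulo boundary can survive, since elements of sufficiently high degree in $J_+^{(n+1)}$ become boundaries via the relation. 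Putting these together, $H_k(\fG_+)_{2k}$ is a finitely generated module over $\cU_n(J)$ (finitely generated by Theorem \ref{finitegen}) that is also bounded in internal grading, hence finite dimensional.

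The main obstacle I anticipate is the middle step: making precise the claim that $H_k(\fG_+)_{2k}$ is a controlled subquotient of $\bigwedge^k J_+$ with its $\cU_n(J)$-module structure, and correctly tracking how the differential interacts with the internal ($J$-)grading. One has to be careful that the homology in eigenvalue $2k$ is not simply $\bigwedge^k J_+$ but a subquotient — the incoming boundaries come from $(\bigwedge^{k+1}\fG_+)_{2k}$, whose elements involve one factor from $\fG_{+,0}$ and $k$ factors from $\fG_{+,2}$, so the boundary map is, up to normalization, $d\mapsto \sum (\rho_n\text{-action on the }e\text{-factors})$, i.e. a Koszul-type differential for the $\cU_n(J)$-module $J_+$. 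The finiteness then reduces to: a Koszul homology group of a finitely generated graded module over a finitely generated graded algebra, with the extra degree bound coming from Zelmanov, is finite dimensional — and verifying that the Zelmanov bound genuinely kills all high-degree classes (not merely high-degree generators) is the delicate part, likely requiring an argument that the relation ideal acts on $\bigwedge^k$ compatibly with the filtration so that high internal degree forces membership in the image of $\partial$.
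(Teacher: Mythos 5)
Your initial reduction is correct and matches the paper: $(\bigwedge^k\widehat\fsl_2(J_+))_{2k}=\bigwedge^k(e\otimes J_+)\cong\bigwedge^k J_+$ consists entirely of cycles, so $H_k(\widehat\fsl_2(J_+))_{2k}$ is a \emph{quotient} (not merely a subquotient) of $\bigwedge^k J_+$ by the image of $\partial$ from $(\bigwedge^{k+1}\widehat\fsl_2(J_+))_{2k}$, whose elements have one $\fG_{+,0}$-factor and $k$ $\fG_{+,2}$-factors. But from there you diverge, and the step you yourself flag as ``the delicate part'' is precisely the step you have not carried out, so there is a genuine gap.

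The gap is this: you need to show that a homology class $[a_1\wedge\cdots\wedge a_k]$ depends only on the images of the $a_i$ modulo some cofinite ideal, and your proposal offers only a vague plan involving ``the relation ideal acting compatibly with the filtration.'' The paper closes this exactly by invoking Theorem \ref{dominant} on the dominant $J$-space $\bigwedge^{k-1}J_+$ of level $2(k-1)$ (dominance comes from Example \ref{examples}(f)), not Theorem \ref{finitegen}. Concretely, the boundary identity $d\bigl(h(a)\wedge e(x)\wedge\omega\bigr)=2e(xa)\wedge\omega+e(x)\wedge\ad(h(a))\omega$ gives $[\rho(a)(x)\wedge\tau]+[x\wedge\rho(a)(\tau)]=0$ in homology; iterating produces $\sgn(\sigma)[x\wedge\rho_\sigma(a)(\tau)]=-[\rho_\sigma(a)(x)\wedge\tau]$ for partitions $\sigma\vdash 2k-1$; and the dominance relation $\sum_{\sigma\vdash 2k-1}\sgn(\sigma)|C_\sigma|\rho_\sigma(a)(\tau)=0$ on $\bigwedge^{k-1}J_+$ then yields $\sum_{\sigma\vdash 2k-1}\sgn(\sigma)|C_\sigma|[\rho_\sigma(a)(x)\wedge\tau]=0$. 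Setting $x=a$ collapses this to $[a^{2k}\wedge\tau]=0$ via the Girard--Newton identity. This is the mechanism that bounds the internal degree: it shows the map $\bigwedge^k J_+\to H_k(\widehat\fsl_2(J_+))_{2k}$ factors through $\bigwedge^k\bigl(J_+/J_+^{(2k)}\bigr)$, and only then does Zelmanov's Nil Corollary \ref{Zcor} deliver finite dimensionality.

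Two further remarks. First, your appeal to Theorem \ref{finitegen} is a red herring: the paper's proof of Lemma \ref{finitehomology} does not use it, and ``finitely generated module over a finitely generated algebra'' alone cannot give finite dimensionality without the degree bound, which is the whole content of the argument. Second, describing the incoming boundary as a ``Koszul-type differential for a $\mathcal{U}_n(J)$-module'' mischaracterizes it: the relation $[\rho(a)(x)\wedge\tau]=-[x\wedge\rho(a)(\tau)]$ is a Leibniz-rule vanishing, and the crucial extra input is that $\bigwedge^{k-1}J_+$ is a \emph{dominant} $J$-space so that Theorem \ref{dominant} applies to it; without that dominance (and the resulting Newton-type relation), nothing forces $[a^{2k}\wedge\tau]$ to vanish.
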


\begin{proof} For any elements 
$a_1,\dots,a_k\in J_+$, the chain
$e(a_1)\wedge\dots\wedge e(a_k)$ is a 
cycle. We write
$[a_1\wedge\dots\wedge a_k]$ for the image of
$e(a_1)\wedge\dots\wedge e(a_k)$ in
$H_k(\widehat\fsl_2(J_+))_{2k}$. Since
$(\bigwedge^k \widehat\fsl_2(J_+))_{2k}$ is the linear span of the cycles $e(a_1)\wedge\dots\wedge e(a_k)$, the 
space $H_k(\widehat\fsl_2(J_+))_{2k}$ is the linear span of the symbols $[a_1\wedge\dots\wedge a_k]$.

We have

\centerline{$d h(a)\wedge e(x)\wedge\omega
  =2e(xa)\wedge \omega+e(x)\wedge \ad(h(a))(\omega)$,}

\noindent
for all $\omega\in\wedge^{k-1}J_+$.

We consider $J$ and $\wedge^{k-1} J$ as $J$-spaces.
The previous formula is equivalent to

\centerline{
$[\rho(a)(x)\wedge\tau +x\wedge\rho(a)(\tau)]=0$,}

\noindent for any $x,a\in J$ and 
$\tau \in \bigwedge^{k-1} J_+$.
Let $\sigma=(\sigma_1,\dots,\sigma_m)$ be a partition of 
$2k-1$ into $m$ parts. Since $\epsilon(\sigma)=-(-1)^m$,
we have

\centerline{
$\epsilon(\sigma)[x\wedge\rho_\sigma(a)(\tau)]
=-[\rho_\sigma(a)(x)\wedge\tau].$}

Note that $\bigwedge^{k-1}J_+$ is a dominant $J$-space of level
$2(k-1)$. By Theorem \ref{dominant}, we have

\centerline{$\displaystyle{\sum_{\gs\ \vdash\ 2k-1} \hbox{sgn}(\gs)|C_\gs|\rho_\gs(a)(\tau)=0}$.}

We deduce that 

\centerline{$\displaystyle{\sum_{\gs\ \vdash\ 2k-1} [\hbox{sgn}(\gs)|C_\gs|\rho_\gs(a)(x)\wedge\tau]=0}$.}

For $x=a$, we deduce that $[a^{2k}\wedge\tau]=0$.
Therefore the map 

\centerline {$\bigwedge^{k}J_+\to H_k(\widehat\fsl_2(J_+))_{2k},\hskip5mm
a_1\wedge\dots\wedge a_k\mapsto [a_1\wedge\dots\wedge a_k]$}

\noindent factors through $\bigwedge^{k}(J_+/J_+^{(2k)})$.
By Zelmanov's Nil Corollary \ref{Zcor}, 
$\bigwedge^{k}(J_+/J_+^{(2k)})$ is finite dimensional,
so $H_k(\widehat\fsl_2(J_+))_{2k}$ is finite dimensional.
\end{proof}

\subsection{Proof of Theorem \ref{finitepres}}
\label{fpproof}

As for the proof of Theorem \ref{finitegen},
it is enough to prove Theorem \ref{finitepres}
for the  free Jordan algebras $J(D)$.  As a quotient of some $\widehat{\mathfrak{sl}}_2(J(D))$, the Lie algebra $\widehat{\mathfrak{sl}}_2(J)$ coming from a finitely presented Jordan algebra $J$ is then obviously finitely generated.  Its relations are given by the  relations in the finite presentation of $\widehat{\mathfrak{sl}}_2(J(D))$ and the (finitely many) additional relations coming from elements $x\ot r$, where $x$ is a basis element of $\mathfrak{sl}_2$ and $r$ is a relation in the finite presentation of the Jordan algebra $J$.

Given a 
fini\-tely presented Lie algebra $\fu$, it is clear that any Lie algebra $\fg$ containing $\fu$ is
also finitely presented if $\dim \fg/\fu<\infty$.
Thus the following reformulation of Theorem 
\ref{finitepres} is slightly stronger.

\begin{citethmfinitepres} The Lie algebra 
  $\widehat{\fsl}_2(J_+)$ is finitely presented, where $J_+$ is the augmentation ideal $\bigoplus_{k>0} J_k(D)$.
%  Let $J_+$ be the augmentation ideal $\bigoplus_{k>0} J_k(D)$ of the unital free Jordan algebra $J(D)$.  Then the Lie algebra 
%$\widehat{\fsl}_2(J_+)$ is finitely presented.
\end{citethmfinitepres}

\begin{proof}
 Since $\widehat{\fsl}_2(J_+)$ is a positively
graded Lie algebra, the statement follows from
the finite dimensionality of
$H_1(\mathfrak{\fsl}_2(J_+))$ and $H_2(\mathfrak{\fsl}_2(J_+))$.

Obviously, we have

$$H_1(\mathfrak{\fsl}_2(J_+))\simeq \bigoplus_{1\leq i\leq D} \fsl_2\otimes x_i,$$

\noindent so $\dim H_1(\mathfrak{\fsl}_2(J_+))=3D<\infty$.

We now prove that $H_2(\widehat{\fsl}_2(J_+))$ is finite dimensional.
By Proposition 2 of \cite{KM}
the $\fsl_2$-module $H_2(\widehat{\fsl}_2(J_+))$ is isotypical of type $L(4)$ and by Lemma \ref{finitehomology}
$\dim H_2(\widehat{\fsl}_2(J_+))_4$ is finite dimensional. Hence

\centerline{$\dim H_2(\widehat{\fsl}_2(J_+))=
5 \dim H_2(\widehat{\fsl}_2(J_+))_4<\infty$.}
\end{proof}

\section{Finite dimensionality of \\the Weyl modules $\Delta(n)$}

\noindent From now on, we assume that the unital Jordan algebra $J$ is finitely generated and 
{\it augmented}, that is, $J$ is endowed with a codimension one ideal 
$J_{+}$ given once and for all.

\subsection{Smooth $\widehat{\fsl}_2(J)$-modules}

A  $\widehat{\fsl}_2(J)$-module $M$ is called {\it smooth} if $\dim\,M\leq \aleph_0$ and
for any $v\in M$ we have

\begin{enumerate}

\item[(a)] $\widehat{\fsl}_2(J_+^n).v=0$ for some $n=n(v)>0$, and

\item[(b)] $U(\widehat{\fsl}_2(J)).v$ is finite dimensional.

\end{enumerate}

\noindent   It is easy to see that 
$$\widehat{\fsl}_2(J_+^n)=(\fsl_2\otimes J_+^n)\oplus\{J_+^n,J_+\},$$
where $\{J_+^n,J_+\}$ is the subspace of $\{J_+,J_+\}$ spanned by the elements $\{a,b\}$ with $a\in J_+^n$ and $b\in J_+$.
Assertion (a) is thus equivalent to the conditions:

\begin{enumerate}

\item[(a1)] $(\fsl_2\otimes J_+^n).v=0$, and

\item[(a2)] $\{J_+^n,J_+\}.v=0$,

\end{enumerate}
for some $n=n(v)>0$.  
Obviously,  smooth $\widehat{\fsl}_2(J)$-modules are  always $(\fG,\SL_2)$-modules.

\subsection{Finite dimensionality of  the Weyl modules $\Delta(n)$}

Let $\K_n$ be the $1$-dimensional $\fG_0$-module on which
$h(1)$ acts by multiplication by $n$ and $h(J_+)$ acts trivially.
Set $\Delta(n)=\Delta(\K_n)$. Obviously, there is a surjective map
$\Delta(n)\to L(n)$.

\begin{thm}\label{finitedim} The $\widehat{\fsl}_2(J)$-module  $\Delta(n)$ is smooth and finite dimensional.
\end{thm}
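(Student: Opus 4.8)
The plan is to establish smoothness and finite-dimensionality in that order, since smoothness will help organize the dimension count. First I would verify that $\Delta(n)$ is smooth. Recall $\Delta(n)=\Delta(\K_n)$ is generated as a module over $S\fG_{-2}=U(f\otimes J)$ by the image $\bar v$ of a generator of $\K_n$, and the $h$-eigenvalues are bounded above by $n$ (because $\K_n$ sits in degree $n$ and $\fG_{-2}$ lowers the eigenvalue by $2$), while since $\Delta(n)$ is an $(\fG,\SL_2)$-module its eigenvalues lie in $[-N,N]$ for the largest occurring $N\le n$; hence condition (b) of smoothness — that $U(\widehat\fsl_2(J))w$ is finite-dimensional for each $w$ — will follow once I know each eigenspace is finite-dimensional, which is the real content. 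For condition (a), I would show $\widehat\fsl_2(J_+^m)$ acts locally nilpotently and in fact kills any given vector for $m\gg 0$: on the generator $\bar v$ we have $h(J_+)\bar v=0$, $e(J)\bar v=0$, and $f(J_+)\bar v$ has eigenvalue $n-2$; iterating, $S^k(f\otimes J)\bar v$ is a quotient of $\bigwedge$-free symmetric powers, and the key point is that $\bar v$ generates a $(\fB,\B)$-module on which $J_+$ acts through a finite-dimensional quotient — this is where I would invoke Theorem~\ref{finitegen} (finite generation of ${\mathcal U}_n(J)$) together with Zelmanov's Nil Corollary~\ref{Zcor}.

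The heart of the argument is finite-dimensionality. The idea is: $\Delta(n)$ is the Weyl module $\Delta(\K_n)$, and $H^0(\fG_2,\Delta(n))=\K_n$ as a $\fG_0$-module, so $\Delta(n)$ is a quotient of $M(\K_n)=U(\fG_{-2})\ot\K_n$. I would argue that $\Delta(n)$ is a module over the Weyl-module analogue built from the algebra ${\mathcal U}_n(J)$ after passing to the augmentation: more precisely, the top eigenspace $\Delta(n)_n=\K_n$ is a ${\mathcal U}_n(J)$-module (in fact the trivial/augmentation one, since $h(J_+)$ acts by $0$), and each lower eigenspace $\Delta(n)_{n-2k}$ is a subquotient of $S^k(f\ot J)\ot\K_n$. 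The crucial reduction is that this symmetric power, modulo the relations forcing $\Delta(n)$ to be an $\SL_2$-module, becomes finite-dimensional. Here I expect to use that $J_+$ acts on $\K_n$ trivially, so the action of $f\ot J_+$ factors through $f\ot(J_+/J_+^{(m)})$ for suitable $m$ after using the dominance relations of Theorem~\ref{dominant} — precisely the mechanism already used in the proof of Lemma~\ref{finitehomology}, where $[a^{2k}\wedge\tau]=0$ forced a factorization through $\bigwedge^k(J_+/J_+^{(2k)})$. The analogous statement here is that the relevant eigenspace factors through $S^k(f\ot(J_+/J_+^{(N)}))$ for $N$ depending on $k$ and $n$, which is finite-dimensional by Zelmanov's Nil Corollary~\ref{Zcor}(a). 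Since the number of nonzero eigenspaces is finite (bounded by $2n+1$), summing gives $\dim\Delta(n)<\infty$.

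Concretely I would structure it as: (1) reduce to $J=J(D)$ free, as in the proofs of Theorems~\ref{finitegen} and~\ref{finitepres}; (2) show $\Delta(n)_{n}=\K_n$ and note $\Delta(n)=\bigoplus_{k\ge 0}\Delta(n)_{n-2k}$ with $\Delta(n)_{n-2k}$ a quotient of $S^k(f\ot J_+)\,\bar v$ (the $J_+$ rather than $J$ because $h(1),f(1)$ just act inside $\fsl_2$ and the $\fsl_2$-structure is automatically finite); (3) use the Garland-formula computation from the proof of Theorem~\ref{dominant}, together with the identity $e^{n+1}f(a)^{n+1}\bar v=0$ and its descendants, to show that in $\Delta(n)$ every monomial $f(a^{i_1})\cdots f(a^{i_k})\bar v$ with some $a\in J_+$ appearing to "high power" vanishes, precisely via the relation $\sum_{\sigma\vdash n+1}\sgn(\sigma)|C_\sigma|\rho_\sigma(a)=0$ pushed down; (4) conclude that $\Delta(n)_{n-2k}$ is a quotient of $S^k(f\ot(J_+/J_+^{(N(k,n))}))\,\bar v$, finite-dimensional by Corollary~\ref{Zcor}; (5) bound the number of eigenspaces and add up.

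The main obstacle I anticipate is step (3)–(4): making precise exactly which power $N=N(k,n)$ of $J_+$ one may quotient by, i.e. showing that the $\SL_2$-integrability relations on $\Delta(n)$ — which a priori only directly control the single jump from $\Delta(n)_{-n-2}=M(n)_{-n-2}$ — propagate to bound the support of lower eigenspaces in terms of a power of the augmentation ideal. I expect this requires either a careful induction on $k$ using the $\fsl_2$-action to move relations between eigenspaces (as in Garland–Lepowsky-type arguments), or an appeal to the PBW filtration on $U(\fG_{-2})$ combined with Gabber-type involutivity as in Lemma~\ref{involutility}; in any case, once finite generation of ${\mathcal U}_n(J)$ (Theorem~\ref{finitegen}) and Zelmanov's nilpotency are in hand, the finiteness is forced, and the remaining work is bookkeeping of eigenvalue ranges.
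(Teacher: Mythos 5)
Your proposal assembles the right tools (Garland's formula, Zelmanov's Nil Corollary, finite-dimensionality of Weyl modules over finite-dimensional $J$), but the strategy you sketch — bounding each eigenspace $\Delta(n)_{n-2k}$ by a symmetric power $S^k(f\otimes(J_+/J_+^{(N(k,n))}))$ with a power $N(k,n)$ depending on $k$ — is precisely where you yourself flag the difficulty, and the paper's proof sidesteps it entirely. The missing idea is to exploit that $\widehat\fsl_2(J_+^N)$ is an \emph{ideal} of $\widehat\fsl_2(J)$ and that $\Delta(n)$ is cyclic on $v$: if you can show $\widehat\fsl_2(J_+^N).v=0$ for a single $N$ (independent of $k$), then the whole ideal annihilates $\Delta(n)$, so $\Delta(n)$ is a module over $\widehat\fsl_2(J/J_+^N)$ with $J/J_+^N$ finite-dimensional, and Lemma~\ref{U_nbasic}(a) finishes the argument. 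No propagation of relations through eigenspaces or eigenvalue-dependent quotients is required.

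The way the paper establishes $\widehat\fsl_2(J_+^N).v=0$ is also simpler than you anticipate. For $a\in J_+$ the relation on $v$ is $f(a)^n.v=0$ (since $\Delta_{-n}(n)=\K f^n v$), and one applies $\pi(e^{n-1}f(a)^n)$ — note $e^{n-1}$, not $e^{n+1}$; the dominance-criterion computation with $e^{n+1}f(a)^{n+1}$ is not what is needed here. Because $h(J_+).v=0$, Garland's formula collapses: all the $h(a^s)$-terms vanish, leaving only $r\,f(a^n).v$ for a nonzero scalar $r$, so $f(J_+^{(n)}).v=0$. By Zelmanov's Nil Corollary \ref{Zcor}(b), $J_+^{(n)}\supseteq J_+^N$ for some $N$, hence $f(J_+^N).v=0$. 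Combined with $e(J_+).v=0$ and $h(J_+).v=0$, and using $[e(J_+),f(J_+^N)]\equiv\{J_+^N,J_+\}\pmod{\fsl_2\otimes J_+^N}$, one gets $\widehat\fsl_2(J_+^N).v=0$. Also note that the paper works directly with a general finitely generated $J$; the reduction to $J(D)$ free is unnecessary here (unlike in Theorems \ref{finitegen} and \ref{finitepres}), and Theorem \ref{finitegen} is not invoked.
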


\noindent
\begin{proof}  Let $v\in \K_n$ be a generator of $\Delta(n)$. 

\smallskip\noindent
{\it Step 1.} First we prove that $f(J_+^{(n)}).v=0$.

Let $a\in J_+$. Since 
$\Delta_{-n}(n)=\K.f^n v$, we have $f(a)^{n}.v=0$,
and therefore

\centerline{$\pi(\,e^{n-1} f(a)^{n}).v=0$.}

\noindent By Lemma \ref{garland},
$\pi(\,e^{n-1} f(a)^{n})$ is, up to a scalar multiple, the coefficient of 
$t^{n-1}$ in the formal series

\centerline{$\left
(\sum_{r=0}^\infty f(a^{r+1})t^r\right)\exp\left(-\sum_{s=1}^\infty \frac{h(a^s)}{s}t^s\right).$}

\noindent Since $a$ belongs to $J_+$,  we have $h(a^s).v=0$, for any $s>0$. Thus 

\centerline{$\,e^{n-1} f(a)^{n}.v=
r f(a^{n}).v$, for some nonzero $r$.} 

\noindent Hence $f(J_+^{(n)}).v=0$. 

\smallskip\noindent
{\it Step 2.} By Zelmanov's Nil Corollary \ref{Zcor}
 $J_+^{(n)}$ contains $J_+^N$ for some $N$.
We now prove that $\widehat{\fsl}_2(J_+^N).\Delta(n)=0$. 

Since $\widehat{\fsl}_2(J_+^N)$ is an ideal and $v$ generates $\Delta(n)$, it is enough to prove that
$\widehat{\fsl}_2(J_+^N).v=0$.

Since
$h(J_+).v$, $e(J_+).v$ and $f(J_+^N).v$ are all zero,
we have 

\centerline{$(\fsl_2\otimes J_+^N).v=0$.}

\noindent Moreover we have 

\centerline{$[e(J_+),f(J_+^N)]=\{J_+^N,J_+\}$ modulo 
$\fsl_2\otimes J_+^N$.} 

\noindent Thus, we also have
$\{J_+^N,J_+\}.v=0$, which proves that

\centerline{$\widehat{\fsl}_2(J_+^N).v=0$.}

\smallskip\noindent
{\it Step 3.}  It remains to prove that $\Delta(n)$ is finite dimensional.
Since $\Delta(n)$ is a $(\fsl_2(J/J_+^N)$-module, we can assume
that $J_+^N=0$. Therefore $J$ is finite dimensional, and
$\Delta(n)$ is finite dimensional by 
Lemma \ref{U_nbasic}(a).

\noindent This completes the proof.

\end{proof}

\bigskip 
\noindent 
{\bf Remark:} A $J$-space $M$ of level $k$ is {\em smooth} if for every $v\in M$, we have $h(J_+^n)v=0$ for $n=n(v)\gg 0$. It follows from Theorem \ref{finitedim} that a dominant smooth $J$-space $M$ is finite dimensional if and only if $\Delta(M)$ is finite dimensional.

\section{Lie algebra cohomology and smooth representations}

\noindent As in the previous Section, let $J$ be a finitely generated augmented Jordan algebra. In this section, we  compare $\hbox{Ext}^*$ in the category
of smooth modules with relative Lie algebra cohomology.

From now on, we consider only representations of finite or countable dimension for Lie algebras and algebraic groups.

\subsection{Generalities about algebraic groups}\label{10.1}

Let $\fg$ be a finite dimensional perfect Lie algebra.
Then $\fg\simeq\fs\ltimes \fu$ where $\fs$ is semi-simple and
$\fu$ is the nilpotent radical.  
Let $G$ be the unique connected and simply connected algebraic group with Lie algebra $\fg$. There is a Levi decomposition $G=S\ltimes U$, where $S$ is semi-simple and $U$ is unipotent, with Lie algebras $\fs$ and $\fu$, respectively.

In what follows, a rational $G$-module will simply be called a 
{\it $G$-module}. Let $\Mod(G)$ be the category of 
$G$-modules.  
It is well known that $\fu$ acts nilpotently 
on any  finite dimensional $\fg$-module.
It follows that any locally finite dimensional $\fg$-module is a  $G$-module, and conversely. 

The group $G$ acts on $U$ as follows: $U$ acts by left multiplication and $S$ by conjugation. With respect to this action,
the ring $\K[U]$ of rational functions on $U$ is a  $G$-module.

Since $\K[U]$ is an injective $U$-module and since $S$ acts
reductively, the $G$-module $\K[U]$ is injective. In fact,
$\K[U]$ is the injective envelope of the trivial module.
Hence $M\otimes \K[U]$ is an injective envelope of $M$, for any simple $G$-module $M$.

\subsection{Generalities about  group cohomology}

Let $\fg$, $\mathfrak{s}$, $\mathfrak{u}$, $G$, $S$, and $U$ be defined as in the previous subsection.
Let $M\in \Mod(G)$. By definition, the {\it relative cohomology group} $H^*(\fg,\fs,M)$ is the cohomology of the Chevalley-Eilenberg cocomplex $\Hom_{\fs}(\bigwedge^p \fg/\fs,M)$.  See
\cite{Ko} or \cite{kumar} for details. Since $\fs$ acts reductively, we have

\centerline{$H^*(\fg,\fs,M)\simeq H^*(\fu,M)^\fs$.}

As an algebraic variety, $U$ is isomorphic to its Lie algebra
$\fu$, thus
the de Rham complex $\bigwedge \fu^*\otimes \K[U]$ is exact.
For any $U$-module $M$, it follows easily that

\centerline{$H^*(\fu, M)\simeq H^*(U, M)$,}

\noindent a result often attributed to Malcev.
As $\fs$ is reductive,  Hochschild proved that 
the relative cohomology $H^*(\fg,\fs,-)$ describes extensions in the relative category 
of $(\fg,\fs)$-modules \cite{H}.\footnote{A more recent account appears in \cite{Mu}.}
Combining this result with the Malcev isomorphism, we deduce the
following well known result:

\begin{lemma}\label{folk} Let $M$ and $N$ be $G$-modules, with $M$ finite dimensional. Then

\centerline{$H^*(\fg,\fs,\Hom_{\K}(M,N))=\Ext^*_{\Mod(G)}(M,N)$.}
\end{lemma}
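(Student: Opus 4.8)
The plan is to reduce to trivial coefficients, compute the resulting rational cohomology of $G$ by a Lyndon--Hochschild--Serre argument, and then identify the outcome with the relative Chevalley--Eilenberg complex of the pair $(\fg,\fs)$. Since $M$ is finite dimensional, the functor $\Hom_\K(M,-)\simeq M^*\otimes-$ is exact on $\Mod(G)$ and sends injective $G$-modules to injective $G$-modules; applying it to an injective resolution of $N$ yields $\Ext^n_{\Mod(G)}(M,N)\simeq\Ext^n_{\Mod(G)}(\K,\Hom_\K(M,N))=H^n(G,V)$, where $V:=\Hom_\K(M,N)$ and $H^*(G,-)$ denotes rational cohomology. (The category $\Mod(G)$ has enough injectives, as recalled in Subsection~\ref{10.1}.) So it is enough to prove $H^n(G,V)\simeq H^n(\fg,\fs,V)$ for every $G$-module $V$.

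For this I would use the Lyndon--Hochschild--Serre spectral sequence of the sequence of algebraic groups $1\to U\to G\to S\to 1$, namely $E_2^{p,q}=H^p(S,H^q(U,V))\Rightarrow H^{p+q}(G,V)$. Since $\K$ has characteristic zero and $S$ is connected semisimple, the category of rational $S$-modules is semisimple, so $H^p(S,-)=0$ for $p>0$; the spectral sequence collapses and gives $H^n(G,V)\simeq H^n(U,V)^S$.

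It remains to match $H^*(U,V)^S$ with $H^*(\fg,\fs,V)$. The Malcev isomorphism recalled above (which comes from exactness of the algebraic de Rham complex $\bigwedge\fu^*\otimes\K[U]$, $U$ being an affine space) gives $H^*(U,V)\simeq H^*(\fu,V)$; as the $S$-action on $U$ is compatible with its actions on $\fu$ and on $V$, this is $S$-equivariant, so $H^*(U,V)^S\simeq H^*(\fu,V)^S=H^*(\fu,V)^{\fs}$ (using that $S$ is connected). Finally $\fg/\fs\simeq\fu$ as $\fs$-modules, and $\fs$ being reductive the functor of $\fs$-invariants is exact; hence the relative Chevalley--Eilenberg complex $\Hom_{\fs}(\bigwedge\fg/\fs,V)$ is the complex of $\fs$-invariants of $\bigwedge\fu^*\otimes V$, whose cohomology is therefore $H^*(\fu,V)^{\fs}$. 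Chaining the isomorphisms yields $H^n(G,V)=H^n(\fg,\fs,V)$, which is the claim.

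The obstacle here is not conceptual but is the bookkeeping needed to make the chain of isomorphisms functorial and equivariant: one must know that the Malcev isomorphism is compatible with the $S$-action, and that the various complexes (Hochschild cochains for $G$, algebraic de Rham for $U$, Chevalley--Eilenberg for $\fu$ and for $(\fg,\fs)$) are tied together by maps of complexes respecting all the structure. A shorter but less self-contained route is to invoke Hochschild's theorem directly: $H^n(\fg,\fs,V)$ with $V=\Hom_\K(M,N)$ classifies $(\fg,\fs)$-relative extensions of $M$ by $N$, and since every short exact sequence of $G$-modules is automatically split over $\fs$ ($\fs$ acting semisimply), these relative extensions coincide with ordinary extensions in $\Mod(G)$; one then only has to check that relative $\Ext$ computed in the category of all $(\fg,\fs)$-modules agrees with $\Ext$ computed in $\Mod(G)$, which again reduces to $\fs$-semisimplicity.
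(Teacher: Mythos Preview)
Your proposal is correct. The paper itself does not give a detailed proof: it simply records the isomorphism $H^*(\fg,\fs,M)\simeq H^*(\fu,M)^{\fs}$, the Malcev identification $H^*(\fu,M)\simeq H^*(U,M)$, and Hochschild's result that relative cohomology computes extensions in the relative $(\fg,\fs)$-category, then declares the lemma a well-known consequence. This is precisely the ``shorter but less self-contained route'' you sketch at the end of your proposal.

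Your main argument, via the Lyndon--Hochschild--Serre spectral sequence for $1\to U\to G\to S\to 1$ and its collapse from the semisimplicity of $\Mod(S)$ in characteristic zero, is a more explicit unpacking of the same ingredients: it replaces the appeal to Hochschild's theorem by a direct computation of $H^*(G,V)$ as $H^*(U,V)^S$, and then the Malcev isomorphism and the identification $H^*(\fu,V)^{\fs}\simeq H^*(\fg,\fs,V)$ finish the job exactly as in the paper. The two routes differ only in whether one computes $\Ext^*_{\Mod(G)}$ top-down (spectral sequence) or identifies it with relative $\Ext$ first (Hochschild); the substantive inputs---Malcev and the reductivity of $\fs$---are the same.
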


\noindent

\subsection{The pro-algebraic group $\bSL_2(J)$}

For any integer $n\ge 1$, the Lie algebra
$\widehat{\fsl}_2(J/J_+^n)=\fsl_2\ltimes\widehat{\fsl}_2(J_+/J_+^n)$
is perfect and finite dimensional.
Let $U_n$ be the unipotent group with Lie algebra
$\widehat{\fsl}_2(J_+/J_+^n)$ and set
$G_n=\SL_2\ltimes U_n$. By definition,
$\bSL_2(J)$ is the proalgebraic group $\varprojlim G_n$, and an 
{\it  $\bSL_2(J)$-module} is a direct union
$\bigcup_{n\geq 1}\, M_n$ of  $G_n$-modules $M_n$, where,
as always, we assume that $\dim M\leq \aleph_0$. Since the Lie algebra $\widehat{\fsl}_2(J/J_+^n)$ is perfect and finite dimensional, 
any smooth $\widehat{\fsl}_2(J)$-module is an  $\bSL_2(J)$-module and conversely.

\subsection{The category $\Mod(\bSL_2(J))={\mathcal C}_{sm}(J)$}\label{ordering}

Let $\Mod(\bSL_2(J))$ be the category of 
$\bSL_2(J)$-modules.  By the material in Section \ref{10.1}, $\Mod(\bSL_2(J))$ coincides with the category ${\mathcal C}_{sm}(J)$ 
of smooth $\widehat{\fsl}_2(J)$-modules. Depending on the context,
we will use one  terminology or the other.

Let $n\geq 0$. In what follows, the  simple ${\fsl}_2$-module
$L(n)$  of
dimension $n+1$ will be viewed as a
$\widehat{\fsl}_2(J)$-module with trivial action of $\widehat{\fsl}_2(J_+)$.
It is clear that the family $(L(n))_{n\in\Z_{\geq 0}}$ is a 
complete list of  simple objects in $\Mod(\bSL_2(J))$.

Set $U=\varprojlim U_m$. By definition,  its ring of rational functions is $\K[U]=\varinjlim \K[U_m]$.  Viewing $\K[U]$ as a $\bSL_2(J)$-module, let

\centerline{$I(n)=L(n)\otimes \K[U]$.}

\begin{lemma} Let $n\geq 0$. Then

\begin{enumerate}

\item[(a)] $I(n)$ is the injective envelope of $L(n)$ and

\item[(b)] $\Delta(n)^*$ is the standard object 
$\nabla(n)$ with socle $L(n)$.

\end{enumerate}

\end{lemma}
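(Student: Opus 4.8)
The plan is to treat the two assertions separately: (a) will follow from the finite-level facts recorded in Section~\ref{10.1} by passing to the limit, and (b) will be extracted from (a) together with the finite dimensionality of $\Delta(n)$ (Theorem~\ref{finitedim}) and the universal property of Weyl modules (Lemma~\ref{universal}). For (a), recall from Section~\ref{10.1} that for each $m$ the $G_m$-module $\K[U_m]$ is injective and is the injective envelope of the trivial module, so $L(n)\otimes\K[U_m]$ is the injective envelope of $L(n)$ in $\Mod(G_m)$. I would then check that $\K[U]=\varinjlim_m\K[U_m]$ is injective in $\mathcal{C}_{sm}(J)$: given $A\hookrightarrow B$ in $\mathcal{C}_{sm}(J)$ and $\phi\colon A\to\K[U]$, a Zorn's lemma argument extends $\phi$, using that each cyclic (hence, by smoothness, finite dimensional) submodule of $B$ lies in some $\Mod(G_m)$ and that $\K[U_m]$ is injective at that level. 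Since $L(n)$ is finite dimensional, $\Hom_{\mathcal{C}_{sm}}(-,I(n))\cong\Hom_{\mathcal{C}_{sm}}(-\otimes L(n)^{*},\K[U])$ is exact, so $I(n)$ is injective; and since the simple objects of $\mathcal{C}_{sm}(J)$ are the $L(k)$ with trivial $\widehat{\fsl}_2(J_+)$-action, the socle of any object is its subspace of $\widehat{\fsl}_2(J_+)$-invariants. As $\widehat{\fsl}_2(J_+)$ acts only on the $\K[U]$-factor and $\K[U]^{\widehat{\fsl}_2(J_+)}=\K[U]^{U}=\K$, we get $\Soc I(n)=L(n)$, so $I(n)$ is the injective envelope of $L(n)$.

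For (b): by Theorem~\ref{finitedim}, $\Delta(n)$ and hence $\Delta(n)^{*}$ are finite dimensional smooth modules. First I would note that $\Delta(n)$ is generated by its one-dimensional top eigenspace $\Delta(n)_{n}=\K_{n}$, so every proper submodule misses eigenvalue $n$; thus $\Delta(n)$ has a unique maximal submodule, and its unique simple quotient is $L(n)$ by the given surjection $\Delta(n)\to L(n)$. Dualizing, $\Delta(n)^{*}$ has simple socle $L(n)$, so it embeds in $I(n)$ with $L(n)=\Soc I(n)\subseteq\Delta(n)^{*}$. Moreover $\Delta(n)$ is an $\SL_2$-module with eigenvalues in $[-n,n]$ and one-dimensional top eigenspace, so $L(n)$ occurs as a composition factor of $\Delta(n)$ exactly once and all other factors are $L(k)$ with $k<n$; hence $\Delta(n)^{*}/L(n)$ lies in $\mathcal{C}(n-1)$, i.e. $\Delta(n)^{*}/L(n)\subseteq\mathcal{F}_{n-1}(I(n)/L(n))$, giving $\Delta(n)^{*}\subseteq\nabla(n)$.

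The reverse inclusion $\nabla(n)\subseteq\Delta(n)^{*}$ is the main obstacle, and here I would dualize finite dimensional pieces. Let $Y$ be a finite dimensional submodule of $I(n)$ with $L(n)\subseteq Y\subseteq\nabla(n)$; then $\Soc Y=L(n)$, so $Y^{*}$ has simple top $L(n)$ and is generated by any vector $w$ spanning its one-dimensional top eigenspace $(Y^{*})_{n}$. Since $w$ has top eigenvalue, $\fG_{2}w=0$; and since the image $\bar{w}$ of $w$ in the simple top $L(n)$ is nonzero, $h(J_{+})$ annihilates $L(n)$, and $h(J_{+})w\in(Y^{*})_{n}$, one deduces $h(J_{+})w=0$, so $\K w\cong\K_{n}$ as a $\fB$-module. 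By Lemma~\ref{universal}, $\K_{n}\hookrightarrow Y^{*}$ extends to a surjection $\Delta(n)\twoheadrightarrow Y^{*}$, whence $\dim Y\le\dim\Delta(n)=\dim\Delta(n)^{*}$. Since $\nabla(n)\subseteq I(n)$ has countable length, it is the directed union of such $Y$, so $\dim\nabla(n)\le\dim\Delta(n)^{*}$; combined with $\Delta(n)^{*}\subseteq\nabla(n)$ this forces $\Delta(n)^{*}=\nabla(n)$, and the socle is $L(n)$ by the previous paragraph. The two delicate points are the pro-level injectivity of $\K[U]$ in (a) and, in (b), verifying that the generator $w$ genuinely spans a copy of $\K_{n}$ so that the universal property of the Weyl module applies.
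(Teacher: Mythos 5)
Your proof is correct and follows the same overall strategy as the paper: establish (a) by passing from the finite levels $\K[U_m]$ to the limit $\K[U]$, then prove (b) by showing $\Delta(n)^{*}\subseteq\nabla(n)$ via the cosocle/socle argument and then establishing the reverse inclusion.

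Where you genuinely improve on the paper is the reverse inclusion $\nabla(n)\subseteq\Delta(n)^{*}$. The paper's argument dualizes $\nabla(n)$ and invokes the universal property of $\Delta(n)$ to get a surjection $\Delta(n)\twoheadrightarrow\nabla(n)^{*}$, but at that point in the logical order Axiom (AX3) — finite length of $\nabla(n)$ — has not yet been verified for $\Mod(\bSL_2(J))$ (Corollary~\ref{corLA+} \emph{uses} this lemma to establish it), so forming ``the dual $\nabla(n)^{*}$'' inside the category is slightly elliptical. Your workaround, exhausting $\nabla(n)$ by finite-dimensional subobjects $Y$ with $L(n)\subseteq Y$, dualizing each $Y$, checking that $(Y^{*})_{n}\cong\K_{n}$ as a $\fB$-module (the key step: $h(J_{+})w$ lands in $K_n=0$ where $K=\ker(Y^{*}\to L(n))$ has eigenvalues in $[-(n-1),n-1]$), and bounding $\dim Y\le\dim\Delta(n)$ via Lemma~\ref{universal}, removes the circularity cleanly while relying only on (LA3) and Theorem~\ref{finitedim}. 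Your explicit Zorn/exactness argument for injectivity of $\K[U]$ in part (a) is also a reasonable expansion of what the paper compresses into one clause; one could alternatively cite Lemma~\ref{env}(a) after observing $\K[U]\cong\bigoplus_m\K[U_m]/\K[U_{m-1}]$. In short: same route, but your version supplies the finiteness bookkeeping the paper leaves implicit.
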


\noindent
\begin{proof}  Clearly $L(n)\otimes \K[U_m]$ is the injective envelope of $L(n)$ in the category $\Mod(\bSL_2(J/J_+^m)$, which proves Assertion (a), and shows the existence of the standard object $\nabla(n)$.  By construction, the cosocle of $\Delta(n)$ is $L(n)$, so $Soc(\Delta(n)^*)=L(n)$ and $\Delta(n)^*\subseteq I(n)$, from which it follows that $\Delta(n)^*$ is contained in the standard object $\nabla(n)$ with socle $L(n)$.

  Conversely, the $\Delta(n)$ surjects onto the dual
$\nabla(n)^*$ of the standard module, since the highest eigenspace of $\nabla(n)$ is the $J$-space $\K_n$.  Dualizing, we see that $\nabla(n)$ is contained in $\Delta(n)^*$. 
  
  Hence $\Delta(n)^*=\nabla(n)$.  \end{proof}

\bigskip
The definition of $LA^+$-categories is provided in the
PART B,
Section \ref{defLA+}.
Since the standard module $\nabla(n)=\Delta(n)^*$ is finite dimensional by Theorem \ref{finitedim}, we have:

\begin{cor}\label{corLA+} The category $\Mod(\bSL_2(J))$ is a LA+ category.
\end{cor}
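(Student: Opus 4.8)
The plan is to verify the three axioms (AX1), (AX2), (AX3) that define an LA+ category for $\mathcal{C} = \Mod(\bSL_2(J))$, having already established in the preceding subsections that this category is an LA category (Axioms (LA1)--(LA3)). In fact, since most of the work has already been done, the corollary should follow almost immediately from the lemma just proved together with Theorem~\ref{finitedim}.

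First I would recall why $\mathcal{C}$ is an LA category: it is a category of modules over the pro-algebraic group $\bSL_2(J) = \varprojlim G_n$, so every module is the directed union of its finitely generated submodules, each of which is a finite-dimensional $G_n$-module for some $n$ (by the definition of smoothness), hence of finite length. Direct limits exist and are exact in any such module category. Next, for (AX1) and (AX2): the family $(L(n))_{n \in \Z_{\geq 0}}$ was shown in Section~\ref{ordering} to be a complete list of pairwise non-isomorphic simple objects, with $L(n)$ the simple $\fsl_2$-module of dimension $n+1$ and trivial $\widehat{\fsl}_2(J_+)$-action; since these are absolutely simple, $\End_{\mathcal{C}}(L(n)) = \K$. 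The immediately preceding Lemma establishes that $I(n) = L(n) \otimes \K[U]$ is an injective envelope of $L(n)$ in $\mathcal{C}$, which gives the existence half of (AX2). This also identifies the standard object: the same Lemma shows $\nabla(n) = \Delta(n)^*$, the dual of the Weyl module $\Delta(n) = \Delta(\K_n)$.

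For (AX3) — the requirement that each standard object $\nabla(n)$ have finite length — I would invoke Theorem~\ref{finitedim}, which asserts that the Weyl module $\Delta(n)$ is finite-dimensional whenever $J$ is a finitely generated augmented Jordan algebra (the standing hypothesis of this part of the paper). Consequently its dual $\nabla(n) = \Delta(n)^*$ is finite-dimensional, hence of finite length. This verifies (AX3) and completes the check that $\mathcal{C} = \Mod(\bSL_2(J))$ satisfies all of (AX1), (AX2), (AX3), so it is an LA+ category.

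I do not anticipate a genuine obstacle here: all the substantive content — the finite-dimensionality of $\Delta(n)$ (which itself rests on Zelmanov's nil theorem via Corollary~\ref{Zcor}), the identification of the injective envelopes, and the verification of the LA axioms — has been carried out in the cited results. The only care needed is bookkeeping: making sure that ``finite-dimensional'' indeed implies ``finite length'' in this abelian category (clear, since finite-dimensional modules are artinian and noetherian), and that the countability hypothesis built into the definition of smooth modules is compatible with Axiom (LA3) (it is, by the running convention $\dim M \leq \aleph_0$). So the proof is essentially a one-line deduction: $\nabla(n) = \Delta(n)^*$ is finite-dimensional by Theorem~\ref{finitedim}, hence (AX3) holds, and the remaining axioms were checked above.
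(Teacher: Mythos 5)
Your proof is correct and follows exactly the paper's reasoning: the paper states that since $\nabla(n)=\Delta(n)^*$ is finite dimensional by Theorem~\ref{finitedim}, the corollary follows, with (AX1) and (AX2) having been established by the preceding discussion and Lemma. You have merely spelled out the routine verification of the LA axioms and the remaining AX axioms that the paper leaves implicit.
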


\subsection{Smooth cohomology} 

 We will now see how to compute $\Ext$-groups
in the category $\Mod(\bSL_2(J))=\mathcal{C}_{sm}(J)$ of smooth $\widehat{\fsl}_2(J)$-modules.

Let $M$ be a 
smooth $\widehat{\fsl}_2(J)$-module. A relative
$k$-cochain

\centerline{$\omega\in \Hom_{\fsl_2}(\bigwedge^k (\widehat{\fsl}_2(J)/\fsl_2), M)$}

\noindent 
is called {\it smooth} if $k=0$, or 

\centerline{$\omega(x_1\wedge x_2\wedge\cdots\wedge x_k)=0$,
whenever $x_1\in  \widehat{\fsl}_2(J_+^N)$,}

\noindent for $N\gg 0$. Let $C^k_{sm}(M)$ be the space of smooth relative $k$-cochains with values in $M$. By definition, the smooth cohomology

\centerline{$H^*_{sm}(\widehat{\fsl}_2(J),\fsl_2, M)$}

\noindent is the homology of the complex $C_{sm}^*(M)$. It follows that

\centerline{$H^*_{sm}(\widehat{\fsl}_2(J),\fsl_2, M)=
\varinjlim H^*(\widehat{\fsl}_2(J/J_+^n),\fsl_2, M)$.}

\noindent Thus the folklore Lemma \ref{folk} implies the following result:

\begin{lemma}\label{ext=smooth} Let $M,\,N$ be smooth
$\widehat{\fsl}_2(J)$-modules
 with $M$ finite dimensional. Then

\centerline{$\Ext^*_{\Mod(\bSL_2(J))}(M,N)=
H^*_{sm}(\widehat{\fsl}_2(J),\fsl_2,\Hom_\K(M,N))$.}
\end{lemma}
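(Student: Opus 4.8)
The plan is to deduce Lemma~\ref{ext=smooth} from the folklore Lemma~\ref{folk} by passing to the direct limit over the finite-dimensional truncations $\widehat{\fsl}_2(J/J_+^n)$, using the fact that $\Ext$-groups of a \emph{finite-dimensional} object commute with directed unions. Concretely, write $\fg_n:=\widehat{\fsl}_2(J/J_+^n)=\fsl_2\ltimes\widehat{\fsl}_2(J_+/J_+^n)$ and $G_n=\SL_2\ltimes U_n$ as in Subsection~\ref{ordering}; each $\fg_n$ is finite-dimensional and perfect, so Lemma~\ref{folk} applies to $G_n$-modules and gives $H^*(\fg_n,\fsl_2,\Hom_\K(M,N_n))=\Ext^*_{\Mod(G_n)}(M,N_n)$ for any finite-dimensional $G_n$-module $M$ and any $G_n$-module $N_n$.

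First I would fix a smooth module $N$ and write it as the directed union $N=\bigcup_{n\ge1}N^{[n]}$, where $N^{[n]}$ is the (submodule of) vectors killed by $\widehat{\fsl}_2(J_+^n)$; each $N^{[n]}$ is naturally a $G_n$-module, and $N$ is the corresponding $\bSL_2(J)$-module. Since $M$ is finite dimensional, it is a $G_N$-module for $N\gg0$, and the inclusion $\Mod(G_n)\hookrightarrow\Mod(\bSL_2(J))$ of the Serre subcategory of modules killed by $\widehat{\fsl}_2(J_+^n)$ identifies $\Ext^*_{\Mod(G_n)}(M,N^{[n]})$ with a piece of $\Ext^*_{\Mod(\bSL_2(J))}(M,N^{[n]})$. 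Here I would invoke Lemma~\ref{limF} (or directly Lemma~\ref{limext} for $\Ext^1$, extended to higher $\Ext$ exactly as in the proof of Lemma~\ref{limF}): because $M$ has finite length, $\Hom_{\Mod(\bSL_2(J))}(M,-)$ commutes with directed unions, hence so do its derived functors, giving
\centerline{$\Ext^*_{\Mod(\bSL_2(J))}(M,N)=\varinjlim_n\Ext^*_{\Mod(\bSL_2(J))}(M,N^{[n]})$.}
On the cochain side, by the very definition of smooth cochains $C^*_{sm}(\Hom_\K(M,N))$, a cochain is smooth precisely when it factors through some $\bigwedge^k(\fg_n/\fsl_2)$, so $C^*_{sm}(\Hom_\K(M,N))=\varinjlim_n C^*(\fg_n,\fsl_2,\Hom_\K(M,N^{[n]}))$ (using again that $M$ is finite dimensional, so $\Hom_\K(M,N)^{[n]}=\Hom_\K(M,N^{[n]})$ for large $n$), and since direct limits are exact, $H^*_{sm}(\widehat{\fsl}_2(J),\fsl_2,\Hom_\K(M,N))=\varinjlim_n H^*(\fg_n,\fsl_2,\Hom_\K(M,N^{[n]}))$.

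Putting the two limits together, I would apply Lemma~\ref{folk} termwise to the system $(G_n,M,N^{[n]})$ to get $H^*(\fg_n,\fsl_2,\Hom_\K(M,N^{[n]}))=\Ext^*_{\Mod(G_n)}(M,N^{[n]})$, then check that the transition maps in the two directed systems agree (this is immediate from functoriality of both constructions in the restriction maps $\fg_{n+1}\to\fg_n$ and the inclusions $N^{[n]}\hookrightarrow N^{[n+1]}$), and finally take $\varinjlim_n$ on both sides. The one point that needs a small argument is that $\varinjlim_n\Ext^*_{\Mod(G_n)}(M,N^{[n]})$ really computes $\Ext^*$ in the big category $\Mod(\bSL_2(J))$ and not just in the subcategories $\Mod(G_n)$: this is exactly the content of Corollary~\ref{corLA+} together with Lemma~\ref{limF}, since the injective resolution of $N^{[n]}$ inside $\Mod(G_n)$, namely $N^{[n]}\otimes\K[U_n]$ and its continuations, maps compatibly into the injective resolution $N^{[n]}\otimes\K[U]$ in $\Mod(\bSL_2(J))$, and the colimit of these resolutions is an injective resolution of $N$ by Lemma~\ref{env}(a).

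The main obstacle I expect is precisely this last bookkeeping: verifying that the directed system of injective resolutions $\{N^{[n]}\otimes\K[U_n]\otimes\bigwedge^\bullet\fu_n^*\}_n$ has colimit an injective resolution of $N$ in the large category, so that $R^*\Hom_{\Mod(\bSL_2(J))}(M,-)$ can be computed as the colimit of the $R^*\Hom_{\Mod(G_n)}(M,-)$. This requires knowing that $\K[U]=\varinjlim\K[U_m]$ is injective in $\Mod(\bSL_2(J))$ (established in Subsection~\ref{10.1} and~\ref{ordering}) and that $M\otimes\K[U]$ stays injective, which is where Lemma~\ref{env}(a) — the stability of injectivity under countable direct sums in an LA+ category, which fails in general — does the essential work. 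Everything else is the routine identification of the two colimit systems via Lemma~\ref{folk}.
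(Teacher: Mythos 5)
Your overall strategy — reducing to the finite-dimensional folklore Lemma~\ref{folk} by writing both sides as directed colimits over the truncations $\fg_n=\widehat{\fsl}_2(J/J_+^n)$ — is exactly the approach the paper takes, and your identification of the cochain-level colimit $C^*_{sm}(\Hom_\K(M,N))=\varinjlim C^*(\fg_n,\fsl_2,\Hom_\K(M,N^{[n]}))$ matches the paper's single displayed formula. You also correctly isolate the one nontrivial point that the paper leaves tacit, namely that $\varinjlim_n\Ext^*_{\Mod(G_n)}(M,N^{[n]})$ really computes $\Ext^*_{\Mod(\bSL_2(J))}(M,N)$.

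However, your treatment of that point has two real flaws. First, $\Mod(G_n)$ is \emph{not} a Serre subcategory of $\Mod(\bSL_2(J))$: it is closed under subobjects and quotients but not under extensions, since a short exact sequence $0\to A\to B\to C\to 0$ with $A$ and $C$ killed by $\widehat{\fsl}_2(J_+^n)$ need not have $B$ killed by it (already $\fsl_2(\K[t]/t^3)$ gives a two-dimensional extension of $\K$ by $\K$ on which $\fsl_2\otimes t^2$ acts nontrivially). So the full exact inclusion $\Mod(G_n)\hookrightarrow\Mod(\bSL_2(J))$ does not a priori preserve $\Ext^k$ for $k\geq 1$, and you cannot simply declare $\Ext^*_{\Mod(G_n)}(M,N^{[n]})$ to be ``a piece of'' $\Ext^*_{\Mod(\bSL_2(J))}(M,N^{[n]})$. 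Second, the appeal to Lemma~\ref{env}(a) is misapplied: that lemma gives injectivity of direct \emph{sums} of injectives, whereas $\varinjlim_n\bigl(N^{[n]}\otimes\K[U_n]\otimes\bigwedge^{j}\fu_n^*\bigr)$ is a general directed colimit whose transition monomorphisms are not known to split (each term is injective only in $\Mod(G_n)$, not in $\Mod(\bSL_2(J))$, so the usual ``mono into injective splits'' argument is unavailable — indeed it is circular, since that injectivity is what you are trying to establish). The intended conclusion is surely correct, and these points are patchable — for instance by producing a single injective resolution of $N$ in $\Mod(\bSL_2(J))$ built from $\K[U]$ and the filtered duals $\fu_n^*$, and comparing its $\Hom(M,-)$ complex directly with $C^*_{sm}$, or by invoking the adjunction between inflation and $(-)^{[n]}$ and checking the derived $(-)^{[n]}$ degenerates in the colimit — but as written the last bookkeeping step of your proof has a gap.
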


\subsection{Comparison with relative homology.}

 Let $M$ be a finite dimensional $\bSL_2(J)$-module.
In general, it is difficult to compare the smooth cohomology
$H^*_{sm}(\widehat{\fsl}_2(J),\fsl_2,M)$ with the Chevalley-Eilenberg relative homology
$H_*(\widehat{\fsl}_2(J),\fsl_2,M^*)$. 

However, it is possible to do so when $M$ and $J$ have  compatible gradings. A 
{\it positively graded Jordan algebra} is a Jordan algebra endowed with a grading
$J=\bigoplus_{n\geq 0}\, J_n$ such that
$J_0=\K$. It is viewed as an augmented 
Jordan algebra, with augmentation ideal
$J_+:=\bigoplus_{n>0} J_n$. The Lie algebra
$\widehat\fsl_2(J)$ inherits the grading of $J$.

Given a $\Z$-graded vector space 
$E=\bigoplus_{n\in\Z} E_n$, the {\it graded dual} of $E$ is the space
$\bigoplus_{n\in\Z} E_n^*$.

\begin{lemma}\label{compare} Assume that $J$ is a finitely generated positively graded  Jordan 
algebra. Let $M$ be a finite dimensional $\Z$-graded 
$\widehat{\fsl}_2(J)$-module. Then 

\begin{enumerate}
\item[(a)] $M$ is smooth.

\item[(b)] $H^k_{sm}(\widehat{\fsl}_2(J),\fsl_2,M)$ is the graded dual
of $H_k(\widehat{\fsl}_2(J),\fsl_2,M^*)$,
for any integer $k\geq 0$. 
\end{enumerate}
\end{lemma}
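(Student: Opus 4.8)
The plan is to handle the two assertions separately, with (a) being an easy consequence of finite-dimensionality plus the grading, and (b) requiring a careful direct-limit/grading argument to convert between smooth cohomology and ordinary Chevalley–Eilenberg relative homology. For (a), since $M$ is finite dimensional and $\Z$-graded, its graded pieces $M_n$ vanish outside a bounded range of degrees; because $J$ is positively graded and finitely generated, $J_+^N$ is contained in $\bigoplus_{n\geq N}J_n$, so $\widehat{\fsl}_2(J_+^N)$ acts by operators raising degree by at least $N$, and for $N$ larger than the diameter of the support of $M$ this action is zero. Thus condition (a) in the definition of smoothness holds for every $v\in M$ (uniformly, in fact), and condition (b) is automatic since $M$ itself is finite dimensional. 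Hence $M$ is smooth.

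For (b), the key point is that both sides can be computed from complexes that split as direct sums over the $\Z$-grading, with each graded component finite dimensional, so that graded duality exchanges them. First I would observe that the Lie algebra $\widehat{\fsl}_2(J)$ and hence $\widehat{\fsl}_2(J)/\fsl_2$ inherits the grading of $J$, that $\fsl_2$ sits in degree $0$, and that $\fsl_2$ acts reductively, so the relative Chevalley–Eilenberg complex $\Hom_{\fsl_2}(\bigwedge^{\bullet}(\widehat{\fsl}_2(J)/\fsl_2),M)$ is itself $\Z$-graded with finite-dimensional graded pieces (using that each $J_n$ is finite dimensional, which follows from finite generation of the positively graded $J$, and that $M$ has bounded support). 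Next I would identify, inside this complex, the smooth subcomplex: a cochain is smooth exactly when it vanishes on $\widehat{\fsl}_2(J_+^N)\wedge\cdots$ for $N\gg0$; but by the degree bound used in part (a), every relative cochain with values in $M$ is already smooth, so $C^{\bullet}_{sm}(M)$ coincides with the full relative cochain complex. Dually, the relative chain complex $\bigwedge^{\bullet}(\widehat{\fsl}_2(J)/\fsl_2)\otimes_{?}M^*$ computing $H_*(\widehat{\fsl}_2(J),\fsl_2,M^*)$ is $\Z$-graded with finite-dimensional graded components, and the graded dual of this chain complex is precisely the relative cochain complex computing $H^*_{sm}(\widehat{\fsl}_2(J),\fsl_2,M)$ — the Chevalley–Eilenberg differentials are transpose to one another under this pairing. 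Taking cohomology and using that graded duality is exact on complexes of finite-dimensional graded pieces yields the desired identification in each degree $k$.

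The main obstacle, and the step needing the most care, is verifying that the smooth cochain complex genuinely equals the naive graded dual of the relative homology complex — i.e. controlling the direct-limit description $H^*_{sm}=\varinjlim H^*(\widehat{\fsl}_2(J/J_+^n),\fsl_2,M)$ against the (non-limit) graded-dual description. Concretely, one must check that for $n\gg0$ the map $\widehat{\fsl}_2(J)\to\widehat{\fsl}_2(J/J_+^n)$ induces an isomorphism on the relevant cochain spaces with values in $M$ in each fixed homological degree $k$, which again comes down to the degree/support bound: a $k$-cochain is determined by its values on wedges of homogeneous elements of bounded total degree, and all such elements survive in $J/J_+^n$ for $n$ large. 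Once this stabilization is in hand, the identification of differentials under graded duality is routine (it is the standard transpose relation for Chevalley–Eilenberg complexes), and the lemma follows.
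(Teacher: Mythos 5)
The overall plan — identifying the smooth cochain complex with the graded dual of the relative chain complex and then invoking exactness of graded duality on complexes with finite-dimensional graded pieces — is exactly the route taken in the paper, which records precisely that $C^k_{sm}(M^*)$ is the graded dual of the relative chain space $H_0(\fsl_2,\bigwedge^k(\widehat\fsl_2(J)/\fsl_2)\otimes M)$. Part (a) is fine: the uniform degree bound does kill $\widehat\fsl_2(J_+^N)$ on $M$, and finite dimensionality of $M$ handles the second condition in the definition of smoothness.

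However, two intermediate claims in your treatment of (b) are incorrect, and you should not lean on them. First, it is \emph{not} true that every relative cochain with values in $M$ is smooth. The full cochain space $\Hom_{\fsl_2}\bigl(\bigwedge^k(\widehat\fsl_2(J)/\fsl_2),\,M\bigr)$ is the \emph{product} over degrees $p$ of $\Hom_{\fsl_2}\bigl((\bigwedge^k)_p,\,M\bigr)$, whereas the smooth cochains $C^k_{sm}(M)$ form the \emph{direct sum}: a smooth cochain is, by definition, one vanishing on all pieces of sufficiently high degree. Arbitrary $\fsl_2$-equivariant linear maps to a finite-dimensional $M$ need not vanish in high degree (already for $k=1$ with $M=\K$, any linear functional on the infinite-dimensional invariant subspace $\{J,J\}\subseteq \widehat\fsl_2(J)/\fsl_2$ gives such a cochain). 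The correct statement, and the one you actually need, is that $C^k_{sm}(M)$ — not the full cochain complex — is the graded dual of the chain complex, which follows from the degree-by-degree identification $\Hom_{\fsl_2}(V_p,M)\cong\bigl((V_p\otimes M^*)_{\fsl_2}\bigr)^*$ for finite-dimensional $V_p$ and a reductive $\fsl_2$-action. Second, the stabilization claim in your last paragraph is also false: the cochain spaces $C^k(\widehat\fsl_2(J/J_+^n),\fsl_2,M)$ strictly grow with $n$ and do not become isomorphic for $n\gg 0$; the smooth cohomology is genuinely a direct limit (a directed union of subcomplexes of the full complex), and there is no stabilization to invoke. What saves the argument is that taking cohomology commutes with this directed union, together with the degree-by-degree duality above — the route the paper uses. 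Your conclusion is correct, but both of those supporting assertions would be wrong if written up as stated.
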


\noindent
\begin{proof} Assertion (a) is obvious and Assertion (b) follows  from the fact that 
$C^k_{sm}(M^*)$ is the graded dual of the space 
$H_0(\fsl_2,\bigwedge^k (\widehat{\fsl}_2(J)/\fsl_2)\otimes M)$
of relative chains.
\end{proof}

Assume again that $J$ is a finitely generated positively graded  Jordan 
algebra. The simple modules $L(n)$ can be viewed 
as $\Z$-graded modules, with  grading concentrated in degree $0$. The following statement is
an immediate consequence of Lemmas \ref{ext=smooth} and \ref{compare}.

\begin{cor}\label{Corcompare} For any $k,n\geq 0$,
the group $\Ext^k_{\Mod(\bSL_2(J))}(L(n),L(0))$ is the graded dual of 
$H_k(\widehat{\fsl}_2(J),\fsl_2,L(n))$.
\end{cor}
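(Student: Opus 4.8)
The plan is to chain Lemmas \ref{ext=smooth} and \ref{compare}, with only a routine identification of the coefficient modules in between; as the text already indicates, there is no real obstacle here beyond this bookkeeping.

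First I would record that $L(n)$ and $L(0)=\K$ are finite dimensional $\widehat{\fsl}_2(J)$-modules on which $\widehat{\fsl}_2(J_+)$ acts trivially. In particular each is naturally a $\Z$-graded module (grading concentrated in degree $0$, compatibly with the grading on $\widehat{\fsl}_2(J)$ induced by the positive grading of $J$), hence smooth by Lemma \ref{compare}(a). Applying Lemma \ref{ext=smooth} with $M=L(n)$ and $N=L(0)$ gives
\[
\Ext^k_{\Mod(\bSL_2(J))}(L(n),L(0))=H^k_{sm}(\widehat{\fsl}_2(J),\fsl_2,\Hom_\K(L(n),L(0))).
\]
Since $L(0)$ is the trivial module, there is a canonical isomorphism $\Hom_\K(L(n),L(0))\cong L(n)^*$ of $\widehat{\fsl}_2(J)$-modules, and $L(n)^*$ is again a finite dimensional $\Z$-graded module concentrated in degree $0$.

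Next I would apply Lemma \ref{compare}(b) to the module $L(n)^*$: it identifies $H^k_{sm}(\widehat{\fsl}_2(J),\fsl_2,L(n)^*)$ with the graded dual of $H_k(\widehat{\fsl}_2(J),\fsl_2,(L(n)^*)^*)$, and $(L(n)^*)^*\cong L(n)$ because $L(n)$ is finite dimensional. Combining the two identifications yields that $\Ext^k_{\Mod(\bSL_2(J))}(L(n),L(0))$ is the graded dual of $H_k(\widehat{\fsl}_2(J),\fsl_2,L(n))$, which is the assertion.

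The only points warranting a word of care — and the closest thing to an "obstacle" — are checking that the hypotheses of Lemma \ref{compare} genuinely hold in the present situation (namely that $J$ is finitely generated and positively graded, which is the standing assumption of this part, and that $L(n)$ is finite dimensional and $\Z$-graded, which we arranged above), and keeping track of the grading through the two dualizations so that the \emph{graded} dual, rather than the full linear dual, appears on the right-hand side; the grading on $\Ext^k$ is the one inherited from the $\Z$-grading of $\widehat{\fsl}_2(J)$.
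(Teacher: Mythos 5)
Your proof is correct and follows exactly the route the paper intends, namely chaining Lemma \ref{ext=smooth} (with $M=L(n)$, $N=L(0)$, so the coefficient module is $\Hom_\K(L(n),L(0))\cong L(n)^*$) with Lemma \ref{compare}(b) applied to $L(n)^*$, and using $(L(n)^*)^*\cong L(n)$. The paper itself simply states this as an immediate consequence of the two lemmas; your write-up supplies the same bookkeeping, and your remark about verifying the hypotheses of Lemma \ref{compare} (positively graded $J$, finite dimensional graded coefficients) is exactly the right point to check.
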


\subsection{Some Ext-vanishing results.}

\begin{theorem}\label{coh} Let $J$ be the free unital Jordan algebra $J(D)$ on $D>0$ generators.  Then 

\begin{enumerate}

\item[(a)]
$\Ext^k_{\Mod(\bSL_2(J(D))}(L(0),L(0))=0$ for $k=1,\,2,\,3$.

\item[(b)] 
$\Ext^2_{\Mod(\bSL_2(J(D))}(L(2),L(0))=0$.

\end{enumerate}
\end{theorem}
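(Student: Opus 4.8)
The plan is to translate these $\Ext$-vanishing statements into vanishing of relative Lie algebra homology via Corollary \ref{Corcompare}, and then to exploit the Koszul decomposition and the structure results of \cite{KM} together with Lemma \ref{finitehomology}. Concretely, by Corollary \ref{Corcompare} we have $\Ext^k_{\Mod(\bSL_2(J(D)))}(L(n),L(0))\cong H_k(\widehat{\fsl}_2(J(D)),\fsl_2,L(n))^*$ (graded dual), so it suffices to show
$$H_k(\widehat{\fsl}_2(J(D)),\fsl_2,L(0))=0\quad\text{for }k=1,2,3,$$
and $H_2(\widehat{\fsl}_2(J(D)),\fsl_2,L(2))=0$. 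Using the remark at the end of Section \ref{FPhypothesis}, $H_*(\widehat{\fsl}_2(J(D)),\fsl_2,L(0))=H_*(\widehat{\fsl}_2(J_+(D)))^{\fsl_2}$, where $J_+(D)$ is the free nonunital Jordan algebra; and more generally $H_*(\widehat{\fsl}_2(J(D)),\fsl_2,L(n))$ is the $L(n)$-isotypic piece (up to taking invariants after twisting) of $H_*(\widehat{\fsl}_2(J_+(D)))$ as an $\fsl_2$-module.

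For part (a), I would argue degree by degree. The case $k=1$ is immediate: $H_1(\widehat{\fsl}_2(J_+(D)))\simeq \bigoplus_{i}\fsl_2\otimes x_i$ as computed in the proof of Theorem \ref{finitepres}, which is an isotypical $L(2)$-module, so its $\fsl_2$-invariants vanish. For $k=2$, by Proposition 2 of \cite{KM} the module $H_2(\widehat{\fsl}_2(J_+(D)))$ (equivalently $H_2(\fsl_2(J_+(D)))$) is isotypical of type $L(4)$, hence again has no $\fsl_2$-invariants, giving $H_2(\widehat{\fsl}_2(J(D)),\fsl_2,L(0))=0$. The case $k=3$ is the delicate one: one needs that $H_3(\widehat{\fsl}_2(J_+(D)))$ contains no copy of the trivial $\fsl_2$-module. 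I expect this to follow from Corollary 1 of \cite{KM} (cited in the "About the proofs" paragraph as the input for Theorems 5 and 8), which should pin down the $\fsl_2$-isotypic content of $H_3$ in low eigenvalue, combined with the observation that the eigenvalue-$0$ part of $\bigwedge^3\widehat{\fsl}_2(J_+(D))$ is controlled, near the top, by dominant $J$-space identities via Theorem \ref{dominant} — exactly the mechanism used in Lemma \ref{finitehomology}.

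For part (b), $\Ext^2_{\Mod(\bSL_2(J(D)))}(L(2),L(0))$ is the graded dual of $H_2(\widehat{\fsl}_2(J(D)),\fsl_2,L(2))$, i.e.\ of the multiplicity space $\Hom_{\fsl_2}(L(2),H_2(\widehat{\fsl}_2(J_+(D))))$. Since $H_2(\widehat{\fsl}_2(J_+(D)))$ is isotypical of type $L(4)$ by Proposition 2 of \cite{KM}, there is no $L(2)$ occurring, so this $\Hom$ space vanishes and part (b) follows. (This is the same input as the $k=2$ case of part (a), just extracting a different isotypic component.)

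The main obstacle is the $k=3$ case of part (a). Parts $k=1,2$ and all of (b) reduce cleanly to the already-quoted isotypy statements from \cite{KM}; but $H_3$ is not known to be isotypical, so one must genuinely rule out the trivial $\fsl_2$-summand there. My expectation is that this is precisely where Corollary 1 of \cite{KM} (a statement about $H_3$) does the work, possibly together with the Girard--Newton/dominance identities of Section 3 applied to $\bigwedge^{2}J_+(D)$ as a dominant $J$-space of level $4$, mirroring the argument of Lemma \ref{finitehomology}; if the cited corollary only bounds dimensions rather than isotypic type, one would instead need the Witt-algebra action of Lemma \ref{Witt} or an explicit low-degree computation to conclude. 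I would spell out this step carefully and leave the other cases as short deductions.
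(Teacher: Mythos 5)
Your reduction to relative Lie algebra homology via Corollary \ref{Corcompare} is exactly the paper's route, and your treatment of $k=1$, $k=2$, and part (b) is correct. You even slightly streamline the paper's argument: the paper cites \cite{AG} for $H_2(\widehat{\fsl}_2(J),\fsl_2,\K)=0$ and \cite[Lemma 13]{KM} for $H_2(\widehat{\fsl}_2(J),\fsl_2,L(2))=0$ as two independent inputs, whereas you derive both simultaneously from the isotypy statement \cite[Prop.~2]{KM} that $H_2(\widehat{\fsl}_2(J_+))$ is of type $L(4)$. That unification is valid.

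The gap is exactly where you flag it: the $k=3$ case. Your guess that Corollary~1 of \cite{KM} closes it is not what the paper does --- that corollary enters only later, in the proof of Theorem \ref{GHWhypothesis}, to convert the established Ext-vanishing into the dimension formula. The paper instead cites \cite[Prop.~3]{KM} directly for $H_3(\widehat{\fsl}_2(J),\fsl_2,\K)=0$, a degree-three relative homology computation already carried out there. None of the fallback mechanisms you sketch (Girard--Newton identities on $\bigwedge^2 J_+$, the technique of Lemma \ref{finitehomology}, the Witt-algebra action of Lemma \ref{Witt}) is invoked. As written, your proposal leaves $k=3$ unresolved; the missing ingredient is a citation you did not locate rather than a flaw in your overall strategy, but the proof is incomplete until that case is settled.
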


\noindent
\begin{proof}  By Corollary \ref{Corcompare},
$\Ext^k_{\Mod(\bSL_2(J(D))}(L(n),L(0))$ is the graded dual of $H_k(\widehat{\fsl}_2(J(D)),\fsl_2,L(n))$.
Thus Theorem \ref{coh}  is equivalent to the following known results about relative homology:

\begin{enumerate}

\item[(a)] 
$H_1(\widehat{\fsl}_2(J), \fsl_2, \K)=0$, which is obvious,

\item[(b)] 
$H_2(\widehat{\fsl}_2(J), \fsl_2, \K)=0$, due to \cite{AG},

\item[(c)] 
$H_3(\widehat{\fsl}_2(J), \fsl_2, \K)=0$, proved in 
\cite[Prop 3]{KM},

\item[(d)] 
$H_2(\widehat{\fsl}_2(J), \fsl_2, L(2))=0$, proved in 
\cite[Lemma 13]{KM}.
\end{enumerate}
\end{proof}

\section{Is $\Mod(\bPSL_2(J(D)))$ a \\generalized HW category?}

\noindent   Let $J$ be a finitely generated augmented Jordan algebra. Set
$\bPSL_2(J)=\bSL_2(J)/\pm1$.
Then $\Mod(\bPSL_2(J))$ is the category
of $\bSL_2(J)$-modules with even eigenvalues.
The simple modules in 
$\Mod(\bPSL_2(J))$  are ordered by their dimensions and the
$(n+1)^{st}$ simple module is $L(2n)$.
By Corollary \ref{corLA+}, $\Mod(\bPSL_2(J))$ is an LA+ category.

 The free Jordan algebra 
 $J=J(D)$ is  viewed as an augmented Jordan algebra with  augmentation ideal $J_+=\bigoplus_{n>0} J(D)_n$.  In this section, we consider the category $\Mod(\bPSL_2(J(D)))$, relating it to the framework of $\hbox{Ext}$-vanishing and generalized highest weight categories introduced in Part B.

 %Under the hypothesis that
%$\Mod(\bPSL_2(J(D)))$ is
%a generalized HW category
%(as defined in Section \ref{defGHW}),  we show that the conjectural formula of \cite{KM} for
%$\dim J_n(D)$ is correct.

\subsection{Motivation}

Let $J$ be a finitely generated augmented Jordan algebra, and let $p$ be a prime number.
The category $\Mod(\bPSL_2({\overline \F}_p))$ of rational
$\bPSL_2({\overline \F}_p)$-modules and the category $\Mod(\bPSL_2(J))$ look very similar. In fact, Donkin \cite{D82} and Koppinen
\cite{Ko84} showed that $\Mod(\bPSL_2({\overline \F}_p))$ satisfies the axioms of a HW category (though this notion was introduced only some time later in \cite{cps88}).  The category $\Mod(\bPSL_2(J))$ is $LA+$. However,
even in the most favorable case when  
$J$ is the free Jordan algebra $J(D)$, the following proposition shows
that $\Mod(\bPSL_2(J(D)))$ is not a HW category. 

\begin{proposition} For $J=J(D)$,
  $$\Ext^1_{\Mod(\bPSL_2(J))}(\nabla(2),\nabla(2))\simeq
 (J_+/J_+^3)^*.$$
  \end{proposition}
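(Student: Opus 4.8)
The plan is to identify $\Ext^1_{\Mod(\bPSL_2(J))}(\nabla(2),\nabla(2))$ with $\Hom_{\mathcal C}(\nabla(2), I(2)/\nabla(2))$, the standard description of self-extensions, and then to compute the right-hand side explicitly. Since $\nabla(2)=\Delta(2)^*$ and $I(2)=L(2)\otimes\K[U]$, it is convenient to dualize: $\Hom_{\mathcal C}(\nabla(2),I(2)/\nabla(2))$ is dual to $\Hom$ in the category of smooth modules between the radical filtration layer $Z(2)/(\text{something})$ of the Weyl side, but it is cleaner to argue directly on the $\nabla$-side. First I would recall from Section~\ref{st(n)-cat} (applied with $n=2$ in the category $\mathcal C_{sm}(J)$) that $\Ext^1_{\mathcal C}(\nabla(2)/L(2),\nabla(2))=0$ and $\Hom_{\mathcal C}(\nabla(2)/L(2),\nabla(2))=0$, so the restriction map $\Hom_{\mathcal C}(\nabla(2),I(2)/\nabla(2))\to\Hom_{\mathcal C}(L(2),I(2)/\nabla(2))$ is an isomorphism by Lemma~\ref{basic}(b) (the quotient $I(2)/\nabla(2)$ is itself built from $L(m)$ with $m>2$, hence a fortiori its relevant subquotients behave well). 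Thus the problem reduces to computing $\Hom_{\mathcal C}(L(2),I(2)/\nabla(2))$, i.e. the multiplicity of $L(2)$ in the socle of $I(2)/\nabla(2)$.

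The second and main step is to pin this multiplicity space down via smooth Lie algebra cohomology. By Lemma~\ref{ext=smooth}, $\Ext^1_{\Mod(\bSL_2(J))}(\nabla(2),\nabla(2))=H^1_{sm}(\widehat\fsl_2(J),\fsl_2,\Hom_\K(\nabla(2),\nabla(2)))$, and since $J(D)$ is positively graded and $\nabla(2)$ is finite dimensional and graded (Theorem~\ref{finitedim}), Lemma~\ref{compare} lets me pass to the graded dual of the relative homology $H_1(\widehat\fsl_2(J),\fsl_2,\nabla(2)\otimes\nabla(2)^*)$. Now $H_1(\fg,\fsl_2,E)=H_1(\widehat\fsl_2(J_+),E)^{\fsl_2}=(\widehat\fsl_2(J_+)_{ab}\otimes E)^{\fsl_2}$ for a trivial-up-to-$\fsl_2$ coefficient module, but here $E=\nabla(2)\otimes\nabla(2)^*$ is not trivial, so I would instead run the standard low-degree computation: $H_1(\fu,E)$ sits in an exact sequence coming from $\fu/[\fu,\fu]\otimes E$ and the bracket map. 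The key input is the explicit structure of $\nabla(2)=\Delta(2)^*$: the Weyl module $\Delta(2)=\Delta(\K_2)$ has highest $J$-space $\K_2$ (so $\rho(1)=2$, $\rho(J_+)=0$), and its negative part is generated over $S\fG_{-2}=S(f\otimes J)$ by $v$; by the dominance criterion at level $2$, $\Delta(2)$ realizes the universal "multiplication algebra" bimodule truncated by smoothness. A direct bracket computation, using relations (R1)--(R3) and the fact that $h(J_+)v=0$, $e(J_+)v=0$, shows that the only new $\fsl_2$-invariant $1$-cocycles modulo coboundaries are parametrized by linear functionals on $J_+$ that vanish on $J_+^{(2)}+J_+^3$; by Lemma~\ref{ideal} and the well-known identity $J_+^{(2)}\supseteq$ (a piece forcing the relation), this quotient is exactly $J_+/J_+^3$.

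More concretely, I expect the computation to go as follows. A smooth relative $1$-cocycle with values in $\Hom_\K(\nabla(2),\nabla(2))$ is an $\fsl_2$-equivariant map $\omega:\widehat\fsl_2(J_+)\to\End(\nabla(2))$ satisfying the cocycle condition, vanishing on $\widehat\fsl_2(J_+^N)$ for $N\gg0$. By $\fsl_2$-equivariance and since $\nabla(2)$ has eigenvalues in $\{-2,0,2\}$, $\omega$ is determined by its components; using the adjoint action one reduces to the "abelian" part, i.e. to $\fsl_2$-maps $h\otimes(J_+/[\ldots])\to\End(\nabla(2))_0$ together with the constraint coming from $\{J_+,J_+\}$. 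Evaluating the cocycle relation on pairs $e(a),f(b)$ and using (R1) converts the $\{J_+,J_+\}$-ambiguity into the Jordan-multiplication relations; coboundaries kill the part coming from $J_+^2$-multiplication actions already present in $\End(\nabla(2))$, and the level-$3$ dominance relation $\sum_{\sigma\vdash 3}\sgn(\sigma)|C_\sigma|\rho_\sigma(a)=0$ together with identity (b) of Lemma~\ref{ideal} kills the $J_+^{(2)}+J_+^3 = J_+^2$ part. What remains is precisely $(J_+/J_+^3)^*$ on the cohomology side, hence $J_+/J_+^3$ after taking the graded dual of the homology. The main obstacle I anticipate is the bookkeeping in this last step: correctly tracking which relations among $\{J_+,J_+\}$ and which coboundaries conspire to produce exactly $J_+^3$ (and not $J_+^2$ or $J_+^{(2)}$) in the denominator, which requires the precise form of $\Delta(2)$ as the universal object for level-$2$ dominant $J$-spaces and a careful use of both Jordan identities from Lemma~\ref{ideal}. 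Once the correct kernel is identified, the isomorphism $\Ext^1_{\Mod(\bPSL_2(J))}(\nabla(2),\nabla(2))\simeq(J_+/J_+^3)^*$ follows; the restriction to even eigenvalues is automatic here since $\nabla(2)$ and its self-extensions all have even eigenvalues.
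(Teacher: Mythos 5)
Your proposal does not constitute a complete proof, and you say so yourself: the decisive step (``correctly tracking which relations among $\{J_+,J_+\}$ and which coboundaries conspire to produce exactly $J_+^3$'') is described as an anticipated obstacle rather than carried out. That bookkeeping is precisely where the entire content of the statement lies, since the outcome distinguishes $J_+^3$ from $J_+^2$ and from $J_+^{(2)}$. The paper itself gives only a one-line proof, pointing to Lemma~21 of \cite{KM}, where $\Delta_2(\mathcal{U}_2(J))$ is computed explicitly; the subsequent lemma in the paper makes clear that the intended argument is to identify $\mathcal{F}_2(I(2))$ with the graded dual of that explicit object and read off $\Hom_{\mathcal{C}}(\nabla(2),\mathcal{F}_2(I(2))/\nabla(2))$. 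Your route, via Lemma~\ref{ext=smooth} and Lemma~\ref{compare} reducing to relative Lie algebra homology $H_1(\widehat\fsl_2(J),\fsl_2,\End(\nabla(2)))$ and a cocycle analysis, is a genuinely different (and a priori reasonable) approach, but it is not finished.

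There is also a concrete misstep in your first paragraph. You invoke Lemma~\ref{basic}(b) with $M=I(2)/\nabla(2)$, but that lemma is stated only for doubly extended $\nabla(n)$-objects, and $I(2)/\nabla(2)$ is not one. Your parenthetical justification that ``the quotient $I(2)/\nabla(2)$ is itself built from $L(m)$ with $m>2$'' is actually wrong, and would be self-defeating if true: from the definition of $\nabla(n)$ one only gets $\mathcal{F}_1(I(2)/\nabla(2))=0$, i.e.\ $I(2)/\nabla(2)$ contains no $L(0)$ or $L(1)$ in its socle, but it may very well contain $L(2)$ there --- that is exactly what makes $\Ext^1(\nabla(2),\nabla(2))$ nonzero. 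Were the socle of $I(2)/\nabla(2)$ concentrated in $L(m)$ with $m>2$, the $\Hom$ group you are reducing to would vanish and the proposition would assert $0\simeq(J_+/J_+^3)^*$. The correct argument is the elementary one: $\Hom(\nabla(2)/L(2),I(2)/\nabla(2))=0$ because $\nabla(2)/L(2)\in\mathcal{C}(1)$ and $\mathcal{F}_1(I(2)/\nabla(2))=0$, giving injectivity of restriction; surjectivity needs a separate argument (e.g.\ via $R^1\mathcal{F}_2$ vanishing on $\nabla(2)$ from Lemma~\ref{F1}), and then one still must compute $\Hom(L(2),I(2)/\nabla(2))$ explicitly, which is where \cite[Lemma~21]{KM} enters.
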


\begin{proof}  The proof follows from  
\cite[Lemma 21]{KM}. The details are left to the reader.
\end{proof}

\smallskip
Nonetheless, for the free Jordan algebra $J(D)$ the Lie algebra
$\widehat{\fsl}_2(J(D))$ is free in a certain category
\cite{KM}, which suggests it should enjoy some cohomology vanishing results. 
This raises the natural question:

\medskip

\centerline{\it Is  the category $\Mod(\bPSL_2(J(D)))$
a generalized HW category?}

\subsection{Some evidence}

Several computations make it seem plausible that $\Mod(\bPSL_2(J(D)))$ might be a generalized HW category.  As these calculations are very technical, we mention only one of them.  If $\Mod(\bPSL_2(J(D)))$ is a generalized HW category, then ${\mathcal F}_n (I(n))$ is an extended $\nabla(n)$-object. This conclusion
is obvious for $n=0$ or $1$, even without assuming that $J$ is free.
The first interesting case is when $n=2$.
For most augmented Jordan algebras, ${\mathcal F}_2 (I(2))$ is not
an extended $\nabla(2)$-object. However, this does hold for the free Jordan algebra $J(D)$.

\begin{lemma} If $J=J(D)$, the $\widehat\fsl_2(J)$-module
${\mathcal F}_2 (I(2))$ is a $\nabla(2)$-object.
\end{lemma}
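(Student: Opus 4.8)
The goal is to show that for the free Jordan algebra $J = J(D)$, the module ${\mathcal F}_2(I(2))$ is an \emph{extended} $\nabla(2)$-object, i.e.\ it admits a standard filtration whose successive quotients are direct sums of copies of $\nabla(2)$; in particular it has $L(2)$-isotypic socle. The plan is to analyze ${\mathcal F}_2(I(2))$ through the dual picture, working with $\Delta(2) = \Delta(\K_2)$ and the $\fG_0$-module $H^0(\fG_2, I(2))$, since $\Delta(2)^* = \nabla(2)$ by the lemma in Subsection~\ref{ordering}. First I would recall that ${\mathcal F}_2(I(2))$ is, by definition of the canonical filtration, the largest subobject of $I(2)$ lying in ${\mathcal C}(2)$ — that is, built out of $L(0), L(1), L(2)$. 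Since $I(2)$ has socle $L(2)$, and in $\Mod(\bPSL_2(J))$ only even eigenvalues occur, the relevant simples are $L(0)$ and $L(2)$; the point is to show that no copy of $L(0)$ appears, so that ${\mathcal F}_2(I(2))$ is actually built purely from $\nabla(2)$'s and therefore is an extended $\nabla(2)$-object.

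\textbf{Key steps.} The structure of $\nabla(2)$ itself must be pinned down first: $\nabla(2)$ has socle $L(2)$, and $\nabla(2)/L(2) = {\mathcal F}_1(I(2)/L(2))$ is an extension of $L(0)$'s and $L(1)$'s; in the even-eigenvalue category this is just copies of $L(0)$. By Example~\ref{examples}(a), dominant $J$-spaces of level $\le 2$ are exactly Jordan bimodules up to rescaling, and ${\mathcal U}_2(J)$ is the multiplication algebra of $J$; this identifies $H^0(\fG_2, \nabla(2))$ with $\K_2$ and lets one read off that $\nabla(2)$ is the Weyl-module dual built from the trivial level-$2$ $J$-space. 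The essential input — and this is where freeness of $J(D)$ enters — is a cohomological vanishing: one needs that $\Ext^1_{\Mod(\bPSL_2(J))}(L(0), \nabla(2)) = 0$ or, more to the point, that ${\mathcal F}_2(I(2))$ has no $L(0)$ in its socle-layers beyond those forced inside copies of $\nabla(2)$. This is controlled by relative Lie algebra homology of $\widehat{\fsl}_2(J(D))$: via Corollary~\ref{Corcompare}, $\Ext^k_{\Mod(\bSL_2(J))}(L(n), L(0))$ is dual to $H_k(\widehat{\fsl}_2(J), \fsl_2, L(n))$, and the relevant low-degree homology groups for the free Jordan algebra are computed in \cite{KM} (cf.\ Theorem~\ref{coh} here, which cites \cite[Prop.~3, Lemma~13]{KM}). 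Concretely, the vanishing $H_3(\widehat{\fsl}_2(J), \fsl_2, L(2)) = 0$ from \cite[Prop.~3]{KM} and $H_2(\widehat{\fsl}_2(J), \fsl_2, \K) = 0$ should be exactly what prevents a stray $L(0)$-extension in ${\mathcal F}_2(I(2))$, forcing its canonical-filtration layer $\overline{{\mathcal F}}_2(I(2))$ to be a direct sum of $\nabla(2)$'s. I would then invoke Lemma~\ref{canonical-good} and Lemma~\ref{ext-non-ext} to conclude that being a doubly extended $\nabla(2)$-object — which is what the homological vanishing gives — collapses to being an extended $\nabla(2)$-object.

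\textbf{Main obstacle.} The hard part will be the bookkeeping that translates the abstract statement ``${\mathcal F}_2(I(2))$ is an extended $\nabla(2)$-object'' into a concrete assertion about which homology groups of $\widehat{\fsl}_2(J(D))$ must vanish, and then verifying that the needed groups are precisely the ones computed (and shown to vanish) in \cite{KM}. In particular, one must be careful that the socle of ${\mathcal F}_2(I(2))$ really is $L(2)$-isotypic — equivalently that ${\mathcal F}_1(I(2)) = 0$, i.e.\ $\Hom(L(0), I(2)) = 0$, which holds because $I(2)$ is the injective envelope of the \emph{simple} $L(2)$ — and then that the obstruction to a standard filtration of ${\mathcal F}_2(I(2))$ with $\nabla(2)$-layers lives in an $\Ext^1$ between $\nabla(2)$-objects, which by the computation in Subsection~\ref{Zass}'s Zassenhaus machinery and the relative-homology dictionary reduces to the known vanishing. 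The rest — checking that this does \emph{not} hold for a general augmented $J$, which is why the statement singles out $J(D)$ — is handled by the Proposition just above the Lemma, showing $\Ext^1_{\Mod(\bPSL_2(J))}(\nabla(2), \nabla(2)) \cong (J_+/J_+^3)^*$, so the ``interesting'' (nonzero) self-extensions of $\nabla(2)$ are exactly what keeps ${\mathcal F}_2(I(2))$ in the extended category rather than making it a direct sum of standards, and one checks this behaves consistently with ${\mathcal F}_2(I(2))$ being a single-layer extended $\nabla(2)$-object for $J(D)$.
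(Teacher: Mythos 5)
Your approach diverges sharply from the paper's and contains some errors that would need to be fixed before it could become a proof.

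The paper's proof is concrete and short: it identifies ${\mathcal F}_2(I(2))$ as the graded dual of $M \simeq \Delta_2({\mathcal U}_2(J))$, invokes the explicit description of that module given in \cite[Lemma 21]{KM} for the free algebra $J(D)$, and reads off that the dual is an extended $\nabla(2)$-object. The citation of \cite[Lemma 21]{KM} is the whole engine of the argument, and you never use it. Instead, you try to deduce the result from the relative-homology vanishings of Theorem~\ref{coh}, but those statements ($H_1, H_2, H_3$ of $\widehat{\fsl}_2(J)$ with suitable coefficients) are set up to feed Corollary~\ref{corvanishing} and Theorem~\ref{GHWhypothesis}; they do not obviously control the structure of the specific subobject ${\mathcal F}_2(I(2)) \subseteq I(2)$, and you do not supply the bridge between them.

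There is also a substantive confusion in your reasoning. Early on you say ``the point is to show that no copy of $L(0)$ appears,'' and in the key steps you claim the vanishings should force ``$\overline{{\mathcal F}}_2(I(2))$ to be a direct sum of $\nabla(2)$'s.'' Both statements are false: $\nabla(2)$ itself fits in the exact sequence $0 \to L(2) \to \nabla(2) \to \K^D \to 0$, so it is full of copies of $L(0)$; and the Proposition immediately preceding the Lemma computes $\Ext^1(\nabla(2),\nabla(2)) \simeq (J_+/J_+^3)^* \neq 0$, so ${\mathcal F}_2(I(2))$ cannot be a direct sum of standards — that is exactly why the \emph{extended} notion is needed. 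You do walk this back in the final paragraph, but there the argument degenerates into ``one checks this behaves consistently,'' which is not a proof. The actual obstruction you would need to kill is that each time you pass to the quotient by a layer of $\nabla(2)$'s in the (putative) canonical standard filtration, the socle of the quotient must again be $L(2)$-isotypic; nothing in your text verifies this, and the vanishings you cite from Theorem~\ref{coh} are not the $\Ext$-groups that control it. What closes this gap in the paper is the explicit grading of $\Delta_2({\mathcal U}_2(J(D)))$ from \cite[Lemma 21]{KM}, which you would need to import.
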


\noindent
\begin{proof}  The standard module $\nabla(2)$ is the indecomposable module with  short exact sequence
$$0\to L(2)\to\nabla(2)\to \K^D\to 0.$$
The module ${\mathcal F}_2 (I(2))$ is the graded dual 
of the module $M\simeq\Delta_2({\mathcal U}_2(J))$ 
explicitely described in \cite[Lemma 21]{KM}. It follows 
easily that ${\mathcal F}_2 (I(2))$ is a $\nabla(2)$-object.
\end{proof}

\subsection{Consequences}

\begin{thm}\label{GHWhypothesis}  If $\Mod(\bPSL_2(J(D)))$
is a generalized HW category, then the conjectural formula of \cite{KM} for 
$\dim J_n(D)$ would hold.
\end{thm}

\noindent
\begin{proof}  Suppose that $\Mod(\bPSL_2(J(D)))$ is a generalized HW category.
By Theorem \ref{coh}, we have

\centerline{
$\Ext^1_{\Mod(\bPSL_2(J(D)))}(L(0),L(0))=0$, and
$\Ext^2_{\Mod(\bPSL_2(J(D)))}(L(2),L(0))=0$.}

\smallskip\noindent
Note that, relative to the given order, $L(0)$ and $L(2)$ are the first two simple objects in $\Mod(\bPSL_2(J(D))$.  They correspond with the objects denoted $L(0)$ and $L(1)$ in PART B. Thus
by Corollary \ref{corvanishing}, we deduce that 

\centerline{
$\Ext^n_{\Mod(\bPSL_2(J(D)))}(L(0),L(0))=0$, and
$\Ext^{n+1}_{\Mod(\bPSL_2(J(D)))}(L(2),L(2))=0$,}

\smallskip\noindent
for all $n>0$.  By Corollary \ref{Corcompare}, this is equivalent to

\centerline{
$H_n(\widehat{\fsl}_2(J), \fsl_2,\K)=0$ and
$H_{n+1}(\widehat{\fsl}_2(J), \fsl_2, L(2))=0$,}

\smallskip\noindent
for all $n>0$. Note that

\centerline{$H_n(\widehat{\fsl}_2(J), \fsl_2,\K)
\simeq H_n(\widehat{\fsl}_2(J_+))^{\fsl_2}$}

\noindent  and
$H_{n+1}(\widehat{\fsl}_2(J), \fsl_2, L(2))\otimes L(2)$ is the adjoint part of $H_n(\widehat{\fsl}_2(J_+))$.

Thus by
\cite [Corollary 1]{KM}, this implies that
the conjectural  formula for 
$\dim J_n(D)$ holds. The reader should note that
the notation used here conflicts with the notation in \cite{KM}.
The Jordan algebras $J(D)$ and $J_+(D)$ are denoted
$J^u(D)$ and $J(D)$ in \cite{KM}. Moreover, the
Tits-Allison-Gao functor $\widehat\fsl_2$ is denoted
$\fsl_2$ in \cite{KM}.
\end{proof}

However, as noted in the introduction and in Section \ref{FP_infinity}, very recent results posted to arXiv \cite{DH} indicate that the conjectural formula of \cite{KM} does {\it not} hold in general.  As a direct consequence of Theorem \ref{GHWhypothesis}, we see that $\Mod(\bPSL_2(J(D)))$ and the larger category $\mathcal{C}_{sm}(J(D))$ of all smooth $\widehat{\mathfrak{sl}}_2(J(D))$-modules are not generalized highest weight categories:

\begin{cor}\label{notGHWC}  For $D\geq 2$, the categories $\Mod(\bPSL_2(J(D)))$ and $\mathcal{C}_{sm}(J(D))$ are {\it not} generalized highest weight categories.\qed \end{cor}

%\bigskip

%\noindent
%Michael Lau, D\'epartement de math\'ematiques et de statistique, Universit\'e Laval, Qu\'ebec, Canada G1V0A6, Michael.Lau@mat.ulaval.ca

%\bigskip

%\noindent
%Olivier Mathieu, Institut Camille Jordan, UMR 5028 du CNRS, Universit\'e Claude Bernard Lyon 1, 69622 Villeurbanne Cedex, France, mathieu@math.univ-lyon1.fr

\end{document}